\documentclass{article}
\usepackage[english]{babel}

\usepackage[letterpaper,top=2cm,bottom=2cm,left=3cm,right=3cm,marginparwidth=1.75cm]{geometry}

\usepackage{amsmath, amsthm, amssymb}
\usepackage{graphicx,  enumerate, comment,mathtools,array,mdframed, paralist, enumerate}
\usepackage[colorlinks=true, allcolors=blue]{hyperref}

\usepackage{autonum,tabularx}

\newcommand{\lcm}[2]{(#1\vee#2)}
\renewcommand{\gcd}[2]{(#1\wedge#2)}

\newtheorem*{question*}{Main problem}

\newtheorem{theorem}{Theorem}
\newtheorem{lemma}[theorem]{Lemma}
\newtheorem{proposition}[theorem]{Proposition}
\newtheorem{corollary}[theorem]{Corollary} 
\newtheorem{classicalFact}[theorem]{Classical fact}
\theoremstyle{definition}
\newtheorem{example}{Example}
\newtheorem{remark}[example]{Remark}

\newtheorem{definition}[example]{Definition}

\newcommand{\cint}{c_{\textrm{int}}}

\newcommand{\capprox}{c_{\textrm{approx}}}

\newcommand{\w}{\omega}
\newcommand{\bw}{\boldsymbol{\omega}}

\newcommand{\prim}{\pi}
\newcommand{\bfprim}{\boldsymbol{\pi}}

\newcommand{\bfr}{\mathbf{r}}
\newcommand{\bfs}{\mathbf{s}}
\newcommand{\bfw}{\mathbf{w}}
\newcommand{\bfx}{\mathbf{x}}
\newcommand{\bfy}{\mathbf{y}}

\newcommand{\cQ}{\mathcal{Q}}
\newcommand{\cC}{\mathcal{C}}
\newcommand{\cM}{\mathcal{M}}
\newcommand{\cT}{\mathcal{T}}
\newcommand{\cP}{\mathcal{P}}

\newcommand{\congplane}{\stackrel{pl}{=}}
\newcommand{\conglab}{\stackrel{lab}{=}}

\newcommand{\bbP}{\mathbb P}

\newcommand{\bbE}{\mathbb E}

\newcommand{\Var}{\mathrm{Var}}
\newcommand{\PBGW}{\bbP_{\scriptscriptstyle BGW}}
\newcommand{\EBGW}{\bbE_{\scriptscriptstyle BGW}}
\newcommand{\muBGW}{\mu_{\scriptscriptstyle BGW}}

\newcommand{\Supp}{\textrm{Supp}}
\newcommand{\defi}{:=}

\title{Composition of random functions and word reconstruction}

\author{Guillaume Chapuy%
\thanks{Université Paris Cité, CNRS, IRIF, F-75013, Paris, France
Email:~{\tt guillaume.chapuy@irif.fr}.
}
\and Guillem Perarnau%
\thanks{Departament de Matem\`atiques and IMTECH, Universitat Polit\`ecnica de Catalunya (UPC), Barcelona, Spain. Centre de Recerca Matemàtica, Barcelona, Spain. Email:~{\tt guillem.perarnau@upc.edu}. 
}
}

\begin{document}
\maketitle

\begin{abstract}
Given two functions $\mathbf{a}\!:\! [n] \rightarrow [n]$ and $\mathbf{b}\!:\!  [n] \rightarrow [n]$ chosen uniformly at random, any word $w=w_1w_2\dots w_k\in \{a,b\}^k$ induces a random function $\mathbf{w}\!:\!  [n] \rightarrow [n]$ by composition, i.e. $\mathbf{w}=\phi_{w_k}\circ  \dots \circ \phi_{w_1}$ with $\phi_a=\mathbf{a}$ and $\phi_b=\mathbf{b}$.
We study the following question: assuming $w$ is fixed but unknown, and $n$ goes to infinity, does the random function $\mathbf{w}$ carry enough information to (partially) recover the word $w$ with good enough probability, in one or several i.i.d. samples?

We prove that the random functions stemming from two non-isomorphic words can be discriminated with probability arbitrarily close to $1$ with a bounded number of samples, when $n$ goes to infinity. Equivalently, the total variation distance between their distributions is bounded away from zero. The proof relies on the study of certain \emph{auto-correlation} functions appearing in the variance of the weighted number of \emph{quasi-leaves}.

	Whether the total variation distance goes to 1, or equivalently, whether {\it one} sample is enough to distinguish the words with high probability, is the major question we leave open. We show that this is the case when the two words have different lengths or different exponents.
\end{abstract}


\begin{figure}[h]
\includegraphics[width=0.49\linewidth]{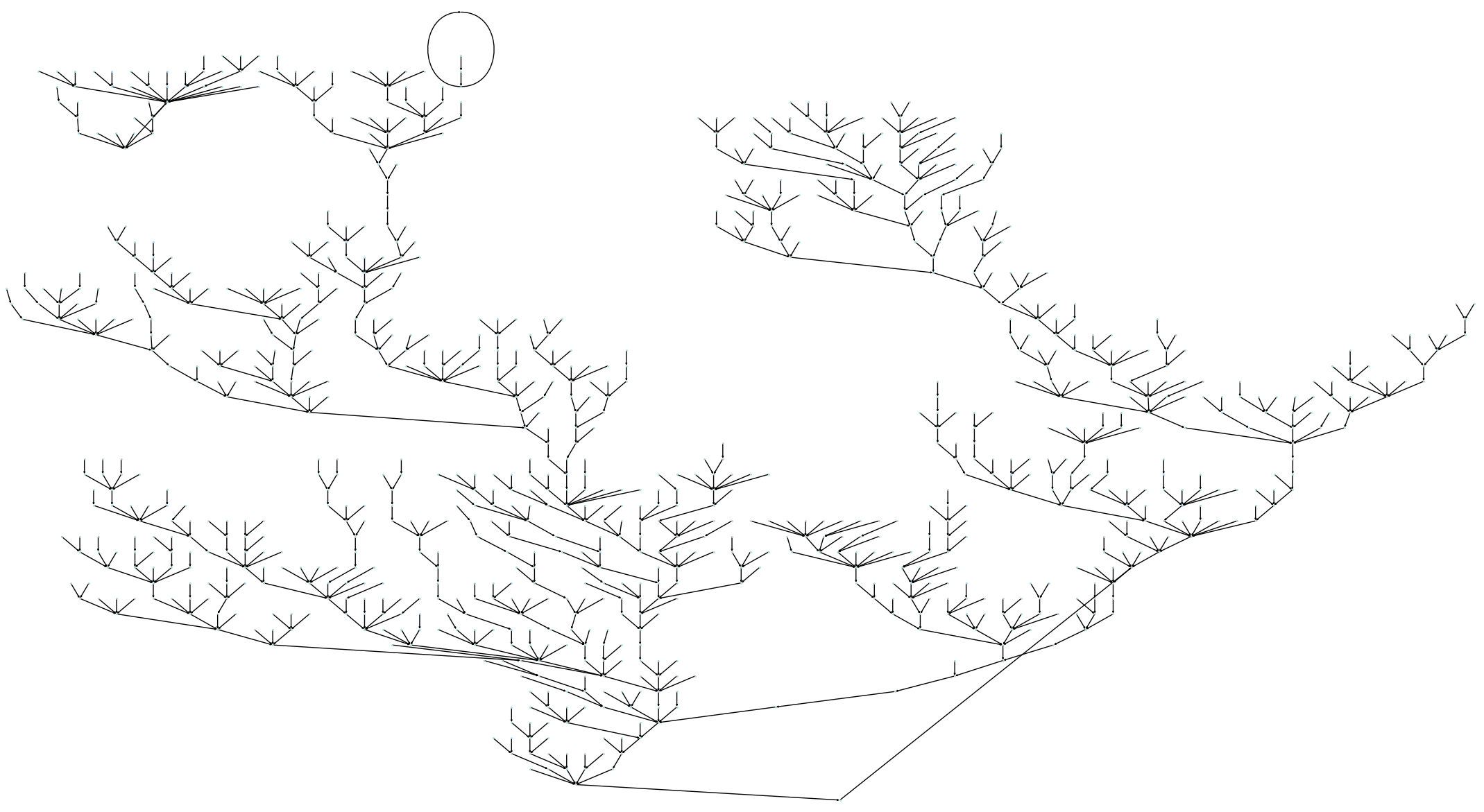}\hfill
\includegraphics[width=0.49\linewidth]{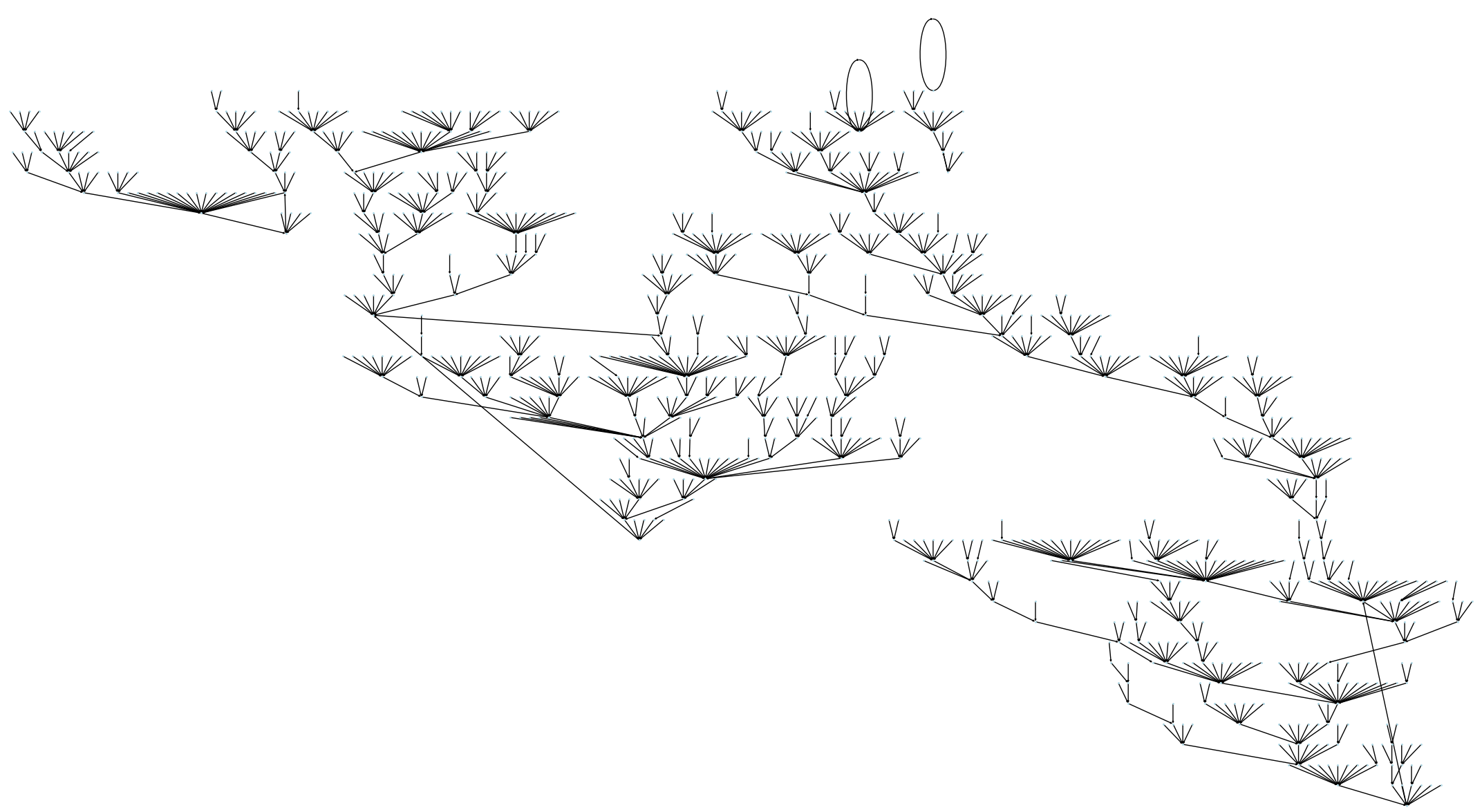}
\caption{Samples of the random function $\bfw$ for $n=1000$, for $w=ab$ (left) and $w=aaabaa$ (right). Since the two words have different length one can distinguish them by looking at the proportion of leaves (Theorem~\ref{thm:main_leaves}).}
\end{figure}

\section{Main question and main results}
\label{sec:intro}

\subsection{Presentation of the problem}

The problem studied in this paper is motivated by the study of random automata, random permutations, and random trees.

Throughout this paper, $n\geq1$ is an integer that will eventually tend to infinity. We consider two functions $\mathbf{a}, \mathbf{b}\!:\![n]\longrightarrow [n]$ chosen independently and uniformly at random, where we write $[n]\defi \{1,2,\dots,n\}$. One can picture this data as a deterministic finite automaton (DFA), in which the elements in $[n]$ are states and the two functions $\mathbf{a}$ and $\mathbf{b}$ represent the transitions associated to the letters $a$ and $b$, respectively. In graph theory language, this object can be represented as a directed graph with two types of arcs (labelled with $a$ or $b$), such that each vertex has exactly one outgoing arc of each type. Choosing the functions independently and uniformly at random is equivalent to choosing the DFA uniformly at random, and equivalent to saying that each outgoing arc of each vertex is chosen uniformly at random in $[n]$, independently of everything else.

In the context of automata, the notion of $w$-transition is very natural: the $w$-transition from a state is obtained by following successively the functions $\mathbf{a}$ and $\mathbf{b}$ according to the letters of the word $w$. Formally, given an integer $k\geq 1$ and a word $w=w_1w_2\dots w_k$ from $\{a,b\}^*$, the set of words on $\{a,b\}$, the \emph{$w$-transitions} are represented by the function $\bfw\!:\![n] \longrightarrow [n]$ defined as
$$
\bfw = \phi_{w_k}\circ \dots\circ \phi_{w_2}\circ\phi_{w_1}, \ \ \text{ where } \phi_a=\mathbf{a} \ \text{and}\  \phi_b=\mathbf{b}.
$$
The random function $\bfw$ is a priori non-uniform. The goal of this paper is to study it and understand how it depends on the associated word $w$. 
We defer a detailed discussion of the connections with random automata and random permutations to Section~\ref{sec:permutations} at the end of the introduction, beginning instead with our main questions and results.

Our main question can be stated in several forms:
	\begin{compactitem}
		\item 	What are the properties of the random function $\bfw$, and to what extent does $\bfw$ encode information about the word $w$ used to construct it?
		\item More precisely, which properties of the word $w$ can be guessed with probability close to one in one sample of the random function $\bfw$? and in a bounded number of samples?
\end{compactitem}
A natural way to attack this question is to find statistics, measurable observables of the function $\bfw$, that behave differently for different words $w$.
	\begin{compactitem}
		\item
	Which observables of the random function $\bfw$ have characterizable asymptotic behaviour that depends on $w$ or uniquely determines $w$ up to isomorphism?
\end{compactitem}
For example, we will see that the proportion of leaves of the function $\bfw$ (see below for a precise definition) satisfies an almost sure convergence theorem, with a deterministic limit only depending on the length of the word $w$. Thus the word length can be guessed, with high probability, in one sample.

A natural way to formalize these problems is measuring the distance between the distributions of two random functions. Recall that the \emph{total variation distance} (\emph{TV-distance}) between two distributions $\mu$ and $\nu$ on some finite set $\Omega$, is
$$
d_{TV}(\mu,\nu) \defi \frac{1}{2}\sum_{x \in \Omega} | \mu(x) - \nu(x)|.
$$
An equivalent statistically-flavored definition is that the TV-distance measures how difficult it is to distinguish between samples from	 $\mu$ and $\nu$. Namely, $1-d_{TV}(\mu,\nu)$ is the minimum of the sum of Type I and Type II errors, among all statistical tests between the two distributions.	
Equivalently, $\frac{1}{2}(1+d_{TV}(\mu,\nu))$ is the maximum probability to guess the source urn if one is presented a sample taken from one of two urns (each chosen with probability~$1/2$), sampling respectively the distributions $\mu$ and $\nu$; see e.g.~\cite[Equation~(1.1)]{Lugosi}. 
Note that the TV-distance takes values in $[0,1]$.

We say that two words are \emph{isomorphic} if one can be obtained from the other by permuting the letters of the alphabet; if the words are on $\{a,b\}$, the equivalence class of a non-empty word has size exactly two, with a unique representative starting with an $a$. If $w$ and $w'$ are isomorphic, then $\bfw$ and $\bfw'$ follow the same law. Our main question can thus be formulated as follows. 

\begin{question*}[\bf  Total variation distance between random functions]
	Fix $w, w' \in \{a,b\}^*$ two non-isomorphic words, and let $\bfw, \bfw'$ be the associated random functions. Which asymptotic bounds hold for their TV-distance? More precisely, determine which of the following two inequalities are strict:
	\begin{align}\label{eq:bounds_TVd}
	0 \leq\liminf_{n\to\infty} d_{TV}(\bfw,\bfw') \ \ , \ \  \limsup_{n\to\infty} d_{TV}(\bfw,\bfw') \leq 1. 
	\end{align}
\end{question*}

In this paper, we solve the question completely for the first inequality, which we show is always strict (main result in Theorem~\ref{thm:cw_separate}). 
The second question remains wide open in general, and we do not dare to make a conjecture about it. Nevertheless, we do prove that the TV-distance goes to $1$ under much stronger hypotheses on the words $w$ and $w'$ (different lengths or different exponents, Corollaries~\ref{cor:length} and~\ref{cor:distinguish_d} respectively). In the next section we state these results precisely.

\begin{remark}[\bf Reformulation in the urn setting]\label{rem:urn}
	Given two non-isomorphic words $w,w'$ and their associated random functions $\bfw$ and $\bfw'$, the inequalities in~\eqref{eq:bounds_TVd} admit an equivalent formulation in the urn setting as above, in which two urns produce i.i.d. samples of the function $\bfw$ and $\bfw'$ respectively, we are presented the output of only one of the two urns, each with probability $1/2$, and our goal is to guess which urn has produced it. The first inequality is strict if and only if a bounded number of i.i.d. samples suffices to guess the correct urn with probability arbitrarily close to $1$. The second inequality is an equality if a single sample suffices to guess it with high probability.
\end{remark}

\subsection{Main results}

Throughout the paper we follow the notation of the previous section:  $n\geq 1$ is an integer and $\mathbf{a},\mathbf{b} \! : \! [n]\to [n]$ are independent uniform random functions. We use $\bbP$ to refer to the probability measure induced by $\mathbf{a}, \mathbf{b}$, $\bbE$ for its expectation and $\Var$ for its variance. Occasionally, we extend this space with one (or more) vertex $R$ chosen independently and uniformly at random, keeping the same notation.
We let $w\in \{a,b\}^*$ be a word and we denote its length by $|w|$ and its associated random function by $\bfw$. For the statements concerning two words, we use the notation $w'$ for the second word, and $\bfw'$ for the associated random function. Unless otherwise specified, asymptotic statements are taken as $n \to \infty$, and occasionally as $L \to \infty$ or $\epsilon \to 0$. Throughout this analysis, the lengths of the words $w$ and $w'$ remain fixed. Consequently, any implicit constants in the $O$-notation or limits may depend on $|w|$ and $|w'|$; dependence on any other parameter $u$ will be indicated explicitly as $O_u$.

\subsubsection{Guessing the length of the word}

We define the following sequence of functions: $g_0(s) = s$ and 
\begin{align}\label{eq:def_g}
	g_{t}(s) := f(g_{t-1}(s)) = \exp(\exp(\dots \exp(s-1)-1)\dots )-1),\quad \text{for all }t\geq 1,
\end{align}
where $f(s):=e^{s-1}$.
We also let $\eta_t:=g_t(0)$.
As we will see below, $g_t(s)$ is the probability generating function of generation $t$ in a Poisson(1)-Bienaymé-Galton-Watson tree (BGWT), and $\eta_t$ is the probability that it has no progeny at generation $t$.
Note that the sequence $\eta_t$ is strictly increasing.

We say that $x$ is a \emph{leaf} if it has no $\bfw$-preimage and a \emph{quasi-leaf} if all its $\bfw$-preimages are leaves. In particular all leaves are quasi-leaves. Let $\cQ_n^w$ be the set of quasi-leaves of $\bfw$. Let $d^\bfw(x)$ be the number of $\bfw$-preimages of $x\in [n]$.

In the rest of the paper $u\in[0,1]$ is a real parameter, and we consider the following weighted number of quasi-leaves:
\begin{align}
Q_n^w(u):&=\sum_{x\in [n]} u^{d^{\bfw}(x)}\mathbf{1}_{x\in \cQ_n^w}.
\end{align}
We insist that $u$ belongs to  $[0,1]$ and for simplicity we will not try to extend our results to a larger domain\footnote{Some of the limit quantities we obtain will be extended in Section~\ref{sec:independence} to $u\in\mathbb{R}$, but $Q_n^w(u)$ itself will be only considered for $u\in [0,1]$.}. All our error terms will be uniform over $u\in [0,1]$.

If the word $w$ (or function $\bfw$) is clear from the context we will write $d(x)$, $\cQ_n $ and $Q_n(u)$.

\begin{theorem}[\bf Statistics of quasi-leaves]\label{thm:main_leaves}
For any integer $k\geq 1$, $w\in \{a,b\}^k$ and $u\in [0,1]$, we have 
\begin{align}\label{eq:main_leaves}
\frac{Q_n^w(u)}{n} \xrightarrow{n \to \infty} g_k(u\eta_k),\quad \text{in probability}.
\end{align}
\end{theorem}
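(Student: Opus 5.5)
We use the second moment method: we show that $\bbE[Q_n(u)]/n \to g_k(u\eta_k)$ and that $\Var(Q_n(u)) = o(n^2)$; Chebyshev's inequality then gives the convergence in probability. Everything rests on a local weak limit description of the $\bfw$-preimage structure around a uniform vertex.

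\textbf{Step 1: local structure.} Fix a vertex $R$, chosen uniformly. The iterated $\mathbf{a}$/$\mathbf{b}$-preimage tree of $R$ is explored backwards along the word. The $\phi_{w_k}$-preimages of $R$ form a Binomial$(n,1/n)$ set. The $\phi_{w_{k-1}}$-preimages of each of these are again Binomial$(n,1/n)$, and so on. After $k$ backward steps we reach the $\bfw$-preimages of $R$.

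To decide whether $R$ is a quasi-leaf we must also explore, for each $\bfw$-preimage $y$, whether $y$ has a $\bfw$-preimage. This is $k$ further backward steps, so the depth is $2k$ in total. Up to depth $2k$ the exploration touches $O(1)$ vertices in expectation, so with probability $1-O(1/n)$ it contains no cycles or collisions. On that event it couples with a Poisson(1)-BGW tree of depth $2k$.

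In this tree, the generation-$k$ vertices are the $\bfw$-preimages of $R$. The subtrees hanging from distinct ones are independent. A given preimage is a leaf of $\bfw$ exactly when its own BGW subtree has no generation-$k$ descendant, which has probability $\eta_k$.

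\textbf{Step 2: first moment.} Let $Z_k$ be the generation-$k$ size, with generating function $g_k$. In the limit,
\[
\bbE\bigl[u^{d(R)}\mathbf{1}_{R\in\cQ_n}\bigr]\to \bbE\bigl[u^{Z_k}\eta_k^{Z_k}\bigr]=g_k(u\eta_k).
\]
Multiplying by $n$ gives the expectation. The weight $u^{Z_k}\le 1$ is bounded, so the coupling error contributes only $O(1/n)$ uniformly in $u\in[0,1]$. One subtlety is that the letters differ across levels. This is harmless, because $\mathbf{a}$ and $\mathbf{b}$ are independent and both uniform, so each backward step has the same law. It does, however, matter for correlations where the two explorations reuse the same function.

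\textbf{Step 3: second moment (main obstacle).} Take two independent uniform vertices $R,R'$. We need
\[
\bbE\bigl[X_RX_{R'}\bigr]=\bbE[X_R]\,\bbE[X_{R'}]+o(1), \qquad X_x:=u^{d(x)}\mathbf{1}_{x\in\cQ_n}.
\]
The approach is to run the two depth-$2k$ backward explorations jointly. Each has $O(1)$ size, so with probability $1-O(1/n)$ they are disjoint and contain no collisions.

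On that event, the two explorations are conditionally almost independent. Conditioning on the exploration of $R$ reveals the $\phi$-values of $O(1)$ vertices. It also reveals that certain vertices are not preimages: for each explored vertex $v$ and each letter, the exploration learns that no unrevealed vertex maps to $v$ under that letter. These are negative constraints. They change the Binomial$(n,1/n)$ counts in the second exploration to Binomial$(n-O(1),1/(n-O(1)))$-type laws, which shifts the relevant probabilities by only $O(1/n)$.

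Formalizing this requires a careful exploration-process coupling. I would reveal preimage sets level by level, expose only the needed $\phi$-values, and bound total variation by $O(1/n)$. Given that coupling, $\Var(Q_n(u))=O(n)$, and Chebyshev's inequality finishes the proof.
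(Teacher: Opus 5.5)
Your proposal is correct and is essentially the paper's argument. You get $\bbE[Q_n^w(u)]=g_k(u\eta_k)n+O(1)$ from the BGW approximation of the $w^2$-in-ball of a uniform vertex (Lemmas~\ref{lem:prob_ball_given_tree}, \ref{lem:leaf_and_tree} and~\ref{lem:not_a_tree}), then $\Var(Q_n^w(u))=O(n)$ from near-independence of disjoint in-balls given an induced event (Lemmas~\ref{lemma:prob-ball-intersects} and~\ref{lemma:tree-approx-conditional}, exactly as in the case $m=2$ of Theorem~\ref{thm:mth-moment}), and finish with Chebyshev's inequality. The paper additionally computes the variance exactly as $c(w,u)n+O(1)$, which this theorem does not need. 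One small point: your $1-O(1/n)$ no-collision bounds require a bounded second moment of the in-ball size (the paper uses the exponential tails of Lemma~\ref{lemma:exponential-in-ball}), not just a bounded expectation, although $o(1)$ errors would already suffice here.
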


By setting $u=0$ and $u=1$, we obtain the following consequence.
\begin{corollary}[\bf Number of leaves and quasi-leaves]\label{coro:number_leaves}
	For any  integer $k\geq 1$  and any $w\in \{a,b\}^k$, the proportion of vertices in $[n]$ which are leaves and quasi-leaves of $\bfw$ converge in probability respectively to $\eta_k$ and to $\eta_{2k}$. 
\end{corollary}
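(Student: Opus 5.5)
This follows by specializing Theorem~\ref{thm:main_leaves} to the two endpoints $u=0$ and $u=1$ of the parameter range, and identifying both the left-hand sides and the limits.

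\textbf{Leaves ($u=0$).} With the convention $0^0=1$, the summand $u^{d^{\bfw}(x)}\mathbf{1}_{x\in\cQ_n^w}$ at $u=0$ equals $1$ exactly when $d^{\bfw}(x)=0$ and $x\in\cQ_n^w$. Since every leaf is a quasi-leaf, the second condition is implied by the first. Hence $Q_n^w(0)$ is exactly the number of leaves of $\bfw$. Theorem~\ref{thm:main_leaves} with $u=0$ gives that $Q_n^w(0)/n$ converges in probability to $g_k(0\cdot\eta_k)=g_k(0)=\eta_k$, by the definition of $\eta_k$.

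\textbf{Quasi-leaves ($u=1$).} At $u=1$ every weight $u^{d^{\bfw}(x)}$ equals $1$, so $Q_n^w(1)=|\cQ_n^w|$ is the number of quasi-leaves. Theorem~\ref{thm:main_leaves} with $u=1$ gives convergence in probability to $g_k(\eta_k)$. It remains to check that $g_k(\eta_k)=\eta_{2k}$. By~\eqref{eq:def_g}, $g_t=f^{\circ t}$ is the $t$-fold iterate of $f$, so $g_k\circ g_k=f^{\circ 2k}=g_{2k}$. Therefore
\begin{align}
g_k(\eta_k)=g_k(g_k(0))=g_{2k}(0)=\eta_{2k}.
\end{align}

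There is no real obstacle: all the probabilistic work is contained in Theorem~\ref{thm:main_leaves}. The only points to verify are the convention $0^0=1$ at $u=0$ and the semigroup identity $g_k\circ g_k=g_{2k}$. The latter also matches the branching-process intuition: a vertex is a quasi-leaf of $\bfw$ when its $\bfw$-preimages have no further $\bfw$-preimages, i.e.\ when its backward exploration of depth $2k$ in the underlying $\mathbf{a},\mathbf{b}$ structure is empty.
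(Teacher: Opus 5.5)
Your proof is correct and is the paper's argument: the paper obtains the corollary by setting $u=0$ and $u=1$ in Theorem~\ref{thm:main_leaves}. Your checks of the convention $0^0=1$ and of the identity $g_k(\eta_k)=g_k(g_k(0))=g_{2k}(0)=\eta_{2k}$ just make explicit what the paper leaves implicit; the paper also notes that quasi-leaves of $\bfw$ are exactly the leaves of the function associated with the word $w^2$ of length $2k$, so the quasi-leaf statement also follows from the leaf statement applied to $w^2$.
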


Note that quasi-leaves in $\bfw$ are leaves in $\bfw^2$ (associated to the word $w^2$ of length $2k$), so the second part of the corollary is in fact a consequence of the first. As $\eta_t$ is a strictly increasing in $t\geq 0$, we obtain the following consequence.
\begin{corollary}[\bf Guessing the length]\label{cor:length}
	The length of $w$ can be recovered from a single sample of~$\bfw$ with probability tending to $1$ as $n$ tends to infinity.
	In particular, if $w,w'$ are two words with $|w|\neq |w'|$, then $d_{TV}(\bfw,\bfw')\rightarrow 1$ when $n$ goes to infinity.
\end{corollary}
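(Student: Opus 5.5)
Corollary~\ref{cor:length} follows directly from Corollary~\ref{coro:number_leaves} by a threshold test on the proportion of leaves. Let $L_n(\bfw)$ be the number of leaves of $\bfw$, namely the number of $x\in[n]$ with no $\bfw$-preimage. This is a deterministic function of the sampled map, so we may use it as the observable. Fix the true length $k=|w|$.

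By Corollary~\ref{coro:number_leaves}, $L_n(\bfw)/n\to\eta_k$ in probability. The sequence $(\eta_t)_{t\ge0}$ is strictly increasing, so its values are pairwise distinct. For each $t\ge1$ we can therefore choose disjoint open neighbourhoods. Concretely, set
\[
I_t:=\Bigl(\tfrac{\eta_{t-1}+\eta_t}{2},\,\tfrac{\eta_t+\eta_{t+1}}{2}\Bigr),
\]
which are disjoint open intervals with $\eta_t\in I_t$. The estimator is $\hat k:=t$ if $L_n(\bfw)/n\in I_t$, and arbitrary (say $\hat k:=1$) if no such $t$ exists. Convergence in probability gives $\bbP(L_n(\bfw)/n\in I_k)\to1$, and on this event $\hat k=k$. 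So $\bbP(\hat k=|w|)\to1$. The limits are taken with $w$ fixed, as the statement requires, so no uniformity in $k$ is needed.

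For the total variation statement, let $k=|w|\neq k'=|w'|$ and consider the event $A:=\{f:[n]\to[n] \,:\, L_n(f)/n\in I_k\}$. Then $\bbP(\bfw\in A)\to1$. Since $I_k\cap I_{k'}=\emptyset$ and $\bbP(\bfw'\in I_{k'})\to1$, we also get $\bbP(\bfw'\in A)\to0$. Using $d_{TV}(\mu,\nu)\ge|\mu(A)-\nu(A)|$, we obtain $d_{TV}(\bfw,\bfw')\ge \bbP(\bfw\in A)-\bbP(\bfw'\in A)\to1$. Together with $d_{TV}\le1$, this gives the limit.

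There is no real obstacle once Corollary~\ref{coro:number_leaves} (and hence Theorem~\ref{thm:main_leaves} at $u=0$) is granted. The only check is that $\eta_t$ is strictly increasing, as the paper notes. This follows because $f(s)=e^{s-1}$ is increasing with $f(0)=e^{-1}>0=\eta_0$, and induction gives $\eta_{t+1}=f(\eta_t)>f(\eta_{t-1})=\eta_t$.
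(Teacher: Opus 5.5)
Your proof is correct and follows the paper's route: the paper derives Corollary~\ref{cor:length} in one line from Corollary~\ref{coro:number_leaves} and the strict monotonicity of $(\eta_t)_{t\geq 0}$. You make explicit the separating intervals $I_t$, the resulting estimator, and the bound $d_{TV}(\bfw,\bfw')\geq \bbP(\bfw\in A)-\bbP(\bfw'\in A)$, and all of these steps are sound.
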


\subsubsection{Guessing the exponent of the word}

The \emph{exponent of $w$} is the largest integer $d\geq 1$ such that $w$ is a $d$-power, i.e. there exists a word $\prim$ such that $w=\prim^d$ is the concatenation of $d$ copies of $\prim$.
The word $\prim$ has exponent $1$ and is called \emph{primitive}. 
As we will see, for primitive words $w$ the random function $\bfw$ has a short cycle structure close to that of uniform random functions or permutations: for a fixed $j\geq 1$, the number of cycles of length $j$ converges in distribution to a Poisson($1/j$) as $n$ goes to infinity, and the number of cycles of length at most $L$ is concentrated around $\log L$, for large~$L$.

We refine this observable by weighting the number of cycles of length $j$ with a periodic sequence of weights of period $g\geq1$. More precisely, for $g\geq1$ and a sequence of weights $\mathbf{z}=(z_0,\dots,z_{g-1})\in \mathbb{C}^g$, we define
$$
D^w_n(L,g;\mathbf{z}) \defi \sum_{j=1}^{L} z_{(j\text{ mod } g)} C^w_{j,n},
$$
where $C^w_{j,n}$ is the number of cycles of length $j$ in the random function $\bfw$.

\begin{theorem}[\bf Weighted cycle-count]\label{thm:Dn_limit}
	Let $w\in\{a,b\}^*$ be a word of exponent $d$ and let $g\geq 1$. Then for any tuple $\mathbf{z}=(z_0,\dots,z_{g-1})\in \mathbb{C}^g$, we have the convergence in distribution
\begin{align}\label{eq:weighted_cycle_count}
		\lim_{L\rightarrow \infty}
		\lim_{n\rightarrow \infty}
		\frac{D^w_n(L,g;\mathbf{z})}{\log L} 
=
		\frac{1}{\mathrm{lcm}(d,g)}\sum_{r=1}^{\mathrm{lcm}(d,g)}z_{(r\textnormal{ mod } g) }\bigl|\{c:c|d, \mathrm{gcd}(c,r))=1\}\bigr|.
	\end{align}
	Moreover, the convergence also holds for the expectation, and in probability.
\end{theorem}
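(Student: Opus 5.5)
We compute the first two moments of $C^w_{j,n}$ for fixed $j$ as $n\to\infty$. Write $w=\prim^d$ with $\prim$ primitive of length $p$. A cycle of length $j$ in $\bfw$ is a cyclic sequence $x_0,\dots,x_{j-1}$ with $\bfw(x_i)=x_{i+1}$. Unfolding it in the automaton gives a closed walk reading the word $w^j=\prim^{dj}$, of length $kj$ with $k=|w|$.

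\textbf{Step 1: the limiting mean of $C^w_{j,n}$.} The walk contributes to $\bbE[C^w_{j,n}]$ at order $1$ only when it is "tree-free": its $kj$ visited vertices are distinct except for identifications forced by the determinism of $\mathbf a,\mathbf b$. Such forced identifications arise exactly when the walk is in fact a cycle of the function $\bfprim$ traversed several times. Concretely, the leading configurations are the simple closed walks reading $\prim^{m}$, with $kj$ a multiple of $pm$, i.e. $m\mid dj$. The expected number of closed walks reading a primitive word of length $pm$ on distinct vertices is $\sim 1/m$ per cyclic class, by the standard first-moment computation with $n^{pm}$ choices and probability $n^{-pm}$. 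Walks with extra coincidences cost a factor $O(1/n)$. The $j$-cycles of $\bfw$ come from the $m$-cycles of $\bfprim$ under the map $\bfw=\bfprim^d$: an $m$-cycle of $\bfprim$ splits into $\gcd{m}{d}$ cycles of $\bfw$, each of length $m/\gcd{m}{d}$. This gives
\[
\lim_{n\to\infty}\bbE[C^w_{j,n}]=\sum_{m:\ m/\gcd{m}{d}=j}\frac{\gcd{m}{d}}{m}=\sum_{m:\ m/\gcd{m}{d}=j}\frac1j .
\]
Writing $m=jc'$ with $c'\mid d$, we have $m/\gcd{m}{d}=j$ if and only if $\gcd{j}{d/c'}=1$. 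Setting $c=d/c'$, the limit becomes $\frac1j\bigl|\{c:c\mid d,\ \gcd{c}{j}=1\}\bigr|$. To justify treating $\bfprim$ like a uniform random function for short cycles, I would use a direct walk-counting argument. This argument must also show that a closed walk reading $w^j$ with no coincidences beyond those forced must come from $\bfprim$-cycles. That follows from primitivity: a word equal to a nontrivial rotation of itself is a power.

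\textbf{Step 2: summation.} The weight $z_{(j \bmod g)}\,|\{c\mid d:\gcd{c}{j}=1\}|$ depends only on $j \bmod \mathrm{lcm}(d,g)$. Summing the weights $1/j$ over $j\le L$ in each residue class $r$ modulo $\ell=\mathrm{lcm}(d,g)$ gives $\frac1\ell\log L+O(1)$. Dividing by $\log L$ and letting $L\to\infty$ yields the right-hand side of~\eqref{eq:weighted_cycle_count} for the expectation.

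\textbf{Step 3: concentration.} To get convergence in probability, I would show that $\Var(D^w_n(L,g;\mathbf z))=O(\log L)$ in the limit $n\to\infty$. The second factorial moment $\bbE[C_{j}C_{j'}]$ is computed in the same way. Pairs of vertex-disjoint walks factorize up to a $1+O(1/n)$ correction. Overlapping pairs that are not the same cycle require extra coincidences and cost $O(1/n)$. Hence the $C^w_{j,n}$ are asymptotically independent Poisson variables. Some care is needed because several cycles of $\bfw$ can come from the same $\bfprim$-cycle. I would handle this by writing $D_n$ as a sum over $\bfprim$-cycles, whose counts are asymptotically independent Poisson$(1/m)$, so that each $\bfprim$-cycle contributes a bounded multiple. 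The variance is then $\sum_{m\le dL}O(1/m)=O(\log L)$. Chebyshev's inequality, applied after dividing by $(\log L)^2$, concludes; convergence in distribution to the constant follows.

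\textbf{Main obstacle.} The hardest part is the combinatorial step in Step 1. One must rigorously classify the closed $w^j$-walks whose probability is of order $n^{-(\#\text{vertices})}$ with $\#\text{edges}=\#\text{vertices}$, that is, unicyclic and tree-free configurations. One must then show that these are exactly the repeated traversals of primitive $\prim^m$-cycles. The key input is a Fine–Wilf / primitivity argument on the coincidence pattern of the walk.
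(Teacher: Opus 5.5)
Your proposal is correct and follows essentially the same route as the paper. You reduce to the primitive word $\prim$ via the deterministic splitting of $\bfprim$-cycles under $\bfw=\bfprim^d$, and you get asymptotically independent Po$(1/m)$ counts of short $\bfprim$-cycles from a coincidence count in which primitivity excludes every non-simple order-one configuration (the paper does this with its ``hits'' exploration in Lemma~\ref{lemma:prob_cycles}); you then sum $1/j$ over residue classes modulo $\mathrm{lcm}(d,g)$ and finish with $\sum_{m\le dL}K_m^2/m=O(\log L)$ and Chebyshev. One slip: for $d>1$ the $C^w_{j,n}$ are not themselves asymptotically Poisson (for $d=2$, $C^w_{1,n}$ converges to $2Z_2+Z_1$), but your rewriting of $D^w_n$ as a sum over $\bfprim$-cycles already handles this correctly.
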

We refer to Section~\ref{sec:cycles} for the precise meaning of the bivariate convergence in probability in the last part of theorem.

\begin{example}[\bf Cycles of length at most $L$]
	For $g=1$, $D^w_n(L,1;(1))$ counts the number of cycles of $\bfw$ of length at most $L$.
	The limiting constant in~\eqref{eq:weighted_cycle_count} is equal to
	\begin{align}
		\frac{1}{d} \sum_{r=1}^d \bigl|\{ c: c|d, \mathrm{gcd}(c,r)=1\}\bigr| = \frac{1}{d} \sum_{c\in[d]:c\mid d} \bigl|\{r\in [d]: \mathrm{gcd}(c,r)=1\}\bigr| =  \sum_{c\in[d]:c|d} \frac{\phi(c)}{c},
\end{align}
	where $\phi$ is Euler's totient function.
This sequence starts as follows, 
	\begin{align}\label{eq:seqphi_0}
	\left(\sum_{c\in[d]:c|d} \frac{\phi(c)}{c}\right)_{d\geq1} = \left(1, \frac{3}{2}, \frac{5}{3}, 2, \frac{9}{5}, \frac{5}{2}, \frac{13}{7}, \frac{5}{2}, \frac{7}{3}, \frac{27}{10},\dots\right).
	\end{align}
	For example, if two words have respectively exponents $1$ and $2$, then the quantity $D^w_n(L,1;(1))/\log(L)$, for $L$ and $n$ large enough, will concentrate respectively around $1$ and $\frac{3}{2}$, which enables one to discriminate these two cases, with high probability. Since there are repeated values in~\eqref{eq:seqphi_0}, this observable is not sufficient to discriminate arbitrary exponents (for example the cases $d=6$ and $d=8$ share the value $5/2$, but other repetitions occur later in the sequence. 
\end{example}

Despite the previous example, in Section~\ref{sec:cycles} we will see that we can choose finitely many observables of the form $D^w_n(L,g;\mathbf{z})$ to uniquely determine all exponents, leading to the following corollary.

\begin{corollary}[\bf Guessing the exponent]\label{cor:distinguish_d}
	The exponent of $w$ can be recovered from a single sample of $\bfw$ with probability tending to $1$ as $n$ tends to infinity.
	In particular, if $w, w'$ are two words with different exponents, then $d_{TV}(\bfw,\bfw')\rightarrow 1$ when $n$ goes to infinity.
\end{corollary}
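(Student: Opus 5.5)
The plan is to combine Corollary~\ref{cor:length} with Theorem~\ref{thm:Dn_limit}, using only finitely many observables. The key remark is that the exponent $d$ of $w$ always divides $k=|w|$. By Corollary~\ref{cor:length}, $k$ is recovered from one sample of $\bfw$ with probability tending to $1$. So, on that event, only the finitely many divisors of $k$ remain as candidates for $d$. It then suffices to exhibit, for each fixed $k$, finitely many weighted cycle-counts whose limits in~\eqref{eq:weighted_cycle_count} separate the divisors of $k$.

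\textbf{Choice of observables.} Take period $g=k$. Since $d\mid k$, we have $\mathrm{lcm}(d,k)=k$. For $s\in[k]$, take $\mathbf{z}=\mathbf{e}_s$, the indicator of residue $s$ mod $k$. Then the sum in~\eqref{eq:weighted_cycle_count} keeps only $r=s$, and the limit is
\begin{align}
F_d(s) \defi \frac{1}{k}\,\bigl|\{c : c\mid d,\ \mathrm{gcd}(c,s)=1\}\bigr| .
\end{align}
To show that $d\mapsto (F_d(s))_{s\in[k]}$ is injective on divisors of $k$, fix a prime $p\mid k$. Let $s_p$ be the product of the other primes dividing $k$. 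The divisors of $d$ coprime to $s_p$ are exactly the powers of $p$ dividing $d$. Hence $kF_d(s_p)=v_p(d)+1$, which recovers every $p$-adic valuation of $d$, and thus $d$ itself. Moreover, two distinct candidates give vectors at sup-distance at least $1/k$.

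\textbf{Estimator and error control.} First compute $\hat k$ from the proportion of leaves, as in Corollary~\ref{cor:length}. Then compute $X_s=D^w_n(L,\hat k;\mathbf{e}_s)/\log L$ for $s\in[\hat k]$, and output the divisor $\hat d$ of $\hat k$ whose vector $(F_{\hat d}(s))_s$ is closest to $(X_s)_s$.

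By the convergence in probability of Theorem~\ref{thm:Dn_limit}, applied to the finitely many tuples $\mathbf{e}_s$, the following holds for every $\delta>0$: there is $L_\delta$, and then $n_\delta$, such that for $n\ge n_\delta$ we have $|X_s-F_d(s)|<1/(3k)$ for all $s$, with probability at least $1-\delta$. On this event, intersected with $\{\hat k=k\}$, we get $\hat d=d$.

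Since the estimator must not depend on $\delta$, we need a diagonal argument. Choose a sequence $\delta_m\downarrow 0$, and let $L=L(n)$ equal $L_{\delta_m}$ for $n_{\delta_m}\le n<n_{\delta_{m+1}}$, where the thresholds $n_{\delta_m}$ are taken increasing. This gives a single test with error probability $o(1)$.

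\textbf{TV consequence and main obstacle.} For the total variation statement, let $A$ be the event $\{\hat d = d\}$. If $w'$ has exponent $d'\neq d$, then $\bbP(\bfw\in A)\to1$ while $\bbP(\bfw'\in A)\le \bbP(\hat d'\ne d')\to 0$. Therefore $d_{TV}(\bfw,\bfw')\ge \bbP(\bfw\in A)-\bbP(\bfw'\in A)\to 1$.

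The main obstacle is handling the iterated limit $L\to\infty$ after $n\to\infty$ correctly. One must check that the precise bivariate notion of convergence in probability from Section~\ref{sec:cycles} really yields, for each $\delta$, a fixed $L$ and a threshold $n_\delta$ as used above. With that in hand, the diagonal choice of $L(n)$ is routine, and the combinatorial injectivity argument is the only other ingredient.
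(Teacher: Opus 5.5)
Your proof is correct, and its core is the paper's. You apply Theorem~\ref{thm:Dn_limit} with the indicator of a single residue class modulo a multiple $g$ of $d$. The limit is then $\frac1g$ times the number of divisors of $d$ coprime to that residue, and the prime factorization of $d$ is read off from these counts. The differences are in the details:
\begin{itemize}
\item The paper uses the residues $1$ and $p$ and obtains $v_p(d)$ as a ratio of the two limits minus one. Your residue $s_p$ gives $v_p(d)+1$ directly.
\item More substantively, the paper only says ``take $g$ a multiple of $d$'' (and $g=\mathrm{lcm}(d,d')$ for the TV claim). It leaves implicit how to choose $g$ when $d$ is unknown. Your reduction via $d\mid k$ and Corollary~\ref{cor:length} (take $g=\hat k$, leaving finitely many candidates) is what makes the first claim an actual estimator.
\end{itemize}

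Your worry about the iterated limit is justified. Definition~\ref{def:bivariate}, read literally, would give a single $L$ with error probability tending to $0$ as $n\to\infty$. That cannot hold. For fixed $L$, the counts converge in law (Theorem~\ref{thm:main_cycle}) to a nondegenerate combination of Poisson variables, which equals $0$ with positive probability, while $F_d(s)\geq 1/k$. What the Chebyshev step in the proof of Proposition~\ref{prop:Dn_limit} actually gives is $\limsup_{n}\bbP(|X_{n,L}-\rho|>\epsilon)\to 0$ as $L\to\infty$. This is exactly your $\delta$-form, so your diagonal choice of $L(n)$ is genuinely needed for the single-estimator claim. It is not needed for the TV claim: a fixed $L_\delta$ already gives $\liminf_n d_{TV}(\bfw,\bfw')\geq 1-2\delta$ for every $\delta>0$.

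One small fix is required. Choose $L_\delta$ and $n_\delta$ uniformly over the finitely many words of length $\hat k$, that is, take $L=L(n,\hat k)$. Otherwise your estimator depends on the unknown $w$, and in the TV step you cannot invoke its success on $\bfw'$.
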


\begin{remark}\label{rem:cyclesPermutations}
	We also obtain, for fixed $L\geq 1$, the limiting joint distribution of the vector $(C^w_{1,n},\dots,C^w_{L,n})$ (Theorem~\ref{thm:main_cycle} in Section~\ref{sec:cycles}).
Anticipating over Section~\ref{sec:permutations}, the limit is the same as the one appearing when composing random permutations instead of random functions. This law was first described by Nica~\cite{Nica1994} for the $d$-th power of a single random permutation, and shown to be the limit for composition of independent random permutations according to a word of exponent $d$ in~\cite{KammounMaida2022}. These references state that the limiting law depends only $d$, but do not seem to establish that, when $L$ goes to infinity, these distributions actually become fully separated in TV-distance. The proof of the corollary above therefore also establishes this fact; see Section~\ref{sec:cycles}.
\end{remark}

\subsubsection{Auto-correlation constants and TV-separation}

We have seen in Theorem~\ref{thm:main_leaves} that the expected number of leaves satisfies $\bbE[Q_n^w(u)]\sim  g_k(u\eta_k)n$, a quantity that depends only on $k=|w|$. To obtain an asymptotic behaviour that depends more subtly on the word $w$, we study the second moment and variance of $Q_n^w(u)$ (at the price of much more technical difficulties, as we will see). We have: 
\begin{theorem}[\bf Variance and second moment for the number of leaves]\label{thm:main_variance}
For any integer $k\geq 1$, $w\in \{a,b\}^k$ and $u\in [0,1]$,  we have
\begin{align}%
	\frac{\Var(Q_n^w(u))}{n} \xrightarrow{n \to \infty} c(w,u),
\end{align}
	where the function $c(w,u)$ is a computable ``measure of auto-correlation'' of $w$, whose explicit expression is given in Section~\ref{subsec:def-cw-selfcontained}.
	Moreover, if $w$ and $w'$ are not isomorphic, the functions $u\mapsto c(w,u)$ and $u\mapsto c(w',u)$ are restrictions of distinct analytic functions, and they are distinct almost everywhere on $[0,1]$.
\end{theorem}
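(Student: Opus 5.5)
We plan to write $Q_n^w(u)=\sum_{x}Y_x$ with $Y_x=u^{d(x)}\mathbf{1}_{x\in\cQ_n}$, so that
\[
\Var(Q_n^w(u)) = n\,\Var(Y_R) + n(n-1)\,\mathrm{Cov}(Y_{R},Y_{R'}),
\]
where $R,R'$ are two distinct uniform vertices. The first term is easy. The local picture around $R$, namely its $\bfw$-preimages and their preimages, explored backwards through the $k$ layers of $\mathbf{a}$/$\mathbf{b}$ transitions, converges to a Poisson(1) BGW tree of depth $2k$. This is the same local limit used for Theorem~\ref{thm:main_leaves}, so $\bbE[Y_R]\to g_k(u\eta_k)$ and $\bbE[Y_R^2]=\bbE[Y_R(u^2)]\to g_k(u^2\eta_k)$. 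The real content is the covariance: we must show $n\,\mathrm{Cov}(Y_R,Y_{R'})$ converges and identify its limit.

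\textbf{Step 1: expansion of the covariance.} We would explore the backward neighbourhoods of $R$ and $R'$ jointly, revealing arcs of $\mathbf{a}$ and $\mathbf{b}$ layer by layer. The two explorations are independent except for order-$1/n$ events, which come in two kinds.
\begin{compactitem}
\item \emph{Collisions}: two explored vertices coincide. A vertex at level $i$ of the exploration from $R$ equals a vertex at level $j$ from $R'$. Then the arc leaving it is forced to be the same function, which requires $w_{i}=w_{j}$ at the relevant positions. The word structure enters exactly here.
\item \emph{Negative correlation}: the functions are uniform, so a vertex having a preimage lowers the chance that other vertices do. This gives the standard depletion $1/n$ correction, which depends only on $k$.
\end{compactitem}
Summing over the finitely many collision types (pairs of positions and depths, up to $2k$), each contributes $\frac{1}{n}\times$ an explicit BGW functional. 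The prerequisite letter matching is that the suffix $w_{i+1}\dots w_k$ and some shifted copy agree, which is an auto-correlation condition on $w$ (and on $w^2$, because depth $2k$ wraps around). Hence $c(w,u)=\sum_{(i,j)\text{ compatible}}F_{i,j}(u)$, plus a word-independent part, where the $F_{i,j}$ are compositions of $g_t$'s evaluated at $u$, $u\eta_k$ and similar arguments. Uniformity in $u\in[0,1]$ follows from $|Y_x|\le1$ and from dominated bounds on exploration sizes. Configurations with two or more collisions cost $O(1/n^2)$ per pair and do not contribute, and we would justify this with exponential tails of the exploration size.

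\textbf{Step 2: distinguishing $c(w,\cdot)$ from $c(w',\cdot)$.} Each $F_{i,j}$ is an entire function of $u$. It is built from $\exp$, so it extends analytically to $\mathbb{R}$, as anticipated in the footnote. The map from $w$ to its set of compatible collision patterns (an auto-correlation vector, recording which shifts and suffix/prefix overlaps match, including matches of the letters themselves) determines $w$ up to isomorphism, essentially because the pattern records, for every pair of positions, whether the letters agree. The task is then to show that the functions $F_{i,j}$ are linearly independent. Our approach would be asymptotic analysis as $u\to\infty$ along the reals: distinct nesting depths of $\exp$ give distinct growth rates, and at equal depth the affine shifts in the arguments differ. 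An alternative is to compare Taylor coefficients at a point where the $g_t$ separate. Once linear independence holds, $c(w,\cdot)-c(w',\cdot)$ is a nonzero analytic function, so its zeros in $[0,1]$ are isolated. In particular they have measure zero.

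\textbf{Main obstacle.} We expect the hardest step to be two things: the precise bookkeeping of which collisions are possible and with what weight, and the linear-independence argument, which must rule out accidental cancellations between different auto-correlation patterns. For the independence we would first try ordering the terms by growth at $+\infty$ and peeling off the dominant one inductively.
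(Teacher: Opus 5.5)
Your architecture matches the paper's: a Po(1)-BGW approximation of the $w^2$-in-balls, a $1/n$ correction split into depletion and collision parts, and then a comparison of towers of exponentials as $u\to+\infty$. However, Step~2 rests on a false claim, and the idea that actually makes it work is missing.

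\textbf{The independence claim.} The collision functions are \emph{not} linearly independent, either among themselves or from the rest of $c(w,u)$. Take a minimal collision $(i,j)$ with $i<j\le k$ and $\ell_{i,j}=0$. Its contribution is
$F_1^{i,j}(u)F_2^{i,j,0}(u)=F_1^{i+1,j+1}(u)=\prod_{s=0}^{i}g_{k-s}(u\eta_k)\prod_{s=0}^{j}g_{k-s}(u\eta_k)$.
This is a monomial in exactly the towers from which $\capprox^1,\capprox^2$ are built. Moreover, all collisions with $k<i<j$ give constant multiples of the single function $\bigl(\prod_{s=0}^{k}g_{k-s}(u\eta_k)\bigr)^2$. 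So no ``peel off the dominant term'' argument can recover the whole collision pattern from $c(w,\cdot)$.

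\textbf{The missing fingerprint.} The paper isolates a specific family: collisions with $i=0$ (the root $x$ sits at height $j$ in the in-ball of $y$), $1\le j<k$ and $\ell_{0,j}(w^2)\ge1$. These produce branched towers $F_2^{0,j,\ell}$, with two occurrences of $u$. Each occurs in a single monomial of $\cint$ and nowhere in $\capprox^1,\capprox^2$. Since $\ell_{0,j}(w^2)>0$ iff $w_{k-j}=w_k$, their presence alone determines $w$ up to isomorphism.

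Reading off their coefficients needs more than linear independence, because $c(w,u)+g_k(u^2\eta_k)$ is a \emph{polynomial} in towers. The paper proceeds in steps:
\begin{enumerate}
\item It orders the towers by $\ln h_1=o(\ln h_2)$ (Lemmas~\ref{lemma:compare_towers_final} and~\ref{lemma:compare_branches}).
\item It handles separately the towers that differ from an element of $\mathcal{G}$ only in the outermost coefficient. These are constant multiples of fractional powers $[g(u)]^{x}$ with $x\in(0,1)$ (the set $\mathcal{Y}$).
\item It checks that $c$ is at most linear in these.
\item It then uses uniqueness of expansions in the resulting multiplicative asymptotic scale (Lemma~\ref{lemma:AI}).
\end{enumerate}

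\textbf{Errors in Step~1.} Your Step~1 reduction contains misstatements that would break Step~2 even with a correct independence statement.

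First, the depletion correction is not a function of $k$ alone. The exact in-ball probability carries the factor $(1-t_a^-/n)^{n-t_a^+}(1-t_b^-/n)^{n-t_b^+}$, so it depends on which layers of $w^2$ use $\mathbf{a}$ and which use $\mathbf{b}$. Concretely,
$2\capprox^1(w,u)g_k(u\eta_k)-\capprox^2(w,u)=g_k(u\eta_k)^2-m^2-\sum_{c\in\{a,b\}}m_{C^-}(m_{C^-}-2m_{C^+})$,
where $m_U=\EBGW[u^{Z_k}Z_U\mathbf{1}_{Z_{2k}=0}]$ and $m$ is the total-progeny version. The last sum already differs between $aa$ and $ab$ at $u=0$. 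So this part does not cancel in $c(w,\cdot)-c(w',\cdot)$. The paper's argument survives this only because the fingerprint variables never appear in it.

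Second, a collision does not require matching letters. A vertex at heights $i$ and $j$ of the two in-covers is the first meeting point precisely when the \emph{outgoing} letters differ (or $i=0$), i.e.\ $(i,j)\in\cM(w^2)$. Agreement of the \emph{incoming} letters only fixes the length $\ell_{i,j}$ of the shared subtree. Every $(i,j)\in\cM$ contributes, including those with $\ell_{i,j}=0$.

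Finally, cyclic in-balls occur with probability of order $1/n$. You must show they drop out of the covariance. The paper does this with estimates conditional on an induced event (Lemma~\ref{lem:not_a_tree_asymp_prob_pairs_1}).
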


We will deduce the following consequence of the previous results:
\begin{theorem}[\bf TV-separation]\label{thm:cw_separate}
	If $w,w'$ are non-isomorphic words,
	then $\liminf_{n\to\infty} d_{TV}(\bfw,\bfw')>0$.
	In particular, in the urn formulation of the problem (see Remark~\ref{rem:urn}), one can guess the correct urn with probability arbitrarily close to $1$ in a bounded number of samples, with high probability.
\end{theorem}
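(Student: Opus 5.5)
By Corollary~\ref{cor:length}, if $|w|\neq|w'|$ the total variation distance tends to $1$, so we may assume $|w|=|w'|=k$. The plan is to distinguish the two laws through the variance of the weighted quasi-leaf count, which Theorem~\ref{thm:main_variance} says differs between the words.

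Fix $u\in[0,1]$ with $c(w,u)\neq c(w',u)$. Such a $u$ exists because, by Theorem~\ref{thm:main_variance}, the two functions differ almost everywhere on $[0,1]$. The key point is that $Q^w_n(u)$ is a deterministic function $F_u$ of the sampled function, and the same $F_u$ is used for both words. So if $d_{TV}(\bfw,\bfw')\to0$ along a subsequence, the laws of $X_n\defi F_u(\bfw)$ and $X'_n\defi F_u(\bfw')$ would be at TV-distance tending to $0$ along that subsequence. We will show this is impossible.

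Argue by contradiction. Set $m_n\defi \bbE[X_n]$ and $m'_n\defi \bbE[X'_n]$, and normalise as $Y_n\defi (X_n-m_n)/\sqrt n$ and $Y'_n\defi (X'_n-m_n)/\sqrt n$, both centred by the same constant. By Theorem~\ref{thm:main_variance}, $\Var(Y_n)\to c(w,u)$ and $\Var(Y'_n)\to c(w',u)$. Small TV-distance alone does not transfer second moments, so we need uniform integrability. For this we would prove a uniform fourth-moment bound $\bbE[(X_n-m_n)^4]=O(n^2)$. This should follow from the same local-dependence structure used in Theorem~\ref{thm:main_variance}: each summand $u^{d(x)}\mathbf 1_{x\in\cQ_n}$ depends only on a bounded-radius neighbourhood of $x$, and distinct neighbourhoods are nearly independent. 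We also need $|m_n-m'_n|=O(\sqrt n)$. Indeed, if the means differed by much more than $\sqrt n$, the two concentrated distributions would already be separated in TV-distance, which contradicts the assumption.

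With these bounds in hand, we pass to a subsequence on which the TV-distance tends to $0$ and $(m_n-m'_n)/\sqrt n\to\delta$. Tightness then gives a common limit law for $Y_n$ and $Y'_n$ (both limits exist and coincide, since the TV-distance tends to $0$). Uniform integrability of the squares turns convergence in law into convergence of second moments. It also gives $\bbE[Y'_n]=(m'_n-m_n)/\sqrt n$, which converges to the mean of the common limit, namely $0$, so $\delta=0$. Hence the two limiting variances coincide, that is $c(w,u)=c(w',u)$, a contradiction. Therefore $\liminf_n d_{TV}(\bfw,\bfw')>0$.

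The statement about samples in the urn formulation then follows by standard tensorisation. A positive TV-distance $\epsilon$ gives a Hellinger affinity that stays bounded away from $1$. With $N$ i.i.d. samples this affinity is raised to the $N$-th power, so the TV-distance between product laws becomes at least $1-(1-\epsilon^2/2)^N$, which is close to $1$ for $N$ large but bounded.

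The main obstacle is the fourth-moment (uniform integrability) estimate. The variance computation is already delicate, and extending it to the fourth central moment requires controlling the cumulants of order $3$ and $4$ of sums of locally dependent indicators in the random DFA. I would handle this with the same exploration-process or coupling technique used for the variance, showing that joint cumulants of $r$ vertices decay like $n^{-(r-1)}$, so that the $r$-th cumulant of $X_n$ is $O(n)$. An alternative would be a bounded-differences martingale argument on resampling $\mathbf a(i),\mathbf b(i)$. However, resampling a single image can alter the quasi-leaf status of many vertices, namely the preimages of high-degree vertices, so that route needs truncation.
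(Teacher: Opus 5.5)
Your argument is correct and rests on the same ingredients as the paper. These are: a value of $u$ with $c(w,u)\neq c(w',u)$, the variance asymptotics of Theorem~\ref{thm:observables}, a fourth-moment bound, and the fact that $Q^w_n(u)$ is one and the same measurable function of $\bfw$ and of $\bfw'$.

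The difference lies in how you turn these into a lower bound on the TV-distance. The paper plugs them into Lemma~\ref{lemma:TVvariance}, a direct estimate using Cauchy--Schwarz twice. For all large $n$ it yields an explicit $\delta>0$ that depends only on the variance gap and the fourth-moment constant. You instead argue by contradiction, using subsequences, Prokhorov's theorem and uniform integrability. This is valid once you extract a further subsequence along which $Y_n$ converges in law, but it is non-quantitative.

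Two of your steps can be dropped.
\begin{itemize}
\item The comparison of means: Corollary~\ref{cor:prob_leaf} gives $\bbE[Q^w_n(u)]=g_k(u\eta_k)n+O(1)$ for both words, so the paper simply centres both variables at $g_k(u\eta_k)n$.
\item The proof of the fourth-moment bound: this is Theorem~\ref{thm:mth-moment} with $m=4$, already announced in the introduction. The paper proves it not by cumulants but by successive conditional expectations over the partition of the sampled vertices according to intersections of their $w^2$-in-balls.
\end{itemize}

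Your reason for discarding the bounded-differences route is mistaken, and that route is in fact the shortest way to the input you need.
\begin{itemize}
\item Redefining $\mathbf{a}(i)$ changes $\bfw(y)$ only for vertices $y$ whose $w$-path evaluates $\mathbf{a}$ at $i$.
\item All such $y$ that first do so at a given step $s$ move together from one fixed vertex to another fixed vertex.
\item Hence at most $2k$ vertices have their $\bfw$-preimage set modified.
\item A change of $d(y)$ affects only the summands at $y$ and at its image $\bfw(y)$, never those at the preimages of $y$. So at most $4k$ summands $u^{d(x)}\mathbf{1}_{x\in\cQ_n}$ change.
\end{itemize}
McDiarmid's inequality over the $2n$ independent values of $\mathbf{a}$ and $\mathbf{b}$ then gives $\bbP(|Q^w_n(u)-\bbE Q^w_n(u)|\geq t)\leq 2e^{-t^2/(16k^2n)}$. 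Hence $\bbE[(Q^w_n(u)-\bbE Q^w_n(u))^4]=O(n^2)$, with no truncation needed.

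This is much shorter than Section~\ref{sec:higher_moments}. It does not, however, recover the sharper $O(n^{\lfloor m/2\rfloor})$ bound for odd $m$, nor the pairing structure behind the Gaussian heuristics of Remark~\ref{rem:gaussian}.
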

The proof of Theorem~\ref{thm:main_variance}, the explicit expression for $c(w,u)$, and their consequence Theorem~\ref{thm:cw_separate} occupy the longer and most technical part of this paper.

\begin{figure}
	\centering
	\begin{minipage}{0.70\linewidth}
	\includegraphics[height=6.8cm]{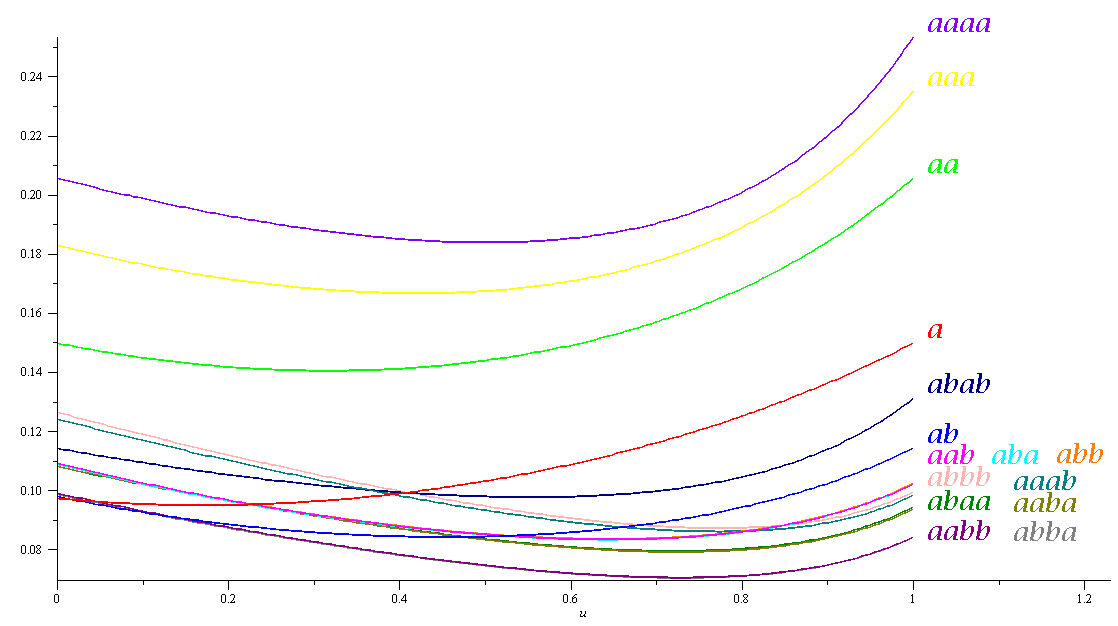}
	\end{minipage}
	\hspace{10mm}
	\begin{minipage}{0.19\linewidth}
		\includegraphics[height=3.3cm]{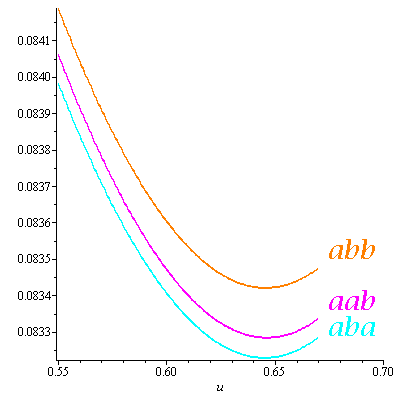}
		\includegraphics[height=3.3cm]{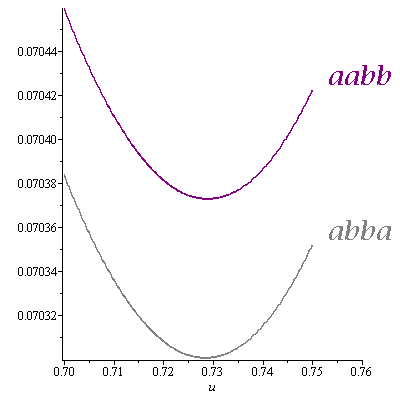}
	\end{minipage}
	\caption{Left: Plot of the constant $c(w,u)$ for the 15 non-isomorphic words $w$ of length at most $4$. As stated in Theorem~\ref{thm:main_variance}, they are pairwise distinct as analytic functions and thus almost everywhere. Right: zoom on portions of curves that cannot be distinguished on the main picture.}
	\label{fig:cw-statistics}
\end{figure}

For the interested reader, we provide a Maple worksheet accompanying this paper that computes $c(w,u)$ in closed form for any given word~\cite{maple}. 

 Figure~\ref{fig:cw-statistics} shows a plot of this function for all words of length at most $4$.
Table~\ref{tablec} provides the exact values of $c(w,u)$, as well as approximated values for $c(w,0)$, for all non-isomorphic words of length at most $3$. Note that these expressions quickly get more convoluted as the length of the word increases; as an illustration we also include the value of $c(w,u)$ for $w=aaaba$.

\begin{table}[h!!!]
\centering
	{\footnotesize
\def\arraystretch{1.5}
\begin{tabular}{|>{\centering\arraybackslash}p{8mm}|>{\centering\arraybackslash}p{12cm}|>{\centering\arraybackslash}p{13mm}|}
\hline
	$w$ & ${c}(w,u) $ & ${c}(w,0)\approx$
\\ \hline 
$a$
 &
 $
 -3\,u\eta_{{1}}g_{{1}}^{2}+g_{{1}}f_{{0,1,0}}-2\,g_{{1}}^{2}+2\,f_
{{0,1,0}}+h_{{1}}
$
 &
 $0.097209$\\ \hline 
$aa$
 &
 $
 -2\,u\eta_{{1}}\eta_{{2}}g_{{1}}g_{{2}}^{2}+2\,u\eta_{{1}}\eta_{{2}}
g_{{1}}g_{{2}}-2\,u\eta_{{2}}g_{{1}}g_{{2}}^{2}+2\,u\eta_{{2}}g_{{1}
}g_{{2}}-3\,g_{{1}}g_{{2}}^{2}+f_{{0,1,0}}g_{{2}}+2\,g_{{2}}f_{{0,1,
3}}-2\,g_{{2}}^{2}+h_{{2}}
$
 &
 $0.149768$\\ \hline 
$ab$
 &
 $
 -2\,{u}^{2}{\eta_{{1}}}^{2}{\eta_{{2}}}^{2}g_{{1}}^{2}g_{{2}}^{2}+
4\,{u}^{2}{\eta_{{2}}}^{2}g_{{1}}^{2}g_{{2}}^{2}\eta_{{1}}-2\,{u}^
{2}{\eta_{{2}}}^{2}g_{{1}}^{2}g_{{2}}^{2}-4\,g_{{1}}^{2}{g_{{2}}
}^{2}u\eta_{{1}}\eta_{{2}}+2\,u\eta_{{1}}\eta_{{2}}g_{{1}}g_{{2}}^{2
}+4\,u\eta_{{2}}g_{{1}}^{2}g_{{2}}^{2}-4\,u\eta_{{2}}g_{{1}}{g_{{2
}}}^{2}+2\,u\eta_{{2}}g_{{1}}g_{{2}}-2\,g_{{1}}^{2}g_{{2}}^{2}+g_{
{1}}g_{{2}}^{2}+f_{{0,1,0}}g_{{2}}-2\,g_{{2}}^{2}+h_{{2}}
$
 &
 $0.097922$\\ \hline 
$aaa$
 &
 $
 -2\,g_{{3}}^{2}u\eta_{{1}}\eta_{{2}}\eta_{{3}}g_{{1}}g_{{2}}+2\,u
\eta_{{1}}\eta_{{2}}\eta_{{3}}g_{{1}}g_{{2}}g_{{3}}-2\,g_{{3}}^{2}u
\eta_{{2}}\eta_{{3}}g_{{1}}g_{{2}}+2\,u\eta_{{2}}\eta_{{3}}g_{{1}}g_{{
2}}g_{{3}}-2\,ug_{{3}}^{2}\eta_{{3}}g_{{1}}g_{{2}}+2\,u\eta_{{3}}g_{
{1}}g_{{2}}g_{{3}}-2\,g_{{1}}g_{{2}}g_{{3}}^{2}+2\,g_{{2}}g_{{3}}f_{
{0,2,4}}-3\,g_{{2}}g_{{3}}^{2}+f_{{0,1,0}}g_{{3}}+2\,g_{{3}}f_{{0,1,
5}}-2\,g_{{3}}^{2}+h_{{3}}
$
 &
 $0.182808$\\ \hline 
$aab$
 &
 $
 -2\,{u}^{2}\eta_{{1}}{\eta_{{2}}}^{2}{\eta_{{3}}}^{2}g_{{1}}^{2}{g_{
{2}}}^{2}g_{{3}}^{2}+2\,{u}^{2}g_{{3}}^{2}{\eta_{{3}}}^{2}{g_{{1}}
}^{2}g_{{2}}^{2}\eta_{{1}}\eta_{{2}}-2\,{u}^{2}{\eta_{{2}}}^{2}{\eta
_{{3}}}^{2}g_{{1}}^{2}g_{{2}}^{2}g_{{3}}^{2}+4\,{u}^{2}{\eta_{{3
}}}^{2}g_{{1}}^{2}g_{{2}}^{2}\eta_{{2}}g_{{3}}^{2}-2\,{u}^{2}{
\eta_{{3}}}^{2}g_{{1}}^{2}g_{{2}}^{2}g_{{3}}^{2}-2\,g_{{2}}^{2
}g_{{3}}^{2}u\eta_{{1}}\eta_{{2}}\eta_{{3}}g_{{1}}-2\,g_{{1}}^{2}{
g_{{2}}}^{2}u\eta_{{2}}\eta_{{3}}g_{{3}}^{2}+2\,g_{{3}}^{2}u\eta_{
{1}}\eta_{{2}}\eta_{{3}}g_{{1}}g_{{2}}-4\,g_{{2}}^{2}u\eta_{{2}}\eta
_{{3}}g_{{1}}g_{{3}}^{2}+2\,ug_{{2}}^{2}g_{{3}}^{2}\eta_{{3}}g_{
{1}}f_{{2,4,1}}+2\,g_{{3}}^{2}u\eta_{{2}}\eta_{{3}}g_{{1}}g_{{2}}+4
\,ug_{{2}}^{2}g_{{3}}^{2}\eta_{{3}}g_{{1}}-4\,ug_{{3}}^{2}\eta_{
{3}}g_{{1}}g_{{2}}+2\,u\eta_{{3}}g_{{1}}g_{{2}}g_{{3}}-2\,g_{{2}}^{2
}g_{{1}}g_{{3}}^{2}+2\,g_{{2}}g_{{3}}^{2}f_{{1,2,1}}-2\,g_{{2}}^
{2}g_{{3}}^{2}+g_{{2}}g_{{3}}^{2}+f_{{0,1,0}}g_{{3}}-2\,g_{{3}}^
{2}+h_{{3}}
$
 &
 $0.109260$\\ \hline 
$aba$
 &
 $
 -2\,{u}^{2}{\eta_{{1}}}^{2}{\eta_{{2}}}^{2}{\eta_{{3}}}^{2}g_{{1}}^{
2}g_{{2}}^{2}g_{{3}}^{2}+4\,{u}^{2}\eta_{{1}}{\eta_{{2}}}^{2}{\eta
_{{3}}}^{2}g_{{1}}^{2}g_{{2}}^{2}g_{{3}}^{2}-2\,{u}^{2}g_{{3}}
^{2}{\eta_{{3}}}^{2}g_{{1}}^{2}g_{{2}}^{2}\eta_{{1}}\eta_{{2}}-2\,
{u}^{2}{\eta_{{2}}}^{2}{\eta_{{3}}}^{2}g_{{1}}^{2}g_{{2}}^{2}{g_{{
3}}}^{2}+2\,g_{{1}}^{2}g_{{2}}^{2}{u}^{2}\eta_{{2}}\eta_{{3}}{g_{{
3}}}^{2}\eta_{{1}}-4\,u\eta_{{1}}\eta_{{2}}\eta_{{3}}g_{{1}}^{2}{g_{
{2}}}^{2}g_{{3}}^{2}+4\,g_{{2}}^{2}g_{{3}}^{2}u\eta_{{1}}\eta_{{
2}}\eta_{{3}}g_{{1}}+4\,g_{{1}}^{2}g_{{2}}^{2}u\eta_{{2}}\eta_{{3}
}g_{{3}}^{2}-4\,g_{{3}}^{2}u\eta_{{1}}\eta_{{2}}\eta_{{3}}g_{{1}}g
_{{2}}-4\,g_{{2}}^{2}u\eta_{{2}}\eta_{{3}}g_{{1}}g_{{3}}^{2}-2\,{g
_{{1}}}^{2}g_{{2}}^{2}g_{{3}}^{2}u\eta_{{3}}+2\,u\eta_{{1}}\eta_{{
2}}\eta_{{3}}g_{{1}}g_{{2}}g_{{3}}+2\,g_{{3}}^{2}u\eta_{{2}}\eta_{{3
}}g_{{1}}g_{{2}}+2\,g_{{2}}^{2}g_{{1}}g_{{3}}^{2}uf_{{2,3,1}}-2\,u
g_{{3}}^{2}\eta_{{3}}g_{{1}}g_{{2}}-2\,g_{{1}}^{2}g_{{2}}^{2}{g_
{{3}}}^{2}+2\,u\eta_{{3}}g_{{1}}g_{{2}}g_{{3}}+4\,g_{{2}}^{2}g_{{1}}
g_{{3}}^{2}-4\,g_{{1}}g_{{2}}g_{{3}}^{2}-2\,g_{{2}}^{2}g_{{3}}
^{2}+2\,g_{{2}}g_{{3}}f_{{0,2,1}}+g_{{2}}g_{{3}}^{2}+f_{{0,1,0}}g_{{
3}}-2\,g_{{3}}^{2}+h_{{3}}
$
 &
 $0.108728$\\ \hline 
$abb$
 &
 $
 -2\,{u}^{2}{\eta_{{1}}}^{2}{\eta_{{2}}}^{2}{\eta_{{3}}}^{2}g_{{1}}^{
2}g_{{2}}^{2}g_{{3}}^{2}+4\,{u}^{2}g_{{3}}^{2}{\eta_{{3}}}^{2}{g
_{{1}}}^{2}g_{{2}}^{2}\eta_{{1}}\eta_{{2}}-2\,{u}^{2}{\eta_{{3}}}^{2
}g_{{1}}^{2}g_{{2}}^{2}\eta_{{2}}g_{{3}}^{2}-2\,{u}^{2}{\eta_{{3
}}}^{2}g_{{1}}^{2}g_{{2}}^{2}g_{{3}}^{2}+2\,g_{{1}}^{2}{g_{{2}
}}^{2}g_{{3}}^{2}{u}^{2}\eta_{{3}}f_{{3,4,1}}-4\,u\eta_{{1}}\eta_{{2
}}\eta_{{3}}g_{{1}}^{2}g_{{2}}^{2}g_{{3}}^{2}+2\,g_{{3}}^{2}u
\eta_{{1}}\eta_{{2}}\eta_{{3}}g_{{1}}g_{{2}}+4\,g_{{1}}^{2}g_{{2}}
^{2}g_{{3}}^{2}u\eta_{{3}}-2\,g_{{3}}^{2}u\eta_{{2}}\eta_{{3}}g_{{
1}}g_{{2}}-2\,ug_{{2}}^{2}g_{{3}}^{2}\eta_{{3}}g_{{1}}+2\,u\eta_{{
3}}g_{{1}}g_{{2}}g_{{3}}f_{{0,4,1}}-4\,ug_{{3}}^{2}\eta_{{3}}g_{{1}}
g_{{2}}+2\,g_{{1}}g_{{2}}g_{{3}}^{2}uf_{{1,3,1}}-2\,g_{{1}}^{2}{g_
{{2}}}^{2}g_{{3}}^{2}+2\,u\eta_{{3}}g_{{1}}g_{{2}}g_{{3}}+2\,g_{{1}}
g_{{2}}g_{{3}}^{2}-3\,g_{{2}}g_{{3}}^{2}+f_{{0,1,0}}g_{{3}}+2\,g_{
{3}}f_{{0,1,1}}-2\,g_{{3}}^{2}+h_{{3}}
$
 &
 $0.109157$

	 \\ \hline
	$aaaba$ &
	$\tiny
-2\,g_{{5}}^{2}g_{{2}}g_{{3}}g_{{4}}+2\,u\eta_{{5}}g_{{1}}g_{{2}}g_{
{3}}g_{{4}}g_{{5}}-2\,u\eta_{{5}}g_{{1}}g_{{2}}g_{{3}}^{2}g_{{4}}^
{2}g_{{5}}^{2}-2\,ug_{{5}}^{2}\eta_{{5}}g_{{1}}g_{{2}}g_{{3}}g_{{4
}}-2\,g_{{5}}^{2}{u}^{2}{\eta_{{3}}}^{2}{\eta_{{4}}}^{2}{\eta_{{5}}}
^{2}g_{{1}}^{2}g_{{2}}^{2}g_{{3}}^{2}g_{{4}}^{2}\eta_{{1}}\eta
_{{2}}-2\,u\eta_{{1}}\eta_{{2}}\eta_{{3}}\eta_{{4}}\eta_{{5}}g_{{1}}g_
{{2}}g_{{3}}g_{{4}}g_{{5}}^{2}-2\,u\eta_{{1}}\eta_{{2}}\eta_{{3}}
\eta_{{4}}\eta_{{5}}g_{{1}}g_{{2}}g_{{3}}^{2}g_{{4}}^{2}g_{{5}}^
{2}+2\,g_{{5}}^{2}g_{{4}}^{2}u\eta_{{1}}\eta_{{2}}\eta_{{3}}\eta_{
{4}}\eta_{{5}}g_{{1}}g_{{2}}g_{{3}}+2\,u\eta_{{1}}\eta_{{2}}\eta_{{3}}
\eta_{{4}}\eta_{{5}}g_{{1}}g_{{2}}g_{{3}}g_{{4}}g_{{5}}+2\,g_{{5}}^{
2}{u}^{2}{\eta_{{4}}}^{2}{\eta_{{5}}}^{2}g_{{1}}^{2}g_{{2}}^{2}{g_
{{3}}}^{2}g_{{4}}^{2}\eta_{{1}}\eta_{{2}}\eta_{{3}}-2\,g_{{3}}^{2}
g_{{4}}^{2}g_{{5}}^{2}g_{{2}}+2\,g_{{4}}g_{{5}}f_{{0,2,1}}+2\,{g_{
{4}}}^{2}g_{{5}}^{2}g_{{2}}g_{{3}}f_{{2,4,2}}+2\,g_{{2}}g_{{3}}g_{{4
}}g_{{5}}f_{{0,4,1}}-2\,u\eta_{{2}}\eta_{{3}}\eta_{{4}}\eta_{{5}}g_{{1
}}g_{{2}}g_{{3}}g_{{4}}g_{{5}}^{2}+2\,g_{{1}}^{2}g_{{2}}^{2}{g_{
{3}}}^{2}g_{{4}}^{2}g_{{5}}^{2}{u}^{2}\eta_{{5}}\eta_{{4}}f_{{5,7,
1}}+2\,u\eta_{{4}}\eta_{{5}}g_{{1}}g_{{2}}g_{{3}}g_{{4}}g_{{5}}f_{{0,7
,1}}+2\,g_{{5}}^{2}g_{{3}}^{2}g_{{4}}^{2}u\eta_{{4}}\eta_{{5}}g_
{{1}}g_{{2}}f_{{3,7,3}}+2\,g_{{2}}^{2}g_{{3}}^{2}g_{{4}}^{2}{g_{
{5}}}^{2}u\eta_{{5}}g_{{1}}\eta_{{4}}f_{{4,7,2}}-2\,g_{{4}}^{2}{g_{{
5}}}^{2}-2\,g_{{5}}^{2}g_{{1}}g_{{2}}g_{{3}}g_{{4}}-2\,u\eta_{{2}}
\eta_{{3}}\eta_{{4}}\eta_{{5}}g_{{1}}g_{{2}}g_{{3}}^{2}g_{{4}}^{2}
g_{{5}}^{2}+2\,g_{{4}}^{2}u\eta_{{3}}\eta_{{4}}\eta_{{5}}g_{{1}}g_
{{2}}g_{{3}}g_{{5}}^{2}\eta_{{2}}+2\,{u}^{2}{\eta_{{4}}}^{2}{\eta_{{
5}}}^{2}g_{{1}}^{2}g_{{2}}^{2}g_{{3}}^{2}g_{{4}}^{2}\eta_{{3}}
g_{{5}}^{2}\eta_{{2}}-2\,{u}^{2}{\eta_{{3}}}^{2}{\eta_{{4}}}^{2}{
\eta_{{5}}}^{2}g_{{1}}^{2}g_{{2}}^{2}g_{{3}}^{2}g_{{4}}^{2}{g_
{{5}}}^{2}\eta_{{2}}-4\,g_{{5}}^{2}g_{{3}}g_{{4}}-2\,{u}^{2}{\eta_{{
4}}}^{2}{\eta_{{5}}}^{2}g_{{1}}^{2}g_{{2}}^{2}g_{{3}}^{2}{g_{{4}
}}^{2}g_{{5}}^{2}+4\,g_{{5}}^{2}g_{{3}}^{2}g_{{4}}^{2}u\eta_{{
4}}\eta_{{5}}g_{{1}}g_{{2}}+2\,u\eta_{{4}}\eta_{{5}}g_{{1}}g_{{2}}g_{{
3}}g_{{4}}g_{{5}}^{2}-4\,u\eta_{{4}}\eta_{{5}}g_{{1}}g_{{2}}g_{{3}}{
g_{{4}}}^{2}g_{{5}}^{2}+2\,u\eta_{{3}}\eta_{{4}}\eta_{{5}}g_{{1}}g_{
{2}}g_{{3}}g_{{4}}g_{{5}}f_{{0,8,1}}-2\,g_{{3}}^{2}g_{{1}}g_{{2}}{g_
{{4}}}^{2}g_{{5}}^{2}+h_{{5}}-4\,g_{{3}}^{2}g_{{4}}^{2}g_{{5}}
^{2}u\eta_{{3}}\eta_{{4}}\eta_{{5}}g_{{1}}g_{{2}}-2\,g_{{5}}^{2}{g_{
{2}}}^{2}g_{{3}}^{2}g_{{4}}^{2}u\eta_{{3}}\eta_{{4}}\eta_{{5}}g_{{
1}}-2\,{u}^{2}g_{{5}}^{2}{\eta_{{5}}}^{2}g_{{1}}^{2}g_{{2}}^{2}{
g_{{3}}}^{2}g_{{4}}^{2}\eta_{{3}}\eta_{{4}}-2\,g_{{5}}^{2}{u}^{2}{
\eta_{{3}}}^{2}{\eta_{{4}}}^{2}{\eta_{{5}}}^{2}g_{{1}}^{2}g_{{2}}^
{2}g_{{3}}^{2}g_{{4}}^{2}+4\,{u}^{2}{\eta_{{4}}}^{2}{\eta_{{5}}}^{
2}g_{{1}}^{2}g_{{2}}^{2}g_{{3}}^{2}g_{{4}}^{2}\eta_{{3}}{g_{{5
}}}^{2}-2\,g_{{5}}^{2}+2\,g_{{3}}g_{{4}}^{2}g_{{5}}^{2}f_{{2,3,3
}}+2\,g_{{3}}g_{{4}}g_{{5}}f_{{0,3,1}}+g_{{5}}^{2}g_{{4}}-4\,{g_{{5}
}}^{2}u\eta_{{3}}\eta_{{4}}\eta_{{5}}g_{{1}}g_{{2}}g_{{3}}g_{{4}}+4\,{
g_{{4}}}^{2}u\eta_{{3}}\eta_{{4}}\eta_{{5}}g_{{1}}g_{{2}}g_{{3}}{g_{{5
}}}^{2}-2\,g_{{1}}^{2}g_{{2}}^{2}g_{{3}}^{2}g_{{4}}^{2}u\eta_{
{3}}\eta_{{4}}\eta_{{5}}g_{{5}}^{2}+g_{{5}}f_{{0,1,0}}+2\,g_{{4}}^
{2}g_{{5}}^{2}g_{{1}}g_{{2}}g_{{3}}uf_{{2,5,1}}-2\,g_{{3}}^{2}{g_{
{4}}}^{2}g_{{5}}^{2}+4\,g_{{3}}g_{{4}}^{2}g_{{5}}^{2}
$
	& 0.110714
	 \\ \hline
\end{tabular}
	\caption{Values of $c(w,u)$ for non-isomorphic words of length at most 3, and an example of length $5$. Here for fixed length $k=|w|$ we use the shorcut notation $g_i=g_i(u \eta_k)$, $h_k=g_k(u^2 \eta_k)$, and moreover we write $f_{i,j,\ell}$ for the function $F_2^{i,j,\ell}(u)$ explicitly given in Proposition~\ref{prop:cross_monster}. All these functions are towers or branched towers of exponentials, for example for $k=3$ the function $f_{0,2,1}$ appearing in the case $w=aba$ is equal to
	$f_{0,2,1}=g_1(u\eta_3g_2(u\eta_3))=
{{\rm e}^{{{\rm e}^{{{\rm e}^{u{{\rm e}^{-1+{{\rm e}^{-1+{{\rm e}^{-1}
}}}}}-1}}-1}}u{{\rm e}^{-1+{{\rm e}^{-1+{{\rm e}^{-1}}}}}}-1}}
	$.
	}
\label{tablec}
	}
\end{table}

In the way of establishing Theorem~\ref{thm:cw_separate} we will also obtain bounds for higher-order moments of $Q_n^w(u)$: in Section~\ref{sec:higher_moments} we show that 
\begin{align}
\bbE[(Q_n^w(u)-g_k(u\eta_k) n)^m] = O(n^{\lfloor m/2\rfloor}).
\end{align}
These bounds imply that $Q_n^w(u)$, under appropriate renormalization, is sub-Gaussian. Following the approach for the exact computation of the variance in Section~\ref{sec:second_moments}, one can show that $n^{-1/2}(Q_n^w(u)-g_k(u\eta_k) n)$ converges in distribution to a Gaussian variable.

\subsection{Random automata, random permutations, and comments}
\label{sec:permutations}

As mentioned above, the study of the function $\bfw$ appears naturally, although maybe not as explicitly stated as in this paper, in the field of random automata, which is a classical subject at the intersection of random graphs and computer science~\cite{Nicaud:survey, CaiDevroye,QuattropaniSau,ABBP:rout-digraphs}. 

In a recent work~\cite{GCGP:synchronisation}, motivated by the study of the shortest synchronisation word for random automata~\cite{KKS-sqrtn, Nicaud}, we introduced the notion of \emph{$w$-trees}, which are automata in which the $w$-transitions induce a cycle-rooted tree. 
A key element in that work is to show that a random automaton is a $w$-tree with probability asymptotic to $\frac{|w|}{n}$, uniformly for all $w$ of at most logarithmic length. In a sense, the bulk of~\cite{GCGP:synchronisation} shows that, for the function property of being a cycle-rooted tree, the random function $\bfw$ behaves like a uniform random function, in some qualitative and quantitative sense. The upper bound on the shortest synchronizing word in~\cite{GCGP:synchronisation} was later slightly tightened by Martinsson in the preprint~\cite{Martinsson}, with no references to $w$-trees, but using $w$-transitions for some ``structured'' words $w$.

The common difficulty in all these papers comes from the fact that it is difficult to analyze the $w$-transitions of elements in $[n]$ simultaneously, due to their lack of independence.
In some way, they all have to do with the question of understanding the random function $\bfw$ for various choices of the word $w$, which led us to formalizing the question in this paper. 

\smallskip

Apart from random automata, another natural motivation for this work comes from random permutations. Our main question can be asked similarly if the uniform random functions $\mathbf{a}, \mathbf{b}$ are replaced by uniform elements $\sigma,\pi$ in the symmetric group $\mathfrak{S}_n$. Note that in this context, it is also natural to allow the symbols $\sigma^{-1}$ and $\pi^{-1}$ in the word $w$.
The distribution of the number of short cycles in such compositions is well understood~\cite{Nica1994, HananyPuder2020,KammounMaida2020, KammounMaida2022} with a limit law depending only on the exponent of the underlying word. This problem has deep connections to free probability; see the introduction of~\cite{KammounMaida2022}.
As it turns out, the limiting distribution of short cycles in our setting is the same as for random permutations, and in particular some of our results also apply to the latter case; see Remark~\ref{rem:cyclesPermutations}.

In the context of random permutations, the question of the TV-distance to the uniform measure on the symmetric or alternating group could be studied by means of representation theory, at least for certain simple words $w$ which have some symmetries related to conjugacy invariance. We refer to Féray's formulation of the problem~\cite{feray}, who mentions that the smallest word for which no such property is known is $w=\sigma\tau\sigma^2\tau^2$.

Note that the references above often consider the case of an alphabet of arbitrary fixed size  rather than $\{a,b\}$: we leave the reader to check that all our proof techniques and results are easily adapted to this context. Note also that considering an alphabet of size growing with $n$ would not make sense in our setting, since we only work with words $w$ of fixed length --- however, this could lead to interesting new questions.

\smallskip

To conclude this discussion, let us also mention that random functions $[n]\rightarrow [n]$ are a standard and very well studied object in discrete probability, with deep relations to random trees, see e.g.~\cite{Flajolet1989RandomMS, Pitman2006, GCGP:AMM}. A natural model interpolating between uniform random functions and uniform random permutations 
is to consider a random function $\mathbf{f}$ chosen with probability proportional to the weight $x^{c(\mathbf{f})}$, where $c(\mathbf{f})$ denotes the number of vertices belonging to a cycle in $\mathbf{f}$, and $x\geq 0$ is a real parameter. In this distribution of ``Boltzmann'' type, $x$ acts as a fugacity parameter controlling the ``cyclicity'' of the function. The classical ensembles are recovered as limiting cases of this parameter: if $x \to 0$, the measure concentrates on functions with minimal cyclic content (only one single fixed point), yielding uniform rooted Cayley trees; if $x = 1$, the weight is uniform for all $\mathbf{f}$, yielding uniform random functions; and if $x \to \infty$, the measure concentrates on functions with maximal cyclic content (i.e. where every vertex is cyclic), yielding uniform random permutations.
All the questions asked in this paper for uniform random functions could be studied for this general model, which might lead to interesting new phenomena.

\subsection{The case $u=0$ and the first version of this article
}

In this section we discuss the differences between this article and the first version that was made public (still available on arXiv as 2603.28936v1, \cite{ourpaper_v1}).

In the first version, we where only considering the case $u=0$, i.e. our main observable was not $Q_n^w(u)$ but the number of leaves of $\bfw$. In that setting we were not able to prove Theorem~\ref{thm:cw_separate} fully: we could prove it only under the hypothesis that $c(w,0)\neq c(w',0)$. We conjectured that $c(w,0)\neq c(w',0)$ for non-isomorphic words $w,w'$, and we proved this conjecture conditionally to Schanuel's conjecture, a major open problem in transcendental number theory.  Because we now establish TV-separation unconditionally, we have removed the discussion on $c(w,0)$ and this conditional theorem in this new version --- the interested reader can consult the first version.

In this first version we were also considering a variant of the variance, obtaining a similar but different limit constant $\tilde{c}(w)$, with analogue results. Now that we can prove Theorem~\ref{thm:cw_separate} unconditionally, the extra work needed to get these further results seems disproportionate and we have removed them. In this removed part we were obtaining a precise limit theorem for the number of ``cyclic leaves'' (leaves of the function $\bfw$ containing in some sense a cycle of the underlying $\{a,b\}$-digraph) at the price of introducing relatively subtle variants of branching processes with self-replication, which we completely avoid here. Again, we refer the interested reader to the first version.

At a technical level, this new version is very similar to the first one, in some sense we only ``introduce'' the parameter $u$ all around. It is somehow even simpler since the discussion of the Schanuel conjecture, and the extra work related to the variant constant $\tilde{c}(w)$, have been replaced by the asymptotic independence discussion in Section~\ref{sec:independence}.
So the cost of introducing the parameter $u$ is small, while now we obtain an unconditional TV-separation result in Theorem~\ref{thm:cw_separate}.

\subsection{Plan of the paper}

In Section~\ref{sec:GW}, we start by recalling standard facts about Bienaymé-Galton-Watson trees (BGWTs)  and how they approximate local in-neighbourhoods of uniform random functions. This section also establishes the convergence of the expected count of quasi-leaves $Q_n(u)$, in Theorem~\ref{thm:main_leaves}, and sets the necessary notation and toolbox for the next sections where more involved local approximations are studied, both at the first and second order.

In Section~\ref{sec:second_moments}, we compute the limiting variance of the number of leaves of $\bfw$, which is of linear order with leading constant $c(w,u)$.
This requires to study second-order approximations of local neighbourhoods in our random functions by BGWTs and their variants. 
In particular we introduce certain coupled pairs of BGWTs that describe in some sense the second-order corrections to the second moment of $Q_n(u)$.
At the end of the section we provide a self-contained explicit description of the constants $c(w,u)$ (Section~\ref{subsec:def-cw-selfcontained}).

In Section~\ref{sec:independence} we prove that the functions $c(w,u)$ characterize the word $w$ up to isomorphism. This part is largely independent from the rest of the paper and relies on proving that the building blocks (towers and branched towers of exponentials) that appear in the explicit expressions of $c(w,u)$ have some extent of algebraic independence, which we prove by considering their behaviour as $u\rightarrow \infty$. We also discuss the link with Schanuel's conjecture and the first version of this work; see Remark~\ref{rem:old_schanuel}.

In Section~\ref{sec:high_moments}, we focus on higher-order  moments of $Q_n(u)$. Our main result here bounds the $q$-th (shifted) moment, hinting a Gaussian-type behaviour. In particular, this gives us control on its fourth moment and enables us to translate differences in variance into  TV-distance separation, proving Theorem~\ref{thm:cw_separate}. 

Finally, Section~\ref{sec:cycles} is devoted to the short cycle distribution of $\bfw$ and the proof of Theorem~\ref{thm:Dn_limit}.

\section{First order statistic on quasi-leaves}
\label{sec:GW}

From now on we fix $k\geq 1$ and the word $w=w_1 w_2\dots w_k$. We will often view the functions $\mathbf{a}, \mathbf{b}, \bfw$ as directed graphs. More precisely, an \emph{$\{a,b\}$-digraph} $H$ is a labelled directed graph in which each edge has a type in $\{a,b\}$; the labels will be specified in each context. We use $|H|$ to denote the order of the graph. Henceforth, we will identify the pair of  functions $\mathbf{a},\mathbf{b}$ with the corresponding $\{a,b\}$-digraph (with an edge of type $x$ from $i$ to $\mathbf{x}(i)$ for each $x \in \{a,b\}$ and $i \in [n]$). Similarly we will identify $\bfw$ with its directed graph on $[n]$. 

\subsection{Plane trees vs labelled trees}
\label{subsec:auto}

All trees considered in this paper are rooted. Depending on context, the trees will be \emph{labelled} (with a vertex set which is a subset of positive integers), or \emph{plane} (with an order on the children of each vertex, say from left to right).

Because neighborhoods of vertices in a labelled graph are naturally labelled, and BGWTs are naturally plane, we will have to work with both. Readers already familiar with trees and their automorphisms will find this very natural and may skip this part. For others, the purpose of this section is to clarify the relation between the two notions and to set up the notation that we will use throughout, sometimes implicitly.

We use $\mathfrak{L}$ to denote the set of labelled trees, where a tree on $n$ vertices is labelled with labels in $[n]$, and use capital letters such as $T$ to refer to one of its elements. We use $\mathfrak{P}$ to denote the set of plane trees and we use Greek letters such as $\tau$ to refer to one of its elements. We use $|T|$ and $|\tau|$ to denote the order of $T$ and $\tau$, respectively. 

We will write $\congplane$ to say that two rooted plane trees are equal.  
We will also use the equivalence relation $\conglab$ defined on rooted labelled graphs having positive integer labels (but possibly spanning different sets, for example subgraphs of a given labelled graph) defined as follows:  $T\conglab T'$ if and only if the unique increasing function $V(T)\rightarrow V(T')$ is a root-preserving graph isomorphism.

If $U$ is a rooted unlabelled tree with $n$ vertices, the number of inequivalent labellings of $U$ with labels in $[n]$ is equal to $\textrm{lab}(U)=\frac{n!}{\mathrm{Aut}_r(U)}$, where $\mathrm{Aut}_r(U)$ is the number of root-preserving automorphisms of $U$. Note also that the number of ways to equip $U$ with both a plane structure and a labelling in $[n]$ is $\textrm{lab(U)}\prod_{v\in V(U)} n_v!$, where $n_v$ is the number of children of the vertex~$v$, since once a labelling is chosen, all orderings of children give inequivalent plane structures as labelled objects. The number of such structures can also be counted by first fixing one of the $\textrm{pl}(U)$ many inequivalent representations of $U$ as a plane tree, and then labelling the vertices in one of the $n!$ different ways. Therefore the number of automorphisms, plane representations, and labellings of the tree $U$ are related by
\begin{align}\label{eq:triple}
\frac{\textrm{pl}(U)}{\prod_{v\in V(U)} n_v!} = \frac{\textrm{lab}(U)}{n!} = \frac{1}{\mathrm{Aut}_r(U)}.
\end{align}
In what follows we will sometimes need to go back and forth between plane, non-plane, labelled, unlabelled structures in a similar way, but in slightly more subtle situations. 

Finally, in order to compare the neighbourhood of a random vertex in a labelled graph (which is labelled itself) to a BGWT (which is naturally plane), we will use the following notation: if $\tau\in \mathfrak{P}$ is a plane tree, we let $\tau^{(lab)}\in\mathfrak{L}$ be the random labelled tree obtained by labelling uniformly at random the vertices of $\tau$ with labels in $[|\tau|]$, and forgetting the plane structure. This notation will often be used in cases where $\tau$ itself is already a random tree.

We could have avoided talking about labelled trees in this paper and use unlabelled trees everywhere, using the graph-isomorphism relation $\cong$ 
instead of $\conglab$ and carrying inverse automorphism factors in formulas rather than inverse factorials. However, working with labelled graphs avoids the need to discuss graph automorphisms, which would become subtle in some of the most involved sections. Our choice also leads to slightly stronger results since labels are controlled in the approximations.

\subsection{Poissonian Bienaymé-Galton-Watson Trees}
\label{subsec:GW_gf}

For a real $\lambda \geq 0$, we use Po($\lambda$) to denote a Poisson random variable of parameter $\lambda$.
A \emph{Po(1)-Bienaymé-Galton-Watson Tree} (\emph{Po(1)-BGWT} for short, also referred to as \emph{branching process}) is a stochastic process $\mathbf{Z}=(Z_L)_{L\geq 0}$  defined by  $Z_0 \defi 1$ and $Z_{L+1} \defi \sum_{i=1}^{Z_L} \xi_{L,i}$,
where the $\xi_{L,i}$ are i.i.d. Poisson random variables with mean $\lambda=1$.  
For all\footnote{In this paper $\mathbb{N}=\{0,1,2,3,\dots\}$.} $A \subseteq \mathbb{N}$, we write $Z_A\defi\sum_{l\in A} Z_l$, and $Z\defi Z_\mathbb{N}$ for the total progeny. 

We denote by $\PBGW$ the probability distribution of $\mathbf{Z}$, and by $\EBGW$ its expectation. (We will occasionally use branching processes with different offspring distribution; if the distribution is clear from the context, we will also use the same notation to refer to its probability function and expectation.)

We often use the graphical representation of the branching process as a rooted plane tree, that we denote by $\tau_{\mathbf{Z}}$. Indeed, 
the rooted tree associated to a branching process is naturally equipped with a plane structure, in the sense that the children of each vertex are ordered (from left to right, say). We use $\muBGW$ to denote the distribution on plane trees induced by $\tau_{\mathbf{Z}}$: 
\begin{align}\label{eq:GW}
\muBGW(\tau) \defi\PBGW(\tau_{\mathbf{Z}}=\tau) = \frac{e^{-|\tau|}}{\prod_{v\in V(\tau)} n_v!},\quad \text{for all } \tau\in\mathfrak{P}, 
\end{align}
where $n_v$ is the number of children of $v$ in $\tau$. Together with the previous discussion (Equation~\eqref{eq:triple}) this has the following classical consequence: 
\begin{align}\label{eq:classical}
	\PBGW(\tau_{\mathbf{Z}}^{(lab)}=T) = \frac{e^{-|T|}}{|T|!},\qquad \text{for all } T\in\mathfrak{L}, 
\end{align}
and in particular this probability is uniform once conditioned on trees of given size (we will not use this last fact).

The probability generating function of a Po(1) variable $\xi$ is $f(s)  = e^{s-1},$ 
which directly implies that the probability generating function of the progeny at the $L$-th level, 
\begin{align}
g_{L}(s)\defi\EBGW[s^{Z_L}],
\end{align}
satisfies the recursion: $g_0(s) = s$ and 
\begin{align}\label{eq:recu_g}
g_{L}(s) = f(g_{L-1}(s)),\quad \text{for all }L\geq 1.
\end{align}
In particular, it coincides with the definition given~\eqref{eq:def_g}.

The extinction probability at time $L\geq 0$ is
\begin{equation}
	\eta_L\defi\PBGW(Z_L=0) = \EBGW[\mathbf{1}_{Z_L=0}]=g_L(0).
\end{equation}
It is useful to observe that
\begin{equation}\label{eq:SOEDS}
	\eta_L=\EBGW[\mathbf{1}_{Z_L=0}]= \EBGW[\eta_{L-i}^{Z_i}], \quad \text{for all }0\leq i\leq L,
\end{equation}
which is direct by formulas above, and also directly follows from the more refined statement
\begin{equation}\label{eq:SOEDS-conditional}
	\EBGW[\mathbf{1}_{Z_L=0}\mid Z_i]= \eta_{L-i}^{Z_i}, \quad \text{for all }0\leq i\leq L,
\end{equation}
which says that the tree is extinct by time $L$ if and only if the subtree produced by each individual present at time $i$ survives for less than $L-i$ generations.

It will be useful to introduce the multivariate generating function of the process:%
\begin{equation}\label{eq:GF_multivar}
G_L(x_0,x_1,\dots,x_L)
\defi\EBGW\bigl[x_0^{Z_0}x_1^{Z_1}\cdots x_L^{Z_L}\bigr].
\end{equation}
Similarly as in~\eqref{eq:recu_g}, we have: $G_0(x_0)=x_0$ and 
\begin{align}
	G_L(x_0,x_1,\dots,x_L)=x_0f\bigl(G_{L-1}(x_1, x_2,\dots,x_{L})\bigr)= x_0f\bigl(x_1 f(x_{2} f(\dots f(x_{L-1} f(x_L)),\quad \text{for all }L\geq 1.
\end{align}
This generating function gives access to moments: for any $m_1,\dots,m_L \in \mathbb{N}$ we obtain
\begin{equation}
	\EBGW\bigl[Z_1^{m_1}\cdots Z_{L}^{m_{L}} x_1^{Z_1}\dots x_{L}^{Z_{L}}] = \Bigl(\frac{x_1\partial}{\partial x_1}\Bigr)^{m_1}\hspace{-0.3cm}\cdots\Bigl(\frac{x_{L}\partial}{\partial x_{L}}\Bigr)^{m_{L}} G_{L}(x_0,x_1,\dots, x_{L}). \label{eq:momentsGF}
\end{equation}
In particular, by~\eqref{eq:SOEDS-conditional} we can compute, for $k<L$,
\begin{equation}\label{eq:WPEOW}
	\EBGW\bigl[ Z_1^{m_1}\cdots Z_{L}^{m_{L}}u^{Z_k} \mathbf{1}_{Z_L=0}\bigr] =
	\Xi_L^{\mathbf{x},u}\left[	\Bigl(\frac{x_1\partial}{\partial x_1}\Bigr)^{m_1}\hspace{-0.3cm}\cdots\Bigl(\frac{x_{L}\partial}{\partial x_{L}}\Bigr)^{m_{L}} G_{L}(x_0,x_1,\dots, x_{L})\right],
\end{equation}
where $\Xi_L^{\mathbf{x},u}$ is the operator that specializes $\mathbf{x}=(x_0,x_1,\dots,x_{L})$ to 
$(\underbrace{1,\dots,1}_{k},u,\underbrace{1,\dots,1}_{L-k-1},0)$, i.e. sets $x_k=u,x_{L}=0$,  and the rest of the $x_i$ to $1$.

\medskip

We will sometimes need to use several independent copies of the branching process $\mathbf{Z}$. We will denote them by $\mathbf{Z}^{(1)},\mathbf{Z}^{(2)}, \mathbf{Z}^{(3)},\dots$. For $m\in\mathbb{N}$, we will use the notation $\PBGW^{m}$ and $\EBGW^{m}$ for the probability distribution and expectation over $\mathbf{Z}^{(1)},\dots ,\mathbf{Z}^{(m)}$ i.i.d. Po(1)-BGWTs.

\subsection{In-balls of a vertex and universal covers}

Let $x\in [n]$ and $\omega\in\{a,b\}^L$. We write most of this section for a general word $\omega$, although we will set $\omega=w^2$ in our applications.
For $0\leq \ell\leq L$, let $\w^{(\ell)}= \w_{L-\ell+1}\dots \w_L$ be the suffix of $\w$ of length $\ell$ and $\bw^{(\ell)}=\phi_{\w_L}\circ \dots\circ \phi_{\w_{L-\ell+1}}$ the corresponding random function induced by the choices of $\mathbf{a,b}$. Define the \emph{$\w$-in-ball of $x$} (or just \emph{in-ball} if $\w$ is clear from context), $B_{\w}(x)$, as the rooted $\{a,b\}$-digraph with root $x$ obtained by adding an arc $(y,\phi_{\w_{L-\ell+1}}(y))$ with label $\w_{L-\ell+1}$ if there are $y\in [n]$ and $\ell\in [L]$ such that $\mathbf{\w}^{(\ell)}(y)=x$;
see Figure~\ref{fig:w-in-ball}.
Note that if the $\w$-in-ball is a tree, the arc labels can be recovered from their height in the tree and the knowledge of $\w$, but this is not true in general.

We also define the \emph{universal $\w$-in-cover of $x$} (or just \emph{in-cover}), $T_{\w}(x)$, as the root-directed labelled tree with root at $(x,0)$ whose vertices are labelled by $(y,\ell)$ with $\bw^{(\ell)}(y)=x$ and where we add the arc $(y,\ell)\to (y',\ell-1)$ if $\phi_{\w_{L-\ell+1}}(y)=y'$. 
Note that the universal $\w$-in-cover of $x$ has always depth (longest directed path to $x$) at most $L$.

\begin{figure}[h]
	\centering
\includegraphics[width=0.75\linewidth]{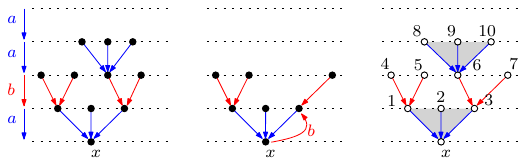}
	\caption{The $\w$-in-ball and universal $\w$-in-cover of a vertex $x$ for $\w=aaba$. On the left figure, the ball is a directed tree so the cover is isomorphic to the ball. On the middle figure, the ball is not a tree. The in-cover of the middle example is displayed on the right figure, with vertices arbitrarily labeled, and assuming that vertices $8,9,10$ have no $\mathbf{a}$-preimages.
	The following pairs of vertices in the right figure correspond to the same vertex in the $\w$-in-ball: $(6,x)$, $(1,8)$, $(2,9)$ and $(3,10)$.
Note that the right example coincide, as a tree, with the left one, in particular the left and middle examples have the same $\w$-in-cover.
	}
	
	\label{fig:w-in-ball}
\end{figure}

For all $L \geq 1$, define the \emph{$L$-in-ball of $x$}, $B_{L}(x)$, to be the union of the balls $B_{\w}(x)$ for all $\w\in \{a,b\}^L$, which is naturally rooted at $x$.
Note that $B_{\w}(x)\subset B_{L}(x)$, for any $\w\in \{a,b\}^L$. Next auxiliary result shows that the size of any $L$-in-ball has exponential tails.

\begin{lemma}\label{lemma:exponential-in-ball}
	There exists $c=c(L)>0$ such that for any $x\in [n]$ and $t\geq 1$, 
	\begin{align}\label{eq:exponential-in-ball}
		\bbP\bigl(|B_{L}(x)|> t\bigr) \leq e^{-ct}.
	\end{align}
\end{lemma}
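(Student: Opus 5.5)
The plan is to dominate $|B_L(x)|$ by the total progeny of a branching process whose offspring law is a sum of two binomials. This process has exponential tails because its depth is bounded by $L$.

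First note what $B_L(x)$ is. It is the set of vertices $y$ such that some word of length $\ell\le L$ maps $y$ to $x$, together with the corresponding arcs. It is therefore contained in the set of vertices reachable from $x$ within $L$ backward steps in the digraph of $\mathbf{a}$ and $\mathbf{b}$. We explore this set by breadth-first search from $x$, going backwards. When a vertex $v$ is processed we reveal its $\mathbf{a}$-preimages and $\mathbf{b}$-preimages, but only among those vertices $y$ whose $\mathbf{a}$-value (respectively $\mathbf{b}$-value) has not yet been revealed.

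Conditionally on the history, the outgoing arc of each unrevealed vertex is still uniform on $[n]$. Hence the number of new $\mathbf{a}$-preimages of $v$ is $\mathrm{Bin}(m,1/n)$ with $m\le n$, and likewise for $\mathbf{b}$. We use the standard coupling: the number of newly discovered vertices at $v$ is stochastically dominated by $X_v=\mathrm{Bin}(n,1/n)+\mathrm{Bin}(n,1/n)$, independently over the processed vertices. Over-counting vertices already discovered only helps the upper bound. Consequently $|B_L(x)|$ is stochastically dominated by $W_L=\sum_{i=0}^{L} Y_i$, where $(Y_i)$ is a Galton--Watson process with offspring law $\mathrm{Bin}(n,1/n)^{*2}$ truncated at generation $L$.

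Next we bound $\bbE[e^{\theta W_L}]$ uniformly in $n$. The offspring generating function satisfies $h_n(s)=(1+(s-1)/n)^{2n}\le e^{2(s-1)}$ for $s\ge 1$. The moment generating function of $W_L$ obeys the recursion $M_0(s)=s$, $M_{j}(s)=s\,h_n(M_{j-1}(s))$, where $s=e^\theta$. By induction, $M_j(s)\le \tilde M_j(s)$, where $\tilde M$ uses $e^{2(s-1)}$ in place of $h_n$. Each $\tilde M_j$ is a finite tower of exponentials, so for a fixed small $\theta_0>0$ we get $\bbE[e^{\theta_0 W_L}]\le K(L)<\infty$, independently of $n$. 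Markov's inequality then gives $\bbP(|B_L(x)|>t)\le K e^{-\theta_0 t}$.

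To remove the constant $K$ and obtain exactly $e^{-ct}$ for all $t\ge1$, we shrink $c$. For $t\ge t_0$ we have $Ke^{-\theta_0 t}\le e^{-\theta_0 t/2}$. For $1\le t<t_0$ we need $\bbP(|B_L(x)|>t)\le\bbP(|B_L(x)|\ge 2)$ to be bounded away from $1$. This holds because $x$ has no preimage at all with probability $(1-1/n)^{2n}\ge e^{-3}$ for $n\ge 2$ (for $n=1$ the ball has size $1$). Hence a small enough $c$ works for all $t\ge 1$.

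The main obstacle is making the domination coupling rigorous, since exploration reveals dependencies. Processing vertices one at a time and using only unrevealed out-arcs keeps each step a binomial with at most $n$ trials, which dominates as required.
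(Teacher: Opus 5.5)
Your route is the paper's: a breadth-first exploration of the in-ball, domination by a branching process with $\mathrm{Bin}(2n,\frac1n)$ offspring truncated at generation $L$, a uniform-in-$n$ bound on its moment generating function, and Markov's inequality. However, the domination step, which you yourself single out as the main obstacle, rests on a false claim. Conditionally on the history, an unrevealed out-arc is \emph{not} uniform on $[n]$. Once a set $P$ of vertices has been processed, you know that no vertex with unrevealed $\mathbf{a}$-value maps into $P$, so its $\mathbf{a}$-image is uniform on $[n]\setminus P$. The number of $\mathbf{a}$-preimages of the next vertex $v$ among the $m$ vertices with unrevealed $\mathbf{a}$-value is therefore $\mathrm{Bin}(m,\frac{1}{n-|P|})$, and this is not dominated by $\mathrm{Bin}(n,\frac1n)$ in general. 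For example, if $x$ has no $\mathbf{a}$-preimage and $v$ is a $\mathbf{b}$-preimage of $x$, then $m=n$, $|P|=1$, and the count is $\mathrm{Bin}(n,\frac{1}{n-1})$, which strictly dominates $\mathrm{Bin}(n,\frac1n)$. So ``at most $n$ trials'' is not enough, and over-counting already-discovered vertices is no longer harmless once the correct success probability is used. (The paper's write-up has the same slip, asserting that the preimage counts are $\mathrm{Bin}(n-A,\frac1n)$.)

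The repair is short. Count only undiscovered vertices and, for $1\le t\le n/2$, stop the exploration as soon as more than $t$ vertices have been discovered. Before stopping, $|P|\le t\le n/2$, so each undiscovered vertex is an $\mathbf{a}$-preimage, resp.\ a $\mathbf{b}$-preimage, of $v$ independently with probability $\frac{1}{n-|P|}\le\frac2n$. Hence, uniformly in the history, the number of new vertices at $v$ is dominated by $\mathrm{Bin}(2n,\frac2n)$, whose generating function is at most $e^{4(s-1)}$ for $s\ge 1$. Your tower-of-exponentials bound then goes through verbatim and gives $\bbP(|B_L(x)|>t)\le Ke^{-\theta t}$ for $t\le n/2$. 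For $n/2<t<n$, use $\bbP(|B_L(x)|>t)\le\bbP(|B_L(x)|>\lfloor n/2\rfloor)\le K'e^{-\theta t/2}$; for $t\ge n$ the probability vanishes. (If you restrict to the at most $n-|P|-1$ undiscovered vertices, domination by $\mathrm{Bin}(2n,\frac1n)$ can actually be salvaged, but only via a sharper comparison of binomial laws.) Everything else in your proposal is correct. The bound $(1+\frac{s-1}{n})^{2n}\le e^{2(s-1)}$ is a cleaner substitute for the paper's uniform-continuity argument. Your final step removes the multiplicative constant via $\bbP(|B_L(x)|\ge 2)\le 1-(1-\frac1n)^{2n}\le 1-e^{-3}$. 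This is right, and it fills a small omission in the paper, whose proof ends with $C(1+\delta')^{-t}$ although the statement has no constant.
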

\begin{proof}
	 Define the set $U_i\subset [n] $ recursively for $i\geq 0$ by $U_0\defi \{x\}$ and $U_{i+1}\defi \mathbf{a}^{-1}(U_i)\cup \mathbf{b}^{-1}(U_i)$. Let also $V_i\defi \cup_{0\leq j\leq i}U_j$. Note that $V(B_{L}(x))= V_L$. 

	 If $X_{i+1}\defi |U_{i+1}\setminus V_{i}|$, we claim that $(X_i)_{i\geq 0}$ is stochastically dominated by a branching process $\tilde{\mathbf{Z}}^{(n)}$ with offspring distribution $\xi^{(n)}\defi\mathrm{Bin}(2n,\frac{1}{n})$. 
	Indeed, explore the in-ball of $x$ by revealing $U_1,U_2, \dots, $ successively. To reveal $U_{i+1}\setminus V_i$, we only need to reveal $(\mathbf{a}^{-1}(y),\mathbf{b}^{-1}(y))$ for all $y \in U_i\setminus V_{i-1}$ (since the preimages of $y \in V_{i-1}$ have already been revealed in previous steps, so they are in $V_i$). Now reveal the preimages $(\mathbf{a}^{-1}(y),\mathbf{b}^{-1}(y))$ of each $y \in U_{i}\setminus V_{i-1}$ in some order. The sizes of these preimages are distributed as two independent binomial variables $\mathrm{Bin}(n-A,\frac{1}{n})$ and $\mathrm{Bin}(n-B,\frac{1}{n})$, where $A$ and $B$ are the (random) number of vertices whose $\mathbf{a}$-image and $\mathbf{b}$-image has already been revealed in previous steps of the process, respectively. Therefore the total number of new vertices revealed is dominated by $\mathrm{Bin}(2n-A-B,\frac{1}{n})$, hence by $\mathrm{Bin}(2n,\frac{1}{n})$. This proves the claim. 

	Now, let $f^{(n)}(x)\defi \bbE[x^{\xi^{(n)}}]=(1+\frac{x-1}{n})^{2n}$. We have, with the branching process $\tilde{\mathbf{Z}}^{(n)}$ as above,
	\begin{align}\label{eq:branching-analytic}
	\EBGW\Bigl[x_1^{\tilde{Z}_1^{(n)}}\cdots x_{L}^{\tilde{Z}_{L}^{(n)}}\Bigr] = f^{(n)}(x_1f^{(n)}(x_2\dots f^{(n)}(x_{L})\dots)).
\end{align}
	Now the sequence of functions $(f^{(n)})_{n\geq 1}$ is uniformly continuous at $x=1$ with $f^{(n)}(1)=1$, i.e. for any $\epsilon$ there exists $\delta=\delta(\epsilon)$ such that $f^{(n)}([1\pm \delta])\subset [1\pm \epsilon]$ for all $n\geq 1$.
	Iterating, there exists $\delta'=\delta'(L,\epsilon) >0$ such that~\eqref{eq:branching-analytic} is bounded by $1+\epsilon$ for $(x_1,\dots,x_L) \in [1\pm \delta']^L$.
	It follows that
	\begin{align}
		\EBGW\bigl[(1+\delta')^{\tilde{Z}_1^{(n)}+\dots+\tilde{Z}_L^{(n)}}\bigr] \leq C,
	\end{align}
	for some constant $C$, uniformly in $n$.
	By Markov inequality, we obtain
	\begin{align}
		\PBGW[\tilde{Z}_1^{(n)}+\dots+\tilde{Z}_L^{(n)}\geq  t ] \leq C(1+\delta')^{-t}.
	\end{align}
Recall that $\tilde{Z}_1^{(n)}+\dots+\tilde{Z}_L^{(n)}$ stochastically dominates $X_1+\dots+X_L=V_L-1$, so we are done.
\end{proof}

\subsection{Expected value of $Q_n^w(u)$}
\label{subsec:probaleaf}

For a letter $c\in \{a,b\}$, of respective capital version $C\in \{A,B\}$, we define the sets
\begin{align}
C^+&=C^+(\w)\defi \{1\leq \ell \leq L: \,\w_{L-\ell+1}=c\},\quad\\
C^-&=C^-(\w)\defi  \{0\leq \ell <L: \,\w_{L-\ell}=c\}.\quad\;\;\;\,
\end{align}

If $T\in \mathfrak{L}$ has root $r$, we
let $d_T(\cdot,r)$ be the \emph{distance-to-root function}. 
We define $T_\ell$ as the set of vertices $v$ of $T$ with $d_T(v,r)=\ell$.
The \emph{height} of $T$ is defined as $\max_{v\in V(T)} d_T(v,r)$.
Let $\mathfrak{L}_{<L}$ be the set of trees in $\mathfrak{L}$ of height less than $L$. For any letter $c\in\{a,b\}$ and $*\in\{+,-\}$, we define
\begin{align}\label{eq:def_t_c^*}
t_c^*&=t_c^*(\w)\defi  \sum_{\ell\in C^*} |T_\ell|.
\end{align}
Note that $t_a^++t_b^+=|T|-1$ and $t_a^-+t_b^-=|T|$. These parameters are independent of the labelling of $T$, and so are also well defined for a plane tree $\tau\in\mathfrak{P}$. Note that, although we do not carry it in the notation, $t_c^*$ depends on the underlying tree $T$ or $\tau$, which will always be clear from context. 

We can extend the definition to any labelled set-rooted $\{a,b\}$-digraph $H$, with root-set $\mathcal{R}\subset V(H)$. Most of the time our graphs will be rooted at a single vertex $r$ (i.e. $\mathcal{R}=\{r\}$ is a singleton), but we will also need to consider the more general case where a subset of vertices $\mathcal{R}$ is the root. We define similarly as above 
\begin{align}\label{eq:def_h_c^*}
h_c^+&=h_c^+(\w)\defi |\{v\in V(H): \exists \ell\geq 1,\, \exists r\in \mathcal{R},\,\,  v \xrightarrow[{H}]{{{\w}^{(\ell)}}} r,\, \w_{L-\ell+1}=c\}|,\\
h_c^-&=h_c^-(\w)\defi |\{v\in V(H): \exists \ell\geq 0,\, \exists r\in \mathcal{R},\,\,  v \xrightarrow[{H}]{{\w^{(\ell)}}} r,\, \w_{L-\ell}=c\}|,
\end{align}
where we used $v \xrightarrow[{H}]{{\w^{(\ell)}}} r$ to denote the event that there exists a path from $v$ to $r$ in $H$ the concatenation of whose arc-labels is $\w^{(\ell)}$. Observe that this coincides with the  definition of $t^+_c(\w)$ and $t^-_c(\w)$, when $H$ is a tree.

\begin{example}
	For the left tree in Figure~\ref{fig:w-in-ball}, we have $t_a^+=6$, $t_b^+=4$ (respectively the number of edges on an $a$-layer and on a $b$-layer) and $t_a^-=8$, $t_b^-=3$ (respectively the number of vertices whose $a$-input and $b$-input is determined, or in other words, vertices sitting just below an $a$-layer and a $b$-layer, respectively).
\end{example}

The following statement says that the $\w$-in-ball of a random vertex is well approximated by a Po(1)-BGWT, similarly as what happens for a single random function. Note however that we control the second-order term, in which the dependency on $\w$ is visible through the quantities $t_a^+, t_b^+, t_a^-, t_b^-$. This will play a crucial role in upcoming sections.
\begin{lemma}[\bf BGWT-approximation of $\w$-in-balls]\label{lem:prob_ball_given_tree}
Let $R$ be chosen uniformly at random in $[n]$ and $T\in \mathfrak{L}$. We have
\begin{align}
\bbP(B_{\w}(R)\conglab T) = 
\PBGW\bigl(\tau_\mathbf{Z}^{(lab)}= T\bigr) \left(1-\frac{\capprox^T(\w)}{n}+O\Big(\frac{|T|^3}{n^2}\Big)\right),
\end{align}
where 
\begin{align}\label{eq:c_approx_1_T_b}
\capprox^T(\w)&\defi  \frac{1}{2}\bigl[|T|(|T|-1)+t_a^-(\w)(t_a^-(\w)-2t_a^+(\w)) + t_b^-(\w)(t_b^-(\w)-2t_b^+(\w))\bigr].
\end{align}
\end{lemma}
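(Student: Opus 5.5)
The plan is to compute $\bbP(B_{\w}(R)\conglab T)$ exactly by direct counting over vertex sets, and then expand the resulting closed form in $1/n$. Let $m=|T|$. The event $B_{\w}(R)\conglab T$ holds if and only if the vertex set $S=V(B_{\w}(R))$ has size $m$ and, under the unique increasing bijection $V(T)\to S$, the root of $T$ is sent to $R$ and the arcs of $T$ are sent to the arcs of the ball. Summing over $R$ and over the $m$-subsets $S\subset[n]$ (which fixes $R$ as the image of the root), I will write
\begin{align}
\bbP(B_{\w}(R)\conglab T)=\frac{1}{n}\binom{n}{m}\,\bbP(E_S),
\end{align}
where $E_S$ is the event that the ball rooted at the image of the root is exactly the copy of $T$ on $S$. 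By symmetry this probability does not depend on the choice of $S$.

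Next I will describe $E_S$ as a conjunction of independent constraints on the values of $\mathbf{a}$ and $\mathbf{b}$. There are two kinds of constraint.
\begin{compactitem}
\item \emph{Arc constraints.} Each non-root vertex $v$ at level $\ell$ must satisfy $\phi_{\w_{L-\ell+1}}(v)=\mathrm{parent}(v)$. This gives $t_a^++t_b^+=m-1$ constraints, each holding with probability $1/n$.
\item \emph{Exclusion constraints.} Each vertex $v$ at a level $\ell<L$ must have no $\phi_{\w_{L-\ell}}$-preimage other than its children in $T$. Fix a letter $c$. The targets to avoid are the $t_c^-$ vertices at levels in $C^-$. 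The vertices whose $c$-image is still free are the $n-t_c^+$ vertices whose $c$-image is not already fixed by an arc constraint. Note that this includes vertices of $S$ itself, so no spurious arcs inside $S$ are created. Each such free vertex must avoid all $t_c^-$ targets.
\end{compactitem}
Vertices at level $L$ impose no exclusion constraint. Every image is chosen independently, so
\begin{align}
\bbP(E_S)=n^{-(m-1)}\prod_{c\in\{a,b\}}\Bigl(1-\frac{t_c^-}{n}\Bigr)^{n-t_c^+}.
\end{align}
One step needs care: checking that these constraints are exactly equivalent to $E_S$. I need to confirm that a vertex of $S$ reached by some $\w^{(\ell)}$-path cannot create a non-tree configuration without violating an exclusion constraint. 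This holds because every free image is forbidden from landing on any vertex at levels in $C^-$.

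It then remains to expand the closed form and compare it with $\PBGW(\tau_\mathbf{Z}^{(lab)}=T)=e^{-m}/m!$, which is \eqref{eq:classical}. The binomial factor gives
\begin{align}
\frac{1}{n}\binom{n}{m}n^{-(m-1)}=\frac{1}{m!}\prod_{i<m}\Bigl(1-\frac{i}{n}\Bigr)=\frac{1}{m!}\Bigl(1-\frac{m(m-1)}{2n}+\cdots\Bigr).
\end{align}
For each letter,
\begin{align}
\Bigl(1-\frac{t_c^-}{n}\Bigr)^{n-t_c^+}=e^{-t_c^-}\exp\Bigl(\frac{t_c^+t_c^-}{n}-\frac{(t_c^-)^2}{2n}+\cdots\Bigr),
\end{align}
and $t_a^-+t_b^-=m$ produces the factor $e^{-m}$. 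Collecting the $1/n$ terms yields exactly $-\capprox^T(\w)/n$ with $\capprox^T(\w)$ as defined in \eqref{eq:c_approx_1_T_b}.

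The main obstacle is controlling the error term uniformly in $T$. The second-order terms in the expansion of $\log\prod_{i<m}(1-i/n)$ are of size $m^3/n^2$, as are the corrections $t_c^+(t_c^-)^2/n^2$ and $(t_c^-)^3/n^2$. The quadratic remainder of the exponential, however, is of order $(\capprox^T/n)^2=O(m^4/n^2)$. To obtain the stated $O(|T|^3/n^2)$, I would first try to restrict to the regime $m^2\le n$, or more strongly to $m=O(n^{1/3})$ where every correction is small. Outside that regime I would argue that the bound is either trivial or only needed up to exponentially small contributions, using the exponential tails of Lemma~\ref{lemma:exponential-in-ball}. If that fails, I would take the error term to be relative and of the form $O(|T|^4/n^2)$, and check whether this weaker form is enough for the later variance computations.
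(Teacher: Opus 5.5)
Your argument follows the paper's proof. You obtain the same exact formula $\frac{(n)_m}{m!\,n^m}\prod_{c\in\{a,b\}}\bigl(1-\frac{t_c^-}{n}\bigr)^{n-t_c^+}$, and then expand the logarithms. Your $\frac1n\binom nm$ is the paper's count of $(n)_t/t!$ compatible relabellings, times $\frac1n$ for $R=r$. Your check that the arc and exclusion constraints characterize the event exactly is a step the paper leaves implicit.

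One caveat, which the paper shares: $t_a^-+t_b^-=|T|$ holds only if $T$ has height $<L$, since level-$L$ vertices lie in no $C^-$, as you note yourself. For height $L$ the leading factor is $e^{-(|T|-|T_L|)}$ rather than $e^{-|T|}$, and for height $>L$ the probability vanishes. So the lemma should be read for $T\in\mathfrak{L}_{<L}$, which is the only case used later.

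Your suspicion about the error term is correct. It reflects a slip in the statement, not something a finer argument could repair. Expanding the logarithms gives $\exp\bigl(-\capprox^T(\w)/n+O(|T|^3/n^2)\bigr)$ uniformly for, say, $|T|^2\le n$. Passing to $1-\capprox^T(\w)/n+\cdots$ then costs $\tfrac12\bigl(\capprox^T(\w)/n\bigr)^2$, and $\capprox^T(\w)$ can be of order $|T|^2$.

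For instance, take $\w_L\neq\w_{L-1}$ (e.g.\ $\w=abab$) and let $T$ be a star with $m-1$ leaves. Then $t^-_{\w_L}=1$, $t^+_{\w_L}=m-1$, $t^-_{\w_{L-1}}=m-1$ and $t^+_{\w_{L-1}}=0$, hence $\capprox^T(\w)=m^2-\tfrac52m+2$. The quadratic part comes in equal shares from $(n)_m/n^m$ and from $(1-\frac{m-1}{n})^n$; the latter forbids every vertex from $\phi_{\w_{L-1}}$-hitting a leaf.

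For such trees the relative remainder is asymptotic to $m^4/(2n^2)$ when $1\ll m\ll\sqrt n$. So your first plan, restricting to $m=O(n^{1/3})$, cannot yield $O(|T|^3/n^2)$. Your fallback is the right statement: a relative error $O(|T|^4/n^2)$ for $|T|^2\le n$. Larger trees are negligible by Lemma~\ref{lemma:exponential-in-ball} and by the super-exponential tails of $Z$ on $\{Z_{2k}=0\}$.

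This weaker form is all the later computations need. The error is only ever integrated against $\PBGW$ on $\{Z_{2k}=0\}$, where $\EBGW[Z^4\mathbf{1}_{Z_{2k}=0}]<\infty$ just as for $Z^3$.
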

\begin{proof}

	If the $\w$-in-ball has size $t$, there are $(n)_{t}/t!$ ways to relabel the vertices of $T$ with elements of $[n]$
	which are compatible with the relation $\conglab$.
	Given such a relabelling $T^{*}$ with root $r$, the event $\{B_{\w}(r)= T^*\}$ is equivalent to the intersection of the following three events: (i) $R$ must be $r$; (ii) each of the $t-1$ edges of $T^*$ has to be present; and (iii) no other edge has to be present in the $\w$-in-ball: this means that no vertex in $[n]$ can be mapped by the function $\mathbf{a}$ to one of the $t_a^-$ vertices of tree sitting below an $a$-layer, except for the vertices  from where the $t_a^+$ edges present in the tree origin (and similarly for the letter $b$). This happens with probability 
\begin{align}\label{eq:exact_prob_tree2}
	\bbP(B_{\w}(R)= T^*)=\frac{1}{n}\cdot\frac{1}{n^{t-1}}\left(1-\frac{t_{a}^-}{n}\right)^{n-t_a^+}\left(1-\frac{t_{b}^-}{n}\right)^{n-t_b^+},
\end{align}
and we obtain
\begin{equation}\label{eq:exact_prob_tree}
	\bbP(B_{\w}(R)\conglab T) =  \frac{(n)_{t}}{t!n^{t}}\left(1-\frac{t_{a}^-}{n}\right)^{n-t_a^+}\left(1-\frac{t_{b}^-}{n}\right)^{n-t_b^+}.
\end{equation}
	Now the lemma follows from~\eqref{eq:classical} and  a direct asymptotic expansions (write each product as an exponential of sums of logarithms, and expand each logarithm to second order).
\end{proof}

\begin{remark}
The constant $c_{approx}^T(\w)$ does not depend on the labelling of $T$, only on the underlying rooted tree structure. Thus, for all $\tau\in\mathfrak{P}$ we will write 
$c_{approx}^{\tau}(\w)$ to denote $c_{approx}^{\tau^{(lab)}}(\w)$.
\end{remark}

We recall the definition of a key notion. 
\begin{definition}[\bf Quasi-leaf]
Let $w\in\{a,b\}^k$. A vertex $x\in [n]$ is a \emph{$w$-leaf} (or just \emph{leaf} if $w$ is clear from the context) if $\bfw^{-1}(x)=\emptyset$, or equivalently if there are no vertices at distance $k$ from $x$ in $B_{w}(x)$. A vertex $x\in [n]$ is a \emph{$w$-quasi-leaf} (or just \emph{quasi-leaf} if $w$ is clear from the context) if $\bfw^{-1}(x)$ is a subset of leaves, or equivalently if there are no vertices at distance $2k$ from $x$ in $B_{w^2}(x)$. 
We denote by $\cQ_n=\cQ_n^w$ the (random) set of quasi-leaves.
\end{definition}

\begin{definition}[\bf Acyclic and cyclic vertex]
A vertex $x\in [n]$ is \emph{$w$-acyclic} (or just \emph{acyclic} if $w$ is clear from the context) if $B_{w^2}(x)$ is a rooted directed tree, and it is \emph{$w$-cyclic} (or just \emph{cyclic}) otherwise. We denote by $\cT_n=\cT_n^w$ the (random) set of acyclic vertices and by $\cC_n=\cC_n^w$ the (random) set of cyclic ones.

\end{definition}

\begin{lemma}[\bf Acyclic quasi-leaves]\label{lem:leaf_and_tree}
For all $x\in [n]$ and $u\in [0,1]$, we have
\begin{equation}
\bbE\bigl[u^{d(x)}\mathbf{1}_{x\in \cQ_n\cap \cT_n}\bigr] = g_k(u\eta_k) - \frac{\capprox^1(w,u)}{n}+O\Big(\frac{1}{n^2}\Big),
\end{equation}
where  $d(x)$ is the number of $\bfw$-preimages of $x\in [n]$ and $\capprox^1(w,u)\defi  \EBGW[u^{Z_k}\capprox^{\tau_{\mathbf{Z}}}(w^2)\mathbf{1}_{Z_{2k}=0}]$.
\end{lemma}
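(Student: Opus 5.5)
The plan is to decompose the expectation according to the shape of the $w^2$-in-ball of $x$ and apply Lemma~\ref{lem:prob_ball_given_tree} with $\w=w^2$ (so $L=2k$). By symmetry the quantity does not depend on $x$, so we may replace $x$ by a uniform vertex $R$. On the event $\{x\in\cT_n\}$ the ball $B_{w^2}(x)$ is a labelled tree $T$. Both the quasi-leaf property and the value of $d(x)$ are then read off from $T$: $x$ is a quasi-leaf if and only if $T$ has height less than $2k$, that is $T_{2k}=\emptyset$, and $d(x)=|T_k|$. We therefore write
\begin{align}
\bbE\bigl[u^{d(x)}\mathbf{1}_{x\in \cQ_n\cap \cT_n}\bigr]=\sum_{T\in\mathfrak{L}_{<2k}} u^{|T_k|}\,\bbP\bigl(B_{w^2}(R)\conglab T\bigr),
\end{align}
where the sum runs over labelled trees up to $\conglab$, one representative per class.

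Next we insert Lemma~\ref{lem:prob_ball_given_tree} term by term. The main term is $\sum_T u^{|T_k|}\PBGW(\tau_{\mathbf Z}^{(lab)}=T)=\EBGW[u^{Z_k}\mathbf 1_{Z_{2k}=0}]$. Conditioning on $Z_k$ and using~\eqref{eq:SOEDS-conditional}, this equals $\EBGW[(u\eta_k)^{Z_k}]=g_k(u\eta_k)$. The $1/n$ term is $-\frac1n\EBGW[u^{Z_k}\capprox^{\tau_{\mathbf Z}}(w^2)\mathbf 1_{Z_{2k}=0}]=-\capprox^1(w,u)/n$, using that $\capprox^T$ depends only on the underlying tree (the Remark after the lemma). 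This expectation is finite because $\capprox^{\tau}$ is $O(|\tau|^2)$ and the total progeny up to a fixed generation has exponential moments.

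The hard step is justifying the $O(1/n^2)$ error, since the lemma's error term $O(|T|^3/n^2)$ is not uniform in $|T|$. Summed against $\PBGW$, it contributes $O(\EBGW[|\tau|^3]/n^2)=O(1/n^2)$, because generations $0,\dots,2k$ of a Po(1)-BGWT have finite moments. The real concern is whether the implicit constant in Lemma~\ref{lem:prob_ball_given_tree} is uniform for all $|T|\le n$. I would split at $|T|\le n^{1/3}$, say, where the expansion of~\eqref{eq:exact_prob_tree} is uniform (one checks the logarithm expansions with $t/n\to0$). For the large trees I would bound the exact probability directly: $\bbP(|B_{2k}(R)|>n^{1/3})\le e^{-cn^{1/3}}$ by Lemma~\ref{lemma:exponential-in-ball}, and likewise $\PBGW(|\tau_{\mathbf Z}|>n^{1/3})$ and the tail of $\EBGW[|\tau|^2\mathbf 1_{|\tau|>n^{1/3}}]$ are superpolynomially small. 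These contributions are absorbed into $O(1/n^2)$, and since $u\in[0,1]$ every bound is uniform in $u$.
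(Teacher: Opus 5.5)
Your proposal is correct and follows the paper's proof: you pass to a uniform vertex $R$, decompose $\{R\in\cQ_n\cap\cT_n\}$ according to $B_{w^2}(R)\conglab T$ with $T\in\mathfrak{L}_{<2k}$, insert Lemma~\ref{lem:prob_ball_given_tree} term by term, and identify the main term $\EBGW[u^{Z_k}\mathbf{1}_{Z_{2k}=0}]=g_k(u\eta_k)$ via~\eqref{eq:SOEDS-conditional}. Your truncation at $|T|\le n^{1/3}$ is a sound way to make explicit the uniformity in $|T|$ that the paper assumes when it sums the $O(|T|^3/n^2)$ error against $\PBGW$; two harmless corrections: every BGWT moment and tail you use must carry the indicator $\mathbf{1}_{Z_{2k}=0}$ (without it $\EBGW[|\tau|]=\infty$ for the critical tree), and in the small-tree range the expansion actually gives $O(|T|^4/n^2)$, because $\capprox^T$ can be of order $|T|^2$.
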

See also Section~\ref{subsec:def-cw-selfcontained}  for a self-contained expression of~$\capprox^1(w,u)$.
\begin{proof}
By symmetry, the events $\{x\in \cQ_n\cap \cT_n\}$ and $\{R\in \cQ_n\cap \cT_n\}$, where $R$ is uniformly chosen in $[n]$, have the same probability, and the latter can be written as $\cup_{T\in \mathfrak{L}_{<2k}} \{B_{\omega}(R)\conglab T\}$.
From Lemma~\ref{lem:prob_ball_given_tree},
\begin{equation}
\begin{aligned}
\bbE\bigl[u^{d(R)}\mathbf{1}_{R\in \cQ_n\cap \cT_n}\bigr]
&= \sum_{T\in \mathfrak{L}_{<2k}} \bbE\Bigl[u^{d(R)}\mathbf{1}_{B_{\omega}(R)\conglab T}\Bigr] \\
&= \sum_{T\in \mathfrak{L}_{<2k}} u^{|T_k|}\bbP(B_{\omega}(R)\conglab T) \\
&=\sum_{T\in \mathfrak{L}_{<2k}} u^{|T_{k}|}\PBGW\bigl(\tau_\mathbf{Z}^{(lab)}=T\bigr)\left(1-\frac{\capprox^T(w^2)}{n}+O\Big(\frac{|T|^3}{n^2}\Big)\right)\\ 
	&= \EBGW(u^{Z_{k}}\mathbf{1}_{Z_{2k}=0}) - \frac{\EBGW\bigl[u^{Z_k}\capprox^{\tau_\mathbf{Z}}(w^2)\mathbf{1}_{Z_{2k}=0}\bigr]}{n}+ \frac{O(1)}{n^2}\EBGW[Z^3u^Z\mathbf{1}_{Z_{2k}=0}]\\
&= g_k(u\eta_k) -\frac{\capprox^1(w,u)}{n}+ O\Big(\frac{1}{n^2}\Big).
\end{aligned}
\end{equation}
	The last equality follows from the fact that $\EBGW[Z^3u^Z\mathbf{1}_{Z_{2k}=0}] <\infty$, which is direct from the explicit expression~\eqref{eq:WPEOW} and the fact that $u\leq 1$.\qedhere

\end{proof}

The following lemma gives a rough bound for the probability of a cyclic vertex, this will be sufficient for our purposes.
\begin{lemma}[\bf Cyclic vertex]\label{lem:not_a_tree}
For all $x\in [n]$, we have
\begin{equation}
\bbP\bigl(x\in \cC_n\bigr) = O\Bigl(\frac{1}{n}\Bigr). 
\end{equation}
\end{lemma}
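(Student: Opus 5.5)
We show that $B_{w^2}(x)$ fails to be a tree only if an exploration of it hits an already revealed vertex, and we bound the probability of such a collision by $O(1/n)$ using the exponential tail of Lemma~\ref{lemma:exponential-in-ball}.

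Set $L=2k$ and recall $B_{w^2}(x)\subset B_L(x)$. Explore $B_{w^2}(x)$ level by level, in the spirit of the proof of Lemma~\ref{lemma:exponential-in-ball}. At step $\ell$ we reveal, for each vertex $y$ currently at distance $\ell-1$, the set $\phi_{c}^{-1}(y)$, where $c=w^2_{2k-\ell+1}$ is the relevant letter. The in-ball is a rooted directed tree unless one of two events occurs:
\begin{compactitem}
\item[(a)] some newly found preimage is a vertex already present in the explored ball;
\item[(b)] a vertex is found as a $c$-preimage of two distinct vertices, which is impossible since $\phi_c$ is a function.
\end{compactitem}
So only (a) matters, and it covers every way a cycle or a merging of paths can arise, including a vertex appearing at two different heights.

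Now condition on the exploration history, with a current explored set of size $s$. Consider a vertex $z$ that already lies in the explored set and whose $c$-image has not yet been revealed. The probability that $\phi_c(z)$ equals a specified vertex $y$ is then at most $1/(n-s)$, or exactly $1/n$ if we handle the revealed information carefully. There are at most $s$ candidates $z$ and at most $s$ targets $y$ over the whole exploration. Hence, on the event $\{|B_L(x)|\le t\}$, a union bound gives
\[
\bbP(\text{collision}\mid |B_L(x)|\le t)\le \frac{t^2}{n-t}.
\]
To make this rigorous without conditioning on the future size, we sum over exploration steps instead. Let $S_j$ denote the explored set size before step $j$. Step $j$ creates a collision with probability at most $S_j\cdot(\text{number of targets})/(n-S_j)$, and the number of targets revealed at that step is bounded by $|B_L(x)|$.

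This gives
\[
\bbP(x\in\cC_n)\le \bbE\Bigl[\min\Bigl(1,\tfrac{2|B_L(x)|^2}{n}\Bigr)\Bigr]+\bbP\bigl(|B_L(x)|>n/2\bigr).
\]
By Lemma~\ref{lemma:exponential-in-ball}, $\bbE|B_L(x)|^2=O(1)$ and $\bbP(|B_L(x)|>n/2)\le e^{-cn/2}$. Therefore $\bbP(x\in\cC_n)=O(1/n)$.

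The main obstacle is making the union bound over the random exploration legitimate. One clean route is to dominate the process by one that runs until size $t$ or until extinction, and then average over $t$ with the exponential tail. A simpler alternative is a first-moment count: bound the expected number of pairs $(P,P')$ where $P,P'$ are distinct label-consistent directed paths of length at most $2k$ ending at $x$ that share a starting vertex, or a path that closes a cycle. For fixed shapes, each such configuration with $m$ vertices requires $m$ prescribed arcs, which has probability $n^{-m}$, while there are only $O(n^{m-1})$ ways to choose its vertices, giving $O(1/n)$ per shape with finitely many shapes. I would try this first-moment route first, since it avoids conditioning subtleties entirely.
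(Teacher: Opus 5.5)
Your proposal is correct and follows the paper's argument: explore $B_{w^2}(x)$ breadth-first, bound the chance that a revealed preimage falls back into the explored set by (size)$^2/n$, and average against the exponential tail of Lemma~\ref{lemma:exponential-in-ball}. Your step-by-step summation of predictable per-step collision bounds is a cleaner way of handling what the paper does informally by conditioning on $|B_{w^2}(x)|=t$. The first-moment alternative you sketch would also work and bypasses Lemma~\ref{lemma:exponential-in-ball} entirely, but it needs one more observation: two distinct label-consistent walks into $x$ from a common start span at least as many arcs as vertices.
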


\begin{proof}
Let $\w=w^2$. Given that the ball $|B_{\w}(x)|$ has size $t$, the probability that $B_{\w}(x)$ is not a tree is at most $t^2/n$: indeed, in the BFS exploration of the $w^2$-in-ball, at each of the $t$ steps, the probability that the new in-edge has its origin at a vertex already present in the in-ball is at most $t/n$.

Let $c=c(2k)$ be the constant from Lemma~\ref{lemma:exponential-in-ball}. It follows that
\begin{align}
\bbP(x\in \cC_n)
&= \sum_{t\geq 1} \bbP(|B_{\w}(x)|= t) 
\bbP\bigl(x\in \cC_n\mid |B_{\w}(x)|= t\bigr) 
\leq \frac{1}{n} \sum_{t\geq 1} t^2 e^{-c (t-1)} 
= O\Bigl(\frac{1}{n}\Bigr).
\qedhere
\end{align}

\end{proof}

The previous lemmas imply the following approximation of the expected number of weighted quasi-leaves:
\begin{align}\label{eq:expectation_full}
\bbE[Q_n^w(u)]]=  g_k(u\eta_k) n -\capprox^1(w,u)+\bbE\bigl[u^{d(1)}\mathbf{1}_{1\in \cQ_n\cap \cC_n}\bigr]n+O\Bigl(\frac{1}{n}\Bigr).
\end{align}

In particular, we obtain the convergence of the expectation in Theorem~\ref{thm:main_leaves}.
\begin{corollary}\label{cor:prob_leaf}
For every $u\in [0,1]$ we have 
\begin{align}
{\bbE[Q_n^w(u)]}= g_k(u\eta_k) n + O(1).
\end{align}
\end{corollary}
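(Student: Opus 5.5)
Corollary~\ref{cor:prob_leaf} follows by summing over the vertices $x\in[n]$ and splitting according to whether $x$ is acyclic or cyclic. By linearity of expectation and symmetry,
\begin{align}
\bbE[Q_n^w(u)] = \sum_{x\in[n]} \bbE\bigl[u^{d(x)}\mathbf{1}_{x\in \cQ_n\cap \cT_n}\bigr] + \sum_{x\in[n]} \bbE\bigl[u^{d(x)}\mathbf{1}_{x\in \cQ_n\cap \cC_n}\bigr].
\end{align}
This is exactly identity~\eqref{eq:expectation_full}, so the task reduces to bounding the three correction terms there by $O(1)$.

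\textbf{Acyclic part.} By Lemma~\ref{lem:leaf_and_tree}, the first sum equals $n\,g_k(u\eta_k) - \capprox^1(w,u) + O(1/n)$. It remains to see that $\capprox^1(w,u)$ is bounded uniformly in $u\in[0,1]$. The definition~\eqref{eq:c_approx_1_T_b} gives $|\capprox^{\tau}(w^2)| \le C|\tau|^2$, because each of $t_c^\pm$ is at most $|\tau|$. Using $u\le 1$,
\begin{align}
|\capprox^1(w,u)| \le C\,\EBGW\bigl[Z^2\mathbf{1}_{Z_{2k}=0}\bigr].
\end{align}
The right-hand side is finite by~\eqref{eq:WPEOW}; it is also bounded by $\EBGW[(Z_0+\dots+Z_{2k})^2]<\infty$, since Poisson offspring have all moments finite.

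\textbf{Cyclic part.} Since $u\in[0,1]$, each summand is at most $\bbP(x\in\cC_n)$, which is $O(1/n)$ by Lemma~\ref{lem:not_a_tree}. Summing over the $n$ vertices gives $O(1)$, uniformly in $u$.

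Combining the two parts gives $\bbE[Q_n^w(u)] = g_k(u\eta_k)n + O(1)$. There is no real obstacle. The one point needing care is that all implicit constants are uniform in $u\in[0,1]$, and this holds because $u^{d}\le 1$ everywhere.
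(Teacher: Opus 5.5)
Your proof is correct and matches the paper's argument, which obtains the corollary directly from identity~\eqref{eq:expectation_full}: Lemma~\ref{lem:leaf_and_tree} handles acyclic quasi-leaves and Lemma~\ref{lem:not_a_tree} (with $u^{d}\le 1$) handles cyclic ones. You also spell out that $\capprox^1(w,u)$ is bounded uniformly in $u\in[0,1]$, which the paper leaves implicit.
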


Note that we have not estimated very precisely the contribution from quasi-leaves that are cycles, namely the limiting value of $\bbE\bigl[u^{d(1)}\mathbf{1}_{1\in \cQ_n\cap \cC_n}\bigr]n$, which could certainly be computed. 
In fact, it was computed in the special case $u=0$ in the first version of this article~\cite{ourpaper_v1}. However, as will become clear later, this term plays no role in the computation of the variance of $Q_n^w(u)$. To avoid unnecessary technicalities, we therefore omit its evaluation here.

\section{Variance of $Q_n^w(u)$}
\label{sec:second_moments}

In this section we give an asymptotic expression for the variance of  $Q_n^w(u)$, concluding the proofs of Theorems~\ref{thm:main_leaves} and~\ref{thm:main_variance}.

\subsection{Pairs of quasi-leaves}

Recall that $d(x)$ is the number of $\bfw$-preimages of $x\in [n]$.
For all $x,y\in [n]$, $x\neq y$ and $\omega:=w^2$, we may write following equality:
\begin{align}
\bbE\bigl[u^{d(x)+d(y)} \mathbf{1}_{x,y\in\cQ_n}\bigr] &= 
\bbE\bigl[u^{d(x)+d(y)} \mathbf{1}_{x,y\in\cQ_n\cap \cT_n}\mathbf{1}_{B_{\omega}(x)\cap B_{\omega}(y)= \varnothing}\bigr]\label{eq:first_term}\\ 
&\hspace{1cm}+ 
\bbE\bigl[u^{d(x)+d(y)} \mathbf{1}_{x,y\in\cQ_n\cap \cT_n}\mathbf{1}_{B_{\omega}(x)\cap B_{\omega}(y)\neq \varnothing}\bigr]\label{eq:second_term} \\
&\hspace{1cm}+ 2\bbE\bigl[u^{d(x)+d(y)} \mathbf{1}_{x\in\cQ_n\cap \cC_n}\mathbf{1}_{y\in \cQ_n}\bigr]\label{eq:third_term} \\
&\hspace{1cm}- \bbE\bigl[u^{d(x)+d(y)} \mathbf{1}_{x,y\in\cQ_n\cap \cC_n}\bigr]\label{eq:fourth_term}. 
\end{align}
As we will see, the first term~\eqref{eq:first_term} will give the only constant contribution. It will also give an explicit error term stemming from the BGWT-approximation of order $1/n$. The second and third terms,~\eqref{eq:second_term} and~\eqref{eq:third_term}, each covering an unlikely event (either intersecting in-balls or the presence of a cycle) will also give corrections of order $1/n$. We will compute~\eqref{eq:second_term} explicitly, while the contribution of~\eqref{eq:third_term} will vanish in computing the variance. The last term,~\eqref{eq:fourth_term}, which requires two exceptional events to happen, will be of order $1/n^2$ and thus negligible.

Let us first study~\eqref{eq:first_term}. We approximate the mutually exclusive $w$-in-balls of $x,y$ by Po(1)-BGWTs. As in Lemma~\ref{lem:leaf_and_tree}, we obtain a precise expression for the second-order term in this approximation.
 \begin{lemma}[\bf Acyclic non-intersecting quasi-leaves]\label{lemma:noninttreeleaves}
For all $x,y\in[n]$ with $x\neq y$ and uniformly over $u\in [0,1]$, we have
\begin{align}\label{eq:OEMVD}
 \bbE\bigl[u^{d(x)+d(y)} \mathbf{1}_{x,y\in\cQ_n\cap \cT_n}\mathbf{1}_{B_{\omega}(x)\cap B_{\omega}(y)= \varnothing}\bigr] = g_k(u\eta_k)^2 - \frac{\capprox^2(w,u)}{n}+ O\Big(\frac{1}{n^2}\Big),
\end{align}
where
\begin{align}
\capprox^2(w,u)\defi  \frac{1}{2} \EBGW^2\Bigl[u^{Z_{k}^{(1)}+Z_{k}^{(2)}}\bigl(({Z}^{(1)}+{Z}^{(2)}-2)({Z}^{(1)}+{Z}^{(2)}+1)+\overline{Z}^a+\overline{Z}^b\bigr)\mathbf{1}_{Z_{2k}^{(1)}=Z_{2k}^{(2)}=0}\Bigr].
\end{align}
	 Here we recall the definition of the sets $A^+, A^-, B^+,B^-$ from Section~\ref{subsec:probaleaf}, and for a letter $c\in \{a,b\}$ of respective capital version $C\in \{A,B\}$, we let $\overline{Z}^c = \overline{Z}_{C^-}(\overline{Z}_{C^-}-2\overline{Z}_{C^+})$ with $\overline{Z}_U=Z^{(1)}_U+Z^{(2)}_U$ for every $U\subseteq \mathbb{N}$.
\end{lemma}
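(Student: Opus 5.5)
The plan is to mirror the proof of Lemma~\ref{lem:prob_ball_given_tree} and Lemma~\ref{lem:leaf_and_tree}, now with a forest of two trees rooted at the distinct vertices $x$ and $y$. The event $\{x,y\in\cQ_n\cap\cT_n,\ B_\omega(x)\cap B_\omega(y)=\varnothing\}$ is the disjoint union, over ordered pairs $(T^{(1)},T^{(2)})\in\mathfrak{L}_{<2k}^2$, of the events that $B_\omega(x)\conglab T^{(1)}$, $B_\omega(y)\conglab T^{(2)}$, and the two balls are disjoint. On such an event, $d(x)+d(y)=|T^{(1)}_k|+|T^{(2)}_k|$.

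\textbf{Step 1: exact probability.} Let $t_i=|T^{(i)}|$ and $t=t_1+t_2$. Since the roots are fixed to be $x$ and $y$, the relabellings compatible with $\conglab$ number $(n-2)_{t-2}/((t_1-1)!(t_2-1)!)$. For each such disjoint relabelling, the $t-2$ edges must be present, contributing $n^{-(t-2)}$. No other $a$-arc may land on the $\bar t_a^-$ vertices sitting below an $a$-layer, apart from the $\bar t_a^+$ prescribed arcs, and similarly for $b$; here $\bar t_c^*=t_c^*(T^{(1)})+t_c^*(T^{(2)})$. This is exactly the forest version of~\eqref{eq:exact_prob_tree2}, so
\[
\frac{(n-2)_{t-2}}{(t_1-1)!\,(t_2-1)!\,n^{t-2}}\Bigl(1-\tfrac{\bar t_a^-}{n}\Bigr)^{n-\bar t_a^+}\Bigl(1-\tfrac{\bar t_b^-}{n}\Bigr)^{n-\bar t_b^+}.
\]
Equivalently, one can use the set-rooted quantities $h_c^\pm$ with $\mathcal R=\{x,y\}$.

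\textbf{Step 2: expansion.} Expand to second order, using $\log(1-s/n)=-s/n-s^2/(2n^2)+O(s^3/n^3)$. The exponential factor gives $e^{-\bar t_a^--\bar t_b^-}=e^{-t}$ times
\[
1-\frac{1}{2n}\bigl[\bar t_a^-(\bar t_a^--2\bar t_a^+)+\bar t_b^-(\bar t_b^--2\bar t_b^+)\bigr].
\]
For the falling factorial, $\prod_{j=2}^{t-1}(1-j/n)=1-\frac{1}{n}\bigl(\binom{t}{2}-1\bigr)+O(t^4/n^2)$, and $t(t-1)-2=(t-2)(t+1)$. Comparing with~\eqref{eq:classical} for each tree separately, with root label fixed (which accounts for the factor $t_1t_2$ relating $(t_i-1)!$ to $t_i!$), the main term is $\PBGW^2(\tau^{(lab)}_{\mathbf Z^{(1)}}=T^{(1)},\tau^{(lab)}_{\mathbf Z^{(2)}}=T^{(2)})$. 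The $1/n$ correction is then $-\tfrac12[(t-2)(t+1)+\bar Z^a+\bar Z^b]$, which matches the claimed formula for $\capprox^2(w,u)$.

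\textbf{Step 3: summation and main obstacle.} Multiply by $u^{|T^{(1)}_k|+|T^{(2)}_k|}$ and sum over pairs. The main term gives $\EBGW[u^{Z_k}\mathbf 1_{Z_{2k}=0}]^2=g_k(u\eta_k)^2$, and the $1/n$ term gives $\capprox^2(w,u)$. The one real obstacle is uniform control of the error. The expansions are valid with error $O(\mathrm{poly}(t)/n^2)$ only when $t\le n^{1/3}$, say. For those $t$, the error sums to $O(n^{-2})$ because $\EBGW[Z^m u^Z\mathbf 1_{Z_{2k}=0}]<\infty$ for every $m$, uniformly in $u\in[0,1]$, by~\eqref{eq:WPEOW} (finite-depth trees have all moments). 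For larger $t$, both the true probability and the BGWT weight are exponentially small: use Lemma~\ref{lemma:exponential-in-ball} for the former and exponential tails of the depth-$<2k$ BGWT for the latter. These large-$t$ contributions are therefore $O(n^{-2})$. With this truncation in place the lemma follows.
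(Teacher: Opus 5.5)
Your proposal is correct and follows the paper's proof: an exact product formula for the two disjoint in-balls being prescribed trees, a second-order expansion of the falling factorial and of the two avoidance factors, and summation against the product BGWT weights. Your error bookkeeping ($O(t^4/n^2)$ for the expansion, plus truncation at $t\le n^{1/3}$ using exponential tails) is, if anything, more careful than the paper's uniform $O(|T+T'|^3/n^2)$.

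The only difference is cosmetic. The paper first replaces $(x,y)$ by a uniform ordered pair $(R,R')$ of distinct vertices, so its count $(n)_{t}/(t_1!\,t_2!)$ holds verbatim for its relation $\conglab$. With pinned roots, your count $(n-2)_{t-2}/((t_1-1)!\,(t_2-1)!)$ is only right for a root-pinned version of $\conglab$; under the paper's definition the count would depend on $x$ and on the root label. As your $t_1t_2$ remark shows, the sums over trees nevertheless agree.
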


See also Section~\ref{subsec:def-cw-selfcontained}  for a self-contained expression of~$\capprox^2(w,u)$.
\begin{proof}
For all $T,T'\in \mathfrak{L}$, $t=|T|$ and $t'=|T'|$, we have, arguing as in the proof of Lemma~\ref{lem:prob_ball_given_tree},  with $(R,R')$ a uniformly chosen ordered pair of distinct elements in $[n]$,
\begin{equation}
\begin{aligned}
\bbP(B_{\w}(R)\conglab T,&\, B_{\w}(R')\conglab T',B_{\w}(R)\cap B_{\w}(R')= \varnothing)\\
&=\frac{(n)_{t+t'}}{t!t'!n(n-1)n^{t+t'-2}}  \left(1-\frac{m_a^-}{n}\right)^{n-m_a^+}\left(1-\frac{m_b^-}{n}\right)^{n- m_a^+}\\
	&= \PBGW^{2}(\tau_{\mathbf{Z}^{(1)}}^{(lab)}= T, \tau_{\mathbf{Z}^{(2)}}^{(lab)}= T') \left(1-\frac{\capprox^{T,T'}(w)}{n}+O\Big(\frac{{|T+T'|^3}}{n^2}\Big)\right),
\end{aligned}
\end{equation}
where
\begin{align}
\capprox^{T,T'}(w)\defi  \frac{1}{2}\bigl((|T|+|T'|-2)(|T|+|T'|+1)+m_a^-(m_a^--2m_a^+) + m_b^-(m_b^--2m_b^+)\bigr),
\end{align}
and  $m_c^* \defi  t_c^* +(t')_c^*$ for $c\in \{a,b\}$ and $*\in \{+,-\}$. The end of the proof is analogous to the one of Lemma~\ref{lem:leaf_and_tree}.
\end{proof}

\subsection{Size-biased Bienaymé-Galton-Watson Trees and spine decomposition}
\label{subsec:sizebiased}
Define the \emph{size-biased Po(1)-BGWT with respect to generation $i\geq 0$},   $\mathbf{Z}^{\langle i\rangle}$, through its probability function $\PBGW^{\langle i\rangle}$ as follows: for any event $A$,
\begin{align}
\PBGW^{\langle i\rangle}(\mathbf{Z}^{\langle i\rangle}\in A) \defi  \EBGW[Z_i\mathbf{1}_{\mathbf{Z}\in A}].
\end{align}
Observe that it is a probability distribution, since $\EBGW[Z_i]=1$. In particular, by \eqref{eq:SOEDS-conditional}, for all $0\leq i\leq L$ 
\begin{align}\label{eq:SPEOD}
\PBGW^{\langle i\rangle}(Z^{\langle i\rangle}_L=0)= \EBGW\bigl[Z_i \mathbf{1}_{Z_L=0}\bigr]= \EBGW[Z_i\eta_{L-i}^{Z_i}].
\end{align}
It is well known that a size-biased Po(1)-BGWT admits a particularly transparent description through the \emph{spine decomposition}, which we now recall (we will need to adapt it to more complicated cases later). It relies on the following direct observation (which can be proved from the fact that the probability generating function of a Po(1) satisfies $\frac{sd}{ds}f(s)=sf(s)$):
\begin{classicalFact}\label{classicalFact:classical}
	Let $X$ and $Y$ be respectively a \textnormal{Po(1)} random variable and a size-biased \textnormal{Po(1)} random variable\footnote{i.e. $\bbE[f(Y)]=\bbE[Xf(X)]$ for any measurable function $f$.}. Then $Y$ has the same law as $1+X$.
\end{classicalFact}

Now the spine decomposition works as follows: Given a size-biased Po(1)-BGWT, mark a uniform vertex at generation $i$. Consider the ancestral line from the root to this vertex: at every step of this path, one child of the current vertex is chosen as the next descendent, hence the random variable, say $Y$, giving the offspring of that vertex is a \emph{size-biased} Po(1)-variable. The number of children not in the branch at each step is thus distributed as a $Y-1$, which by the previous fact is an unbiased Po(1). 
Therefore a size-biased Po(1)-BGWT can be produced by the following procedure: first sample a line of height $i$ originating from the root, and then on each vertex of this line attach independent (unbiased) Po(1)-BGWTs. See Figure~\ref{fig:branch-simple}. Note that on this representation and the remaining ones, we disregard the plane structure of the BGWT: to recover it fully, one should reshuffle the order of children of vertices along the spine (so that the index of the spine child among all children of a spine vertex is uniform).

From this equivalence, we have two ways to express the probability that the size-biased Po(1)-BGWT is extinct by generation~$L$ in~\eqref{eq:SPEOD}:
\begin{align}
\PBGW^{\langle i\rangle}(Z^{\langle i\rangle}_L=0)&= 
	\Xi_L^{\mathbf{x},1}\left[ \frac{x_i\partial}{\partial x_i}G_{L}(x_0,x_1,\dots,x_{L})\right]\\
	&= \prod_{t=0}^{i} \eta_{L-t}. \label{eq:branch-simple}
\end{align}
The first expression is~\eqref{eq:WPEOW} with $m_i=1$ and the other ones equal to zero, and the second follows either by calculations or by the spine decomposition: conditioned to a marked vertex at generation $i$, the process is extinct at time $L$ if and only if each independent Po(1)-BGWT hanging from the ancestral line of that vertex, starting at height $0,1,\dots,i$ has height at most $L,L-1,\dots,L-i$ respectively; see Figure~\ref{fig:branch-simple}. 

We will often consider the random pair $(\tau_{\mathbf{Z}^{\langle i\rangle}},V)$, where $V$ is a vertex chosen uniformly at random among vertices at height $i$; this is well defined as by definition $Z^{\langle i\rangle}_i>0$. For all $\tau\in \mathfrak{P}$ of height at least $i$, and $v$ a vertex of $\tau$ at height $i$, we have
\begin{align}\label{eq:un-biasing}
\muBGW^{{\langle i\rangle}}(\tau;v)
\defi \PBGW^{{\langle i\rangle}}(\tau_{\mathbf{Z}^{\langle i\rangle}}=\tau,V=v)
= \EBGW[Z_i\mathbf{1}_{\tau_{\mathbf{Z}}=\tau}\mathbf{1}_{V=v}] = \muBGW(\tau).
\end{align}

\begin{figure}
	\centering
	\includegraphics[width=0.3\linewidth]{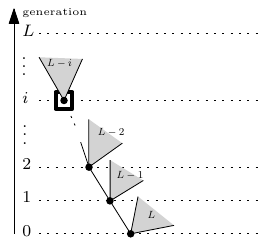}
	\caption{Direct interpretation of the formula in~\eqref{eq:branch-simple} for $\PBGW^{\langle i\rangle}(Z^{\langle i\rangle}_L=0)$. The marked vertex at generation $i$ (among $Z^{\langle i\rangle}_i$) is squared. The number in each subtrees indicate a strict upper bound on the height of its offspring under the event $Z^{\langle i\rangle}_L=0$, each contributing a factor $\eta_{l-t}$ in~\eqref{eq:branch-simple}. }
	\label{fig:branch-simple}
\end{figure}

\subsection{Colliding branching processes}\label{sec:coll-BP}

In order to understand the term~\eqref{eq:second_term}, corresponding to pairs of quasi-leaves whose corresponding $w^2$-in-balls intersect, we now introduce a BGWT-based stochastic process, that can be seen as a coupling of two Po(1)-BGWTs that guarantees that one element at generation $i$ of the first process and one element at generation $j$ of the second one have identical offspring for $\ell$ generations. These two common subprocesses will correspond to the intersection of two in-balls in random functions.

Given $\tau\in\mathfrak{P}$ and a vertex $v$, we denote by $\tau_v$ the subtree of $\tau$ rooted at $v$ and by $\tau_v(\ell)$ the subtree induced by the vertices at distance at most $\ell$ from $v$ in $\tau_v$; we use the analogous notation for all $T\in \mathfrak{L}$.

\begin{definition}[Free vertices]\label{def:free}
	For all $i,j,\ell\geq 0$, we let $\mathfrak{P}^{i,j,\ell}$ be the set of tuples $(\tau,\tau';v,v')$ where $\tau, \tau'\in\mathfrak{P}$, $v$ is a vertex of $\tau$ at height $i$, $v'$ is a vertex of $\tau'$ at height $j$, such that $\tau_v(\ell)\congplane \tau'_{v'}(\ell)$. Then we let $\mathcal{F}_\ell(\tau,\tau';v,v')\defi V(\tau)\cup V(\tau')\setminus V(\tau'_{v'}(\ell-1))$ be the set of \emph{free vertices}, and $f_\ell(\tau,\tau';v,v')\defi |\mathcal{F}_\ell(\tau,\tau';v,v')|$ be their number.
\end{definition}

We now construct a random variable with values in $\mathfrak{P}^{i,j,\ell}$ through a random process. 
Let $0 \le i < j \le L-1$ and $0\leq \ell\leq L-j$. Consider the joint  process $(\mathbf{Y},\mathbf{Y}')$ constructed as follows. 
Let the first $i$ generations of $\mathbf{Y}$ be distributed as the first $i$ generations of $\mathbf{Z}^{\langle i \rangle}$, and, independently, the first $j$ generations of $\mathbf{Y}'$ be distributed as the first $j$ generations of $\mathbf{Z}^{\langle j \rangle}$. Choose independently and uniformly at random  $V$ at generation $i$ of $\mathbf{Y}$ and $V'$ at generation $j$ of $\mathbf{Y}'$. Reveal the rest of the process in such a way that the descendants of these two marked vertices evolve jointly for $\ell$ generations. Precisely, (i) attach independent Po(1)-BGWTs to all individuals at generation $i$ of $\mathbf{Y}$, and to all individuals but $V'$ at generation $j$ of $\mathbf{Y}'$, (ii) reveal the descendants of $V'$ for $\ell$ generations so they coincide with the descendants of $V$ for $\ell$ generations (this step is deterministic), and (iii) attach independent Po(1)-BGWTs to all descendants of $V'$ whose offspring has not been revealed yet. Let $\bbP^{i,j,\ell}_{\mathbf{Y},\mathbf{Y}'}$ and $\bbE^{i,j,\ell}_{\mathbf{Y},\mathbf{Y}'}$ respectively denote the probability distribution and the expectation of the process.

Observe that the marginals are distributed as $\mathbf{Z}^{\langle i \rangle}$ and $\mathbf{Z}^{\langle j \rangle}$ respectively, i.e. $(\mathbf{Y},\mathbf{Y}')$ is a coupling of $\mathbf{Z}^{\langle i \rangle}$ and $\mathbf{Z}^{\langle j \rangle}$.

Let $\tau_\mathbf{Y}$ and $\tau_{\mathbf{Y'}}$ be the tree representations of $\mathbf{Y}$ and $\mathbf{Y}'$, and $\widehat{\tau}_\mathbf{Y}$ and $\widehat{\tau}_{\mathbf{Y'}}$ their truncations at height $i$ and $j$ respectively. Define
\begin{align}
\mu^{i,j,\ell}_{\mathbf{Y},\mathbf{Y}'}(\tau,\tau';v,v')\defi  \bbP^{i,j,\ell}_{\mathbf{Y},\mathbf{Y}'} (\tau_{\mathbf{Y}}=\tau,\tau_{\mathbf{Y}'}=\tau',V=v,V'=v'), \qquad \text{for all }(\tau,\tau';v,v')\in \mathfrak{P}^{i,j,\ell}.
\end{align}

\begin{lemma}\label{lemma:measure_tau2_u_v}
	Let $(\tau,\tau';v,v') \in \mathfrak{P}^{i,j,\ell}$. 
We have
\begin{equation}\label{eq:measure_tau2_u_v}
	\mu^{i,j,\ell}_{\mathbf{Y},\mathbf{Y}'}(\tau,\tau';v,v') = 
	\frac{1}{\prod_{x\in \mathcal{F}_\ell(\tau,\tau';v,v')} n_x!}
	e^{-f_\ell(\tau,\tau';v,v')},
\end{equation}
	where for all $x \in V(\tau)\cup V(\tau')$, $n_x$ is the number of children of the vertex $x$ in the corresponding tree.
\end{lemma}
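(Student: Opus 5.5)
We compute the probability by following the construction of $(\mathbf{Y},\mathbf{Y}')$ step by step. For each step we record the factor it contributes, and then check that these factors combine into the product over free vertices. The guiding principle is that every vertex whose offspring is sampled as an independent Po(1) contributes $e^{-1}/n_x!$. The only exceptions are the spine vertices and the copied descendants of $V'$, and we must show that these exceptional factors cancel exactly.

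\emph{Step 1: the truncated parts.} By~\eqref{eq:un-biasing}, the first $i$ generations of $\mathbf{Z}^{\langle i\rangle}$, together with the uniform marked vertex $V$, have the law $\EBGW[Z_i\mathbf{1}_{\widehat\tau=\cdot}\mathbf{1}_{V=v}]$. The size-bias factor $Z_i$ cancels the factor $1/Z_i$ coming from the uniform choice of $V$. What remains is the product of $e^{-1}/n_x!$ over the vertices $x$ of $\tau$ at heights $<i$, using~\eqref{eq:GW} restricted to generations below $i$. The same argument applies to $\tau'$ and $v'$ at heights $<j$. This is the step where Classical fact~\ref{classicalFact:classical} is implicitly used: in the spine picture, the weight $\frac{e^{-1}}{(n_x-1)!}\cdot\frac{1}{n_x}$ coming from a spine vertex equals $e^{-1}/n_x!$.

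\emph{Step 2: the independent BGWTs.} In step (i), Po(1)-BGWTs are attached to every vertex at generation $i$ of $\tau$ and to every vertex at generation $j$ of $\tau'$ other than $v'$. By~\eqref{eq:GW}, these contribute $e^{-1}/n_x!$ for each vertex $x$ in those subtrees. In particular every vertex of $\tau_v$ is covered, including the vertices of $\tau_v(\ell)$. Step (ii) is deterministic, so it contributes a factor of $1$. However, its success requires $\tau'_{v'}(\ell)\congplane\tau_v(\ell)$, which holds by the assumption $(\tau,\tau';v,v')\in\mathfrak{P}^{i,j,\ell}$. Step (iii) attaches BGWTs to the generation-$\ell$ descendants of $v'$ and contributes $e^{-1}/n_x!$ for every vertex of $\tau'_{v'}$ outside $\tau'_{v'}(\ell-1)$.

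\emph{Step 3: bookkeeping.} We collect all the factors. A vertex $x$ receives exactly one factor $e^{-1}/n_x!$ precisely when its offspring was sampled. The vertices of $\tau'_{v'}(\ell-1)$ receive no factor, because their offspring was copied from $\tau_v$. All other vertices of $\tau\cup\tau'$ receive exactly one factor. These are exactly the free vertices $\mathcal{F}_\ell$, which gives~\eqref{eq:measure_tau2_u_v}.

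The main obstacle is the plane-structure subtlety flagged after the spine decomposition. Because the spine child sits at a uniform position among its siblings, the spine weight is $e^{-1}/n_x!$ rather than $e^{-1}/(n_x-1)!$. We handle this cleanly by not using the spine at all and appealing directly to~\eqref{eq:un-biasing}. We must also check that the copying in step (ii) is done as plane trees, so that the probability of obtaining the exact pair $(\tau,\tau')$ is neither double-counted nor multiplied by an automorphism factor. This holds because the copy is deterministic given $\tau_v(\ell)$.
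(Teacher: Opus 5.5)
Your proposal is correct and follows the paper's proof. You handle the truncated parts via \eqref{eq:un-biasing}, where the size-bias factor $Z_i$ cancels the $1/Z_i$ from the uniform marked vertex, giving $e^{-1}/n_x!$ for each vertex below heights $i$ and $j$. The remaining free vertices have independently sampled Po(1) offspring and also contribute $e^{-1}/n_x!$ each, while the deterministically copied vertices of $\tau'_{v'}(\ell-1)$ contribute nothing.
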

\begin{proof}

Let $\widehat{\tau}$ and  $\widehat{\tau}'$ be the truncations of $\tau$ and $\tau'$ at heights $i$ and $j$, respectively. Let $V_{<i}$ $V'_{<j}$ be the sets of vertices of $\tau$ and $\tau'$ at height less than $i$ and $j$, respectively.

The event $(\tau_\mathbf{Y},\tau_\mathbf{Y'};V,V')=(\tau,\tau';v,v')$ is equivalent to the fact that the two following properties hold: (i) $(\widehat{\tau}_\mathbf{Y},\widehat{\tau}_\mathbf{Y'};V,V')=(\widehat{\tau},\widehat{\tau}';v,v')$,
	and (ii) the sequence of number of children of the free vertices which are not already in $V_{<i} \cup V'_{<j}$ is equal to the same sequence in $(\tau,\tau';v,v')$ both listed in canonical depth-first-search, left-to-right order.
This follows from the fact that $\tau_\mathbf{Y}$ and $\tau_\mathbf{Y'}$ are fully determined by the offspring of the free vertices: indeed, during the construction, the vertices of $(\tau_\mathbf{Y'})_{v'}(\ell-1)$ inherit their offspring by replicating $(\tau_\mathbf{Y})_{v}(\ell-1)$ in  a deterministic way. Free vertices in $V_{<i} \cup V'_{<j}$  are taken into account by the first property, and the remaining ones by the second one.

By~\eqref{eq:un-biasing}, the probability of the event $(\widehat{\tau}_{\mathbf{Y}},\widehat{\tau}_{\mathbf{Y}'};V,V')=(\widehat{\tau},\widehat{\tau}';v,v')$ is the same as the probability of the event $(\widehat{\tau}_{\mathbf{Z}^{(1)}},\widehat{\tau}_{\mathbf{Z}^{(2)}})=(\widehat{\tau},\widehat{\tau}')$, where $\mathbf{Z}^{(1)}$ and $\mathbf{Z}^{(2)}$ are independent Po(1)-BGWTs, which holds with probability
\begin{align}
\prod_{x\in V_{<i}} \frac{e^{-1}}{n_x!}\prod_{x\in V'_{<j}} \frac{e^{-1}}{n_x!},
\end{align}
Moreover, the offspring of the remaining free vertices (elements of $\mathcal{F}_{\ell}(\tau,\tau';v,v')\setminus (V_{<i}\cup V'_{<j})$) is distributed as a sequence of independent Po(1) variables.

Taking the two properties into account, the desired event holds with probability
\begin{align}
\prod_{x\in V_{<i}\cup V'_{<j}} \frac{e^{-1}}{n_x!}
\prod_{x\in \mathcal{F}_\ell(\tau,\tau';v,v')\setminus (V_{<i}\cup V'_{<j})} \frac{e^{-1}}{n_x!}&=
\prod_{x\in \mathcal{F}_\ell(\tau,\tau';v,v')} \frac{e^{-1}}{n_x!}.\qedhere
\end{align}
\end{proof}

\begin{figure}
	\centering
	\includegraphics[width=\linewidth]{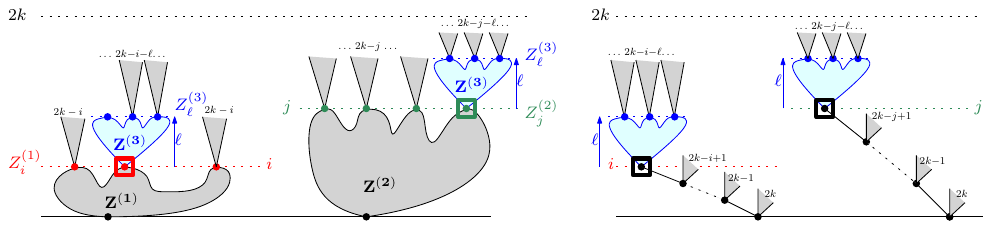}
	\caption{Left: Two branching processes evolving jointly according to the law $\mu^{i,j,\ell}_{\mathbf{Y},\mathbf{Y}'}$: they have a marked vertex, respectively at heights $i$ and $j$, and are conditioned to the fact that the subtrees above these marked vertices coincide for $\ell$ generations. Here the event $Y_{2k}=Y'_{2k}=0$ is considered, which contrains the maximal possible height of each other subtree as indicated. Right: spine decomposition of this event.
	In what follows we will need to weight these configurations by the number of vertices appearing at level $k$, see Figure~\ref{fig:cases} for the different cases involved.	}
	\label{fig:branch-2}
\end{figure}

The next proposition gives an explicit expression for the probability of the process being extinct by generation $2k$, weighted by vertices at generation $k$ (we could adress the case of extinction at any generation $L>k$ but only $L=2k$ will be useful).

\begin{proposition}\label{prop:cross_monster}  For every $u\in \mathbb{R}$, we have
\begin{align}\label{eq:GKEDS}
\bbE^{i,j,\ell}_{\mathbf{Y},\mathbf{Y}'}
\bigl[u^{Y_{k}+Y_{k}'}\mathbf{1}_{Y_{2k}=Y_{2k}'=0}\bigr]
	= F_1^{i,j} (u)F_2^{i,j,\ell} (u),
\end{align}
where 
\begin{align}
	F_1^{i,j} (u):&=	\bigl(\prod_{s=0}^{i-1}g_{k-s}(u^{\mathbf{1}_{\{s\leq k\}}}\eta_{k})\bigr) \bigl(\prod_{s=0}^{j-1}g_{k-s}(u^{\mathbf{1}_{\{s\leq k\}}}\eta_{k})\bigr)
	\\&=
	\bigl(\prod_{s=0}^{(i-1)\wedge k}g_{k-s}(u \eta_{k})\bigr) \bigl(\prod_{s=0}^{(j-1)\wedge k}g_{k-s}(u \eta_{k})\bigr) \bigl(\prod_{s=k+1}^{i-1} \eta_{2k-s}\bigr)  \bigl(\prod_{s=k+1}^{j-1} \eta_{2k-s}\bigr),
\end{align}
and 
\begin{align}
F_2^{i,j,\ell} (u):=\begin{cases}
g_{\ell}(\eta_{2k-i-\ell}\eta_{2k-j-\ell}) &\text{if $k<i$,}\\
g_{k-i}(u g_{i+\ell-k}(\eta_{2k-i-\ell}\eta_{2k-j-\ell})) &\text{if $i\leq k <j$ and $k\leq i+\ell$,}\\
g_{\ell}(g_{k-i-\ell}(u\eta_{k})\eta_{2k-j-\ell}) &\text{if $i\leq k <j$ and $i+\ell<k$,}\\
g_{k-j}(u g_{j-i}(u g_{i+\ell-k}(\eta_{2k-i-\ell}\eta_{2k-j-\ell}))) &\text{if $j\leq k$ and $k\leq i+\ell$,}\\
g_{k-j}(u g_{\ell+j-k} (g_{k-i-\ell}(u\eta_{k})\eta_{2k-j-\ell})) &\text{if $j\leq k$ and $i+\ell< k<j+\ell$,}\\
g_{\ell}(g_{k-i-\ell}(u\eta_{k})g_{k-j-\ell}(u\eta_{k})) &\text{if $j\leq k$ and $j+\ell\leq k$}.
\end{cases}
\end{align}
\end{proposition}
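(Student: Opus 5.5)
We will compute the expectation through the spine decomposition of the joint process $(\mathbf{Y},\mathbf{Y}')$, as in the derivation of~\eqref{eq:branch-simple}, but now keeping track of the weight $u$ at generation $k$. By construction, $(\mathbf{Y},\mathbf{Y}')$ is obtained as follows. Take a spine of length $i$ in $\mathbf{Y}$ (root to $V$) and a spine of length $j$ in $\mathbf{Y}'$ (root to $V'$). At each non-terminal spine vertex attach independent Po(1)-BGWTs, using Classical fact~\ref{classicalFact:classical}. Then let a single Po(1)-BGWT $\mathbf{W}$ grow from $V$; for its first $\ell$ generations it is copied at $V'$. Finally, above generation $\ell$ of $\mathbf{W}$, each vertex of the $\mathbf{Y}$-copy and each vertex of the $\mathbf{Y}'$-copy carries its own independent Po(1)-BGWT. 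All these pieces are independent, so the expectation factorizes into a contribution from the off-spine subtrees and one from the common subtree.

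\emph{Off-spine factor $F_1^{i,j}$.} Consider a Po(1)-BGWT attached at a spine vertex of height $s<i$ in $\mathbf{Y}$; its root is a child of that vertex, hence sits at height $s+1$. Let $m=k-s-1$. We need its weighted extinction functional $\EBGW[u^{Z_{m}}\mathbf{1}_{Z_{2k-s-1}=0}]$, with $u^{Z_m}$ replaced by $1$ when $s\geq k$. By~\eqref{eq:SOEDS-conditional}, for $s<k$ this equals $\EBGW[(u\eta_k)^{Z_m}]=g_{k-s-1}(u\eta_k)$, and for $s\geq k$ it equals $\eta_{2k-s-1}$. The spine vertex itself contributes a Po(1) number of such subtrees. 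Since $\bbE[x^{\mathrm{Po}(1)}]=f(x)$, this gives a factor $f(g_{k-s-1}(u\eta_k))=g_{k-s}(u\eta_k)$ for $s<k$, and $f(\eta_{2k-s-1})=\eta_{2k-s}$ for $s\geq k$. This is exactly the second expression for $F_1^{i,j}$, and the same computation applies to the spine of $\mathbf{Y}'$. The weight $u^{\mathbf{1}}$ on spine vertices themselves at generation $k$ (one for each spine that reaches height $k$, i.e.\ when $i\le k$ or $j\le k$) should be attributed to $F_2$; I will check this bookkeeping against the cases.

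\emph{Common factor $F_2^{i,j,\ell}$.} Let $\mathbf{W}$ be the BGWT from $V$. A vertex of $\mathbf{W}$ at relative height $r\le\ell$ appears twice: at absolute height $i+r$ in $\mathbf{Y}$ and at $j+r$ in $\mathbf{Y}'$. At relative height $\ell$ each of the two copies spawns an independent subtree. These must be extinct by absolute heights $2k$, i.e.\ have relative height constraints giving $\eta_{2k-i-\ell}$ and $\eta_{2k-j-\ell}$. If the copy crosses level $k$ after generation $\ell$, the $\eta$ is instead replaced by $g_{k-i-\ell}(u\eta_k)$, again by~\eqref{eq:SOEDS-conditional}. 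So each generation-$\ell$ vertex of $\mathbf{W}$ contributes a product $x$ of two such factors. In addition, each vertex of $\mathbf{W}$ at relative level $k-i$ (resp.\ $k-j$) with that level $\le\ell$ receives a weight $u$ from the level-$k$ constraint.

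The expectation is then the multivariate generating function $G_\ell$ of $\mathbf{W}$ with $x_{k-i}=u$ and/or $x_{k-j}=u$ inserted, and $x_\ell$ set to the product above. Recall that $G_\ell$ is a nested composition of $f$'s. For example, when $j\le k\le i+\ell$ this gives $g_{k-j}\bigl(u\,g_{j-i}(u\,g_{i+\ell-k}(\eta_{2k-i-\ell}\eta_{2k-j-\ell}))\bigr)$. Note that the nesting reads from the root of $\mathbf{W}$ outward: the innermost $u$ corresponds to level $k-j$, which is the shallower one. The six cases correspond to the relative positions of $k$ with respect to $i$, $j$, $i+\ell$, $j+\ell$. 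The case $k<i$ forces $k<j$, so no weight appears in $\mathbf{W}$ at all. The hypothesis $i<j$ removes the symmetric cases.

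The main obstacle is purely bookkeeping: verifying in each of the six regimes that the $u$-weights are attached to the correct levels (including the boundary of the spine vertices $V,V'$ themselves), and that the off-spine products stop at index $(i-1)\wedge k$ consistently with $F_2$ absorbing generation $k$ when $i$ or $j\le k$. I would check one nontrivial case explicitly against~\eqref{eq:WPEOW} to fix the conventions. Since everything is polynomial or composite in $u$, the identity holds for all $u\in\mathbb{R}$ as long as the expectations are finite, which follows from the extinction indicator bounding all heights.
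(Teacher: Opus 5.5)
Your route is the paper's: spine decomposition of $(\mathbf{Y},\mathbf{Y}')$, factorization over the independent off-spine subtrees and the common tree $\mathbf{W}$, and nested compositions of $f$ with $u$ inserted at the levels lying at absolute height $k$. There is, however, a gap. You name the bookkeeping as the only obstacle, but your $F_1$ step actually gets it wrong. For a non-terminal spine vertex at height $s$ you only count its Po(1) off-spine children at height $s+1$, which gives $\eta_{2k-s}$ for every $s\ge k$. When $s=k$, the spine vertex itself lies at generation $k$ and contributes a factor $u$ to $u^{Y_k}$ (resp.\ $u^{Y'_k}$). This case occurs on the $\mathbf{Y}$-spine iff $i>k$, and on the $\mathbf{Y}'$-spine iff $j>k$. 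The correct factor is therefore $u\eta_k=g_0(u\eta_k)$, which is exactly the $s=k$ term of $\prod_{s=0}^{(i-1)\wedge k}g_{k-s}(u\eta_k)$ in the statement. So your product is not ``exactly the second expression for $F_1^{i,j}$''; it is that expression divided by $u^{\mathbf{1}_{i>k}+\mathbf{1}_{j>k}}$.

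The fix you announce points the wrong way. You say this weight belongs to $F_2$ ``when $i\le k$ or $j\le k$'', but a non-terminal spine vertex at height $k$ exists only when $i>k$ (resp.\ $j>k$). In those regimes the stated $F_2$ carries no $u$ for it. In Case A it has no $u$ at all, as you note yourself. In Cases B and C only the $\mathbf{Y}$-side levels of $\mathbf{W}$ are weighted, while the $\mathbf{Y}'$-spine vertex at height $k$ is unaccounted for. Your computation combined with the stated $F_2$ therefore gives $u^{-\mathbf{1}_{i>k}-\mathbf{1}_{j>k}}F_1^{i,j}F_2^{i,j,\ell}$: wrong by $u^{-2}$ in Case A and by $u^{-1}$ in Cases B and C. $F_2$ only absorbs the terminal vertex $V$ (resp.\ $V'$) when $i=k$ (resp.\ $j=k$), through $x_0=u$ in $G_\ell$.

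The repair is to treat each spine vertex at height $s$ together with its off-spine progeny as a Po(1)-BGWT rooted at height $s$, as the paper does. Then $\EBGW[u^{Z_{k-s}}\mathbf{1}_{Z_{2k-s}=0}]=g_{k-s}(u\eta_k)$ for all $s\le k$, including $s=k$, and $\eta_{2k-s}$ for $s>k$.

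Two smaller slips. In $g_{k-j}(u\,g_{j-i}(u\,g_{i+\ell-k}(\cdot)))$ it is the \emph{outer} $u$ that sits at the shallower level $k-j$; your formula is right but the sentence about the ``innermost'' $u$ is reversed. Also, finiteness for all real $u$ does not follow from bounded height, which does not bound generation sizes. It follows from $f$ being entire, so that $\EBGW[|u|^{Z_m}]=g_m(|u|)<\infty$.
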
 
\begin{proof}
We first prove the case $u=1$, to exemplify the argument. Fix the spines
to the marked individuals in each process, similarly as what we did in the simpler case of Figure~\ref{fig:branch-simple}. For each height $s$ of a spine individual, the subtrees growing from its siblings must become extinct by time $2k-s$, producing the factor $\bigl(\prod_{s=0}^{i-1}\eta_{2k-s}\bigr) \bigl(\prod_{s=0}^{j-1}\eta_{2k-s}\bigr)$. Finally, the common depth-$\ell$ subtree contributes the factor
	$\EBGW[\eta_{2k-i-\ell}^{Z_\ell}\eta_{2k-j-\ell}^{Z_\ell}]=g_\ell(\eta_{2k-i-\ell}\eta_{2k-j-\ell})$, as there are $Z_{\ell}$ descendants at height $\ell$ from $v$ in $\tau_{\mathbf{Y}}$ and from $v'$ in $\tau_{\mathbf{Y}'}$, whose offspring is independent and which have to be extinct by times $2k-i-\ell$ and $2k-j-\ell$, respectively. See Figure~\ref{fig:branch-2}--Right. It follows that
	\begin{align}\label{eq:DIDKS}
	\begin{aligned}
	\bbE^{i,j,\ell}_{\mathbf{Y},\mathbf{Y}'}[\mathbf{1}_{Y_{2k}=Y_{2k}'=0}]
	&= \bbP^{i,j,\ell}_{\mathbf{Y},\mathbf{Y}'}(Y_{2k}=Y_{2k}'=0)\\
	&= \bigl(\prod_{s=0}^{i-1}\eta_{2k-s}\bigr) \bigl(\prod_{s=0}^{j-1}\eta_{2k-s}\bigr) g_\ell (\eta_{2k-i-\ell}\eta_{2k-j-\ell}) \\
	&= F_1^{i,j} (1)F_2^{i,j,\ell} (1).
	\end{aligned}
	\end{align}
	
	We now tackle the general case. Any term of the form $\eta_{2k-s}$ in~\eqref{eq:DIDKS} corresponds to a Po(1)-BGWT rooted at absolute height $s$ and being extinguished by absolute height $2k$. If $s\leq k$ such a tree contains $Z_{k-s}$ vertices at absolute height $k$, that we should mark with the variable $u$, which can be done by replacing the factor $\eta_{2k-s}$ by $\EBGW[u^{Z_{k-s}} \mathbf{1}_{Z_{2k-s}=0}]=\EBGW[u^{Z_{k-s}} \eta_k^{Z_{k-s}}]=g_{k-s}(u \eta_{k})$.
	Therefore the contribution of the first two products in~\eqref{eq:DIDKS}, i.e. the contribution of the spine (not including the terminal vertices $v,v'$) becomes:
\begin{align}
F_1^{i,j}(u):=\bigl(\prod_{s=0}^{(i-1)\wedge k}g_{k-s}(u \eta_{k})\bigr) \bigl(\prod_{s=0}^{(j-1)\wedge k}g_{k-s}(u \eta_{k})\bigr) \bigl(\prod_{s=k+1}^{i-1} \eta_{2k-s}\bigr)  \bigl(\prod_{s=k+1}^{j-1} \eta_{2k-s}\bigr).
\end{align}

For the second part, let $T_{v}=(\tau_{\mathbf{Y}})_{v}(\ell)$ and $T_{v'}=(\tau_{\mathbf{Y}'})_{v'}(\ell)$. The factor $F_2^{i,j,\ell}(u)$, which accounts for the contribution of the common depth-$\ell$ subtrees $T_{v}$ and $T_{v'}$, is defined piece-wise depending on the relative order of $i,j,k,\ell$ (we refer to Figure~\ref{fig:cases} to clarify some of the expressions):
\begin{item}
\item[-] $k<i$ (Case A): we have $F_2^{i,j,\ell}(u):=g_{\ell}(\eta_{2k-i-\ell}\eta_{2k-j-\ell})$ as in the case $u=1$, as there is no need to mark any of the vertices in $T_{v}$ and $T_{v'}$ or above them.
\item[-] $i\leq k <j$: we split into
\begin{itemize}
	\item[-] $k\leq i+\ell$ (Case B): we have $F_2^{i,j,\ell}(u):=g_{k-i}(u g_{i+\ell-k}(\eta_{2k-i-\ell}\eta_{2k-j-\ell}))$.  Indeed, since the vertices at height $k$ are marked with the variable $u$ and $i\leq k$, in the expression obtained in Case A we need to take into account the variable $u$ in the contribution of the offspring of $v$. However, since $j>k$, we do not have to change anything for the offspring of $v'$. Precisely, we need to mark the vertices of $T_{v}$ at relative height $k-i$ (these are at absolute height $k$ in $\tau_{\mathbf{Y}}$).
		Each of these vertices have an independent offspring in $T_v$ which are copied in $T_{v'}$ for $i+\ell-k$ layers. Finally, any remaining offspring at the top of $T_v$ has a height bounded by $2k-i-\ell$, and similarly for the ones at the top of $T_{v'}$ that have height at most $2k-j-\ell$, and all these offsprings are mutually independent. This gives the contribution
		$$\EBGW^2\big[ \big(u  (\eta_{2k-i-\ell}\eta_{2k-j-\ell})^{Z^{(2)}_{i+\ell-k}} \big)^{Z^{(1)}_{k-i}} \big]=g_{k-i}(ug_{i+\ell-k}(\eta_{2k-i-\ell}\eta_{2k-j-\ell})).$$

For the remaining cases C-F the arguments are similar, and we give a bit less detail.
\item[-] $i+\ell< k$ (Case C): we have $F_2^{i,j,\ell}(u):=g_{\ell}(g_{k-i-\ell}(u\eta_{k})\eta_{2k-j-\ell})$, as we only need to mark the vertices of $\tau_{\mathbf{Y}}$ at absolute height $k$ (at relative height $k-i$ with respect to $u$).
\end{itemize}
\item[-] $j\leq k$: we split into
\begin{itemize}
\item[-] $k\leq i+\ell$ (Case D): we have $F_2^{i,j,\ell}(u):=g_{k-j}(u g_{j-i}(u g_{i+\ell-k}(\eta_{2k-i-\ell}\eta_{2k-j-\ell})))$, as we need to mark the vertices of $T_{v'}$ at height $k-j$ and the vertices of $T_{v}$ at height $k-i$ (note that $j-i=(k-i)-(k-j)$).
\item[-] $i+l< k<j+\ell$ (Case E): we have $F_2^{i,j,\ell}(u):=g_{k-j}(u g_{\ell+j-k} (g_{k-i-\ell}(u\eta_{k})\eta_{2k-j-\ell}))$, as we need to mark the vertices of $T_{v'}$ at heigh $k-j$ and the vertices of $\tau_{\mathbf{Y}}$ at absolute height $k$ (at relative height $k-i$ with respect to $u$).
\item[-] $j+\ell\leq k$ (Case F): we have $F_2^{i,j,\ell}(u):=g_{\ell}(g_{k-i-\ell}(u\eta_{k})g_{k-j-\ell}(u\eta_{k}))$, as we need to mark the vertices of $\tau_{\mathbf{Y}}$ and $\tau_{\mathbf{Y}'}$ at absolute height $k$ (relative heights $k-i$ and $k-j$ with respect to $v$ and $v'$, respectively).
\end{itemize}
\end{item}
	
	\begin{figure}
	\centering
	\includegraphics[width=\linewidth]{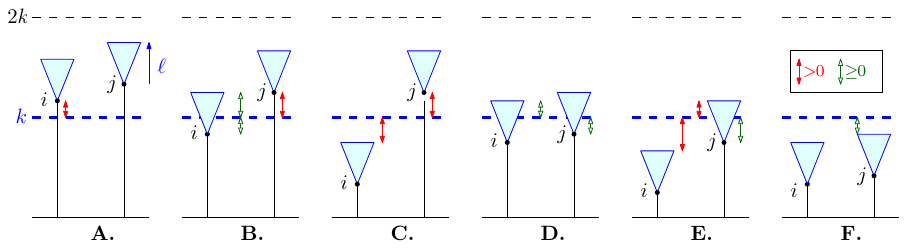}
	\caption{Cases of Proposition~\ref{prop:cross_monster}. In each subfigure, the left picture corresponds to $\tau_{\mathbf{Y}}$ and the right picture to $\tau_{\mathbf{Y}'}$; recall that $i<j$. The common depth-$\ell$ subtrees, $T_v$ and $T_{v'}$, are depicted. The absolute height $k$ is marked with a dashed line in each subfigure, indicating its relative order with respect to $i,i+\ell,j$ and $j+\ell$.
		}
	\label{fig:cases}
\end{figure}

\end{proof}

\subsection{Colliding quasi-leaves and computation of~\eqref{eq:second_term}}

Having introduced the process $(\mathbf{Y},\mathbf{Y}')$, we can now compute the contribution of ``colliding balls'' to the variance. This contribution depends on the set of ``maximal self-overlaps'' of the word $w^2$, which are captured by the set $\mathcal{M}$ and the quantities $(\ell_{i,j})_{(i,j)\in\mathcal{M}}$ in the following definition. We give this definition for a generic word $\omega$ of length $L\geq 1$, even if in our applications we will use $\omega = w^2$ so $L=2k$.

\begin{definition}[\bf Maximum self-overlap set]
	Given $\omega \in \{a,b\}^L$, define $\cM=\mathcal{M}(\omega)$ as the set of pairs $(i,j)$ with $0\leq i< j\leq L-1$, such that $i=0$  or $\omega_{L+1-i}\neq \omega_{L+1-j}$.
	For all $(i,j)\in \cM$, we let $\ell_{i,j}=\ell_{i,j}(\omega)$ be the largest integer $\ell \in \{0,1,\dots,L-j\}$ such that $\omega_{L+1-i-s}=\omega_{L+1-j-s}$ for all $s\in [\ell]$.
\end{definition}
In words, $\ell_{i,j}$ is the length of the maximal self-overlap of $\omega$ ending at positions $L-i$ and $L-j$ (thus an overlap to the left of these positions). The fact that we consider only $i$ and $j$ in $\cM$ ensures that this overlap cannot be extended into a longer overlap. 
Note that we require $i<j$: in this context, it makes no sense to have an overlap ending at the same position.

In what follows, depending on context and on the importance to insist on the word $\omega$, we will write $\mathcal{M}$ or $\mathcal{M}(\omega)$, and $\ell_{i,j}$ or $\ell_{i,j}(\omega)$. This should create no confusion.

\begin{example}\label{exm:set_M}
	For $\omega=ababaa$, we have  
	\begin{align}
		\cM(\omega)=\{(0,1),(0,2),(0,3),(0,4),(0,5),(1,3),(1,5),(2,3),(2,5),(3,4),(4,5)\}.
	\end{align}
	We have $\ell_{0,1}(\omega)=\ell_{0,3}(\omega)=\ell_{0,5}(\omega)=1$, $\ell_{1,3}(\omega)=3$, $\ell_{1,5}(\omega)=1$ corresponding respectively to the overlaps
$(ababa{\underline{a}},abab{\underline{a}}a)$, 
$(ababa{\underline{a}},ab{\underline{a}}baa)$, 
$(ababa{\underline{a}},{\underline{a}}babaa)$, 
$(ab{\underline{aba}}a,{\underline{aba}}baa)$, 
$(abab{\underline{a}}a,{\underline{a}}babaa)$, 
	while $\ell_{i,j}(\omega)=0$ for all other values $(i,j)\in \cM(\omega)$, since the letters at the corresponding positions differ.
	Note that the value $(3,5)$ (for example) is not part of $\cM(\omega)$:  we will never need to consider the overlap
	$(ab{\underline{a}}baa,{\underline{a}}babaa)$, which it is not maximal (it can be extended to the right to  a larger overlap by including the letters $ba$).
\end{example}

\begin{remark}\label{rem:M_identifies_w}
	The set $\mathcal{M}(\omega)$ and the $(\ell_{i,j}(\omega))_{(i,j)\in\mathcal{M}(\omega)}$ characterize the word $\omega$ up to isomorphism. 
	Indeed, for $1\leq j \leq L-1$ we always have $(0,j)\in \mathcal{M}$, and $\ell_{0,j}>0$ if and only if $\omega_{L-j}=\omega_L$.
	 For instance, in Example~\ref{exm:set_M}, we deduce that $\omega_6=\omega_5=\omega_3=\omega_1$ and $\omega_6\neq \omega_4=\omega_2$, so the word is either $ababaa$ or $bababb$.
	Moreover, in the case where $\omega=w^2$, with $L=2k$, to reconstruct $w$ up to isomorphism it suffices to consider the sequence $(\ell_{0,j}(\omega): 1\leq j<k)$: indeed for $1\leq j <k$ we have $w_{k-j}=w_{k}$ if and only if $\omega_{2k-j}=\omega_{2k}$, or equivalently $\ell_{0,j}(\omega)>0$.

	\end{remark}

With this definition at hand we can now express the wanted contribution~\eqref{eq:second_term}:

\begin{lemma}[\bf Acyclic intersecting quasi-leaves]\label{lemma:cint}
For all $x,y\in[n]$ with $x\neq y$  and uniformly over $u\in [0,1]$, we have
\begin{align}
\bbE\bigl[u^{d(x)+d(y)}\mathbf{1}_{x,y\in\cQ_n\cap \cT_n}  \mathbf{1}_{B_{\omega}(x)\cap B_{\omega}(y)\neq  \varnothing}\bigr]  = \frac{\cint(w,u)}{n}+ O\Bigl(\frac{1}{n^2}\Bigr),
\end{align}
where 
\begin{equation}\label{eq:def_cint}
\cint(w,u) \defi  2\sum_{(i,j)\in \cM} \bbE^{i,j,\ell_{i,j}}_{\mathbf{Y},\mathbf{Y}'} \bigl[u^{Y_k+Y_k'}\mathbf{1}_{Y_{2k}=Y_{2k}'=0}\bigr].
\end{equation}
where $\bbE^{i,j,\ell_{i,j}}_{\mathbf{Y},\mathbf{Y}'} \bigl[u^{Y_k+Y_k'}\mathbf{1}_{Y_{2k}=Y_{2k}'=0}\bigr]$ is explicitly given by Proposition~\ref{prop:cross_monster}, 
	and where $\cM=\cM(w^2)$, $\ell_{i,j}=\ell_{i,j}(w^2)$.
\end{lemma}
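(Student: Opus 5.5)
We will show that on the event in question the union $B_\omega(x)\cup B_\omega(y)$ is a ``two-rooted tree-like'' graph, and compute its probability by exact counting, as in Lemma~\ref{lem:prob_ball_given_tree}. Both $B_\omega(x)$ and $B_\omega(y)$ are trees, since $x,y\in\cT_n$. Take an intersection vertex $z$ at heights $i$ in $B_\omega(x)$ and $j$ in $B_\omega(y)$. Following the $\bfw$-forward paths, we see that $z$ reaches $x$ via the suffix $\omega^{(i)}$ and $y$ via $\omega^{(j)}$.

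\textbf{Structure of the intersection.} First, $i\neq j$: if $i=j$, the two forward paths from $z$ would read the same letters and hence end at the same vertex, giving $x=y$. We may therefore assume $i<j$ and pay the factor $2$ for the symmetric case $j<i$. Now move the intersection vertex down (towards the roots) as far as possible. The vertex $z$ has images $\phi_{\omega_{L+1-i}}(z)$ in the $x$-tree and $\phi_{\omega_{L+1-j}}(z)$ in the $y$-tree. If these letters coincide and $i>0$, the images are equal, giving a common vertex at heights $(i-1,j-1)$. Hence the lowest common vertex has $(i,j)\in\cM$.

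Above this lowest vertex $v$, the two in-balls coincide exactly as long as the letters agree, that is, for $\ell_{i,j}$ generations: preimages under the same function are the same vertices. After that the letters differ, so the preimage sets are different in-neighbourhoods. Any further coincidence among them would either create another lowest intersection point or force a non-tree structure, and each such extra coincidence costs an additional factor $1/n$. Up to $O(1/n^2)$, the union is therefore exactly a pair $(T,T';v,v')$ with a unique identified depth-$\ell_{i,j}$ subtree, and only one pair $(i,j)\in\cM$ contributes.

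\textbf{Exact counting.} For a labelled configuration in $\mathfrak{P}^{i,j,\ell}$ whose union has $f=f_\ell$ distinct vertices, the union has $f-2$ edges. Indeed, each non-root vertex has one edge to its $\phi$-image in the union, except that $v$ has two images. This accounts for $f-1$ vertices with one edge each plus one extra edge at $v$, so $f-1+1-2=f-2$ edges. The probability is then
\[
\frac{(n)_f}{n(n-1)\,n^{f-2}}\,\bigl(1+O(f^2/n)\bigr)\,e^{-f}\bigl(1+O(f^2/n)\bigr)\times(\text{labelling factors}).
\]
The leading term is $n^{-1}e^{-f}$ times the number of labellings. Converting labelled to plane structures via~\eqref{eq:triple}, this matches $n^{-1}\mu^{i,j,\ell}_{\mathbf{Y},\mathbf{Y}'}(\tau,\tau';v,v')$ from Lemma~\ref{lemma:measure_tau2_u_v}, with the factor $1/\prod n_x!$ over free vertices. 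Multiplying by $u^{d(x)+d(y)}=u^{|T_k|+|T'_k|}$ and using the quasi-leaf condition (heights $<2k$), then summing, gives $n^{-1}\bbE^{i,j,\ell_{i,j}}_{\mathbf{Y},\mathbf{Y}'}[u^{Y_k+Y_k'}\mathbf{1}_{Y_{2k}=Y'_{2k}=0}]$. The error terms are summable by the exponential tails of Lemma~\ref{lemma:exponential-in-ball} together with $u\le1$.

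\textbf{Main obstacle.} The delicate step is the bookkeeping in this correspondence: checking that the map from labelled unions to elements of $\mathfrak{P}^{i,j,\ell}$ with marked $(v,v')$ has the right multiplicity, so that each automorphism or plane-ordering factor is the product over free vertices only, and that the marked vertex is uniquely determined. Alongside this, one must bound all configurations with two or more intersection points, or an overlap that does not follow the maximal $\ell_{i,j}$ rule, by $O(1/n^2)$. I would handle the latter with the same BFS exploration argument as in Lemma~\ref{lem:not_a_tree}, where each extra collision costs a factor $t/n$.
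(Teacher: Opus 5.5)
\textbf{There is a genuine gap in the counting step.} Your structural analysis matches the paper's: the lowest common vertex $v$ sits at heights $(i,j)\in\cM$, the two in-balls coincide above $v$ for exactly $\ell_{i,j}$ generations, and a second collision costs an extra $1/n$. The problem is the exact count, which is where the factor $1/n$ has to come from.

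\emph{Vertex and edge count.} The union $H=B_\omega(x)\cup B_\omega(y)$ does not have $f_\ell$ vertices. It has $h=|\tau|+|\tau'|-|\tau'_{v'}(\ell)|$ vertices. By contrast, $f_\ell=|\tau|+|\tau'|-|\tau'_{v'}(\ell-1)|$ counts a second time the top layer of the shared subtree (the vertices at relative height $\ell$ above $v'$).

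Moreover $H$ is connected, so it is a tree on $h$ vertices with $h-1$ edges. The $h-2$ non-root vertices each have one out-edge, plus the second out-edge of $v$ when $i>0$; when $i=0$, $v=x$ is a root with one out-edge. Your tally ends at $f-2$, i.e.\ it loses exactly this collision edge. Two roots with $h-2$ edges would be two disjoint trees, which is the order-one situation of Lemma~\ref{lemma:noninttreeleaves}.

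Indeed your displayed prefactor $(n)_f/(n(n-1)n^{f-2})$ tends to $1$, so the claimed leading term $n^{-1}e^{-f}$ does not follow from it. For $(R,R')$ a uniform ordered pair of distinct vertices, the correct exact expression is
\[
\bbP\bigl(B_\omega(R)\cup B_\omega(R')\conglab H\bigr)=\frac{(n)_h}{h!\,n(n-1)\,n^{h-1}}\Bigl(1-\frac{h_a^-}{n}\Bigr)^{n-h_a^+}\Bigl(1-\frac{h_b^-}{n}\Bigr)^{n-h_b^+}=\frac{1}{n}\cdot\frac{e^{-(h_a^-+h_b^-)}}{h!}\Bigl(1+O\Bigl(\frac{h^3}{n}\Bigr)\Bigr).
\]
The $1/n$ comes from the single edge in excess of a two-component forest.

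\emph{Two further facts you need.} To turn this into $\mu^{i,j,\ell}_{\mathbf{Y},\mathbf{Y}'}$, you need two facts you did not establish.

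First, $h_a^-+h_b^-=f_\ell$, not $h$. A vertex in the top layer of the shared subtree has its preimages exposed both for the letter $\omega_{L-i-\ell}$ (as a vertex of $T$) and for $\omega_{L-j-\ell}$ (as a vertex of $T'$). These letters differ by maximality of $\ell_{i,j}$, so the vertex is counted once in $h_a^-$ and once in $h_b^-$, while every other vertex is counted once. By analogy with the one-ball case, where $t_a^-+t_b^-=|T|$, one would wrongly guess $e^{-h}$. This double exposure is why the exponent is $f_\ell$ while the factorial is $h!$.

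Second, there is the multiplicity you flagged as the main obstacle but did not carry out. A labelled $H$ carries $\prod_{x\in\mathcal{F}_\ell}n_x!$ compatible plane structures in $\mathfrak{P}^{i,j,\ell}$, since ordering the children of the free vertices determines the order on the copied part $\tau'_{v'}(\ell-1)$. Each plane configuration admits $h!$ labellings. Hence $\sum_H e^{-f_\ell}/h!$ becomes $\sum_{(\tau,\tau';v,v')}e^{-f_\ell}/\prod_{x\in\mathcal{F}_\ell}n_x!$, which is $\sum\mu^{i,j,\ell}_{\mathbf{Y},\mathbf{Y}'}$ by Lemma~\ref{lemma:measure_tau2_u_v}.

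With these corrections your outline becomes the paper's proof.
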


The self-contained expression of $\cint(w,u)$ will also be recalled in Section~\ref{subsec:def-cw-selfcontained}.

\begin{proof}
The proof combines the strategies used in the proofs of Lemmas~\ref{lem:prob_ball_given_tree} and~\ref{lem:leaf_and_tree}.
By symmetry, the events $\{x,y \in \cQ_n\cap \cT_n\}$ and $\{R,R' \in \cQ_n\cap \cT_n\}$, for $(R,R')$ a uniformly chosen ordered pair of distinct elements in $[n]$, have the same probability; we will bound the latter.

For the rest of the proof, set $\omega=w^2$.	We start with the contribution of cases in which $|B_{\omega}(x)\cap B_{\omega}(y))| =1 $.
	Define $\mathfrak{H}_{<2k}$ as the set of $\{a,b\}$-digraphs $H$ on $[|H|]$ carrying two roots (two ordered marked distinct vertices) that are trees and such that
$\mathbb P\bigl(R,R'\in \cQ_n,B_{\omega}(R)\cup B_{\omega}(R') \conglab H\bigr) > 0$. 
Fix $H\in \mathfrak{H}_{<2k}$ and let $h=|H|$. There are $(n)_{h}/h!$ many ways to label 
	$H$ with elements of $[n]$ 
	that are compatible with the relation $\conglab$. Given such a compatible relabelling $H^{*}$ with root-pair $(r,r')$, the event $\{B_{\omega}(r)\cup B_{\omega}(r')= H^{*}\}$ is equivalent to (i) $R=r$, (ii) $R'=r'$, (iii) each of the $h-1$ edges of $H^*$ being present and (iv) none of the in- and out-edges ``outside'' the in-balls pointing ``inside'' it; more precisely, for each $c\in\{a,b\}$ there are $n-h_c^+$ vertices whose $c$-transition should avoid a set of $h_c^-$ vertices in $H^*$.

Therefore, we have
\begin{align}
\begin{aligned}
\bbP(B_{\omega}(R)\cup B_{\omega}(R')\conglab H)
&=
\frac{(n)_{h}}{h!n^{h+1}}  \left(1-\frac{h_{a}^-}{n}\right)^{n-h_a^+}\left(1-\frac{h_{b}^-}{n}\right)^{n-h_b^+}\\
	&=\frac{ 1}{n}\cdot \frac{e^{-\tilde{h}}}{h!} \left(1+ O\Bigl(\frac{h^3}{n}\Bigr)\right),\label{eq:estimateproba2} 
\end{aligned}
\end{align}
	where $\tilde{h}\defi  h^-_a+h^-_b$. 

Now, let $T$ and $T'$ be the universal $\omega$-in-cover of the two roots of $H$, which are labelled trees. We let $v_H$ be the vertex of $H$ that minimizes the distance to its roots, among vertices of $H$ that are in $T$ and $T'$. The vertex $v_H$ corresponds to one vertex of $T$ and one vertex of $T'$. 
Let $v$ and $v'$ be the copies of $v_H$ in $T$ and $T'$, and let $i$ and $j$ be their respective heights.  Without loss of generality, we have $i<j$. Moreover, we have $(i,j)\in \mathcal{M}$, as otherwise $\phi_{{w}_{k-i+1}}(v_H)$ would be a vertex that appears in $T$ and $T'$ with strictly shorter distance to the roots. This enables us to partition $\mathfrak{H}_{<2k}=\biguplus_{(i,j)\in\mathcal{M}} \mathfrak{H}^{i,j}_{<2k}$ according to the value of the parameters $(i,j)$.

Fix $(i,j)\in \mathcal{M}$, $H\in \mathfrak{H}^{i,j}_{<k}$, and let $\ell=\ell_{i,j}$. 
First, observe that we have $\tilde{h}= |T\cup (T'\setminus T'_{v'}(\ell-1))|$. Indeed, in the count of $h^-_a+h^-_b$, each vertex of $(T\setminus T_{v}(\ell-1))\cup(T'\setminus T'_{v'}(\ell-1))$ appears once, and each pair of vertices one in $T_{v}(\ell-1)$ and one in $T'_{v'}(\ell-1)$ that correspond to the same vertex of $H$, appears also once 
	(one can think of vertices in $T'_{v'}(\ell-1)$ as ''copies'' of the ones in $T_{v}(\ell-1)$).

Second, by construction, $T_{v}(\ell)$ and $T'_{v'}(\ell)$ are isomorphic as unlabelled trees; let $\varphi$ be the isomorphism between them given by the common projection to $H$. It is possible to equip $T$ and $T'$ with plane structures and forget their labellings, obtaining $\tau$ and $\tau'$, in such a way that $\tau_{v}(\ell)\congplane \tau'_{v'}(\ell)$ and that the isomorphism in the relation $\congplane$ is given by $\varphi$.
More precisely, there are $n_1! \cdots n_f!$ inequivalent ways to equip $T$ and $T'$ with such plane structures, where $n_1,\dots,n_f$ are the number of children of the vertices in $T\cup (T'\setminus T'_{v'}(\ell-1))$: choosing the ordering of the children of free vertices fixes also the ordering of $\tau'_{v'}(\ell)$ so its compatible with $\varphi$. Note that this construction implies $(\tau,\tau';v,v')\in \mathfrak{P}^{i,j,\ell}$ and that $f=f_\ell(\tau,\tau';v,v')= |T\cup (T'\setminus T'_{v'}(\ell-1))|$ is precisely the number of free vertices in the sense of Definition~\ref{def:free}.

	Conversely\footnote{Note that here we are having a variant of the classical discussion of Section~\ref{subsec:auto}.}, consider a plane tree $(\tau,\tau';v,v')\in\mathfrak{P}^{i,j,\ell}$, such that $h=|\tau\cup (\tau'\setminus \tau'_{v'}(\ell))|$. Then we can label these vertices in $h!$ inequivalent ways to produce a rooted labelled $\{a,b\}$-digraph $H$ by identifying $T_{v}(\ell)$ and $T'_{v'}(\ell)$ via the isomorphism given by the fact that $\tau_{v}(\ell)\congplane \tau'_{v'}(\ell)$. 

Write $\mathfrak{P}^{i,j,\ell}_{<2k}$ for the set of $(\tau,\tau';v,v')\in \mathfrak{P}^{i,j,\ell}$ where $\tau$ and $\tau'$ have height less than $2k$.
From what precedes, instead of adding the contribution of $ \bbP(B_{\omega}(R)\cup B_{\omega}(R')\conglab H)$ over all graphs $H\in\mathfrak{H}^{i,j}_{<2k}$, we can add it over $(\tau,\tau';v,v')\in\mathfrak{P}^{i,j,\ell_{i,j}}_{<2k}$, with a compensating factor of $\frac{h!}{n_1!\cdots n_f!}$, with $h=|H|$ and $f=\tilde{h}$.


Using~\eqref{eq:estimateproba2}, we have

\begingroup
\allowdisplaybreaks
	\begin{align}
	\sum_{H\in \mathfrak{H}^{i,j}_{<2k}}&\bbE\Bigl[u^{d(R)+d(R')}\mathbf{1}_{B_{\omega}(R)\cup B_{\omega}(R')\conglab H}\Bigr]=\sum_{H\in \mathfrak{H}^{i,j}_{<2k}}u^{h_k}\bbP(B_{\omega}(R)\cup B_{\omega}(R')\conglab H)\\
&=\frac{1}{n}\sum_{H\in \mathfrak{H}^{i,j}_{<2k}} u^{h_k}\cdot \frac{e^{-\tilde{h}}}{h!} \left(1+ O\Bigl(\frac{h^3}{n}\Bigr)\right) \\
&=	\frac{1}{n}\sum_{(\tau,\tau';v,v')\in\mathfrak{P}^{i,j,\ell_{i,j}}_{<2k}} 
	u^{|\tau_k|+|\tau'_k|}\cdot \frac{h!}{n_1!\cdots n_f!}\frac{e^{-f}}{h!} \left(1+ O\Bigl(\frac{(|\tau|+|\tau'|)^3}{n}\Bigr)\right)\\
	&=\frac{1}{n}	\sum_{(\tau;v,v')\in\mathfrak{P}^{i,j,\ell_{i,j}}_{<2k}} u^{|\tau_k|+|\tau'_k|}\mu_{\mathbf{Y},\mathbf{Y}'}^{i,j,\ell_{i,j}}(\tau,\tau';v,v')
	 \left(1+ O\Bigl(\frac{(|\tau|+|\tau'|)^3}{n}\Bigr)\right)\\
&=\frac{1}{n}	\sum_{(\tau,\tau';v,v')\in\mathfrak{P}^{i,j,\ell_{i,j}}_{<2k}} u^{|\tau_k|+|\tau'_k|} \mu_{\mathbf{Y},\mathbf{Y}'}^{i,j,\ell_{i,j}}(\tau;v,v') +  O\Bigl(\frac{1}{n^2}\Bigr),
\end{align}
\endgroup
where $h_k$ is the number of vertices of $H$ from which starts a directed path (of length $k$) labelled by $w$ going to $R$ or $R'$,
$n_1,\dots, n_f$ are the number of children of $\mathcal{F}_{\ell_{i,j}}(\tau,\tau';v,v')$, $h=|\tau \cup (\tau'\setminus \tau'_{v'}(\ell))|$ and $f=\tilde{h}$. In the third equality we used Lemma~\ref{lemma:measure_tau2_u_v} and in the last equality, $\bbE_{\mathbf{Y},\mathbf{Y}'}^{i,j,\ell_{i,j}}[u^{Y_k+Y'_k}(Y+Y')^3\mathbf{1}_{Y_k=Y_k'=0}]<\infty$ as $u\leq 1$.
Adding over $(i,j)\in \mathcal{M}$ and recalling that we assumed $i<j$, so we must add the symmetric contribution of the cases where $j<i$ giving us a factor 2, we conclude that the expectation of acyclic pair of quasi-leaves that intersect only once is the desired one:
\begin{equation}
\bbE\bigl[u^{d(x)+d(y)}\mathbf{1}_{x,y\in\cQ_n\cap \cT_n}\mathbf{1}_{|B_{\omega}(x)\cap B_{\omega}(y)|=1}\bigr] = \frac{\cint(w,u)}{n}+ O\Bigl(\frac{1}{n^2}\Bigr).
\end{equation}

It only remains to rule out the contribution of cases in which $|B_{\omega}(x)\cap B_{\omega}(y))|\geq 2$. For this we use the same argument as in the proof of Lemma~\ref{lem:not_a_tree}. Given that $|B_{\omega}(x)\cup B_{\omega}(y)|=t$, the probability that $|B_{\omega}(x)\cap B_{\omega}(y))|\geq 2$ is at most $t^4/n^2$ and
\begin{align}\label{eq:PRKFD}
\bbE\bigl[u^{d(x)+d(y)}\mathbf{1}_{|B_{\omega}(x)\cap B_{\omega}(y))|\geq 2}\bigr]
&\leq \sum_{t\geq 1} \bbP(|B_{\omega}(x)|= t) \bbP\bigl( |B_{\omega}(x)\cap B_{\omega}(y))|\geq 2 \mid |B_{\omega}(x)|= t\bigr) \\
&\leq \frac{1}{n^2} \sum_{t\geq 1} t^4 e^{-c (t-1)} 
= O\Bigl(\frac{1}{n^2}\Bigr),
\end{align}
where we used that $u\leq 1$ in the first inequality.

\end{proof}

\subsection{Conditional in-ball lemmas}%

The following lemmas control the probability of certain events (e.g. observing a cycle, hitting a small part of the graph) or the distribution of in-balls, conditional on some part of the graph having already been revealed. 
To give a precise meaning to such a conditioning, we introduce the notion of induced event.
Roughly speaking, an induced event corresponds to the fact that the $\mathbf{a}$- or $\mathbf{b}$-preimages of certain vertices (the ones in $V_A^-$ or $V_B^-$ in the definition below) are known (this implies also that the images of certain vertices is known, these are the ones in $V_A^+$ or $V_B^+$).

\begin{definition}[Induced events]
For all $V_A^-,V_B^-,V_A^+,V_B^+\subseteq [n]$ and functions $\psi_a: V_A^+ \to V_A^-$ and $\psi_b: V_B^+ \to V_B^-$, we let $F(V_A^-,V_B^-,V_A^+,V_B^+,\psi_a,\psi_b)$ be the intersection of the following events (seen as events under the probability measure $\bbP$):
\begin{itemize}
\item[-] $\mathbf{a}(x)=\psi_a(x)$, for all $x\in V_A^+$;

\item[-] $\mathbf{b}(x)=\psi_b(x)$, for all $x\in V_B^+$;

\item[-]  $\mathbf{a}^{-1}(y)=\psi^{-1}_a(y)$, for all $y\in V_A^-$; and

\item[-]  $\mathbf{b}^{-1}(y)=\psi^{-1}_b(y)$, for all $y\in V_B^-$.
\end{itemize}
We denote $f_a^-=|V_A^-|$, $f_b^-=|V_B^-|$, $f_a^+=|V_A^+|$ and $f_b^+=|V_B^+|$. We call such an event an \emph{induced} event.  If the role of $V_A^-,V_B^-,V_A^+,V_B^+,\psi_a,\psi_b$ is irrelevant in the context, we write $F\defi F(V_A^-,V_B^-,V_A^+,V_B^+,\psi_a,\psi_b)$ to keep the notation light. We also write $\Supp(F)\defi V_A^-\cup V_B^-\cup V_A^+\cup V_B^+$ and $|F|\defi |\Supp(F)|$.
\end{definition}

\begin{lemma}\label{lemma:prob-ball-intersects}
	Let $L\geq 1$. 
	Let $F$ be an induced event and let $R$ be chosen independently and uniformly at random from $[n]$. We have
\begin{align}
	\bbP(B_L(R)\cap \Supp(F) \neq \emptyset \mid F)=O_L\left(\frac{|F|}{n}\right),
\end{align}
where the notation $O_L(\cdot)$ means that the constant hidden in big-O may depend on $L$.
\end{lemma}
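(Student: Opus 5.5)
We will explore the in-ball of $R$ under the conditional measure $\bbP(\cdot\mid F)$ by breadth-first search, exactly as in the proof of Lemma~\ref{lemma:exponential-in-ball}, and stop the first time we hit $\Supp(F)$.

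\emph{Conditional law.} Conditionally on $F$, the functions $\mathbf{a},\mathbf{b}$ are still independent. For each $x\notin V_A^+$, the image $\mathbf{a}(x)$ is uniform on $[n]\setminus V_A^-$, because preimages of $V_A^-$ are prescribed. These images are independent across $x$, and the same holds for $\mathbf{b}$. A new vertex at each step is therefore obtained from a binomial number of uniformly placed preimages, each of probability at most $1/(n-|F|)$.

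\emph{Exploration and union bound.} First, $\bbP(R\in\Supp(F))=|F|/n$. Next, reveal $U_1,U_2,\dots,U_L$ as before, as long as no vertex of $\Supp(F)$ has been found. While we are outside $\Supp(F)$, revealing the $c$-preimages of a vertex $y\notin V_C^-$ amounts to asking, for each $x\notin V_C^+$ whose $c$-image is not yet revealed, whether $\mathbf{c}(x)=y$. The conditional probability of each such event is at most $1/(n-|F|)$, and the events are independent given the history. Hence the number of new vertices is dominated by $\mathrm{Bin}(2n,1/(n-|F|))$. Moreover, each newly found preimage is uniform among the unrevealed candidates, so it lands in $\Supp(F)$ with probability at most $|F|/(n-O(t))$. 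A union bound over the explored vertices then gives
\begin{align}
\bbP\bigl(B_L(R)\cap\Supp(F)\neq\emptyset\mid F\bigr)\leq \frac{|F|}{n}+\bbE\Bigl[|B_L(R)|\,\frac{|F|}{n-|F|-|B_L(R)|}\,\Big|\,F\Bigr],
\end{align}
where the event of hitting $\Supp(F)$ is bounded by the sum of hitting probabilities along the exploration.

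\emph{Size of the ball.} We may assume $|F|\leq n/2$, since otherwise the bound is trivial. Domination by $\mathrm{Bin}(2n,2/n)$ offspring then gives, by the argument of Lemma~\ref{lemma:exponential-in-ball} with offspring generating function $(1+2(x-1)/n)^{2n}$, exponential tails $\bbP(|B_L(R)|>t\mid F)\leq e^{-ct}$ uniformly in $n$ and $F$. On $\{|B_L(R)|\leq n/4\}$ the denominator is at least $n/4$, and the complementary event has probability $e^{-cn/4}$. This yields the bound $O_L(|F|/n)$.

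\emph{Main obstacle.} The delicate point is to justify the conditional uniformity and independence in the presence of the prescribed preimages. One has to check that a vertex $x\in V_C^+$ cannot become a new preimage of an unexplored vertex, since its image is fixed inside $V_C^-$. Such a vertex only appears when we are already exploring a vertex of $V_C^-\subseteq\Supp(F)$, and that event is already counted as a hit, so stopping at the first hit handles it. I would write the exploration as a sequence of revealed events and check each one-step conditional probability explicitly.
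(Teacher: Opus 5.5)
Your proposal is essentially the paper's proof. Both run a breadth-first exploration of the in-ball stopped at the first hit of $\Supp(F)$, dominate it by a $\mathrm{Bin}(2n,2/n)$ branching process once $|F|\le n/2$, apply a union bound with hit probability of order $|F|/(n-|F|-S)$ per explored vertex, and use exponential tails of $S$ to control the denominator; you split on $\{S\le n/4\}$ where the paper writes $\max(n-2S_L,1)$.

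One correction: $S$ must be the size $S_L$ of the \emph{stopped} exploration, not $|B_L(R)|$. The full conditional ball does not have exponential tails uniformly in $F$. For instance, if $F$ prescribes $\lfloor\sqrt n\rfloor$ $\mathbf{a}$-preimages of some $y\in V_A^-$, then $\bbP(|B_L(R)|>\lfloor\sqrt n\rfloor\mid F)\ge\bbP(R=y)=1/n$. Your domination argument is only valid while the exploration stays outside $\Supp(F)$, so it yields exactly the tail bound for $S_L$. That is all the union bound needs, so replacing $|B_L(R)|$ by $S_L$ makes the argument complete.
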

\begin{proof}
	We explore the $L$-in-ball of $R$ as follows: at $t=0$ reveal the vertex $R$, and for each time $1\leq t \leq L$ reveal the $\mathbf{a}$- and $\mathbf{b}$-preimages of vertices revealed at time $t-1$. Stop the process at the random time $T\leq L$ where the $L$-in-ball is fully revealed or a vertex in $\Supp(F)$ is revealed. Denote by $D_t$ the number of vertices revealed at time $t$.

	As in the proof of Lemma~\ref{lemma:exponential-in-ball}, the process $(D_t)_{t\leq L}$ is dominated by $(\tilde{Z}_t^{(n)})_{t\leq L}$, where $\tilde{\mathbf{Z}}^{(n)}$ is a branching process of offspring distribution $\mathrm{Bin}(2n,\frac{1}{n-|F|})$: this is clear for $t<T$, and if $T\geq t$ then $D_t=0\leq \tilde{Z}_t^{(n)}$. We can assume that $|F|\leq n/2$ (otherwise the lemma trivially holds), so we have domination by a branching process with offspring distribution $\mathrm{Bin}(2n,\frac{2}{n})$, and $S_t\defi \sum_{s\leq t} D_s$ has exponential tails for any $t\leq L$. %

	Now, conditioned on $(D_s)_{s\leq t}$ and on the fact that $\Supp(F)$ has not been hit until time $(t-1)$, the probability to reveal a vertex in $\Supp(F)$ at time $t$ is at most $D_t |F| \min\bigl(\frac{1}{n-|F|-S_t},1\bigr)$, where the factor $D_t$ appear from a union bound on the preimage that hits $|F|$ among the $D_t$ ones revealed at time $t$, and where $n-|F|-S_t$ is a lower bound on the number of possible choices for a preimage at time $t$ (the minimum handles the case where this lower bound would equal zero). Using furthermore a union bound on times $t\leq L$, and taking the expectation on $(D_t)_{t\leq L}$, the probability we want to estimate is at most 
	\begin{align}
		 \bbE \left[ L S_L |F|/\max(n-|F|-S_L,1) \right]
	 \leq 2 L|F| \bbE \left[ S_L /\max(n-2S_L,1) \right],
\end{align}
	 assuming again $|F|\leq n/2$. Since $S_L$ has exponential tails, this quantity is $O_L(|F|/n)$ as wanted.
\end{proof}

\begin{lemma}\label{lemma:prob-nontree-small}
Let $F$ be an induced event and let $R$ be chosen independently and uniformly at random from $[n]$. We have	
	\begin{align}
		\bbP(R\in \cC_n\mid F)=O\left(\frac{|F|}{n}\right).
	\end{align}
\end{lemma}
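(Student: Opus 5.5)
I will split the event $\{R\in\cC_n\}$ according to whether the $w^2$-in-ball of $R$ touches the conditioned region. Let $\omega=w^2$ and $L=2k$. Then
\begin{align}
\bbP(R\in \cC_n\mid F)\leq \bbP\bigl(B_L(R)\cap \Supp(F)\neq\emptyset \mid F\bigr) + \bbP\bigl(R\in\cC_n,\ B_L(R)\cap \Supp(F)=\emptyset\mid F\bigr).
\end{align}
Since $B_\omega(R)\subseteq B_L(R)$, the first term is $O(|F|/n)$ by Lemma~\ref{lemma:prob-ball-intersects}. It then remains to show that the second term is $O(1/n)$, which is enough because $|F|\geq 1$ whenever $F$ is nontrivial. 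If $F$ is the trivial event, the statement is just Lemma~\ref{lem:not_a_tree}, and I read $O(|F|/n)$ as $O(\max(|F|,1)/n)$.

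For the second term I will rerun the exploration argument of Lemma~\ref{lem:not_a_tree} under $F$. I explore $B_\omega(R)$ in BFS order, revealing at each step the $\phi_{\omega_{L-\ell+1}}$-preimages of a current vertex. I stop the exploration as soon as a vertex of $\Supp(F)$ is revealed, since that case is already covered by the first term. As long as no vertex of $\Supp(F)$ has been revealed, conditioning on $F$ only has two effects. First, the images of vertices in $V_A^+$ (resp. $V_B^+$) are fixed and lie in $V_A^-$ (resp. $V_B^-$), so these vertices can never be preimages of an explored vertex. Second, every other vertex has its $c$-image uniform on $[n]\setminus V_C^-$, independently across vertices, where $C$ is the capital version of the letter $c\in\{a,b\}$.

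The key step is that, under this modified law, creating a cycle still costs a factor $t/n$ per explored vertex. A cycle in $B_\omega(R)$ means some explored vertex is the preimage of two explored vertices, or equivalently that a newly revealed preimage is already in the ball. For any non-$F$ vertex, the probability that its $c$-image equals a given explored vertex is $1/(n-|V_C^-|)$. Given the current ball of size $t$, the probability that the next step creates a collision is therefore at most $t^2/(n-|F|)\leq 2t^2/n$, assuming $|F|\leq n/2$; if $|F|>n/2$ the bound is trivial. Summing over the at most $t$ steps gives a bound of order $t^3/n$; more carefully, counting pairs as in Lemma~\ref{lem:not_a_tree}, I expect to get $t^2/n$.

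The step I expect to be the main obstacle is the exponential tail of $|B_L(R)|$ under $F$. I would obtain it by the same domination as in Lemma~\ref{lemma:prob-ball-intersects}: the exploration is dominated by a branching process with $\mathrm{Bin}(2n,2/n)$ offspring, up to the stopping time at which $\Supp(F)$ is hit. Integrating $t^3/n$ against this tail then gives $O(1/n)$ for the second term. Combining the two terms yields $\bbP(R\in \cC_n\mid F)=O(|F|/n)$, with constants depending only on $k$.
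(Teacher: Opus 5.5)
Your proposal is correct and follows essentially the same route as the paper. You split off the event that the exploration hits $\Supp(F)$ and bound it by Lemma~\ref{lemma:prob-ball-intersects}. You then bound the probability of creating a cycle before that happens by summing per-step collision probabilities against the exponential tails given by the same branching-process domination.

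Your reading of the bound as $O(\max(|F|,1)/n)$ is also right. Two looser points are harmless because of the exponential tails: the $t^3/n$ count, and the denominator $n-|V_C^-|$, which should also subtract the vertices whose $c$-preimage sets have already been revealed, as in the paper's $n-|F|-S_t$.
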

\begin{proof}
	The proof is similar to the one of Lemma~\ref{lemma:prob-ball-intersects}, but we stop the process as soon as the $k$-in-ball of $R$ is fully revealed,  we reveal a vertex in $\Supp(F)$, or we create a cycle. With the notation of that proof, conditioned on $(D_s)_{s\leq t}$, the probability to create a cycle at time $t$ is at most $D_t^2/(n-|F|-S_t)$. Using union bound over $t\leq k$ and taking the expectation, the probability to create a cycle before hitting $\Supp(F)$ is $O(1/n)$, since $S_k$ has exponential tails. 
	Moreover, the probability to hit $\Supp(F)$ before completing the ball is $O\left(|F|/n\right)$ by Lemma~\ref{lemma:prob-ball-intersects}, so we are done.
\end{proof}

\begin{lemma}\label{lemma:tree-approx-conditional}

Let $F$ be an induced event and let $R$ be chosen independently and uniformly at random from $[n]$. 
	Let $T$ be a labelled rooted $\{a,b\}$-tree. 
	Let $s=|T|+|F|$,
	and assume that $s\leq \sqrt{n}/2$.

	We have
	\begin{align}\label{eq:tree-approx-conditional}
	\bbP(
	B_{\omega}(R)\conglab T
,	B_{\omega}(R)\cap \Supp(F) =\emptyset
		\mid F)	
		=\PBGW(\tau^{(lab)}_\mathbf{Z}= T) \left(1+O\left(\frac{ s^2}{n}\right)\right).
	\end{align}
\end{lemma}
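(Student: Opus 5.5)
We mimic the exact computation in the proof of Lemma~\ref{lem:prob_ball_given_tree}, now carried out under the conditional measure $\bbP(\cdot\mid F)$. Let $t=|T|$. Under $F$, the maps $\mathbf{a}$ and $\mathbf{b}$ restricted to $[n]\setminus V_A^+$ and $[n]\setminus V_B^+$ are independent and uniform. Each such value is uniform on $[n]\setminus V_A^-$, respectively $[n]\setminus V_B^-$, since preimages of $V_A^-$ are fully prescribed. All other values are independent.

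\textbf{Step 1: counting placements.} Consider the relabellings $T^*$ of $T$ by elements of $[n]\setminus\Supp(F)$ that are compatible with $\conglab$. There are $(n-|F|)_t/t!$ of them. For each such $T^*$ with root $r$, the event $\{B_\omega(R)=T^*\}$ under $F$ decomposes into three requirements:
\begin{itemize}
\item[(i)] $R=r$, which has probability $1/n$;
\item[(ii)] each of the $t-1$ edges of $T^*$ is present;
\item[(iii)] no further in-edge points into the relevant layers of $T^*$.
\end{itemize}
For (ii), the origin of each edge lies outside $\Supp(F)$, hence outside $V_c^+$. Its target lies outside $V_c^-$. So each edge has probability $1/(n-f_c^-)$, where $c$ is its letter.

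For (iii), consider vertices $x\notin V_c^+$ that are not origins of tree edges. Each must avoid the $t_c^-$ targets. Vertices in $V_c^+$ are automatically fine, since $\psi_c$ maps into $V_c^-$, which is disjoint from $T^*$. This gives
\begin{align}
\frac{(n-|F|)_t}{t!\,n}\prod_{c\in\{a,b\}}\Bigl(\frac{1}{n-f_c^-}\Bigr)^{t_c^+}\Bigl(1-\frac{t_c^-}{n-f_c^-}\Bigr)^{n-f_c^+-t_c^+}.
\end{align}
The constraint $B_\omega(R)\cap\Supp(F)=\emptyset$ is exactly what lets us restrict to labels outside $\Supp(F)$. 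Without it, the in-ball could use prescribed edges of $F$.

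\textbf{Step 2: asymptotics.} We compare this expression with $e^{-t}/t!$ from \eqref{eq:classical}. Take logarithms and use $t_a^++t_b^+=t-1$, $t_c^-\le t$ and $f_c^\pm\le |F|$. Each of the following factors is $1+O(s^2/n)$:
\begin{itemize}
\item $(n-|F|)_t/n^t=\exp(O((t^2+t|F|)/n))$;
\item $(n/(n-f_c^-))^{t_c^+}=\exp(O(t|F|/n))$;
\item $(1-t_c^-/(n-f_c^-))^{n-f_c^+-t_c^+}=e^{-t_c^-}\exp(O((t^2+t|F|)/n))$, using $\log(1-x)=-x+O(x^2)$ and $t_c^-/(n-f_c^-)=O(t/n)$.
\end{itemize}
Since $t_a^-+t_b^-=t$, the main terms multiply to $e^{-t}$. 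This yields the claimed factor $1+O(s^2/n)$.

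\textbf{Main obstacle.} The delicate point is keeping the error multiplicative and uniform when $t$ and $|F|$ are as large as $\sqrt n/2$. Exponents of order $n$ multiply corrections of order $t|F|/n^2$, so every quadratic remainder must be tracked. The hypothesis $s\le\sqrt n/2$ guarantees $s^2/n\le1/4$, so all the $O(\cdot)$ terms in the exponent stay bounded and exponentiating gives $1+O(s^2/n)$. It also keeps $n-f_c^-\ge n/2$, so the Taylor expansions of the logarithms are valid.
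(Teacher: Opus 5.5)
Your proposal is correct and follows essentially the paper's argument: the same exact product formula, obtained by sampling the unconstrained $c$-images uniformly from $[n]\setminus V_c^-$, followed by the same term-by-term logarithmic expansion. The only difference is cosmetic. You write $\bbP(R=r)=1/n$ directly, whereas the paper conditions on $R\notin\Supp(F)$ and then multiplies by $1-O(s/n)$; your version is in fact exact.
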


\begin{proof}

The proof is similar to that of Lemma~\ref{lem:prob_ball_given_tree}, taking into account the information already revealed by $F$.
Let $t\defi|T|$ and $f\defi|F|$. For $c\in \{a,b\}$ and $*\in\{+,-\}$, let $t_a^+, t_a^-,t_b^+, t_b^-$ be defined as in \eqref{eq:def_t_c^*} and let $s_c^*\defi t_c^*+f_c^*$.

 There are $\frac{(n-f)_{t}}{t!}$ ways to relabel the vertices of $T$ with elements of $[n]\setminus \Supp(F)$ which are compatible with the relation $\conglab$. Fix such a labelling $T^{*}$.
	Given $F$ and $R\notin \Supp(F)$, one can sample the function $\textbf{a}$ by choosing uniformly $n-f_a^{+}$ outputs among $n-f_a^{-}$ possible values (and similarly for the function $\textbf{b}$). Now the event $\{B_{\omega}(R)=T^{*}\}$ is equivalent to $R=r$, where $R$ is chosen in $[n]\setminus \Supp(F)$, $t_a^+$ outputs of the function $\mathbf{a}$ that need to coincide with those in the tree, and $t_a^-$ values forbidden for this function apart from these outputs (and similarly for the function $\mathbf{b}$). Therefore, we obtain
\begin{align}
\begin{aligned}
		&
	\bbP(
		B_{\omega}(R)=T^{*}
		\mid
		F,R\not\in \Supp(F)) 
	\\&
		=\frac{1}{n-f}
		\left(\frac{1}{n-f_a^-}\right)^{t_a^+}
		\left(\frac{1}{n-f_b^-}\right)^{t_b^+}
		\left(1-\frac{t_{a}^-}{n-f_a^-}\right)^{n-s_a^+}
		\left(1-\frac{t_{b}^-}{n-f_b^-}\right)^{n-s_b^+},
\end{aligned}
\end{align}
which can be seen as a generalization of~\eqref{eq:exact_prob_tree2}, that would correspond to the case $\Supp(F)=\emptyset$.
Since $(n-f)_{t}=(n-f)(n-f-1)_{t_a^++t_b^+}$, we obtain 
\begin{align}
\begin{aligned}
		&
		\bbP(
		B_{\omega}(R)\conglab {T},	B_{\omega}(R)\cap \Supp(F) =\emptyset
		\mid
		F,R\not\in \Supp(F)) 
	\\&
		=
		\frac{1}{t!}
		\left(1-\frac{t_{a}^-}{n-f_a^-}\right)^{n-s_a^+}
		\left(1-\frac{t_{b}^-}{n-f_b^-}\right)^{n-s_b^+}
		\prod_{i=1}^{t_a^+} \left(\frac{n-f-i}{n-f_a^-}\right)
		\prod_{i=1}^{t_b^+} \left(\frac{n-f-t_a^+-i}{n-f_b^-}\right).
		\label{eq:4product}
\end{aligned}
\end{align}
If $s\leq \sqrt{n}/2$, then we have
\begin{align}
	\frac{t}{n-f_a^-}
	\leq
	\frac{s}{n-f_a^-}
	\leq
	\frac{st}{n-f_a^-}\leq\frac{s^2}{n-s}\leq \frac{1}{2}.
	\label{eq:simplebounds}
\end{align}

	Using the fact that $\ln(1-x)=O(x)$ and $\exp(x)=1+O(x)$, uniformly for all $x\in[0,\tfrac{1}{2}]$, the first product in~\eqref{eq:4product} is equal to:
	\begin{align}
		\prod_{i=1}^{t_a^+}
		\left(
1 - \frac{f-f_a^-+i}{n-f_a^-}
		\right)
		=
		\exp\left(\sum_{i=1}^{t_a^+} O\Bigl(\frac{s}{n-f_a^-}\Bigr)
		\right)
		= 1+O\left( \frac{s^2}{n-f_a^-}\right)
		=1+O\left(\frac{s^2}{n}\right).  
	\end{align}
	The same is true for the second product in~\eqref{eq:4product} by an analogous computation.

It remains to handle the first two factors in~\eqref{eq:4product}. Using that $\ln(1-x)=1-x+O(x^2)$ and $\exp(x)=1+x+O(x^2)$, uniformly for all $x\in[0,\tfrac{1}{2}]$, we have from~\eqref{eq:simplebounds} that

	\begin{align}
	\begin{aligned}
	\left(1-\frac{t_{a}^-}{n-f_a^-}\right)^{n-s_a^+}
		&=
		\exp 
		\left(
		(n-s_a^+) \left(-\frac{t_a^-}{n-f_a^-}  +O\left(
		\left(\frac{t_a^-}{n-f_a^-}\right)^2\right) \right)
		\right)
		\\& 
		=
		\exp 
		\left(
		- t_a^- \left(1+O\left(\frac{s}{n}\right)\right)  +
		 O\left(\frac{s^2}{n}\right) 
		\right)
		\\& 
		=
		\exp 
		\left(
		- t_a^-  +
		 O\left(\frac{s^2}{n}\right) 
		\right)=
		e^{-t_a^-} 
		\left(1+
		 O\left(\frac{s^2}{n}\right) 
		\right).
		\end{aligned}
\end{align}
Putting these computations into~\eqref{eq:4product} and using that $t_a^-+t_b^-=t$, we obtain
\begin{align}
		&
		\bbP(
		B_{\omega}(R)\conglab T
,	B_{\omega}(R)\cap \Supp(F) =\emptyset
		\mid 
		F,R\not\in \Supp(F)) 
		=
		\frac{e^{-t}}{t!} \left(1+O\left(\frac{s^2}{n}\right)\right).
\end{align}
We finally need to multiply this by the probability of $R \not \in \Supp(F)$, which is $1-O(s/n)$, and can thus be absorbed in the error term. By~\eqref{eq:classical}, we recognize what we wanted to prove.
\end{proof}

\begin{remark}[In-ball events]\label{rem:in-ball_events}
Given a sequence $\mathbf{x}=(x_1,\dots, x_p)$ and a sequence $\mathbf{H}^*$ of rooted labelled $\{a,b\}$-digraphs $(H_1^*,\dots, H_p^*)$ with labels in $[n]$, let $F_{\mathbf{x},\mathbf{H}^*}=\cap_{i=1}^p \{B_w(x_i)=H_i^*\}$ be the event that the $w$-in-balls of the vertices coincide with these labelled $\{a,b\}$-digraphs.
	If $p=1$, we use the notation $F_{x,H^*}$.  
	Note that such events are always induced (revealing the $w$-in-ball of a vertex is equivalent to revealing successively certain preimages of vertices of its in-neighbourhood), and thus we may apply any of the previous lemmas to them.

\end{remark}

\subsection{Pair of quasi-leaves with a cyclic one and computation of~\eqref{eq:third_term} and~\eqref{eq:fourth_term}}

With the previous lemmas in-hand, we can finish the computation of the terms~\eqref{eq:third_term} and~\eqref{eq:fourth_term}.

\begin{lemma}[\bf Pair of quasi-leaves, one of them cyclic]\label{lem:not_a_tree_asymp_prob_pairs_1}
For all $x,y\in[n]$ with $x\neq y$ and uniformly over $u\in [0,1]$, we have
\begin{equation}
\bbE\bigl[u^{d(x)+d(y)} \mathbf{1}_{x\in\cQ_n\cap\cC_n}\mathbf{1}_{y\in\cQ_n}\bigr] = 
\bbE\bigl[u^{d(x)} \mathbf{1}_{x\in\cQ_n\cap\cC_n}\bigr]g_k(u\eta_k) + O\Bigl(\frac{1}{n^2}\Bigr).
\end{equation}
\end{lemma}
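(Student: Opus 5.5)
We condition on the in-ball of $x$ and then use the conditional in-ball lemmas for $y$. Set $\omega=w^2$. The event $\{x\in\cQ_n\cap\cC_n\}$ and the weight $u^{d(x)}$ are both measurable with respect to $B_\omega(x)$. We therefore decompose according to the value $H^*$ of $B_\omega(x)$ and write
\begin{align}
\bbE\bigl[u^{d(x)+d(y)}\mathbf{1}_{x\in\cQ_n\cap\cC_n}\mathbf{1}_{y\in\cQ_n}\bigr]=\sum_{H^*}u^{d_{H^*}(x)}\mathbf{1}_{H^*\text{ cyclic quasi-leaf}}\,\bbP(F_{x,H^*})\,\bbE\bigl[u^{d(y)}\mathbf{1}_{y\in\cQ_n}\mid F_{x,H^*}\bigr].
\end{align}
By Remark~\ref{rem:in-ball_events}, $F_{x,H^*}$ is an induced event with $|F|=O(|H^*|)$. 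The factor $\bbP(x\in\cC_n)=O(1/n)$ already appears in front, so it suffices to show the following conditional estimate:
\begin{align}
\bbE\bigl[u^{d(y)}\mathbf{1}_{y\in\cQ_n}\mid F\bigr]=g_k(u\eta_k)+O\Bigl(\frac{\mathrm{poly}(|F|)}{n}\Bigr).
\end{align}
Summing against $\bbP(F_{x,H^*})$ then gives the main term $\bbE[u^{d(x)}\mathbf{1}_{x\in\cQ_n\cap\cC_n}]\,g_k(u\eta_k)$. The error term becomes $\frac1n\bbE[\mathrm{poly}(|B_\omega(x)|)\mathbf{1}_{x\in\cC_n}]$, and we still need this to be $O(1/n^2)$.

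To prove the conditional estimate, first replace the fixed $y$ by a uniform $R\notin\Supp(F)$, using symmetry under relabellings fixing $\Supp(F)$. If $y\in\Supp(F)$, the decomposition has to be handled separately; we discuss this at the end. We then split according to three cases:
\begin{itemize}
\item If $B_\omega(R)$ meets $\Supp(F)$, Lemma~\ref{lemma:prob-ball-intersects} bounds the contribution by $O(|F|/n)$.
\item If $R$ is cyclic, Lemma~\ref{lemma:prob-nontree-small} bounds the contribution by $O(|F|/n)$. We need it for the $\omega$-ball of depth $2k$, which the same proof gives.
\item Otherwise, we sum Lemma~\ref{lemma:tree-approx-conditional} over trees $T\in\mathfrak{L}_{<2k}$ with weight $u^{|T_k|}$. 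This gives $\EBGW[u^{Z_k}\mathbf{1}_{Z_{2k}=0}]=g_k(u\eta_k)$ up to the error $\sum_T\PBGW(\tau^{(lab)}_\mathbf{Z}=T)\,O((|T|+|F|)^2/n)=O((1+|F|^2)/n)$. Trees with $|T|+|F|>\sqrt n/2$ are handled by the exponential tails of Lemma~\ref{lemma:exponential-in-ball}.
\end{itemize}
In all cases the errors are uniform in $u\in[0,1]$, since $u^{d}\le1$.

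The main obstacle is the error term $\frac1n\bbE[|B_\omega(x)|^2\mathbf{1}_{x\in\cC_n}]$, which must be $O(1/n^2)$. We will use the argument of Lemma~\ref{lem:not_a_tree}: conditionally on $|B_\omega(x)|=t$, the probability of a cycle is at most $t^2/n$. Combined with the tail bound of Lemma~\ref{lemma:exponential-in-ball}, this gives
\begin{align}
\bbE\bigl[|B_\omega(x)|^2\mathbf{1}_{x\in\cC_n}\bigr]\le\frac1n\sum_t t^4e^{-c(t-1)}=O\Bigl(\frac1n\Bigr),
\end{align}
which is enough.

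It remains to treat the case $y\in B_\omega(x)$. Its contribution is at most
\begin{align}
\bbP\bigl(x\in\cC_n,\ y\in B_\omega(x)\bigr)\le\bbE\Bigl[\mathbf{1}_{x\in\cC_n}\frac{|B_\omega(x)|}{n}\Bigr]\cdot O(1)=O\Bigl(\frac1{n^2}\Bigr).
\end{align}
This uses exchangeability of the vertices other than $x$, and the same moment bound as above.
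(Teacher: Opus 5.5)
Your proposal is correct and follows essentially the same route as the paper. Both condition on the induced event $\{B_\omega(x)=H^*\}$, obtain $\bbE[u^{d(y)}\mathbf{1}_{y\in\cQ_n}\mid F_{x,H^*}]=g_k(u\eta_k)+O(|H^*|^2/n)$ by sandwiching with Lemmas~\ref{lemma:tree-approx-conditional}, \ref{lemma:prob-ball-intersects} and~\ref{lemma:prob-nontree-small}, and absorb the error through $\frac1n\bbE[|B_\omega(x)|^2\mathbf{1}_{x\in\cC_n}]=O(1/n^2)$ as in Lemma~\ref{lem:not_a_tree}; your explicit handling of $y\in B_\omega(x)$ and of the weight $u^{d_{H^*}(x)}$ inside the sum over $H^*$ is slightly more careful than the paper, which conditions only on $\{x\in\cQ_n\cap\cC_n\}$ and lets $R$ range over $[n]\setminus\{x\}$.
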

\begin{proof}
By using conditional expectation we have
\begin{align}\label{eq:OEMFD}
\bbE\bigl[u^{d(x)+d(y)} \mathbf{1}_{x\in\cQ_n\cap\cC_n}\mathbf{1}_{y\in\cQ_n}\bigr]
= 
\bbE\bigl[u^{d(x)} \mathbf{1}_{x\in\cQ_n\cap\cC_n} \bbE\bigl[u^{d(y)}\mathbf{1}_{y\in\cQ_n}\mid x\in\cQ_n\cap\cC_n\bigr]\bigr]
\end{align}
Let $\mathfrak{H}^*_{<2k}$ be the set of rooted labelled $\{a,b\}$-digraphs $H^*$ with labels in $[n]$ 
for which $\bbP(x\in \cQ_n,B_{\omega}(x)= H^*)>0$. Recall that the event $F_{x,H^*}=\{B_{\omega}(x)=H^*\}$ is an induced event.
By symmetry, if $R$ is chosen independently and uniformly at random in $[n]\setminus\{x\}$,
\begin{align}\label{eq:SODAF}
\begin{aligned}
\bbE\bigl[u^{d(y)}\mathbf{1}_{y\in\cQ_n}\mid x\in\cQ_n\cap\cC_n\bigr] &=\bbE\bigl[u^{d(R)}\mathbf{1}_{R\in\cQ_n}\mid x\in\cQ_n\cap\cC_n\bigr] \\
&= \sum_{H^*\in \mathfrak{H}_{<2k}^*} \bbE\bigl[u^{d(R)}\mathbf{1}_{R\in\cQ_n}\mid F_{x,H^*}\bigr]\bbP(F_{x,H^*}\mid x\in \cQ_n\cap \cC_n),
\end{aligned}
\end{align}
where we used that $\{F_{x,H^*}: \, H^*\in \mathfrak{H}_{<k}^*\}$ is a partition of the event $\{x\in \cQ_n\cap \cC_n\}$.

Now, we have the upper and lower bounds:
	\begin{multline}\label{eq:sandwich}
\bbE\bigl[u^{d(R)}\mathbf{1}_{R\in \cQ_n\cap \cT_n}\mathbf{1}_{B_{\omega}(R)\cap B_{\omega}(x)=\emptyset}\mid F_{x,H^*}\bigr]
\leq
		\bbE\bigl[u^{d(R)}\mathbf{1}_{R\in \cQ_n}\mid F_{x,H^*}\bigr] \\
\leq 
\bbE\bigl[u^{d(R)} \mathbf{1}_{R\in \cQ_n\cap \cT_n}\mathbf{1}_{B_{\omega}(R)\cap B_{\omega}(x)=\emptyset}\mid F_{x,H^*}\bigr] + \bbE\bigl[u^{d(R)} \mathbf{1}_{B_{\omega}(R)\cap B_{\omega}(x)\neq\emptyset}\bigr]
+ \bbE\bigl[u^{d(R)} \mathbf{1}_{R\in \cC_n}\bigr].
\end{multline}

	By Lemma~\ref{lemma:tree-approx-conditional}, the lower bound in~\eqref{eq:sandwich} is equal to
\begin{align}
\sum_{T\in \mathfrak{L}_{<2k}} \bbE\bigl[u^{d(R)}\mathbf{1}_{B_{\omega}(R)\conglab T}\mathbf{1}_{B_{\omega}(R)\cap B_{\omega}(x) = \emptyset}\mid F_{x,H^*}\bigr] 
&= \sum_{T\in \mathfrak{L}_{<2k}} u^{|T_k|}\bbP(B_{\omega}(R)\conglab T,B_{\omega}(R)\cap B_{\omega}(x) = \emptyset\mid F_{x,H^*}) \\
&= g_k(u\eta_k)+ \EBGW[u^{Z_k}(Z+|H^*|)^2\mathbf{1}_{Z_{2k}=0}]\cdot \frac{1}{n}\\
&= g_k(u\eta_k)+ O(|H^*|^2/n).\label{eq:VMVDK_LB}
\end{align}
	Moreover, using Lemmas~\ref{lemma:prob-ball-intersects} and~\ref{lemma:prob-nontree-small}, and noting that $B_{\omega}(R)\subset B_{2k}(R)$ and $u\leq 1$, the difference between the upper and lower bound in~\eqref{eq:sandwich} is at most
\begin{align}\label{eq:VMVDK_UB}
	\bbP(B_{\omega}(R)\cap B_{\omega}(x)\neq\emptyset\mid F_{x,H^*}) +\bbP(R\in \cC_n \mid F_{x,H^*})
	=O(|H^*|/n),
\end{align}
which shows that the central quantity in~\eqref{eq:sandwich} satisfies 
$$
\bbE\bigl[u^{d(R)}\mathbf{1}_{R\in \cQ_n}\mid F_{x,H^*}\bigr]= g_k(u\eta_k)+ O(|H^*|^2/n).
$$
Plugging this estimate into~\eqref{eq:SODAF}, and then into~\eqref{eq:OEMFD}, yields the result. Indeed, we can bound the error term using the same ideas as in the proof of Lemma~\ref{lem:not_a_tree},
\begin{align}\label{eq:ODNES}
\frac{1}{n}\bbE[u^{d(x)}|B_{\omega}(x)|^2 \mathbf{1}_{x\in \cQ_n\cap \cC_n}]&\leq \frac{1}{n}
\bbE[|B_{\omega}(x)|^2 \mathbf{1}_{x\in \cQ_n\cap\cC_n}]\leq \frac{1}{n^2}\sum_{t\geq 1} t^4e^{-c(t-1)}= O\Bigl(\frac{1}{n^2}\Bigr).\qedhere
\end{align}

\end{proof}

\begin{lemma}[\bf Pair of cyclic quasi-leaves]\label{lem:not_a_tree_asymp_prob_pairs_2}
For all $x,y\in[n]$ with $x\neq y$ and uniformly over $u\in [0,1]$, we have
\begin{equation}
 \bbE\bigl[u^{d(x)+d(y)}\mathbf{1}_{x,y\in\cQ_n\cap\cC_n}\bigr]= O\Bigl(\frac{1}{n^2} \Bigr). 
\end{equation}
\end{lemma}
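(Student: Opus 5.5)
Since $u\in[0,1]$, we have $u^{d(x)+d(y)}\le 1$, so it suffices to prove $\bbP(x,y\in\cC_n)=O(1/n^2)$; we may also drop the quasi-leaf conditions. The plan is to condition on the $\omega$-in-ball of $x$, exactly as in the proof of Lemma~\ref{lem:not_a_tree_asymp_prob_pairs_1}, and use the conditional bound of Lemma~\ref{lemma:prob-nontree-small} for $y$. For each rooted labelled $\{a,b\}$-digraph $H^*$ with $\bbP(B_\omega(x)=H^*)>0$ and $B_\omega(x)=H^*$ non-tree, the event $F_{x,H^*}=\{B_\omega(x)=H^*\}$ is an induced event with $|F_{x,H^*}|=O(|H^*|)$ by Remark~\ref{rem:in-ball_events}. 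These events partition $\{x\in\cC_n\}$, so
\begin{align}
\bbP(x,y\in\cC_n)=\sum_{H^*}\bbP(y\in\cC_n\mid F_{x,H^*})\,\bbP(F_{x,H^*}).
\end{align}

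The first step is to bound $\bbP(y\in\cC_n\mid F_{x,H^*})$ by $O(|H^*|/n)$. By symmetry in the labels outside $\Supp(F_{x,H^*})$, this is handled by Lemma~\ref{lemma:prob-nontree-small} with $R$ uniform. The only subtlety is that $y$ is fixed rather than uniform. I would either replace $y$ by $R$ uniform in $[n]\setminus\{x\}$ (as in~\eqref{eq:SODAF}) and add a term $\bbP(R\in\Supp(F))=O(|H^*|/n)$ for the case where $R$ lands in the support, or write $\bbP(y\in\cC_n\mid F)\le \bbP(y\in\Supp(F)\mid F)+\dots$. In the latter case I would average over relabellings of $H^*$: summing over $H^*$, the probability that $y\in V(B_\omega(x))$ and $x\in\cC_n$ is itself $O(1/n^2)$, since it needs both a cycle and a hit of the specific vertex $y$. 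Concretely, I would sum over $H^*$ only after grouping them by their $\conglab$-class, so that the class of $H^*$ is uniform given its shape. This makes $y$ exchangeable with a uniform vertex outside $\{x\}$, and the reduction to Lemmas~\ref{lemma:prob-ball-intersects} and~\ref{lemma:prob-nontree-small} goes through.

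Plugging in this bound gives
\begin{align}
\bbP(x,y\in\cC_n)\le \frac{C}{n}\,\bbE\bigl[|B_\omega(x)|\mathbf{1}_{x\in\cC_n}\bigr],
\end{align}
and the remaining expectation is $O(1/n)$. To see this, argue exactly as in the proofs of Lemma~\ref{lem:not_a_tree} and~\eqref{eq:ODNES}: conditionally on $|B_\omega(x)|=t$, a cycle has probability at most $t^2/n$, and Lemma~\ref{lemma:exponential-in-ball} gives $\sum_t t^3e^{-ct}<\infty$. This yields $O(1/n^2)$ uniformly in $u$.

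The main obstacle is the fixed-versus-uniform issue for $y$, in particular the case where $y$ lies inside $H^*$. If the symmetrization above feels delicate, I would bound that piece directly. The event is that $y\in B_\omega(x)$ and $B_\omega(x)$ contains a cycle. In the BFS exploration from $x$ this requires two ``collision-type'' events, namely revealing the specific vertex $y$ and closing a cycle, each costing a factor $O(t/n)$ or $O(t^2/n)$. Together with the exponential tails of $|B_{2k}(x)|$, this gives $O(1/n^2)$.
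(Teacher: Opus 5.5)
Your proposal is correct and is essentially the paper's proof. You bound $u^{d(x)+d(y)}\le 1$, replace $y$ by a uniform $R\in[n]\setminus\{x\}$ using that the event on $x$ is invariant under relabellings fixing $x$, partition according to the induced events $F_{x,H^*}$, apply Lemma~\ref{lemma:prob-nontree-small} to get $O(|H^*|/n)$, and conclude with $\bbE[|B_\omega(x)|\mathbf{1}_{x\in\cC_n}]=O(1/n)$ via Lemma~\ref{lemma:exponential-in-ball}. One point to state cleanly: the pointwise bound $\bbP(y\in\cC_n\mid F_{x,H^*})=O(|H^*|/n)$ for the fixed vertex $y$ fails in general (it equals $1$ if $H^*$ records $\bfw(x)=y$ and $\bfw(y)=x$), so the symmetrization must be done before conditioning on $H^*$, exactly as in~\eqref{eq:SODAF} and your first option.
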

\begin{proof}
Since $u\leq 1$ and by Lemma~\ref{lem:not_a_tree} we have
\begin{equation}\label{eq:VHMDS}
\bbE\bigl[u^{d(x)+d(y)}\mathbf{1}_{x,y\in\cQ_n\cap\cC_n}\bigr]\leq \bbP\bigl(x,y\in\cQ_n\cap\cC_n\bigr)=  O\Bigl(\frac{1}{n}\Bigr)\cdot\bbP\bigl(y\in\cC_n\mid x\in\cQ_n\cap\cC_n\bigr).
\end{equation}
Let $\mathfrak{H}_{<2k}^*$ and $R$ be as in the proof of Lemma~\ref{lem:not_a_tree_asymp_prob_pairs_1}. By Lemma~\ref{lemma:prob-nontree-small} and using the same trick as in~\eqref{eq:SODAF}, we have
\begin{align}
\bbP\bigl(y\in\cC_n\mid x\in\cQ_n\cap\cC_n\bigr) 
&= \sum_{H^*\in \mathfrak{H}_{<2k}^*} \bbP\bigl(R\in\cC_n\mid F_{x,H^*}\bigr)\bbP(F_{x,H^*}\mid x\in \cQ_n\cap \cC_n) \\
&= O\Bigl(\frac{1}{n}\Bigr)  \sum_{H^*\in \mathfrak{H}_{<2k}^*}|H^*|\cdot \bbP(F_{x,H^*}\mid x\in \cQ_n\cap \cC_n) \\
&=O\Bigl(\frac{1}{n}\Bigr),
\end{align}
since $\bbE[|B_{\omega}(x)| \mid x\in \cQ_n\cap \cC_n]=O(1)$, as we argued at the end of the proof of Lemma~\ref{lem:not_a_tree_asymp_prob_pairs_1}. Putting it back into~\eqref{eq:VHMDS}, we conclude the proof of the lemma.
\end{proof}

\subsection{Final computation of variance}\label{subsec:final_obs}

We are now ready to give an asymptotic estimation of the variance.

\begin{theorem}\label{thm:observables}
For every $u\in [0,1]$ we have
\begin{align}
		\Var(Q_n^w(u)) &=  c(w,u)n + O(1),
\end{align}
where
	\begin{align}
		 c(w,u)\defi & g_k(u^2\eta_k)- g_k(u\eta_k)^2+ 2\capprox^1(w,u)g_k(u\eta_k)-\capprox^2(w,u)
+ \cint(w,u), 
	\end{align}
	where the constants $\capprox^1(w,u)$, $\capprox^2(w,u)$ and $\cint(w,u)$ are defined above and summarized altogether in Section~\ref{subsec:def-cw-selfcontained} below.
\end{theorem}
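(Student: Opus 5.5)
We expand the variance as $\Var(Q_n) = \bbE[Q_n^2]-\bbE[Q_n]^2$ and split the second moment into diagonal and off-diagonal parts. Since $u^{d(x)}\mathbf{1}_{x\in\cQ_n}$ squared equals $u^{2d(x)}\mathbf{1}_{x\in\cQ_n}$, we have
\begin{align}
\bbE[Q_n(u)^2] = \bbE[Q_n(u^2)] + \sum_{x\neq y}\bbE\bigl[u^{d(x)+d(y)}\mathbf{1}_{x,y\in\cQ_n}\bigr].
\end{align}
By~\eqref{eq:expectation_full} applied at $u^2$, together with Lemma~\ref{lem:not_a_tree}, the diagonal term is $g_k(u^2\eta_k)n+O(1)$. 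For the off-diagonal term we use the decomposition~\eqref{eq:first_term}--\eqref{eq:fourth_term} for each ordered pair $x\neq y$. There are $n(n-1)$ such pairs, so each term is needed up to $O(1/n^2)$.

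For the individual terms we invoke four earlier results. Lemma~\ref{lemma:noninttreeleaves} gives $g_k(u\eta_k)^2-\capprox^2(w,u)/n+O(n^{-2})$. Lemma~\ref{lemma:cint} gives $\cint(w,u)/n+O(n^{-2})$. Lemma~\ref{lem:not_a_tree_asymp_prob_pairs_1} gives $2\,\bbE[u^{d(x)}\mathbf{1}_{x\in\cQ_n\cap\cC_n}]g_k(u\eta_k)+O(n^{-2})$. Lemma~\ref{lem:not_a_tree_asymp_prob_pairs_2} gives $O(n^{-2})$. Write $\gamma\defi n\,\bbE[u^{d(1)}\mathbf{1}_{1\in\cQ_n\cap\cC_n}]$, which is $O(1)$ by Lemma~\ref{lem:not_a_tree}. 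Multiplying by $n(n-1)=n^2-n$, the off-diagonal sum is
\begin{align}
g_k(u\eta_k)^2 n^2 - g_k(u\eta_k)^2 n -\capprox^2(w,u) n + \cint(w,u) n + 2\gamma g_k(u\eta_k) n + O(1),
\end{align}
using that $(n^2-n)\cdot\gamma/n = \gamma n+O(1)$.

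For the squared mean we use~\eqref{eq:expectation_full}, which reads $\bbE[Q_n(u)] = g_k(u\eta_k)n -\capprox^1(w,u)+\gamma + O(1/n)$. Squaring gives
\begin{align}
\bbE[Q_n]^2 = g_k(u\eta_k)^2n^2 - 2\capprox^1(w,u)g_k(u\eta_k)n + 2\gamma g_k(u\eta_k)n + O(1).
\end{align}
Subtracting, the $n^2$ terms cancel and so do the $\gamma$ terms; this cancellation is why the cyclic contribution $\gamma$ never has to be computed. What remains is exactly $c(w,u)n+O(1)$ with the stated formula.

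The main point needing care is the error bookkeeping. An $O(1/n)$ error in the mean would only give an $O(n)$ error in the square, so we must use the full expansion~\eqref{eq:expectation_full}, which has an $O(1/n)$ remainder, rather than Corollary~\ref{cor:prob_leaf}. Likewise, every pair estimate must hold with $O(n^{-2})$ error uniformly in $x,y$ and $u\in[0,1]$, which the cited lemmas provide. We also have to check that $\gamma$ enters both computations identically: it is the same quantity by symmetry over vertices.
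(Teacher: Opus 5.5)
Your proposal is correct and matches the paper's proof essentially line by line. You split $\bbE[Q_n^2]$ into the diagonal term $\bbE[Q_n(u^2)]$ and the $n(n-1)$ off-diagonal pairs, estimate the latter via Lemmas~\ref{lemma:noninttreeleaves}, \ref{lemma:cint}, \ref{lem:not_a_tree_asymp_prob_pairs_1} and~\ref{lem:not_a_tree_asymp_prob_pairs_2}, and subtract the square of~\eqref{eq:expectation_full} so that the $n^2$ terms and the cyclic contribution $\gamma$ cancel; your point that the $O(1/n)$ remainder of~\eqref{eq:expectation_full}, rather than Corollary~\ref{cor:prob_leaf}, is needed is exactly the bookkeeping the paper relies on.
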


\begin{proof}
Let $Q_n=Q_n^w(u)$. We have
	$\Var(Q_n) = \mathbb{E}[Q_n^2] -  \mathbb{E}[Q_n]^2$
	with
	\begin{align}
	\mathbb{E}[Q_n^2]=n(n-1)\bbE\bigl[u^{d(1)+d(2)}\mathbf{1}_{1,2\in\mathcal{Q}_n}\bigr]+n\bbE\bigl[u^{2d(1)}\mathbf{1}_{1\in\mathcal{Q}_n}\bigr].
	\end{align} 
	The second term is equal to $n g_k(u^2\eta_k) +O(1)$ and the first to 
\begin{align}
n(n-1)
	\left(g_k(u\eta_k)^2+\frac{1}{n}\bigl(-\capprox^2(w,u)+\cint(w,u)+2g_k(u\eta_k) \bbE[u^{d(1)}\mathbf{1}_{1\in\cQ_n\cap \cC_n}]\bigr)+O\Bigl(\frac{1}{n^2}\Bigr) \right),
\end{align}
	by summing the estimates for \eqref{eq:first_term}, \eqref{eq:second_term}, \eqref{eq:third_term} and~\eqref{eq:fourth_term} obtained respectively in Lemmas~\ref{lemma:noninttreeleaves}, \ref{lemma:cint}, \ref{lem:not_a_tree_asymp_prob_pairs_1} and~\ref{lem:not_a_tree_asymp_prob_pairs_2}.
	Moreover $\mathbb{E}[Q_n]$ is equal to $g_k(u\eta_k) n -\capprox^1(w,u)+ \bbE[u^{d(1)}\mathbf{1}_{1\in\cQ_n\cap \cC_n}]n+O\bigl(\frac{1}{n}\bigr)$ by~\eqref{eq:expectation_full}. The coefficient of $n^2$ in the variance clearly cancels out. So does the leading term of the factor including $\bbE[u^{d(1)}\mathbf{1}_{1\in\cQ_n\cap \cC_n}]$ (recall that this expectation has order $O(1/n)$ by Lemma~\ref{lem:not_a_tree}). The expression of $c(w,u)$ follows by computing remainder of the coefficient of $n$.
\end{proof}

\subsection{Summary of constants}
\label{subsec:def-cw-selfcontained}

The goal of this section is to recapitulate all formulas necessary to
give a self-contained expression of the constant $c(w,u)$
in terms of the word $w$. We thus include here the expressions of
the constants $\capprox^1(w,u)$,  $\capprox^2(w,u)$ and $\cint(w,u)$
given above in
Lemmas~\ref{lem:leaf_and_tree},~\ref{lemma:noninttreeleaves} and~\ref{lemma:cint}
as well as the intermediate sets, operators, and generating functions
needed to compute them. All these formulas have either already been
given, or follow directly from the expressions given in the lemmas
together with the generating function expressions of
Section~\ref{subsec:GW_gf}, in particular~\eqref{eq:WPEOW}.

All the formulas of this section are easily implemented in a
mathematical software. See for example the Maple worksheet accompanying this paper~\cite{maple} which
has been used to generate our tables and figures.

\renewcommand{\arraystretch}{2.5} %
\noindent\begin{tabularx}{\textwidth}{|X|}

\hline
Fix $w\in \{a,b\}^k$ and let $\omega=w^2$.
\\
    \hline 
	Let $\cM=\cM(\omega)$ be the set of pairs $(i,j)$ with $0\leq i< j< 2k$, with $i=0$  or $\omega_{2k+1-i}\neq \omega_{2k+1-j}$. For all $(i,j)\in \cM$,  $\ell_{i,j}=\ell_{i,j}(\omega)$ is the largest integer $\ell \in \{0,1,\dots,2k-j\}$ such that $\omega_{2k+1-i-s}=\omega_{2k+1-j-s}$ for all $s\in [\ell]$. \\
    \hline
    The sets $A^+,B^+,A^-,B^-$ are defined as follows. For $(c,C)\in\{(a,A),(b,B)\}$:
    \begin{align}
    C^+= \{1\leq \ell\leq 2k: \,\omega_{2k-\ell+1}=c\} \quad \text{ and}\quad C^-= \{0\leq \ell<2k: \,\omega_{2k-\ell}=c\}.
    \end{align}
	Let $\overline{Z}^c = \overline{Z}_{C^-}(\overline{Z}_{C^-}-2\overline{Z}_{C^+})$ with $\overline{Z}_U=Z^{(1)}_U+Z^{(2)}_U$ for every $U\subseteq \mathbb{N}$, where $\mathbf{Z}^{(1)}$ and $\mathbf{Z}^{(2)}$ are independent Po(1)-BGWTs. For $\mathbf{Z}=(Z_0,Z_1,\dots)$, we write $Z_U= \sum_{i \in U} Z_i$ and
	$Z_{\leq 2k}= \sum_{i= 0}^{2k} Z_i$. \\
    \hline
    Define the polynomials:
    {\begin{align}
	    Q_1(Z_0,\dots,Z_{2k}) &\!\defi \frac{1}{2}\bigl[Z_{\leq 2k}(Z_{\leq 2k}-1)+Z_{A^-}(Z_{A^-}\!-2Z_{A^+}) +Z_{B^-}(Z_{B^-}\!-2Z_{B^+}) \bigr], \\
	    Q_2(Z^{(1)}_0,\dots,Z^{(1)}_{2k};Z^{(2)}_0,\dots,Z^{(2)}_{2k})&\!\defi \frac{1}{2} \bigl[({Z}^{(1)}_{\leq 2k}+{Z}^{(2)}_{\leq 2k}-2)({Z}^{(1)}_{\leq 2k}+{Z}^{(2)}_{\leq 2k}+1)+\overline{Z}^a+\overline{Z}^b\bigr].
    \end{align}}
\vspace{-12pt}    
    \\
    \hline
    The generating functions $g_t(s)$ and $G_t(x_0,\dots,x_t)$ are given by:
    {\begin{align}
        g_{t}(s) &= f(g_{t-1}(s)) \text{ for all } t\geq 1 \text{ and } g_0(s) = s, \\
        G_t(x_0,\dots,x_t) &= x_0 f\bigl(G_{t-1}(x_1,x_2,\dots,x_{t})\bigr) \text{ for } t\geq 1 \text{ and } G_0(x_0)=x_0,
    \end{align}}
    where $f(s)=e^{s-1}$. 
    For all $t\geq 0$, we write $\eta_t = g_t(0)$. \\
    \hline
	For $(z,\mathbf{z})\in \{(x,\mathbf{x}),(y,\mathbf{y})\}$ we write $\mathbf{z}= (z_0,z_1,\dots,z_{2k})$ and $\displaystyle\frac{\mathbf{z}\partial}{\partial\mathbf{z}}= \left(\frac{z_0\partial}{\partial z_0}, \dots, \frac{z_{2k}\partial}{\partial z_{2k}}\right)$. We define the operator $\Xi_{2k}^{\mathbf{z},u}$ that specializes the variable $\mathbf{z}$ to 
	$(\underbrace{1,\dots,1}_{k},u,\underbrace{1,\dots,1}_{k-1},0)$, i.e. sets $z_k=u,z_{2k}=0$, \\[-4.5ex]
	
	and all other $z_i$ to one. 
\\ 
	\hline
Let
{\begin{align}
F_1^{i,j} (u):=\bigl(\prod_{s=0}^{(i-1)\wedge k}g_{k-s}(u \eta_{k})\bigr) \bigl(\prod_{s=0}^{(j-1)\wedge k}g_{k-s}(u \eta_{k})\bigr) \bigl(\prod_{s=k+1}^{i-1} \eta_{2k-s}\bigr)  \bigl(\prod_{s=k+1}^{j-1} \eta_{2k-s}\bigr).
\end{align}
	}
	\\[-8ex]
	{
\begin{align}
F_2^{i,j,\ell} (u):=\begin{cases}
g_{\ell}(\eta_{2k-i-\ell}\eta_{2k-j-\ell}) &\text{if $k<i$,}\\
g_{k-i}(u g_{i+\ell-k}(\eta_{2k-i-\ell}\eta_{2k-j-\ell})) &\text{if $i\leq k <j$ and $k\leq i+\ell$,}\\
g_{\ell}(g_{k-i-\ell}(u\eta_{k})\eta_{2k-j-\ell}) &\text{if $i\leq k <j$ and $i+\ell<k$,}\\
g_{k-j}(u g_{j-i}(u g_{i+\ell-k}(\eta_{2k-i-\ell}\eta_{2k-j-\ell}))) &\text{if $j\leq k$ and $k\leq i+\ell$,}\\
g_{k-j}(u g_{\ell+j-k} (g_{k-i-\ell}(u\eta_{k})\eta_{2k-j-\ell})) &\text{if $j\leq k$ and $i+\ell< k<j+\ell$,}\\
g_{\ell}(g_{k-i-\ell}(u\eta_{k})g_{k-j-\ell}(u\eta_{k})) &\text{if $j\leq k$ and $j+\ell\leq k$}.
\end{cases}
\end{align}
	}
	\vspace{-10pt}
	\\
	\hline
\end{tabularx}

\vspace{12pt}

\renewcommand{\arraystretch}{2.5} %
\noindent\begin{tabularx}{\textwidth}{|X|}

    \hline 
\noindent With the previous notation, we have 
{\begin{align}
	\capprox^1(w,u)&= \Xi^{\mathbf{x},u}_{2k}\Bigl[
		Q_1\left(\frac{\bfx\partial}{\partial \bfx}\right)
        G_{k}(\mathbf{x})\Bigr],
                \\
        \capprox^2(w,u)&=
	\Xi^{\mathbf{x},u}_{2k}\Xi^{\mathbf{y},u}_{2k} \Bigl[ 
		Q_2\left(\frac{\bfx\partial}{\partial \bfx};\frac{\bfy\partial}{\partial \bfy}\right)
        G_{2k}(\mathbf{x})
        G_{2k}(\mathbf{y})\Bigr]
        ,
        \\
	\cint(w,u)&= 2\sum_{(i,j)\in \cM} F_1^{i,j} (u)F_2^{i,j,k,\ell_{i,j}} (u)
	. 
\end{align}}
and
{\begin{align}
		 c(w,u)= &g_k(u^2\eta_k)- g_k(u\eta_k)^2+ 2\capprox^1(w,u)g_k(u\eta_k)-\capprox^2(w,u)
+ \cint(w,u) 
		 .  
\end{align}} 
	\vspace{-20pt}
	\\
\hline
\end{tabularx}

\begin{proof}[Proof of formulas in this section]
        The only formulas which are not directly taken from previous sections
are the expressions of $\capprox^1(w)$ and $\capprox^2(w)$ involving differential operators. However they follow directly from their expression in
Lemmas~\ref{lem:leaf_and_tree} and~\ref{lemma:noninttreeleaves}, and from~\eqref{eq:WPEOW}, which can be rephrased as saying that for any polynomial $P$ in $2k+1$ variables,
\begin{align}\label{eq:SOEMD}
	\EBGW\bigl[u^{Z_k}P(Z_0,\dots,Z_{2k})\mathbf{1}_{Z_{2k}=0}\bigr]& =
	\Xi^{\mathbf{x},u}_{2k} \Bigl[ P\Bigl(\frac{\bfx\partial}{\partial \bfx}\Bigr)
	G_{2k}(\mathbf{x})\Bigr].\qedhere
\end{align}
\end{proof}

\section{Asymptotic independence and proof of Theorem~\ref{thm:cw_separate}}
\label{sec:independence}

Our goal in this section is to prove that the real functions $u\mapsto c(w,u)$ and $u\mapsto c(w';u)$ are different (as functions) as long as the words $w$ and $w'$ are non-isomorphic. In order to do that, we will first prove some sort of asymptotic independence of the different functions that appear as building blocks in our explicit expressions of $c(w,u)$. See also Remark~\ref{rem:old_schanuel} for a comment on our approach.

\subsection{Asymptotic independence lemmas}

First observe that the explicit expressions enable us to extend the domain from $u\in [0,1]$ to $u\in \mathbb{R}$ and  consider the functions $c(w,u):\mathbb{R}\to\mathbb{R}$, which are well-defined.  To study the algebraic independence of real functions, we examine their asymptotic behaviour at infinity. For the sake of simplicity, when the context is clear, we write $h(u)$ to denote the function $u\mapsto h(u)$.

\begin{definition}
	For any two real functions $h_1, h_2:\mathbb{R}\to\mathbb{R}$ which are positive for sufficiently large $u$, we write $h_1(u)\lll h_2(u)$ if $\ln(h_1(u)) = o(\ln(h_2(u))$ as $u\to \infty$.
\end{definition}

Recall we set $f(u)=e^{u-1}$ and defined $g_{t}(u)=f(g_{t-1}(u))$ with $g_0(u)=u$. We informally call the function $g_t$ a \emph{tower of exponentials} of height $t$; for instance $g_4(u)$ is a tower of exponentials that looks like
$$
g_4(u)=e^{e^{e^{e^{u-1}-1}-1}-1}.
$$
We also allow towers of exponentials to carry constant positive coefficients at each level; for instance, a possible tower could be
$$
f((1/3)f(4f(f(2u))))=e^{(1/3)e^{4e^{e^{2u-1}-1}-1}-1}.
$$

The next lemma helps us compare the towers of exponentials appearing in such iterated compositions:
\begin{lemma}\label{lemma:compare_towers_final}
	Let $s\geq r\geq 1$ be integers and let $a_1, \dots a_r$ and $ b_1,\dots, b_s>0$ be (strictly) positive real numbers.
	Let 
\begin{align}
	h_a(u)&=f(a_1f(a_2\dots f(a_r u)\dots))\\ h_b(u)&=f(b_1f(b_2\dots f(b_s u)\dots)).
\end{align}	
	Then if $r<s$, we have $h_a(u)\lll h_b(u)$. 
	The same is true if $r=s$ and if the largest $i\in [r]$ such $a_i\neq b_i$ is such that $a_i<b_i$ and $i>1$.
\end{lemma}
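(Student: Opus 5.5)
The plan is an induction on the levels of the tower, going outwards from the innermost level. It rests on two elementary propagation facts about $f(s)=e^{s-1}$, applied to functions $X,Y$ that are positive and tend to $+\infty$ as $u\to\infty$.

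\emph{Fact 1 ($\lll$ implies $o$).} If $X\lll Y$, then $X=o(Y)$. Indeed, for any $\epsilon>0$ we have $\ln X\le \epsilon\ln Y$ for large $u$, so $X\le Y^{\epsilon}$. Taking $\epsilon<1$ and using $Y\to\infty$ gives $Y^{\epsilon}=o(Y)$.

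\emph{Fact 2 (one level of $f$ turns $o$ into $\lll$).} If $X=o(Y)$ and $c,d>0$, then $f(cX)\lll f(dY)$. Indeed, $\ln f(cX)=cX-1$ and $\ln f(dY)=dY-1$, and $cX-1=o(dY-1)$ because $dY-1\to\infty$.

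Combining the two facts: once some level of $h_a$ is $\lll$ the corresponding level of $h_b$, every further outer level preserves $\lll$. Each outer level applies Fact 1 and then Fact 2 with the coefficients $a_m,b_m$. All towers involved are positive and tend to $\infty$, since every coefficient is positive and $f(x)\ge x$ with $f(x)-x\to\infty$. So it suffices to produce $\lll$ at one inner level and then peel outwards.

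\textbf{Case $r<s$.} Align the two towers at their outer ends. Write
\[
h_b=f(b_1f(\dots f(b_r G(u))\dots)), \qquad G(u)=f(b_{r+1}f(\dots f(b_s u)\dots)),
\]
where $G$ is a tower of depth $s-r\ge 1$, while $h_a$ has $a_ru$ in the same position. I first check that $u=o(G(u))$. This holds by a short induction on the depth: $f(cX)/X\to\infty$ whenever $X\to\infty$ grows at least linearly, because $e^{cX-1}$ dominates any linear function of $X$. Fact 2 then gives $f(a_r u)\lll f(b_r G(u))$, and peeling outwards through levels $r-1,\dots,1$ yields $h_a\lll h_b$.

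\textbf{Case $r=s$.} Let $i>1$ be the largest index with $a_i\neq b_i$, so $a_i<b_i$. The inner parts agree: let
\[
K(u)=f(a_{i+1}f(\dots f(a_ru)\dots)),
\]
with the convention $K(u)=u$ if $i=r$. Then $K\to\infty$. At level $i$, the ratio satisfies $f(a_iK)/f(b_iK)=e^{-(b_i-a_i)K}\to 0$. This gives only $f(a_iK)=o(f(b_iK))$, not $\lll$, because the logarithms differ by the constant factor $a_i/b_i$.

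This is exactly where $i>1$ is needed: level $i-1$ exists. Fact 2, applied with coefficients $a_{i-1},b_{i-1}$, upgrades the $o$-relation to $f(a_{i-1}f(a_iK))\lll f(b_{i-1}f(b_iK))$. Peeling outwards through the remaining levels $i-2,\dots,1$ then gives $h_a\lll h_b$.

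The only delicate point is this distinction between $o$ and $\lll$ at the first differing level. A single level of discrepancy produces only $o$, and one further application of $f$ is required to obtain $\lll$. This explains why $i=1$ is excluded: there, $\ln h_a/\ln h_b\to a_1/b_1\neq 0$. The rest is routine bookkeeping of the constants $-1$ and the positive coefficients.
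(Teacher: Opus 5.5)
Your proof is correct and follows essentially the same route as the paper. The paper's key observation is your Fact~2 ($A=o(B)$ with $B\to\infty$ implies $f(aA)\lll f(bB)$), followed by a descending induction outward with exactly your choices: $A=u$ and $B$ equal to the inner tower of depth $s-r$ when $r<s$, and $A=f(a_iK)$, $B=f(b_iK)$ at level $i-1$ when $r=s$. Your Fact~1 ($\lll$ implies $o$ for functions tending to infinity) makes explicit a step that the paper's induction uses only tacitly.
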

Note that the hypothesis $i>1$ is necessary in the second conclusion of the lemma. Indeed, for $0<a<b$, we have $e^{-1+au}=o( e^{-1+bu} )$ but $e^{-1+au}\lll \hspace{-4mm}/\hspace{4mm} e^{-1+bu}$. However, we have $f(e^{-1+au})\lll f(e^{-1+bu}).$
\begin{proof}
	We start with the following direct observation: if $A=A(u)>0$ and $B=B(u)>0$ are such that $B(u)\rightarrow \infty$ as $u\to \infty$ and  $A(u)=o(B(u))$, then for any $a,b>0$ we have 
	$f(aA(u))\lll f(bB(u))$. 

	For any $q\in [r]$, by descending induction on $1\leq p\leq q$, the claim implies that under the same hypotheses: 
	\begin{align}\label{eq:comp}
	f(a_pf(a_{p+1}\dots f(a_{q}A(u))\dots )) \lll
	f(b_pf(b_{p+1}\dots f(b_{q}B(u))\dots )),
	\end{align}
	and this is true in particular for $p=1$.

	In the case $r<s$ we take $q=r$, $A(u)=u$ and $B(u)=f(b_{r+1}f(b_{r+2}\dots f(b_{s}u)\dots ))$.
	In the case $r=s$ we take $q=i-1$ (so $q\in [r-1]$), $A(u)=f(a_if(a_{i+1}\dots f(a_{r}u)\dots ))$ and $B(u)=f(b_if(a_{i+1}\dots f(a_{r}u)\dots ))$. In both cases,~\eqref{eq:comp} with $p=1$ is what we wanted to prove.
\end{proof}

We will also need the following variant of the previous lemma, where we allow one of the constants $a_i$ (or $b_i$) to be itself a tower of exponentials, called the \emph{branching tower}, of height strictly smaller than $r-i$ (or $s-i$) so the \emph{main tower} is the highest one. As an example, consider the function
$$
h_a(u)=g_2(g_2(u)g_4(u))= e^{e^{e^{e^{u-1}-1} e^{e^{e^{e^{u-1}-1}-1}-1}-1}-1},
$$
that has a main tower of height $6$ and a branching tower of height $2$ at level $2$ (so $2+2<6$); in the notation of the lemma below, we have $a_2(u)=g_2(u)$ and the rest of the $a_i$ equal $1$.

The next result essentially says that the criterion in Lemma~\ref{lemma:compare_towers_final} is unaffected by the existence of  branching towers, as long as their height is smaller than the main tower.
\begin{lemma}\label{lemma:compare_branches}
	Let $s\geq r\geq 1$ be integers, $i_0\in [r]$ and $j_0\in [s]$.  
	Let $a_1, \dots, a_{i_0-1}$, $a_{i_0+1},\dots, a_r$, and $b_1,\dots,b_{j_0-1}$, $b_{j_0+1},\dots, b_s$ be (strictly) positive real numbers. Let $a_{i_0}(u)$ and $b_{j_0}(u)$ be  towers of exponentials of height (strictly) less than $r-i_0$ and $s-j_{0}$ respectively, namely
	\begin{align}\label{eq:FDIDS}	
	a_{i_0}(u) &= f(\alpha_1f(\alpha_2 \dots f(\alpha_{r-i_1}u)\dots)),\\ 
	b_{j_0}(u) &=f(\beta_1f(\beta_2 \dots f(\beta_{s-j_1}u)\dots)), 
	\end{align}
	for positive real numbers $\alpha_1,\dots,\alpha_{r-i_1},\beta_1,\dots,\beta_{s-j_1}$ with $i_0<i_1\leq r$ and $j_0< j_1\leq s$ (if $i_1=r$ or $j_1=s$ the functions are interpreted as $u$).

	Let 
	\begin{align}\label{eq:form_branch_tower}
	\begin{aligned}
		h_a(u)&=f(b_1f(a_2\dots f(a_{i_0}(u)\dots f(a_r u)\dots)\dots)),\\
		h_b(u)&=f(b_1f(b_2\dots f(b_{j_0}(u)\dots f(b_s u)\dots)\dots)).
	\end{aligned}
	\end{align}

	Then $h_a(u)\lll h_b(u)$ in the following cases:
\begin{itemize}
\item[(i)] if $r<s$;
\item[(ii)] if $r=s$ and the largest $i\in [r]$ such that $a_i\neq b_i$ is at least $2$ and is such that $a_i<b_i$;
\item[(iii)] if $r=s$ and the largest $i\in [r]$ such that $a_i\neq b_i$ is $1$ and $a_i=o(b_i)$.
\end{itemize}	
	In case $(ii)$, in the case where $a_i$ or $b_i$ is a function, the inequality $a_i<b_i$ is meant for $u$ large enough.

\end{lemma}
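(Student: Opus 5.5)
We will reduce Lemma~\ref{lemma:compare_branches} to the same descending-induction mechanism used for Lemma~\ref{lemma:compare_towers_final}. The basic tool is again the observation made at the start of that proof: if $A(u),B(u)>0$, $B(u)\to\infty$ and $A(u)=o(B(u))$, then $f(A(u))\lll f(B(u))$. We also use a slightly stronger variant: if $\ln A(u)=o(\ln B(u))$, or merely $A(u)=o(B(u))$ with $B\to\infty$, then $f(cA)\lll f(c'B)$ for any positive constants $c,c'$. Multiplication by a positive constant, or by a positive function of strictly lower ``tower order'', does not destroy the comparison once an exponential has been applied above it. The induction goes from the top of the towers downward: we locate the first level (counted from the innermost argument) where the two expressions differ, establish an $o(\cdot)$ comparison there, and propagate it outward through the remaining common levels of $f$'s.

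The key preliminary step is to control the branching factor. We prove by induction that a tower of height $m$ with positive coefficients satisfies $\ln a_{i_0}(u)\lll$ a tower of height $m+1$. The hypothesis that the branching tower has height strictly less than $r-i_0$ means that $a_{i_0}(u)$ is negligible in the $\lll$ sense compared to the inner expression $X(u)=f(a_{i_0+1}f(\dots f(a_r u)\dots))$ it multiplies. Concretely, Lemma~\ref{lemma:compare_towers_final} with $r<s$ gives $a_{i_0}(u)\lll X(u)$. Since $X(u)\to\infty$, this yields
\[
\ln\bigl(a_{i_0}(u)X(u)\bigr)=\ln X(u)\,(1+o(1)).
\]
In particular, $c_1X(u)\le a_{i_0}(u)X(u)\le X(u)^{1+\epsilon}$ for large $u$ and any fixed $\epsilon>0$. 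Thus the product $a_{i_0}X$ is sandwiched between two expressions with constant coefficients at that level.

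With this in hand, the three cases are handled as follows.

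\textbf{Case (i), $r<s$.} Bound $h_a$ from above by replacing $a_{i_0}X$ with $X^{1+\epsilon}$. Bound $h_b$ from below by replacing $b_{j_0}$ with a small constant, which is valid since $b_{j_0}(u)\ge$ some $c>0$ for large $u$. Then compare the resulting pure towers via the descending induction of Lemma~\ref{lemma:compare_towers_final}. The extra power $1+\epsilon$ sits inside at least one further $f$, where it becomes a factor $X^{\epsilon}$. This factor is still $o$ of the strictly taller competing tower one level up, so the basic observation applies at the level where the heights first differ, and the comparison propagates outward.

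\textbf{Case (ii).} Compare at the largest differing level $i\ge 2$. When $a_i,b_i$ are constants, $a_iY<b_iY$ with the same inner $Y$. When one of them is a branching function, the hypothesis $a_i<b_i$ for large $u$ gives $a_i(u)Y(u)\le (b_i(u)-\delta(u))Y(u)$. We need this gap to be of order $Y$ after applying $f$. The first approach to try is to show that the ratio $a_i/b_i$ stays bounded away from $1$; this holds when both are constants, or when one is a tower tending to infinity and the other is a constant or a lower tower. Then $f(a_iY)=o(f(b_iY))$ with $f(b_iY)\to\infty$, and one more $f$ (available since $i\ge2$) gives $\lll$.

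\textbf{Case (iii).} Here $a_1=o(b_1)$ directly gives $a_1X=o(b_1X)$, and one application of $f$ yields $\lll$.

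The main obstacle is case (ii) when $a_i$ and $b_i$ are both branching towers with $a_i(u)<b_i(u)$ but ratio tending to $1$. If that situation can occur under the hypotheses, I would strengthen the argument by pushing the comparison inside the branching towers themselves, using Lemma~\ref{lemma:compare_towers_final}. That lemma shows two distinct towers are either $\lll$-separated or differ by a constant factor at their outermost level, so the ratio indeed stays away from $1$.
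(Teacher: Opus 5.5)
Your proposal is correct and is essentially the paper's argument: the descending induction of Lemma~\ref{lemma:compare_towers_final}, with the branching factor controlled by $a_{i_0}(u)\lll X(u)$ (the case $r<s$ of that lemma), where your sandwich $c_1X\le a_{i_0}X\le X^{1+\epsilon}$ plays the role of the paper's inequality~\eqref{eq:comp3}. Your explicit check that $(b_i-a_i)Y\to\infty$ at the deciding level of case (ii) is a point the paper leaves implicit; similarly, when a branching factor sits at level $i-1$ you only have $f(a_iY)=o(f(b_iY))$ there rather than $\lll$, so you should state that the step ``one more $f$ gives $\lll$'' uses your sandwich with $\epsilon$ small enough that $(1+\epsilon)a_i<b_i$.
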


\begin{proof}
	The initial observation in the proof of Lemma~\ref{lemma:compare_towers_final} admits the following (even more direct) variant:  if $A=A(u)>0$ and $B=B(u)>0$, and  $a=a(u)>0$, $b=b(u)>0$, are such that $b(u)B(u)\rightarrow \infty$ and  $a(u)A(u)=o(b(u)B(u))$, then 
	$f(a(u)A(u))\lll f(b(u)B(u))$.

Now, since $i_0<i_1$, and respectively $j_0<j_1$, we have from the case $r<s$ of previous lemma that, respectively
	\begin{align} \label{eq:comp2}
		a_{i_0}(u) \lll f(a_{i_0+1}\dots f(a_r u)\dots) \ \ \ , \ \ \ 
	  b_{j_0}(u) \lll f(b_{j_0+1}\dots f(a_s  u)\dots).
	\end{align}
If follows that if  $f(a_{i_0+1}\dots f(a_r  u)\dots)\lll f(b_{i_0+1}\dots f(b_s  u)\dots)$
then we also have 
	\begin{align}\label{eq:comp3} a_{i_0}(u) f(a_{i_0+1}\dots f(a_r  u)\dots) \lll
	  b_{i_0}  f(b_{i_0+1}\dots f(a_s  u)\dots).
	\end{align}
	and respectively the same statement is true with $i_0$ replaced by $j_0$, provided that $j_0\leq r-1$.

We can now conclude similarly as in the previous lemma, but with a separate induction in each case.
	First consider the case $r<s$. Take $A(u)=u$ and $B(u)=f(b_{r+1}f(b_{r+2}\dots f(b_{s}u)\dots ))$,  which satisfy $A(u)\lll B(u)$ and so $f(a_rA(u))\lll f(b_rB(u))$. As in the previous lemma, by descending induction on $1\leq p\leq r-1$ and using either the claim or its variant at each step, relying on~\eqref{eq:comp3} if the variant is needed to ensure its hypotheses, we obtain  
	$$
	f(a_pf(a_{p+1}\dots f(a_{r}A(u))\dots )) \lll
	f(b_pf(b_{p+1}\dots f(b_{r}B(u))\dots )).
	$$

	In the case $r=s$ we take $A(u)=f(a_if(a_{i+1}\dots f(a_{r}u)\dots ))$ and $B(u)=f(b_if(a_{i+1}\dots f(a_{r}u)\dots ))$, and again by descending induction on $1\leq p\leq i-1$ (using either the claim or its variant at each step, relying on~\eqref{eq:comp3} if the variant is needed to ensure its hypotheses) we obtain that 
	$$
	f(a_pf(a_{p+1}\dots f(a_{i-1}A(u))\dots )) \lll
	f(b_pf(b_{p+1}\dots f(b_{i-1}B(u))\dots )).
	$$
	In both cases setting $p=1$ gives what we want to prove.
\end{proof}

\begin{lemma}[\bf Asymptotic independence]\label{lemma:AI}
	Let $\mathcal{X}$ and $\mathcal{Y}$ be two finite sets of functions from $\mathbb{R}_{\geq 0}$ to $\mathbb{R}_{\geq 0}$.
	Let $p:=|\mathcal{X}|$ and assume that we can write $\mathcal{X}=\{f_1(u),\dots,f_p(u)\}$ with $f_1(u)\rightarrow \infty$ and
$$ {f_1(u)} \lll {f_2(u)}\lll \dots\lll {f_p(u)}.$$

	Assume moreover that there are finite sets $I_1, \dots, I_p \subset (0,1)$ such that we can write
	$\mathcal{Y} = \bigcup_{i\in [p]} \mathcal{Y}_i$, 
	with $\mathcal{Y}_i = \{   [f_i(u)]^{x_i} , x_i \in I_i\}$. Write $I_i^0:=I_i\cup \{0\}$ for $i\in [p]$.

	Consider a polynomial in the variables $\mathcal{X}$ and $\mathcal{Y}$ which is at most linear in each $\mathcal{Y}_i$, i.e:
\begin{align}\label{eq:defP}
P(u)=
\sum_{j_1,\dots,j_p \geq 0}^{finite}
\sum_{ (x_1,\dots,x_p) \in I_1^0\times \dots \times I_p^0}
a_{j_1,\dots,j_p;x_1,\dots,x_p}
	\prod_{i=1}^p [f_i(u)]^{(j_i+x_i)}
.
\end{align}

	Then the real function $u\mapsto P(u)$ determines $P$ as a polynomial, i.e. it determines the set of coefficients $$(a_{j_1,\dots,j_p;x_1,\dots,x_p} ; \ j_1,\dots,j_p \geq 0 , \  (x_1,\dots,x_p) \in I_1^0\times \dots \times I_p^0).$$
\end{lemma}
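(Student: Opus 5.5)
By linearity it suffices to show that if $P(u)\equiv 0$ as a function, then all coefficients vanish. I would argue by induction on $p$, peeling off the fastest-growing function $f_p$. The case $p=0$ is trivial, since $P$ is then a constant.

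Group $P$ according to the exponent of $f_p$. Write
\[
P(u)=\sum_{e} R_e(u)\,[f_p(u)]^{e},
\]
where $e$ ranges over the finite set $E=\{j_p+x_p\}\subset\mathbb{R}_{\geq0}$ of exponents that occur. Each $R_e$ is a polynomial of the same form in $f_1,\dots,f_{p-1}$ and $\mathcal{Y}_1,\dots,\mathcal{Y}_{p-1}$. Two pairs $(j_p,x_p)\neq(j_p',x_p')$ with $x_p,x_p'\in I_p^0\subset[0,1)$ give distinct exponents $j_p+x_p\neq j_p'+x_p'$, because the fractional part determines $x_p$ and then the integer part determines $j_p$. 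Hence distinct monomials in $f_p$ stay distinct, and the coefficients of $P$ are exactly the coefficients of the $R_e$, indexed by $e$.

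The key step is a domination estimate. Suppose $R_e\not\equiv 0$ as a function. I want to show that $|R_e(u)|$ is neither too large nor too small compared with powers of $f_p$, namely that
\[
[f_p(u)]^{-\epsilon}\leq |R_e(u)|\leq [f_p(u)]^{\epsilon}
\]
for every $\epsilon>0$ and all large $u$ in some unbounded set.

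For the upper bound I use the hypotheses. Since $f_1\to\infty$ and $f_i\lll f_p$ for $i<p$, we have $\ln f_i=o(\ln f_p)$. Hence every monomial in $f_1,\dots,f_{p-1}$ with bounded exponents is $[f_p]^{o(1)}$, and so $|R_e|\leq [f_p]^{o(1)}$.

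The lower bound is where the induction enters, and it is the main obstacle. Applying the induction hypothesis to the nonzero polynomial $R_e$ is not enough: knowing that $R_e$ is not identically zero gives no rate. I would strengthen the induction statement to the following: \emph{every nonzero $P$ of this form satisfies $P(u)=c\,[f_q(u)]^{e}(1+o(1))\cdot M(u)$ for some nonzero constant $c$, where $q$ is the largest index with nontrivial dependence and $M$ is a polynomial-form factor in the lower $f_i$}. In other words, $P$ is asymptotic to its unique dominant monomial.

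This strengthened statement proves itself by the same induction. Order the monomials lexicographically by their exponent vectors, read from $f_p$ down to $f_1$. A monomial with a larger $f_p$-exponent beats any combination of lower functions, since $[f_p]^{\delta}$ with $\delta>0$ dominates any bounded power of $f_1,\dots,f_{p-1}$, using $\ln f_i=o(\ln f_p)$. Among monomials with equal $f_p$-exponent, compare $f_{p-1}$-exponents, and so on. At the bottom level, the function $f_1\to\infty$ separates distinct real exponents, because $f_1^{a}=o(f_1^{b})$ whenever $a<b$.

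With the strengthened statement, the lexicographically largest monomial with nonzero coefficient dominates all the others. So $P(u)\sim a\prod_i f_i^{e_i}\neq 0$ for large $u$, and therefore $P$ does not vanish identically.

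Consequently, if $P\equiv0$ then every coefficient is zero. Applying this to the difference of two representations shows that the function $u\mapsto P(u)$ determines the coefficients, which is the claim.
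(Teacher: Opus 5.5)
Your argument is correct and is essentially the paper's: the monomials $\prod_i f_i^{j_i+x_i}$ form an asymptotic scale, totally ordered by little-$o$ according to the right-to-left lexicographic order on the exponent vectors. Injectivity of indices into exponents comes, as you note, from $I_i\subset(0,1)$, so the leading nonzero term dominates and $P\equiv 0$ forces every coefficient to vanish. The induction on $p$ and the two-sided domination estimate you set up first are unnecessary; your final lexicographic-domination paragraph already constitutes the whole proof.
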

\begin{proof}
The hypotheses on $f_1,\dots,f_p$ ensure that set of functions 
$$
	u\mapsto	\prod_{i=1}^p [f_i(u)]^{(j_i+x_i)}
$$
	for $j_1,\dots,j_p \geq 0$ and $(x_1,\dots,x_p) \in I_1^0\times \dots \times I_p^0$ is totally ordered for the relation ``being a little-o of'', with total order given by the right-to-left lexicographic order on $(j_1+x_1,\dots,j_p+x_p)$. (Note that this includes the constant function $\mapsto 1$, and this is why we assume $f_1(u)\rightarrow \infty$).

	In other words, these functions form a valid asymptotic scale, and the lemma follows by uniqueness of asymptotic expansion.
\end{proof}

\subsection{Reconstruction of $w$ from $c(w,u)$}

We will apply the independence lemma to the expression of $c(w,u)$ summed up in Section~\ref{subsec:def-cw-selfcontained}. We first need to define the functions that will play the role of ``variables'' in the polynomial expressions. More precisely, our goal is to be able to express all the functions $F_1^{i,j}$ and $F_2^{i,j,\ell}$ of Proposition~\ref{prop:cross_monster} as a polynomial with complex coefficients of these chosen variables.

We first introduce the set of real functions 
\begin{align}
 \mathcal{G}&=	\big\{ g_{k-i}(u\eta_k), 0\leq i < k  \big\}.
\end{align}
Note that we exclude the case $i=k$, that would correspond to the function $g_0(u \eta _k)=u\eta_k$, indeed we prefer to keep the variable ``$u$'' aside. It is clear that the function $F_1^{i,j}$ is a polynomial in $ \mathcal{G} \cup \{u\}$, for any $0\leq i\ < j <2k$.

We now consider the set of functions $F_2^{i,j,\ell}$, which requires some case disjunction. Consider the following sets of real functions
\begin{align}\label{eq:set_def}
\begin{aligned}
	\mathcal{B}&=	\big\{g_{k-i}(u g_{i+\ell-k}(\eta_{2k-i-\ell}\eta_{2k-j-\ell}));  0\leq i< k, k <j< 2k, k-i\leq \ell \big\}\\
\mathcal{C}&=	\big\{ g_{\ell}(g_{k-i-\ell}(u\eta_{k})\eta_{2k-j-\ell});  0\leq i< k, k <j< 2k, 1 \leq \ell < k-i\big\}\\
\mathcal{D}_1&=	\big\{ g_{k-j}(u g_{j-i}(u g_{i+\ell-k}(\eta_{2k-i-\ell}\eta_{2k-j-\ell}))); 0\leq i< k, i< j< k, k-i\leq\ell \big\}\\
\mathcal{D}_0&=	\big\{ g_{k-i}(u g_{i+\ell-k}(\eta_{2k-i-\ell}\eta_{k-\ell})); 0\leq i< k, i<  k, k-i\leq\ell \big\}\\
\mathcal{E}_1&=	\big\{ g_{k-j}(u g_{\ell+j-k} (g_{k-i-\ell}(u\eta_{k})\eta_{2k-j-\ell})); 0\leq i< k, i< j< k,k-j < \ell <k-i  \big\}\\
\mathcal{E}_0&=	\big\{ g_{\ell} (g_{k-i-\ell}(u\eta_{k})\eta_{k-\ell}); 0\leq i< k, i< k,0 < \ell <k-i  \big\}\\
\mathcal{F}&=	\big\{ g_{\ell}(g_{k-i-\ell}(u\eta_{k})g_{k-j-\ell}(u\eta_{k})); 0\leq i< k, i< j\leq k,1\leq \ell\leq k-j \big\}
\end{aligned}
\end{align}
The functions appearing in the sets $\mathcal{B}, \mathcal{C}, \mathcal{D}_1\cup\mathcal{D}_0,\mathcal{E}_0\cup\mathcal{E}_1,\mathcal{F}$ are the functions of $u$ appearing as $F^{i,j,\ell}_2$ in Proposition~\ref{prop:cross_monster}, in cases B.--F. of its proof (see also Figure~\ref{fig:cases}), up to some restrictions and variants, namely:
\begin{compactitem}
	\item The cases $i\geq k$ and $\ell=0$ have been set aside, as they will play a special role.
	\item In $\mathcal{D}_1$ we further impose $j<k$, which ensures that the first function $g_{k-j}$ is not the identity. Setting $k=j$ in the expression appearing in $\mathcal{D}_1$ would result in a function which is a multiple of $u$: the set $\mathcal{D}_0$ precisely captures this function, divided by $u$.
	\item Similarly in $\mathcal{E}_1$, we enforce $k<j$, and the corresponding case $k=j$, once divided by $u$, is captured in the set $\mathcal{E}_0$.
\end{compactitem}
We let
$$\mathcal{V} := \{u\} \cup \mathcal{B}\cup \mathcal{C}\cup \mathcal{D}_1\cup\mathcal{D}_0\cup\mathcal{E}_0\cup\mathcal{E}_1\cup\mathcal{F}.$$
All the elements of $\mathcal{V}$ are towers or branched towers, in the previous terminology. We need to set aside certain of them for which the comparison Lemma~\ref{lemma:compare_branches} will not be applicable. 
We let
\begin{align}
\mathcal{Y}&:=  \mathcal{B}^{[i=k-1]} \cup
		      \mathcal{C}^{[\ell=1]} \cup
		     \mathcal{D}_0^{[i=k-1]} \cup
		      \mathcal{E}_0^{[\ell=1]}
\end{align}
where the notation $\mathcal{B}^{[i=k-1]}$ means that  we restrict to the case $i=k-1$ in the definition of $\mathcal{B}$ in~\eqref{eq:set_def} (for the three other sets the notation is similar). Informally, $\mathcal{Y}$ is composed of all functions which are towers but not branched towers (i.e. there is a unique ``$u$'' in their expression), and that start with the function $g_1$ in their expression given by~\eqref{eq:set_def}.

Finally, we set
\begin{align}
	\mathcal{X}&= \big( \mathcal{G} \cup \mathcal{V}  \big) \setminus \mathcal{Y}. 
\end{align}

With this setup, we can now state:

\begin{proposition}\label{prop:asymptotic_order}
	\begin{compactitem}
	\item[(i)]The sets $\mathcal{X}$ and $\mathcal{Y}$ satisfy the hypotheses of Lemma~\ref{lemma:AI}, up to multiplying each element of $\mathcal{Y}$ by some non-zero constant. 

	\item[(ii)]	the function $u\mapsto c(w,u)+g_k(u^2\eta_k)$ is a polynomial on $\mathcal{X}\cup\mathcal{Y}$ that is at most linear in $\mathcal{Y}$. 
	\end{compactitem}
\end{proposition}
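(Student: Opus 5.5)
The plan is to prove the two items separately. Item (ii) is bookkeeping on the explicit formulas of Section~\ref{subsec:def-cw-selfcontained}. Item (i) is an asymptotic comparison of towers, using Lemmas~\ref{lemma:compare_towers_final} and~\ref{lemma:compare_branches}.

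\textbf{Item (ii).} I would go through the terms of $c(w,u)$ one at a time.

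First, $g_k(u^2\eta_k)$ is removed by hypothesis. The term $g_k(u\eta_k)^2$ lies in $\mathcal{G}^2$, and $\mathcal{G}\subset\mathcal{X}$.

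For $\capprox^1$ and $\capprox^2$, I would use \eqref{eq:SOEMD}. The quantity $G_{2k}(\mathbf{x})=x_0f(x_1f(\cdots f(x_{2k})))$ is a nested product of exponentials. Applying any operator $(x_s\partial/\partial x_s)$ to it yields polynomials in the partial nestings $x_sf(x_{s+1}f(\cdots))$. Under $\Xi^{\mathbf{x},u}_{2k}$ (which sets $x_k=u$, $x_{2k}=0$ and all other $x_i=1$), every such nesting becomes one of $g_{k-s}(u\eta_k)$, $u\eta_k$, or a constant $\eta_t$. So $\capprox^1$ and $\capprox^2$ are polynomials in $\mathcal{G}\cup\{u\}\subset\mathcal{X}$.

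For $\cint$, each summand is $F_1^{i,j}F_2^{i,j,\ell_{i,j}}$, and $F_1^{i,j}$ is a polynomial in $\mathcal{G}\cup\{u\}$. For $F_2$ I would split into cases:
\begin{itemize}
\item if $i>k$ (case A), $F_2$ is a constant;
\item if $\ell=0$, $F_2$ is a product of elements of $\mathcal{G}$, of $u$, and of constants;
\item otherwise, $F_2$ is exactly an element of $\mathcal{B},\mathcal{C},\mathcal{D}_1,\mathcal{E}_1,\mathcal{F}$, or, when $j=k$, it equals $u$ times an element of $\mathcal{D}_0$ or $\mathcal{E}_0$.
\end{itemize}
Since every summand contains exactly one factor $F_2$, and the elements of $\mathcal{Y}$ occur only as such factors, $c(w,u)+g_k(u^2\eta_k)$ is at most linear in $\mathcal{Y}$.

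\textbf{Item (i), treatment of $\mathcal{Y}$.} I would first observe that every element of $\mathcal{Y}$ has the form $f(\alpha X(u))$, where $X(u)$ is either $u$ or some $g_{m}(u\eta_k)$, and $\alpha$ is a constant. The identity $f(\alpha X)=e^{\alpha-1}f(X)^{\alpha}$ then rewrites each such element as a nonzero constant times a power of a pure tower.
\begin{itemize}
\item For $\mathcal{C}^{[\ell=1]}$ and $\mathcal{E}_0^{[\ell=1]}$, the coefficient $\alpha$ is some $\eta_t\in(0,1)$ and the base is $g_{m+1}(u\eta_k)\in\mathcal{G}$. This gives the form $[f_i]^{x}$ with $x\in(0,1)$.
\item For $\mathcal{B}^{[i=k-1]}$ and $\mathcal{D}_0^{[i=k-1]}$, the element is $e^{\alpha u-1}$ with $\alpha$ a product of values of $g$ at points in $(0,1)$. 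Here I would compare $\alpha$ with $\eta_k$ and use the base $g_1(u\eta_k)$. If some $\alpha\geq\eta_k$, I would instead move the element with the largest such constant into $\mathcal{X}$ as the base $e^{\alpha^{*}u}$, and express the others as powers of it with exponents $\alpha/\alpha^{*}\in(0,1)$.
\end{itemize}
This grouping makes each $I_i$ a finite subset of $(0,1)$, as Lemma~\ref{lemma:AI} requires.

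\textbf{Item (i), ordering $\mathcal{X}$.} It then remains to totally order $\mathcal{X}$ by $\lll$, and to check that the smallest element tends to infinity; it is a tower of height at least $1$ in $u$, so it does. The ordering is the step I expect to be the main obstacle.

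For elements with different main-tower heights, Lemma~\ref{lemma:compare_branches}(i) settles the comparison. For equal heights, the comparison is decided by the topmost differing level coefficient, through (ii) or (iii) of that lemma. Two things must be checked:
\begin{itemize}
\item every element of $\mathcal{X}$ is a genuine branched tower with branch height strictly below the main tower, and with at most one branch. Where there are two, as in $\mathcal{F}$ and $\mathcal{E}_1$, I would treat one branch as the main tower, taking the branch with $k-i-\ell$ as the larger;
\item two distinct elements never coincide level by level except at level $1$ with equal coefficients. Such a tie at the top level is exactly what the removal of $\mathcal{Y}$ (towers starting with $g_1$) is designed to prevent.
\end{itemize}
Concretely, I would encode each element of $\mathcal{X}$ by the pair consisting of its main height and its sequence of level coefficients, which are powers of $u$, $\eta$'s, or branch towers. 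I would then verify that distinct index tuples $(i,j,\ell)$ yield distinct codes, comparing the finitely many coefficients $\eta_t$ via their strict monotonicity in $t$. Any remaining coincidences would be handled by merging equal functions into a single variable.
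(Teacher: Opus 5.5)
Your overall plan follows the paper's route. Item (ii) is bookkeeping on the explicit formulas. Item (i) combines Lemma~\ref{lemma:compare_branches} with the rewriting $f(\alpha X)=e^{\alpha-1}f(X)^{\alpha}$ for $\mathcal{Y}$. Your generating-function treatment of $\capprox^1,\capprox^2$ via \eqref{eq:SOEMD} is, if anything, more careful than the paper's spine shortcut.

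There is, however, a genuine gap in the $\mathcal{Y}$ part of (i), for $\mathcal{B}^{[i=k-1]}$ and $\mathcal{D}_0^{[i=k-1]}$. You never prove that the constant $\alpha$ in $e^{\alpha u-1}$ satisfies $\alpha<\eta_k$, and your fallback fails whichever way it is read:
\begin{itemize}
\item If you put $e^{\alpha^{*}u}$ into $\mathcal{X}$ alongside $g_1(u\eta_k)=e^{\eta_k u-1}\in\mathcal{G}$, the two are $\lll$-incomparable, since their logarithms have ratio tending to $\alpha^{*}/\eta_k$. So $\mathcal{X}$ is no longer totally ordered.
\item If you also rewrite $g_1(u\eta_k)$ as a fractional power of the new base, item (ii) breaks, because $g_1(u\eta_k)$ occurs squared in $c(w,u)$. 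Examples are the $g_1^2$ terms in Table~\ref{tablec}, and the factors $F_1^{i,j}$ with $i\geq k$.
\end{itemize}

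The missing observation is the identity $g_t(\eta_s)=\eta_{s+t}$. For $i=k-1$ the constant is $\alpha=g_{\ell-1}(\eta_{k+1-\ell}\,\eta_{2k-j-\ell})$, with $j=k$ in the $\mathcal{D}_0$ case. Since $g_{\ell-1}$ is increasing and $\eta_{2k-j-\ell}<1$, you get $\alpha<g_{\ell-1}(\eta_{k+1-\ell})=\eta_k$. Hence $e^{\alpha u-1}=e^{\alpha/\eta_k-1}[g_1(u\eta_k)]^{\alpha/\eta_k}$ with exponent in $(0,1)$. The degenerate cases with $\alpha=0$ are constants and can be absorbed into coefficients.

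Alternatively, you could bypass the inequality altogether. Split $\alpha/\eta_k$ into its integer and fractional parts; Lemma~\ref{lemma:AI} allows exponents of the form $j_i+x_i$, and each monomial of $c(w,u)$ carries at most one $F_2$-factor.

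Two smaller slips. First, your case list in (ii) omits $i=k$ with $\ell\geq 1$. There $F_2^{k,j,\ell}=u\,g_\ell(\eta_{k-\ell}\eta_{2k-j-\ell})$ is $u$ times a constant, not an element of $\mathcal{B}$; this is harmless once noted. Second, the smallest element of $\mathcal{X}$ is $u$ itself, not a tower of height at least $1$; it still tends to infinity, so the conclusion stands.
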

\begin{proof}[Proof of~(i)]
First, we will make use of Lemma~\ref{lemma:compare_branches} to show that the set $\mathcal{X}$ is totally ordered for the relation $\lll$.  
	For the sake of contradiction, let us assume that there exist two different functions $h_1(u),h_2(u)\in \mathcal{X}$ that are not comparable with respect to the relation $\lll$.
As any function in $\mathcal{X}$ different from $u$ is a tower of height at least $1$, we may assume that $h_1(u),h_2(u)\neq u$.

	The functions ${h}_1$ and ${h}_2$ are towers or branched towers in our previous terminology.
By the contrapositive of Lemma~\ref{lemma:compare_branches}, they have the same height as towers, and in the way to express them as~\eqref{eq:form_branch_tower}, they have the same coefficient $a_i=b_i$ at any height $i>1$. Moreover, coefficients differ at height 1, and both $a_1$ and $b_1$ are constants (because if they were functions of $u$ they would be towers themselves, and we would have $a_1 = o(b_1)$ or $b_1=o(a_1)$, since the family of towers we consider is totally ordered for the little-o relation).  

	First assume that $h_1(u)\in \mathcal{G}$. Since $h_2$ has the same height as $h_1$, it cannot be in $\mathcal{G}$.
We split into cases. In each case, the fact that the coefficients need to differ at height $1$ imposes restrictions on the possible indices:
\begin{itemize}
	\item[-] $h_2(u)\in \mathcal{B}\cap \mathcal{X}$: Then $k-i=1$, but these are not in $\mathcal{X}$ by construction. 
\item[-] $h_2(u)\in \mathcal{C}\cap \mathcal{X}$: Then $\ell=1$, but these are not in $\mathcal{X}$ by construction.
\item[-] $h_2(u)\in \mathcal{D}_1\cap \mathcal{X}$: This is impossible, as even for $k-j=1$ the coefficient at height $1$ goes to infinity.
\item[-] $h_2(u)\in \mathcal{D}_0\cap \mathcal{X}$: Then  $k-i=1$, but these are not in $\mathcal{X}$ by construction.
\item[-] $h_2(u)\in \mathcal{E}_1\cap \mathcal{X}$: This is impossible, by the argument used when $h_2\in \mathcal{D}_1\cap \mathcal{X}$.
\item[-] $h_2(u)\in \mathcal{E}_0\cap \mathcal{X}$: Then $\ell=1$, but these are not in $\mathcal{X}$ by construction.
\item[-] $h_2(u)\in \mathcal{F}\cap \mathcal{X}$: This is impossible, even for $\ell=1$, as above.
\end{itemize}

	When $h_1(u)\in\mathcal{V}$, the argument is analogous.
\medskip
	
	We now check that any function in $\mathcal{Y}$ can be expressed as $c [g(u)]^{x}$ for some $g(u)\in \mathcal{X}$ and $x\in (0,1)$, and $c>0$. In fact we will show that this holds with $g(u)\in \mathcal{G}$. 
Recall that $g_t$ is monotonically increasing for any $t\geq 0$, $(\eta_t)_{t\geq 0}$ is an increasing sequence and strictly bounded above by 1. Let $h(u)\in \mathcal{Y}$. 
We split into cases:
\begin{itemize}
	\item[-] $h(u)\in \mathcal{B}\cap \mathcal{Y}$: then $k-i=1$ so 
		$h(u)=e^{g_{\ell-1}(\eta_{k+1-\ell}\eta_{2k-j-\ell}) u -1}=
		e^{-1+g_{\ell-1}(\eta_{k+1-\ell}\eta_{2k-j-\ell})/\eta_{k}}(g_{1}(u \eta_k))^{g_{\ell-1}(\eta_{k+1-\ell}\eta_{2k-j-\ell})/\eta_{k}}$
and, as $\ell\geq 1$,
$$
0<g_{\ell-1}(\eta_{k+1-\ell}\eta_{2k-j-\ell})< g_{\ell-1}(\eta_{k+1-\ell})=\eta_k,
$$ 
		so we do have $h(u)=c [g_{1}(u \eta_k)]^x$ for $x\in (0,1)$ and $c>0$.

	\item[-] $h(u)\in \mathcal{C}\cap \mathcal{Y}$: then $\ell=1$ so $h(u)=e^{\eta_{2k-j-\ell}g_{k-i-1}(u\eta_k)-1}=e^{-1+\eta_{2k-j-\ell}}(g_{k-i}(u\eta_k))^{\eta_{2k-j-\ell}}$ with $0<\eta_{2k-k-\ell}<1$. 

	\item[-] $h(u)\in \mathcal{D}_0\cap \mathcal{Y}$: then $k-i=1$ so, using that $0<g_{\ell-1}(\eta_{k+1-\ell}\eta_{k-\ell})< \eta_k$, we get
		$h(u)=e^{ g_{\ell-1}(\eta_{k+1-\ell}\eta_{k-\ell})u-1}= e^{-1+g_{\ell-1}(\eta_{k+1-\ell}\eta_{2k-j-\ell})/\eta_{k}}(g_1(u\eta_k))^{g_{\ell-1}(\eta_{k+1-\ell}\eta_{k-\ell})/\eta_k}$. 

\item[-] $h(u)\in \mathcal{E}_0\cap \mathcal{Y}$: then $\ell=1$ so $h(u)= e^{\eta_{k-1}g_{k-i-1}(u\eta_k)-1}=e^{-1+\eta_{k-1}}(g_{k-i}(u \eta_k))^{\eta_{k-1}}$ but $\eta_{k-1}<1$.
\end{itemize}

	Thus, we are done with the first statement of the lemma.
\end{proof}

\begin{proof}[Proof of~(ii)]
We will show that $c(w,u)+g_k(u^2\eta_k)$ is a polynomial on $\mathcal{X}\cup\mathcal{Y}$, which is at most linear in $\mathcal{Y}$. We refer the reader to Section~\ref{subsec:def-cw-selfcontained} for the notation herein.
Recall that 
$$
 c(w,u)+ g_k(u^2\eta_k)= 2\capprox^1(w,u)g_k(u\eta_k)-\capprox^2(w,u)
+ \cint(w,u) 	
$$
So it suffices to show that each of the three constants are themselves polynomials that satisfy our constraints. The constant $\capprox^1(w,u)$ is a linear combination of
$$
\Big(\EBGW\bigl[u^{Z_k}Z_i Z_j\mathbf{1}_{Z_{2k}=0}\bigr]\Big)_{0\leq i\leq j< 2k}
$$
Using the spine decomposition, we can write
$$
\EBGW\bigl[u^{Z_k}Z_i Z_j\mathbf{1}_{Z_{2k}=0}\bigr]= F_1^{i+1,j+1} (u).
$$
Since $F_1^{i,j}(u)$ is a polynomial in $\mathcal{X}$, so is $\capprox^1(w,u)$. An analogous argument shows the same for $\capprox^2(w,u)$.

Finally, $\cint(w,u)$ is a linear combination of the set of functions
$$
\big\{F_1^{i,j}(u) F_2^{i,j,\ell}(u):\, 0\leq i<j <2k, 0\leq \ell\leq 2k-j\big\}.
$$
	We already know that $F_1^{i,j}(u)$ is a polynomial in $\mathcal{X}$. We thus look at $F_2^{i,j,\ell}(u)$. If $(i,j,\ell)$ corresponds to one of the cases captured by the inequalities defining the sets in~\eqref{eq:set_def}, then it is either a function in $\mathcal{X}\cup\mathcal{Y}$, or a product of two functions, one of them being $u\in \mathcal{X}$ and the other one from $\mathcal{Y}$, so the term is always at most linear in $\mathcal{Y}$. 

	We now examine the cases $(i,j,\ell)$ not covered by the inequalities describing the sets in~\eqref{eq:set_def}:
	\begin{compactitem}
	\item If $i>k$, then $F_2^{i,j,\ell}(u)$ is a constant and so, a polynomial in $\mathcal{X}$.
	\item If $i=k$, then $F_2^{i,j,\ell}(u)$ is a tower of height $0$ and so, a polynomial in $u$, which belongs to $\mathcal{X}$.
	\item If $\ell=0$, then $F_2^{i,j,\ell}(u)= g_{k-i}(u^{\mathbf{1}_{\{i\leq k\}}}\eta_{k})g_{k-j}(u^{\mathbf{1}_{\{j\leq k\}}}\eta_{k})$, which is a product of variables in $\mathcal{X}$. 
	\end{compactitem}
	This concludes the proof of the second statement.
\end{proof}

\begin{corollary}
	The function $u\mapsto c(w,u) + g_k(u^2 \eta_k)$  has a unique expression as a polynomial in $\mathcal{X} \cup \mathcal{Y}$ which is at most linear in $\mathcal{Y}$.
\end{corollary}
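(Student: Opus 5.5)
The plan is to combine the two parts of Proposition~\ref{prop:asymptotic_order} with Lemma~\ref{lemma:AI}. By part~(ii), the function $u\mapsto c(w,u)+g_k(u^2\eta_k)$ admits at least one expression as a polynomial in $\mathcal{X}\cup\mathcal{Y}$ that is at most linear in $\mathcal{Y}$. So existence is already settled, and only uniqueness remains.

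For uniqueness I would use part~(i). After rescaling each element of $\mathcal{Y}$ by a suitable nonzero constant $c_h>0$, the sets satisfy the hypotheses of Lemma~\ref{lemma:AI}. Concretely:
\begin{itemize}
\item $\mathcal{X}=\{f_1,\dots,f_p\}$ can be ordered so that $f_1\lll\dots\lll f_p$ with $f_1\to\infty$. Here I would check that the smallest element is $u$ itself, or at least that every element tends to infinity, since each is a tower of positive height or $u$.
\item Each rescaled element of $\mathcal{Y}$ has the form $[f_i(u)]^{x}$ with $x\in(0,1)$. The proof of part~(i) in fact shows $f_i\in\mathcal{G}\subset\mathcal{X}$.
\end{itemize}
One should also make sure that the exponents $x$ attached to a given $f_i$ form a finite set $I_i$. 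This is automatic, since $\mathcal{Y}$ is finite. Two distinct elements of $\mathcal{Y}$ could share the same base $f_i$ and the same exponent $x$, so that they are proportional. In that case I would identify them as a single variable: a polynomial linear in each of them is then still linear in the merged variable, up to absorbing constants.

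Next, I would translate any polynomial $P$ in $\mathcal{X}\cup\mathcal{Y}$ that is at most linear in $\mathcal{Y}$ into the form~\eqref{eq:defP}. Substituting $h=c_h[f_i]^{x}$ for each $h\in\mathcal{Y}$, a monomial becomes a constant times $\prod_i f_i^{j_i}\prod_{h} f_{i(h)}^{x_h}$. Here one subtlety arises. Lemma~\ref{lemma:AI} requires at most one fractional exponent per index $i$, whereas a product of two $\mathcal{Y}$ elements with the same base would give an exponent $x+x'$ that might exceed $1$ or fall outside $I_i$. I would handle this in the simplest way that fits the lemma: interpret ``at most linear in $\mathcal{Y}$'' as meaning that each monomial contains at most one factor from each $\mathcal{Y}_i$, which is what Lemma~\ref{lemma:AI} literally allows. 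I would then verify that the expression produced in the proof of~(ii) has this property. Each $F_2^{i,j,\ell}$ contributes at most one $\mathcal{Y}$ factor, and the factors $F_1$ lie in $\mathcal{X}$, so this holds.

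With that in place, the map from the coefficient family of the rescaled polynomial to the coefficient family of the original polynomial is a bijection, since the rescaling constants are nonzero. Lemma~\ref{lemma:AI} states that the real function determines the coefficients $a_{j;x}$. Hence two such expressions defining the same function must have equal coefficients, which gives uniqueness.

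The main obstacle I expect is the bookkeeping needed to make the matching with~\eqref{eq:defP} exact. This includes checking that distinct pairs $(j_i+x_i)$ yield distinct exponent vectors, so that no two monomials collapse. Since every $x\in(0,1)$, the value $j+x$ determines $j$ and $x$, and therefore distinct monomials map to distinct scale functions. That observation is what makes the asymptotic-scale argument in Lemma~\ref{lemma:AI} apply cleanly.
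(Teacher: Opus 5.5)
Your route is the paper's. The corollary is stated there with no separate proof: existence comes from Proposition~\ref{prop:asymptotic_order}(ii), and uniqueness from part~(i) combined with Lemma~\ref{lemma:AI}. Your reading of ``at most linear'' is harmless. The expressions produced in part~(ii) have at most one $\mathcal{Y}$-factor per monomial, so they lie in the class of Lemma~\ref{lemma:AI}, and uniqueness over that larger class implies uniqueness over the smaller one.

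The step that does not work as written is your treatment of proportional elements of $\mathcal{Y}$. Suppose there were distinct $h_1,h_2\in\mathcal{Y}$ with $h_1=c_1f_i^{x}$ and $h_2=c_2f_i^{x}$. Then $c_2h_1-c_1h_2$ would be a nonzero polynomial, linear in $\mathcal{Y}$, representing the zero function. The expression of $c(w,u)+g_k(u^2\eta_k)$ in $\mathcal{X}\cup\mathcal{Y}$ would then genuinely fail to be unique, and merging $h_1,h_2$ into one variable changes the statement instead of proving it.

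What you need is that this case never occurs, i.e.\ that the rescaling $h\mapsto c_hh$ is injective on $\mathcal{Y}$. Then monomials in $\mathcal{X}\cup\mathcal{Y}$ correspond bijectively to the scale functions in~\eqref{eq:defP}. Injectivity can be read off from the computations in the proof of part~(i). Every element of $\mathcal{Y}$ has the form $h(u)=\exp\bigl(x\,g_{t-1}(u\eta_k)-1\bigr)=e^{x-1}[g_t(u\eta_k)]^{x}$ with $1\le t\le k$. So $c_h=e^{1-x}$ is determined by the exponent, and the pair $(t,x)$ determines $h$.

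One further caveat concerns part~(i) itself as much as your argument. At boundary indices where $\eta_0=0$ enters, the listed ``variables'' degenerate to constants; for instance $i=0$, $\ell=k$ in $\mathcal{D}_0$ gives $g_k(0)=\eta_k$. Such constants violate the hypotheses of Lemma~\ref{lemma:AI} and make uniqueness fail trivially, so they must be removed from $\mathcal{X}\cup\mathcal{Y}$ and absorbed into the coefficients. With these two points in place, your argument is complete.
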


It follows that we can consider this polynomial formally and ``extract its coefficients''.
Namely, we focus on the ``variables'' $F_2^{i,j,\ell}(u)$ for $i=0$, $j<k$, and $\ell \geq 1$.
Note that all these functions appear in $\mathcal{D}_1 \cup \mathcal{E}_1  \cup \mathcal{F}$ and it is direct to verify that they are all distinct.
The coefficient
$$
[F_2^{i,j,\ell}(u)] (c(w,u) + g_k(u^2 \eta_k) ) ,
$$
is thus well-defined.

\begin{lemma}
	For any $1\leq j<k$, the following holds: there exists $\ell>0$ such that the coefficient $[F_2^{0,j,\ell}(u)] (c(w,u) + g_k(u^2 \eta_k) )$ is non-zero if and only if $\ell_{0,j}>0$.
\end{lemma}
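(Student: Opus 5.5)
The plan is to locate exactly which terms of the polynomial expression of $c(w,u)+g_k(u^2\eta_k)$ from Proposition~\ref{prop:asymptotic_order}(ii) can produce the ``variable'' $F_2^{0,j,\ell}(u)$ with $1\leq j<k$ and $\ell\geq 1$. By the uniqueness in the Corollary, this coefficient is read off from the formal expression. We go through the three constants one by one.

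First, $\capprox^1(w,u)g_k(u\eta_k)$ and $\capprox^2(w,u)$ contribute nothing to this coefficient. Through the spine decomposition they are linear combinations of products of the $F_1^{i,j}(u)$, possibly times $g_k(u\eta_k)$. These are monomials in $\mathcal{G}\cup\{u\}$ alone, and $F_2^{0,j,\ell}$ with $\ell\geq 1$, $j<k$ is a branched tower in $\mathcal{D}_1\cup\mathcal{E}_1\cup\mathcal{F}$, not in $\mathcal{G}$. So only $\cint(w,u)=2\sum_{(i',j')\in\cM}F_1^{i',j'}(u)F_2^{i',j',\ell_{i',j'}}(u)$ matters.

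Inside this sum we need the terms whose $F_2$ factor is the variable $F_2^{0,j,\ell}$, with $\ell\geq 1$. Terms with $\ell_{i',j'}=0$ have $F_2$ equal to a product of elements of $\mathcal{G}$, so they cannot contribute. We need one injectivity claim: the map $(i',j',\ell')\mapsto F_2^{i',j',\ell'}(u)$ is injective on the relevant index range with $\ell'\geq 1$, $i'<j'<k$ (and also against the other cases). This follows from the total order of $\mathcal{X}$ under $\lll$ from part (i), provided that order was strict. It can also be seen directly by reading off the main-tower height and the position and height of the branching tower. We then list the admissible triples:
\begin{compactitem}
\item for $j<k$ and $i'=0$, the cases $j+\ell\leq k$ (case F) and $j\leq k<j+\ell$ (cases D/E) give pairwise distinct functions;
\item these functions are not in $\mathcal{Y}$, since they are branched towers.
\end{compactitem}
Hence the coefficient of $F_2^{0,j,\ell}$ equals $2F_1^{0,j}(u)$ if $(0,j)\in\cM$ and $\ell=\ell_{0,j}$, and $0$ otherwise. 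Here the coefficient is a polynomial in the remaining variables. Note that $(0,j)\in\cM$ always holds.

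The conclusion is then immediate. If $\ell_{0,j}>0$, take $\ell=\ell_{0,j}$. The coefficient is $2F_1^{0,j}(u)=2\prod_{s=0}^{j-1}g_{k-s}(u\eta_k)$, a nonzero monomial in $\mathcal{G}$, hence nonzero as a formal coefficient. If $\ell_{0,j}=0$, no term of $\cint$ with $(i',j')=(0,j)$ produces a branched tower. By injectivity, no other pair $(i',j')$ produces $F_2^{0,j,\ell}$ for any $\ell>0$, so every such coefficient vanishes.

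The main obstacle is the injectivity/distinctness step. We must check that a given $F_2^{0,j,\ell}$ cannot coincide with some $F_2^{i',j',\ell'}$ for $(i',j')\neq(0,j)$, and cannot be rewritten through a constant times a power, as happens for $\mathcal{Y}$. The approach is to compare tower heights: the main tower of $F_2^{0,j,\ell}$ has height $k$, and its branching point occurs at level $k-j$ with specified inner structure. This structure determines $i'=0$, $j$ and $\ell$, which we verify case by case in D, E, F.
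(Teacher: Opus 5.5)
Your proposal is correct and follows the same route as the paper. Only $\cint$ can contain the variable $F_2^{0,j,\ell}$, because $\capprox^1 g_k$ and $\capprox^2$ are polynomials in $\mathcal{G}\cup\{u\}$. Inside $\cint$ it occurs exactly once, through the term $(0,j)\in\cM$ with $\ell=\ell_{0,j}$, and its coefficient there is the nonzero monomial $2F_1^{0,j}(u)$. You are more explicit than the paper about the injectivity of $(i,j,\ell)\mapsto F_2^{i,j,\ell}$, which the paper simply calls direct to verify. Your instinct there is right: injectivity does not follow from the total order on the set $\mathcal{X}$ alone, but needs the structural tower comparison you sketch (main-tower height $k-i$, position of the second $u$, then Lemma~\ref{lemma:compare_branches}).
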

\begin{proof}
	The functions $F_2^{i,j,\ell}(u)$ for $i=0$, $j<k$, appear only once in the list of variables and only in the sets $\mathcal{D}_1$, $\mathcal{E}_1$, $\mathcal{F}$.
	They appear neither in the expression of $\capprox^1(w,u)$ nor in the one of $\capprox^2(w,u)$, so they can only appear in $\cint(w,u)$. Each monomial of $\cint(w,u)$ is the contribution of some $(i,j)\in \mathcal{M}(w^2)$, given by Proposition~\ref{prop:cross_monster}. Thus it is clear that $F_2^{i,j,\ell}(u)$ appears if and only if $\ell=\ell_{0,j}(w^2)$, and if this is the case it appears in a unique monomial, so that the coefficient is non-zero.
\end{proof}

\begin{corollary}
	If $w$ and $w'$ are two non-isomorphic words of the same length, the functions $u\mapsto c(w,u)$ and $u\mapsto c(w',u)$ are distinct. In particular, they are different almost everywhere on $[0,1]$.
\end{corollary}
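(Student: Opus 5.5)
The plan is to combine the preceding corollary (uniqueness of the polynomial expression) with the coefficient-extraction lemma and Remark~\ref{rem:M_identifies_w}, and then use analyticity to pass from ``distinct functions'' to ``distinct almost everywhere''.

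\textbf{Step 1: extract the overlap data.} Let $w,w'$ have common length $k$. Both $c(w,u)+g_k(u^2\eta_k)$ and $c(w',u)+g_k(u^2\eta_k)$ are polynomials in the \emph{same} sets $\mathcal{X}\cup\mathcal{Y}$, at most linear in $\mathcal{Y}$. This holds because these sets depend only on $k$, and $g_k(u^2\eta_k)$ is the same term for both words. Suppose $c(w,\cdot)=c(w',\cdot)$ as real functions on $\mathbb{R}$. By the preceding corollary, together with Lemma~\ref{lemma:AI} and Proposition~\ref{prop:asymptotic_order}, the two polynomial expressions then coincide coefficientwise. In particular, for every $1\le j<k$ and $\ell>0$, the coefficients $[F_2^{0,j,\ell}(u)]$ agree. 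The last lemma then gives $\ell_{0,j}(w^2)>0 \iff \ell_{0,j}(w'^2)>0$ for all $1\le j<k$.

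\textbf{Step 2: recover the word.} By Remark~\ref{rem:M_identifies_w}, the sign pattern in Step~1 determines, for each $1\le j<k$, whether $w_{k-j}=w_k$. This determines $w$ up to isomorphism. Hence $w$ and $w'$ are isomorphic, a contradiction. So $c(w,\cdot)\neq c(w',\cdot)$ as functions on $\mathbb{R}$.

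\textbf{Step 3: from $\mathbb{R}$ to $[0,1]$.} Each of $c(w,u)$ and $c(w',u)$ is a finite polynomial combination of compositions of $u\mapsto e^{u-1}$, products and $u$. It therefore extends to an entire function of $u$, and so does their difference. A nonzero real-analytic function on $\mathbb{R}$ has isolated zeros, so its zero set in $[0,1]$ is finite. Hence $c(w,\cdot)$ and $c(w',\cdot)$ differ everywhere on $[0,1]$ except at finitely many points, and in particular almost everywhere. This uses the identity principle: if the difference vanished on a subset of $[0,1]$ of positive measure, which has an accumulation point, it would vanish identically.

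\textbf{Main obstacle.} The delicate point is Step~1: one must ensure the coefficient comparison is legitimate. Concretely, the functions $F_2^{0,j,\ell}$ must genuinely be elements of $\mathcal{X}$, distinct from each other and from all other variables. This was asserted just before the last lemma. I would double-check it by noting that they belong to $\mathcal{D}_1\cup\mathcal{E}_1\cup\mathcal{F}$ with $i=0<k-1$, so they lie outside $\mathcal{Y}$. In addition, the uniqueness from Lemma~\ref{lemma:AI} applies to expansions over the common variable set, which is what lets us compare the two words.
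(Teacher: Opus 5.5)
Your proposal is correct and follows essentially the same route as the paper. The variable sets $\mathcal{X}\cup\mathcal{Y}$ depend only on $k$, so uniqueness of the polynomial expansion lets the coefficient-extraction lemma read off $\{1\le j<k:\ell_{0,j}(w^2)>0\}$ from $c(w,\cdot)$; Remark~\ref{rem:M_identifies_w} then recovers $w$ up to isomorphism, and analyticity gives the a.e.\ statement. Your isolated-zeros argument in Step~3, which even gives only finitely many exceptional points in $[0,1]$, is equivalent to the paper's remark that two entire functions agreeing a.e.\ on $[0,1]$ have the same derivatives at $0$.
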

\begin{proof}
	From the previous lemma, the function $u\mapsto (c(w,u) + g_k(u^2 \eta_k))$, or equivalently the function $u\mapsto c(w,u)$, determines the set $\{1\leq j<k, \ell_{0,j}(w^2)>0\}$. As already noted in the second part of Remark~\ref{rem:M_identifies_w}, this data determines $w$ up to isomorphism.

	The only thing left to prove is that if the functions $u\mapsto c(w,u)$ are distinct then they differ almost everywhere on $(0,1)$. But this is clear since the function $c(w,u)+g_k(u^2 \eta_k)$, being a polynomial in $\mathcal{X}\cup \mathcal{Y}$, is the restriction to $\mathbb{R}$ of an analytic function, and two analytic functions which coincide almost everywhere on $[0,1]$ are equal -- because, for example, all their derivatives at zero must coincide.
\end{proof}
Note that the above corollary concludes the proof of Theorem~\ref{thm:main_variance}.

\begin{remark}[Schanuel's conjecture]\label{rem:old_schanuel}
	In the first version of this paper that was made public, we only considered the observable $Q_n^w(u)$ for $u=0$, i.e. the number of leaves. We could then only prove TV-distance separation (here Theorem~\ref{thm:cw_separate}) in the case where $c(w,0)\neq c(w',0)$. We conjectured that this is true for any pair of non-isomorphic words $w,w'$, and proved this fact conditionally to the Schanuel's conjecture~\cite{Waldschmidt:Schanuel} from transcendental number theory.

	The discussion of this section comparing the asymptotic behaviour of towers of exponentials at infinity plays the role that was played by the Schanuel's conjecture in the first version (the Schanuel's conjecture was used to ensure that the numbers $\eta_k$ and some of their variants are algebraically independent over $\mathbb{Q}$). The analogue of Schanuel's conjecture for functions is known~\cite{Ax}, and it is tempting to try to use it to prove the algebraic independence of the different functions appearing in this section. However we preferred to consider the asymptotic behaviour of exponential towers at infinity, which, as we just showed, is an elementary and convenient way to derive the independence we need.

We have removed in this new version the proof that $c(w,0)\neq c(w',0)$ conditional on Schanuel's conjecture since this statement is no longer needed, but the interested reader can still access it in the first version of the paper  on arXiv~\cite{ourpaper_v1}.
\end{remark}

\section{Higher-order moments of quasi-leaves and total variation distance}
\label{sec:high_moments}

In this section we estimate shifted  moments of $Q_n^w(u)$.  As a consequence of a  bounded fourth moment, we will
conclude the proof of Theorem~\ref{thm:cw_separate}.

\subsection{Shifted moments}\label{sec:higher_moments}

Our goal is to estimate the $m$-th moment of the random variable $N^w_n(u)\defi Q_n^w(u)-  g_k(u\eta_k) n$, for all $m\in \mathbb{N}$.
To do this we interpret this moment as an observable on $m$ randomly sampled points (with replacement), and we show that the dominant contribution comes from configurations in which these points are ``matched together in pairs''   leading a growth of the form $O(n^{\lfloor m/2\rfloor})$; see Theorem~\ref{thm:mth-moment}. This situation  is classical in the context of Gaussian limit laws; see Remark~\ref{rem:gaussian}.

Let $\mathbf{I}_v= \mathbf{I}_v (u)= u^{d(v)}\mathbf{1}_{v\in \cQ_n}$.
We may express the $m$-th moment as
\begin{align}
\bbE\left[N^w_n(u)^m\right]
	&=
	\bbE\left[\left(\sum_{v\in [n]} (\mathbf{I}_v- g_k(u\eta_k))\right)^m\right]
\\
	&=	\bbE\left[\sum_{v_1,\dots,v_m\atop\in [n]} 
	\prod_{i=1}^m(\mathbf{I}_{v_i}- g_k(u\eta_k))\right]
	\label{eq:mth}
\\
	&=\sum_{p=1}^m 
	   {n \choose p}  
	   \sum_{m_1+\dots+m_p=m \atop m_1,\dots,m_p\geq 1}
	   S(m;p;m_1,\dots,m_p)
	   T(m;p;m_1,\dots,m_p), \label{eq:mth2}
\end{align}
where $S(m;p;m_1,\dots,m_p)$ is the number of surjections from $[m]$ to $[p]$ in which $i$ has $m_i$ preimages for all $i\in[p]$, and  
\begin{align}
T(m;p;m_1,\dots,m_p)\defi
	\bbE\Bigl[\prod_{i=1}^p(\mathbf{I}_{V_i}-g_k(u\eta_k))^{m_i}\Bigr] \label{eq:Tmpr},
\end{align}
with $(V_1,\dots,V_p)$ being a $p$-tuple of distinct vertices in $[n]$ chosen uniformly at random.
For the last equality, we grouped the terms according to $p=|\{v_1,\dots,v_m\}|$. 

\medskip

For the rest of Section~\ref{sec:higher_moments} we let $\omega:=w^2$.

We let $\cP=(P_1,\dots,P_R)$ be the random ordered partition of $[p]$ defined as the lexicographically smallest one that satisfies the following: $i$ and $i'$ are in the same part of $\cP$ if and only if $V_i$ and $V_{i'}$ are in the same connected component of the underlying undirected graph of $B_\omega(V_1)\cup\dots\cup B_\omega(V_p)$.

\begin{definition}[Valid ordering]
Given a partition $\cP$ (random or not) of $[p]$, an ordering (permutation) $\sigma=(\sigma_1,\dots,\sigma_p)$ of $[p]$ is \emph{valid} for $\cP$ if
\begin{itemize}
\item[-] for every $j,j'\in [r]$ with $j<j'$, all the elements of $P_j$ appear before any of the elements of $P_{j'}$ in $\sigma$; and
\item[-] for every $j\in [r]$ and $i\in P_j$, if $i$ is not the first element of $P_j$ to appear in the sequence $\sigma$, then $B_\omega(V_i)$ intersects at least one in-ball $B_\omega(V_{i'})$ for $i'\in P_j$ and $i'$ appearing before $i$ in $\sigma$.
\end{itemize}
\end{definition}

Any partition $\cP$ admits at least one valid ordering. Indeed, consider the graph with set of vertices $P_j$ where an edge between $i$ and $i'$ indicate intersection of the corresponding balls. This graph is connected by definition, hence it can be constructed as a sequence of growing connected graphs, adding one vertex at a time. We associate to each partition the lexicographically smallest ordering that is valid for it. For a random partition $\cP$, we denote by $\Sigma=(\Sigma_1,\dots,\Sigma_p) \in \mathfrak{S}_p$ the associated random ordering that is valid for $\cP$.

Note that the partition $\cP=(P_1,\dots,P_R)$ is encoded by $\Pi\defi  (R;|P_1|,\dots,|P_R|;\Sigma)$.
We will estimate $T(m;p;m_1,\dots,m_p)$ by summing over the (finitely many) possible values of this parameter,
and estimating the product in the expectation via a product of conditional expectations:
\begin{align}
	T(m;p;m_1,\dots,m_p)
	&=
	\sum_{{1\leq r\leq p\atop p_1+\dots+p_r=p} \atop \sigma= (\sigma_1,\dots,\sigma_p)}
	\bbE\left[\mathbf{1}_{\Pi= (r;p_1,\dots,p_r;\sigma)}\prod_{i=1}^{p}(\mathbf{I}_{V_i}-g_k(u\eta_k))^{m_i}\right]
	\\
	&=
	\sum_{{1\leq r\leq p\atop p_1+\dots+p_r=p} \atop \sigma=(\sigma_1,\dots,\sigma_p)  }
	\bbE\left[\mathbf{1}_{\Sigma=\sigma}
		\prod_{i=1}^{p}
	\bbE\left[\mathbf{1}_{\mathcal{E}_{i}}
		(\mathbf{I}_{V_{\sigma_i}}-g_k(u\eta_k))^{m_{\sigma_i}}
	\Big| F_{i-1}
	 \right]\right],\label{eq:product}
\end{align}
where we let 
$F_{i-1} = F_{\textbf{V},\textbf{H*}}$ as defined in Remark~\ref{rem:in-ball_events}, with $\textbf{V}= (V_{\sigma_1},\dots, V_{\sigma_{i-1}})$,
and where $\mathcal{E}_i$ denotes 
the following event:
\begin{compactitem}
\item[-] if  $i=1+p_1+\dots+p_{j-1}$ for some $j\in [r]$, then $B_\omega(V_{\sigma_i})$ does not intersect $\Supp(F_{i-1})$;
\item[-] if $1+p_1+\dots+p_{j-1}< i\leq p_1+\dots+p_{j}$, for some $j\in [r]$,  then $B_\omega(V_{\sigma_i})$ does intersect $\Supp(F_{i-1})$,
	but it does not intersect $\Supp(F_{i'})$ for $i'=p_1+\dots+p_{j-1}$.
\end{compactitem}
Indeed, these two cases correspond respectively to $i$ being the first element and not a first element of its part $P_j$, so the intersection of $\mathcal{E}_1,\dots,\mathcal{E}_p$ precisely controls the properties of the ordering $\sigma$ with respect to the partition.
The indicator $\mathbf{1}_{\Sigma=\sigma}$ is there to avoid overcounting in cases where there would have been multiple valid partitions.

We handle the conditional expectation in both cases, with the following two lemmas:
\begin{lemma}\label{lemma:part-conditional1}
	Assume that $1+p_1+\dots+p_{j-1}< i\leq p_1+\dots+p_{j}$, for some $j\in [r]$. Then
	\begin{align}
		\bbE\left[\mathbf{1}_{\mathcal{E}_{i}}
		(\mathbf{I}_{V_{\sigma_i}}-g_k(u\eta_k))^{m_{\sigma_i}}
	\mid F_{i-1}
	 \right]	=
				O\left(\frac{1+|F_{i-1}|}{n}\right).
\end{align}
\end{lemma}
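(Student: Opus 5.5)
The plan is to bound the absolute value of the integrand by a constant and then show that $\mathcal{E}_i$ has conditional probability $O((1+|F_{i-1}|)/n)$. Since $u\in[0,1]$, we have $\mathbf{I}_{V_{\sigma_i}}\in[0,1]$. Also $g_k(u\eta_k)\in[0,1]$. Hence
\begin{align}
\bigl|(\mathbf{I}_{V_{\sigma_i}}-g_k(u\eta_k))^{m_{\sigma_i}}\bigr|\leq 1
\end{align}
deterministically. It therefore suffices to prove
\begin{align}
\bbP(\mathcal{E}_i\mid F_{i-1})=O\left(\frac{1+|F_{i-1}|}{n}\right),
\end{align}
uniformly in $u$, since nothing else depends on $u$.

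Under the hypothesis on $i$, the index $\sigma_i$ is not the first element of its part. So the event $\mathcal{E}_i$ contains the requirement that $B_\omega(V_{\sigma_i})$ intersects $\Supp(F_{i-1})$. We then apply Lemma~\ref{lemma:prob-ball-intersects} with $L=2k$, noting that $B_\omega(x)\subset B_{2k}(x)$ and that $F_{i-1}$ is an induced event by Remark~\ref{rem:in-ball_events}. This gives
\begin{align}
\bbP(B_{2k}(R)\cap\Supp(F_{i-1})\neq\emptyset\mid F_{i-1})=O(|F_{i-1}|/n)
\end{align}
for $R$ uniform in $[n]$ and independent of everything.

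The only subtlety, and the main obstacle, is that $V_{\sigma_i}$ is not uniform on $[n]$. It is uniform among the vertices distinct from $V_{\sigma_1},\dots,V_{\sigma_{i-1}}$, and it is not independent of the conditioning, because the $V$'s are exchangeable. I would handle this as follows.

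First, condition on the values of $V_{\sigma_1},\dots,V_{\sigma_{i-1}}$ and on their in-balls. Conditionally on this, $V_{\sigma_i}$ is uniform on a set of size $n-i+1$, and it is independent of $\mathbf{a},\mathbf{b}$ given the revealed in-balls.

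Second, couple $V_{\sigma_i}$ with a uniform $R$ on $[n]$. The two agree except on an event of probability at most $(i-1)/n\leq p/n=O(1/n)$, since $p\leq m$ is fixed. This exceptional event is where the extra term $1$ in $1+|F_{i-1}|$ comes from.

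Third, apply Lemma~\ref{lemma:prob-ball-intersects} to $R$, conditionally on the induced event.

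Summing the two contributions gives the stated bound $O((1+|F_{i-1}|)/n)$. The implicit constant depends only on $k$ and $m$. This also covers the case where $V_{\sigma_i}$ itself lies in $\Supp(F_{i-1})$: that case has probability $O(|F_{i-1}|/n)$ and is already included in the intersection event.
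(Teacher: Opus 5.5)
Your proof is correct and follows the paper's argument: bound the integrand by $1$, observe that when $\sigma_i$ is not the first element of its part the event $\mathcal{E}_i$ forces $B_\omega(V_{\sigma_i})$ to meet $\Supp(F_{i-1})$, and apply Lemma~\ref{lemma:prob-ball-intersects} with $L=2k$ to the induced event $F_{i-1}$. The paper leaves implicit the step of passing from $V_{\sigma_i}$ to an independent uniform vertex $R$ on $[n]$; you make it explicit using that, given the conditioning, $V_{\sigma_i}$ is uniform on $[n]\setminus\{V_{\sigma_1},\dots,V_{\sigma_{i-1}}\}$ and independent of $\mathbf{a},\mathbf{b}$, and this step is handled correctly.
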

\begin{proof}
	Because $|\mathbf{I}_{V_{\sigma_i}}-g_k(u\eta_k)|\leq 1$ it is enough to show that $\bbP\left(\mathcal{E}_{i}
	\mid F_{i-1}
	 \right)= O\left(\frac{1+|F_{i-1}|}{n}\right)$.
	But, for such an $i$, the event $\mathcal{E}_i$ requires that $B_\omega(V_{\sigma_i})$ intersects the set $\Supp(F_{i-1})$, so the result is a direct application of Lemma~\ref{lemma:prob-ball-intersects}, as $F_{i-1}$ is an induced event.
\end{proof}
\begin{lemma}\label{lemma:part-conditional2}
	Assume that $1+p_1+\dots+p_{j-1}=i$, for some $j\in [r]$, and that $m_{\sigma_i}=1$. Then
	\begin{align}
		\bbE\left[\mathbf{1}_{\mathcal{E}_{i}}
		(\mathbf{I}_{V_{\sigma_i}}-g_k(u\eta_k))^{m_{\sigma_i}}
	\mid F_{i-1}
	 \right]	=
				O\left(\frac{1+|F_{i-1}|^2}{n}\right).
\end{align}
\end{lemma}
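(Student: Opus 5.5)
Here $i$ is the first element of its part, so $\mathcal{E}_i$ is the event that $B_\omega(V_{\sigma_i})$ avoids $\Supp(F_{i-1})$, and $m_{\sigma_i}=1$. Write $R=V_{\sigma_i}$ and $F=F_{i-1}$. Since $V_{\sigma_i}$ is distinct from the earlier vertices, $R$ is uniform on the complement of $\{V_{\sigma_1},\dots,V_{\sigma_{i-1}}\}$, and these vertices lie in $\Supp(F)$. Replacing $R$ by a uniform vertex of $[n]$ therefore changes the quantity by $O(|F|/n)$. The plan is to show that $\bbE[\mathbf{1}_{\mathcal{E}_i}\mathbf{I}_R\mid F]$ and $g_k(u\eta_k)\bbP(\mathcal{E}_i\mid F)$ are both equal to $g_k(u\eta_k)+O((1+|F|^2)/n)$, so that their difference has the required size. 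This is the conditional analogue of the sandwich argument in the proof of Lemma~\ref{lem:not_a_tree_asymp_prob_pairs_1}, in which $H^*$ is replaced by the general induced event $F$.

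\textbf{Step 1: the probability of $\mathcal{E}_i$.} By Lemma~\ref{lemma:prob-ball-intersects}, using $B_\omega(R)\subset B_{2k}(R)$, we have $\bbP(\mathcal{E}_i\mid F)=1-O(|F|/n)$. Hence $g_k(u\eta_k)\bbP(\mathcal{E}_i\mid F)=g_k(u\eta_k)+O(|F|/n)$.

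\textbf{Step 2: the main term.} We split $\mathbf{1}_{\mathcal{E}_i}\mathbf{I}_R$ according to whether $R\in\cT_n$. The cyclic part is $O(|F|/n)$ by Lemma~\ref{lemma:prob-nontree-small}, and more precisely $O((1+|F|)/n)$, since the cycle probability is $O(1/n)$ even when $F$ is empty. The acyclic part equals
\[
\sum_{T\in\mathfrak{L}_{<2k}}u^{|T_k|}\,\bbP\bigl(B_\omega(R)\conglab T,\ B_\omega(R)\cap\Supp(F)=\emptyset\mid F\bigr).
\]
We then apply Lemma~\ref{lemma:tree-approx-conditional} to each term with $T$ satisfying $s=|T|+|F|\le\sqrt n/2$. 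This gives $\PBGW(\tau^{(lab)}_{\mathbf{Z}}=T)(1+O(s^2/n))$. Summing, the main contribution is $\EBGW[u^{Z_k}\mathbf{1}_{Z_{2k}=0}]=g_k(u\eta_k)$, and the error is bounded by
\[
\frac{1}{n}\,\EBGW\bigl[(Z+|F|)^2\bigr]=O\Bigl(\frac{1+|F|^2}{n}\Bigr),
\]
which is finite because the total progeny of a Po(1)-BGWT truncated at height $2k$ has all moments.

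\textbf{Step 3: the main obstacle, the tail $|T|+|F|>\sqrt n/2$.} If $|F|>\sqrt n/4$, the bound is trivial because $|F|^2/n\gtrsim 1$ and the whole quantity is bounded by $2$. Otherwise the relevant trees have $|T|>\sqrt n/4$. On the BGWT side their mass is exponentially small in $\sqrt n$. On the graph side, the contribution is bounded by $\bbP(|B_{2k}(R)|>\sqrt n/4\mid F)$. This requires an exponential tail for in-balls conditional on $F$, which I would obtain from the domination by a branching process with $\mathrm{Bin}(2n,2/n)$ offspring used in the proof of Lemma~\ref{lemma:prob-ball-intersects} (valid once $|F|\le n/2$). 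Either way this term is $O(1/n)$. Combining the three steps, the two terms differ by $O((1+|F|^2)/n)$, which is the claim.
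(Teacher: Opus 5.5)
Your proposal is correct and follows essentially the paper's proof. The paper handles the $-g_k(u\eta_k)$ term via Lemma~\ref{lemma:prob-ball-intersects}, and the $\mathbf{I}_{V_{\sigma_i}}$ term by splitting off the cyclic part (Lemma~\ref{lemma:prob-nontree-small}) and summing the acyclic part over trees with Lemma~\ref{lemma:tree-approx-conditional}, with trees satisfying $|T|+|F_{i-1}|>\sqrt{n}/2$ absorbed into the error; you are more explicit than the paper about that tail, the $O(1/n)$ cycle probability when $F_{i-1}$ is trivial, and $V_{\sigma_i}$ being uniform on the complement of the earlier vertices, and the only nit is that in Step~3 the branching-process domination controls the in-ball only on the event that it avoids $\Supp(F_{i-1})$ (which is all you need, since you work on $\mathcal{E}_i$), not $\bbP(|B_{2k}(R)|>\sqrt n/4\mid F)$ outright.
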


\begin{proof}
We need to estimate
	\begin{align}\label{eq:int36}
		\bbE[
			\mathbf{1}_{\{B_\omega(V_{\sigma_i}) \cap \Supp(F_{i-1})=\emptyset\}}
		(\mathbf{I}_{V_{\sigma_i}}-g_k(u\eta_k))
	\mid F_{i-1}].
\end{align}

	We first look at the contribution of the term $\mathbf{I}_{V_{\sigma_i}}$ in the difference $(\mathbf{I}_{V_{\sigma_i}}-g_k(u\eta_k))$, and we split it according to whether the in-ball $B_{\omega}(V_{\sigma_i})$ is a tree or not. We have
\begin{align}
		\bbE[
			&\mathbf{1}_{\{B_\omega(V_{\sigma_i}) \cap \Supp(F_{i-1})=\emptyset\}}
		\mathbf{I}_{V_{\sigma_i}}
	\mid F_{i-1}] \\
	&\leq 
	\bbP\left(
		V_{\sigma_i}\in \cC_n\mid F_{i-1}\right)
		+
\bbP(V_{\sigma_i}\in \mathcal{Q}_n \cap \mathcal{T}_n, B_\omega(V_{\sigma_i}) \cap \Supp(F_{i-1})=\emptyset
	\mid F_{i-1})
\\		
&= O\Bigl(\frac{|F_{i-1}|}{n}\Bigr)+		
	\sum_{T\in \mathfrak{L}_{<2k}} u^{Z_k}
	\bbP( B_\omega(V_{\sigma_i}) \conglab T, B_\omega(V_{\sigma_i})\cap \Supp(F_{i-1}) =\emptyset
	\mid F_{i-1})	\\
	&= O\Bigl(\frac{|F_{i-1}|^2}{n}\Bigr)+
	\sum_{T\in\mathfrak{L}_{<2k}}
	u^{Z_k}\PBGW(\tau_{\mathbf{Z}}^{(lab)}=T)\left(1+O\Bigl(\frac{(|T|+|F_{i-1}|)^2}{n}\Bigr)\right)\\
	&= g_k(u\eta_k)+ O\Bigl(\frac{1+|F_{i-1}|^2}{n}\Bigr).\label{eq:SAIOD}
\end{align}
	In the first equality we used Lemma~\ref{lemma:prob-nontree-small} for the first term and we split according to the shape of the in-ball for the second one. In the second equality we used Lemma~\ref{lemma:tree-approx-conditional} for the terms for which $|T|+|F_{i-1}|\leq\sqrt{n}/2$ and we added the term $O(|F_{i-1}|^2/n)$ to account for the other cases. In the last equality we used that $E_{BGW}[u^{d_k(T)}\mathbf{1}_{Z_{2k}=0}]=g_k(u\eta_k)$ and the fact that $\EBGW[|Z|^2\mathbf{1}_{Z_{2k}=0}]=O(1)$.

	We now looking at the contribution of the term $-g_k(u\eta_k)$ in the difference. By Lemma~\ref{lemma:prob-ball-intersects}, we have
	\begin{align}
		\bbE[
			\mathbf{1}_{\{ B_\omega(V_{\sigma_i}) \cap \Supp(F_{i-1})=\emptyset\}}
		g_k(u\eta_k)
	\mid F_{i-1}]
&=g_k(u\eta_k) \bbP(
			 B_\omega(V_{\sigma_i}) \cap \Supp(F_{i-1})=\emptyset)
	\mid F_{i-1})\\
		&= g_k(u\eta_k)+O\Bigl(\frac{|F_{i-1}|}{n}\Bigr).\label{eq:SAIOD2}
	\end{align}
	Plugging~\eqref{eq:SAIOD} and~\eqref{eq:SAIOD2} into~\eqref{eq:int36}, we conclude the proof of the lemma.
\end{proof}

\begin{proposition}Let 	$1\leq r \leq p \leq m$ and let $m_1,\dots,m_p\geq 1$, $p_1,\dots,p_r\geq 1$ with $m_1+\dots+m_p=m$ and $p_1+\dots+p_r=p$. Let $\sigma=(\sigma_1,\dots,\sigma_p)$ be a permutation of $[p]$. Let $\cP=(P_1,\dots, P_R)$ be a random partition of $[p]$ and $\Pi=(R;|P_1|,\dots,|P_r|;\Sigma)$, where $\Sigma$ is the valid ordering associated to $\cP$.
Then we have
	\begin{align}
		\bbE\left[\mathbf{1}_{\Pi= (r;p_1,\dots,p_r;\sigma)}\prod_{i=1}^{p}(\mathbf{I}_{V_i}-g_k(u\eta_k))^{m_i}\right]
	=O\left(\frac{1}{n^{p-\beta}} \right),
		\end{align}
where $s\defi |\{j\in [r]: \sum_{i\in P_j} m_i=1 \}|$ and $\beta\defi r-s$.
\end{proposition}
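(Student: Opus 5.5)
We start from the product decomposition~\eqref{eq:product}. After fixing $\sigma$ and the block sizes $p_1,\dots,p_r$, we reveal the in-balls $B_\omega(V_{\sigma_1}),\dots,B_\omega(V_{\sigma_p})$ one at a time, in the order $\sigma$. At step $i$ we get the conditional factor $\bbE[\mathbf{1}_{\mathcal{E}_i}(\mathbf{I}_{V_{\sigma_i}}-g_k(u\eta_k))^{m_{\sigma_i}}\mid F_{i-1}]$.

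The steps $i$ are of three kinds, and we bound each kind separately.
\begin{itemize}
\item[(a)] \emph{Non-first steps.} Here $i$ is not the first element of its block, and there are $p-r$ such steps. By Lemma~\ref{lemma:part-conditional1} the factor is $O((1+|F_{i-1}|)/n)$.
\item[(b)] \emph{First steps of singleton blocks with $m=1$.} Here $i$ opens a block $P_j$ with $\sum_{i'\in P_j}m_{i'}=1$. Since every $m_{i'}\geq 1$, this forces $|P_j|=1$ and $m_{\sigma_i}=1$. There are $s$ such steps. By Lemma~\ref{lemma:part-conditional2} the factor is $O((1+|F_{i-1}|^2)/n)$.
\item[(c)] \emph{All remaining first steps.} There are $r-s=\beta$ of them, and we bound their factor trivially by $1$, using $|\mathbf{I}_v-g_k(u\eta_k)|\leq 1$.
\end{itemize}
Multiplying, the integrand is bounded by $n^{-(p-r)-s}=n^{-(p-\beta)}$ times a polynomial in the sizes $|F_{i-1}|\leq \sum_{i'}|B_\omega(V_{i'})|$. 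The exponent comes out exactly as stated.

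There is a subtlety in step (b). The conditional factor in the statement carries the indicator $\mathbf{1}_{\mathcal{E}_i}$, which for a first element means that $B_\omega(V_{\sigma_i})$ avoids $\Supp(F_{i-1})$. This is precisely the setting of Lemma~\ref{lemma:part-conditional2}. The outer indicator $\mathbf{1}_{\Sigma=\sigma}$, however, is not a product of these conditional events. We handle this by bounding $|\mathbf{1}_{\Sigma=\sigma}|\leq 1$ only after taking absolute values of every factor except those of type (b).

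To make this rigorous, we write the expectation as a nested sequence of conditional expectations (a tower over $i=p,p-1,\dots,1$). The event $\{\Sigma=\sigma\}$ is determined by the whole configuration, so we first drop it, replacing it by $\bigcap_i\mathcal{E}_i$. On this intersection, $\sigma$ is valid for the partition by construction. The overcount caused by distinct valid orderings is harmless: the statement is an upper bound, and the number of orderings is bounded by $p!$. If dropping the indicator turns out to be delicate for the signed type (b) factors, we instead note that $\{\Sigma=\sigma\}$ can be decided at step $i$ from $F_{i-1}$ together with the future balls. Then we bound the type (b) factor by first conditioning on everything except the ball of $V_{\sigma_i}$, using that $V_{\sigma_i}$ is a singleton whose ball is disjoint from all the others.

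The main obstacle is controlling the accumulated polynomial factors in $|F_{i-1}|$. We need $\bbE\bigl[\prod_i (1+|F_{i-1}|^2)\bigr]=O(1)$ under the nested conditioning. For this we use Lemma~\ref{lemma:exponential-in-ball} and its conditional analogue from the proof of Lemma~\ref{lemma:prob-ball-intersects}: conditionally on an induced event, $|B_{2k}(R)|$ is stochastically dominated by the total progeny of a $\mathrm{Bin}(2n,2/n)$ branching process, provided $|F|\leq n/2$. Since $|F_{i-1}|\leq\sum_{i'<i}|B_\omega(V_{\sigma_{i'}})|$, and each revealed ball has exponential tails uniformly given the past, all mixed moments of the ball sizes are $O(1)$. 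We apply this inductively from $i=p$ down to $1$: the bound at step $i$ is polynomial in $|F_{i-1}|$, and the expectation of that polynomial over the ball revealed at step $i-1$ is again polynomial in $|F_{i-2}|$. The event $\max|F|>n/2$ has probability $e^{-cn}$, which absorbs the bad cases, since all integrands are bounded by $1$. This yields the claimed $O(n^{-(p-\beta)})$.
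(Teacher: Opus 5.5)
Your split of the steps into the $p-r$ non-first steps, the $s$ singleton steps and the remaining $\beta$ steps, together with the use of Lemmas~\ref{lemma:part-conditional1} and~\ref{lemma:part-conditional2}, is exactly the paper's argument. You also correctly spot the subtlety. The gap is the step where the $s$ gains from Lemma~\ref{lemma:part-conditional2} are multiplied together: it is not valid, and neither of your two fixes repairs it.

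In the nested tower, at a singleton step $i$ you must control
\[
\bbE\bigl[\mathbf{1}_{\mathcal{E}_i}\bigl(\mathbf{I}_{V_{\sigma_i}}-g_k(u\eta_k)\bigr)\,H_i\mid F_{i-1}\bigr],
\]
where $H_i$ is the conditional expectation, given $F_i$, of all later factors (and of the part of $\mathbf{1}_{\Sigma=\sigma}$ they decide). The weight $H_i$ is signed, small but not constant. It depends on the in-ball $B_\omega(V_{\sigma_i})$ just revealed, and it is correlated with $\mathbf{I}_{V_{\sigma_i}}$. Lemma~\ref{lemma:part-conditional2} only covers weights already determined by $F_{i-1}$. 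Replacing $H_i$ by a polynomial bound on $|H_i|$ throws away exactly the sign cancellation that produced the extra $1/n$. Likewise, replacing $\mathbf{1}_{\Sigma=\sigma}$ by the larger indicator of $\bigcap_i\mathcal{E}_i$ is not an upper bound, because the integrand is signed. Your fallback of revealing the singleton last, after conditioning on all other balls, does recover the cancellation. But it works for only one singleton, so it gives the bound only when $s\le 1$.

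The lost factor cannot be recovered by a cleverer argument, because the statement itself is false in general. Take $w=a$ and $u=0$, so that $\mathbf{I}_v=\mathbf{1}_{\{\mathbf{a}^{-1}(v)=\emptyset\}}$ and $g_k(u\eta_k)=e^{-1}$. Take $p=r=s=2$, $m_1=m_2=1$ and $\sigma=(1,2)$. Then $\beta=0$, and the indicator is that of $B_{aa}(V_1)\cap B_{aa}(V_2)=\emptyset$. Exactly,
\[
\bbE\bigl[(\mathbf{I}_{V_1}-e^{-1})(\mathbf{I}_{V_2}-e^{-1})\bigr]=\Bigl(1-\frac{2}{n}\Bigr)^{n}-2e^{-1}\Bigl(1-\frac{1}{n}\Bigr)^{n}+e^{-2}=-\frac{e^{-2}}{n}+O\Bigl(\frac{1}{n^2}\Bigr).
\]
Now consider the event that the two in-balls intersect:
\begin{itemize}
\item on this event at most one of $V_1,V_2$ is a leaf;
\item the event has probability $4/n+O(n^{-2})$;
\item jointly with $V_1$ being a leaf it has probability $2e^{-1}/n+O(n^{-2})$, and likewise for $V_2$.
\end{itemize}
So the intersecting part equals $-e^{-1}\cdot 4e^{-1}/n+e^{-2}\cdot 4/n+O(n^{-2})=O(n^{-2})$. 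Hence the left-hand side of the statement equals $-e^{-2}/n+O(n^{-2})$, not $O(n^{-2})$.

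This is not an artefact of the example. In $\bbE[N_n^w(u)^2]$, the constants $\capprox^1(w,u)$ and $\capprox^2(w,u)$ of Theorem~\ref{thm:observables} enter only through this disjoint-balls term. The paper's own proof has the same flaw: \eqref{eq:product} writes the expectation as $\bbE[\mathbf{1}_{\Sigma=\sigma}\prod_i\bbE[\,\cdot\mid F_{i-1}]]$ and then multiplies the two lemmas' bounds.

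For Theorem~\ref{thm:mth-moment}, a weaker statement in which singleton gains survive only in pairs, e.g.\ $O(n^{-(p-r)-\lceil s/2\rceil})$, would suffice, since $r-\lceil s/2\rceil\le\lfloor m/2\rfloor$. Proving it, however, needs a genuine decorrelation (cumulant-type) argument rather than a sequential product of conditional bounds.
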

\begin{proof}
	We write it as a product of conditional expectations as in~\eqref{eq:product}, and for each value of $i\in[p]$ we bound the corresponding factor using the previous two lemmas. The condition of Lemma~\ref{lemma:part-conditional1} holds for each element which is not the first in its part, which happens precisely $\sum_{j=1}^r (p_j-1)=p-r$ times. The condition  of Lemma~\ref{lemma:part-conditional2} applies precisely $s$ times. For the remaining values of $i$ 
	we just upper bound the expectation by $1$. The quantity to estimate is thus equal to:
	\begin{align}
		\bbE\left[\mathbf{1}_{\Sigma=\sigma}
		\prod_{i=1}^{p}
	\bbE[\mathbf{1}_{\mathcal{E}_{i}}
		(\mathbf{I}_{V_{\sigma_i}}-g_k(u\eta_k))^{m_{\sigma_i}}
	\mid F_{i-1}
	 ]\right]	=O\left(\frac{1}{n^{p-r +s}} \right) \cdot \bbE\left[  |F_p|^{2p} \right].
	\end{align}
	From Lemma~\ref{lemma:exponential-in-ball}, the last expectation is finite, and since $p-r+s=p-\beta$, we are done.
\end{proof}

The previous proposition directly implies the main result of this section: 
\begin{theorem}\label{thm:mth-moment} We have
\begin{align}
	\bbE\left[N^w_n(u)^m\right]
	=O\bigl(n^{\lfloor\frac{m}{2}\rfloor}\bigr).
\end{align}
\end{theorem}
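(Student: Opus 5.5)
We start from the decomposition \eqref{eq:mth2}, which writes $\bbE[N^w_n(u)^m]$ as a finite sum over $p\in[m]$ and compositions $m_1+\dots+m_p=m$. Each term is $\binom{n}{p}S(m;p;m_1,\dots,m_p)\,T(m;p;m_1,\dots,m_p)$. Here $S$ and the number of compositions depend only on $m$, and $\binom np=O(n^p)$. By \eqref{eq:product}, $T$ is itself a finite sum, over $r$, $(p_1,\dots,p_r)$ and $\sigma\in\mathfrak S_p$, of the quantities bounded in the preceding proposition, and this index set also depends only on $m$. So each term contributes $O(n^{p}\cdot n^{-(p-\beta)})=O(n^{\beta})$, where $\beta=r-s$ is the number of parts $P_j$ with total multiplicity $M_j:=\sum_{i\in P_j}m_i\ge 2$. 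The uniformity in $u\in[0,1]$ is inherited from the lemmas.

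It therefore remains to check the purely combinatorial inequality $\beta\le\lfloor m/2\rfloor$. Each of the $\beta$ parts with $M_j\ge2$ contributes at least $2$ to $\sum_j M_j=m$. The remaining $s$ parts contribute at least $1$ each. Hence $2\beta+s\le m$, so $\beta\le m/2$, and since $\beta$ is an integer, $\beta\le\lfloor m/2\rfloor$. Summing finitely many $O(n^{\lfloor m/2\rfloor})$ terms gives the theorem.

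The only delicate point is making sure the preceding proposition applies to every configuration appearing in \eqref{eq:product}. This includes the events where $\Pi$ takes a given value, including the indicator $\mathbf 1_{\Sigma=\sigma}$, and the implicit bound $\bbE[|F_p|^{2p}]=O(1)$. That bound follows from Lemma~\ref{lemma:exponential-in-ball}, since $|F_p|\le\sum_i|B_{2k}(V_i)|$ and each term has exponential tails. Once the proposition is taken as given, however, the theorem is essentially bookkeeping. The exponent count $p-\beta$ is the key input, and the pairing argument $2\beta+s\le m$ is the main (and easy) step.
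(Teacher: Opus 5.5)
Your deduction from the preceding proposition is exactly the paper's, and your count is the correct one. The paper writes $\sum_j\sum_{i\in P_j}m_i=p$ and concludes $\beta\le\lfloor p/2\rfloor$, which fails for $p=1$, $m_1=m\ge2$ (there $\beta=1$). Your inequality $2\beta+s\le m$ is right.

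\textbf{The gap.} The problem is the input you take as given. The proposition's bound $O(n^{-(p-\beta)})$ for each configuration is false as soon as $s\ge2$. So your step ``each term contributes $O(n^{\beta})$'' has a genuine gap, and the paper's proof has the same one.

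The proposition's proof rests on \eqref{eq:product}, which is not an identity. The tower property only gives $\bbE[X_1\cdots X_p]=\bbE[X_1\cdots X_{p-1}\,\bbE[X_p\mid F_{p-1}]]$. Once $\bbE[X_p\mid F_{p-1}]$ is replaced by the absolute bound $O((1+|F_{p-1}|^2)/n)$ of Lemma~\ref{lemma:part-conditional2}, you are left with a random weight depending on the earlier in-balls. Against that weight, the centring of an earlier singleton factor $\mathbf I_V-g_k(u\eta_k)$ gives no further gain, because $\mathbf I_V$ is correlated with the size of $B_\omega(V)$.

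\textbf{Counterexample.} Take $w=a$ and $u=0$. Then $\mathbf I_v$ is the indicator that $v$ is a leaf of $\mathbf a$, $g_1(0)=e^{-1}$ and $\omega=aa$. Consider $m=p=r=s=2$, i.e.\ the event $D=\{B_\omega(V_1)\cap B_\omega(V_2)=\emptyset\}$, which is $\Pi=(2;1,1;\mathrm{id})$. Note that $D^c$ means $\mathbf a^j(V_1)=V_2$ or $\mathbf a^j(V_2)=V_1$ for some $j\in\{1,2\}$. One computes exactly
\[
\bbP(\mathbf I_{V_1}=\mathbf I_{V_2}=1,D)=(1-\tfrac2n)^n,\qquad \bbP(\mathbf I_{V_1}=1,D)=(1-\tfrac1n)^n(1-\tfrac1{n-1})^2,\qquad \bbP(D)=1-\tfrac4n+O(n^{-2}),
\]
and the same as the middle one for $V_2$. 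Hence
\[
\bbE\bigl[\mathbf 1_D(\mathbf I_{V_1}-e^{-1})(\mathbf I_{V_2}-e^{-1})\bigr]=-\frac{e^{-2}}{n}+O\Bigl(\frac1{n^2}\Bigr),
\]
which is not $O(n^{-2})$. This is consistent with $\Var(Q^a_n(0))=(e^{-1}-2e^{-2})n+O(1)$. The $p=1$ term gives $(e^{-1}-e^{-2})n$ and the intersecting $p=2$ configurations give only $O(1)$ here, so the disjoint configuration must carry $-e^{-2}n$.

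\textbf{How to repair it.} The theorem itself survives, and your inequality is exactly what is needed, but with a corrected proposition. A configuration should be $O(n^{-(p-r+\lceil s/2\rceil)})$. The reason is that centred singletons are correlated only through the $1/n$ repulsion between disjoint balls, and each factor $1/n$ can absorb at most two of them.

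One way to prove this is to treat the singleton parts jointly rather than by sequential conditioning:
\begin{itemize}
\item Write the exact probability of a family of disjoint acyclic in-balls, as in the proofs of Lemmas~\ref{lem:prob_ball_given_tree} and~\ref{lemma:noninttreeleaves}. Its logarithm is a sum of one-ball terms plus interaction terms, where a term of order $n^{-q}$ involves at most $q+1$ balls.
\item Expand the exponential of the interaction terms. A singleton untouched by every interaction contributes a factor $O(1/n)$, by Lemmas~\ref{lem:leaf_and_tree} and~\ref{lem:not_a_tree}.
\item A group of $c\ge2$ singletons linked by interactions costs at least $n^{-(c-1)}\le n^{-\lceil c/2\rceil}$.
\item Cyclic balls cost a full $1/n$ each, by Lemma~\ref{lemma:prob-nontree-small}.
\end{itemize}
Each configuration then contributes $O(n^{r-\lceil s/2\rceil})=O(n^{\beta+\lfloor s/2\rfloor})$. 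Your $2\beta+s\le m$ gives $\beta+\lfloor s/2\rfloor\le\lfloor m/2\rfloor$, which proves the theorem.
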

\begin{proof}[Proof of Theorem~\ref{thm:mth-moment}]
Consider the exponent $\beta=r-s$ in the last proposition.
Each index $j\in [r]$ contributes one to this quantity unless the part $P_j$ is a singleton, $P_j=\{i\}$, with $m_i=1$.
Therefore, each index $j$ contributing to $\beta$, contributes at least $2$ in the sum $\sum_{j=1}^r \sum_{i\in P_j }m_i=p$.
It follows that $\beta\leq \left\lfloor\frac{p}{2}\right\rfloor$.
	Going back to~\eqref{eq:mth2} and~\eqref{eq:product}, we see that
$\bbE\left[N^w_n(u)^m\right]$
	is a finite sum of terms, each of which is of order $\binom{n}{p}\cdot  \frac{1}{n^{p-\beta}}=O\bigl(n^\beta\bigr)$ where $\beta\leq \lfloor\frac{p}{2}\rfloor \leq \lfloor\frac{m}{2}\rfloor$, so we are done.
\end{proof}

\begin{remark}\label{rem:gaussian}
	The proof shows that the dominant contribution to the $m$-th moment comes from the case where $\beta$ is maximal. In the case of even $m$, say $m=2q$, this corresponds to the cases where $p=m$ and all parts satisfy $\sum_{i\in P_j} m_i =2$. In this case, each part $P_j$ (which has size either $1$ or $2$) is naturally attached to two elements $[m]$, which naturally induces a matching of the elements of $[m]$ into pairs, among the $(m-1)!!$ possible such matchings. Our proof using successive conditional expectations in fact shows that the contribution of each pair in such a matching can be  evaluated essentially independently and gives rise to a multiplicative factor which is the same as the one arising in the case $m=2$. By Theorem~\ref{thm:observables}, we have
\begin{align}
\bbE[N^w_n(u)^2] = \Var[Q_n^w(u)]+O(1) = c(w,u)n+O(1).
\end{align}
Thus, a more careful analysis would show that for all $q\geq 0$
	\begin{align}
	\bbE\left[N^w_n(u)^{2q}\right]
	&\sim n^q \cdot   (c(w,u))^q (2q-1)!! \quad \text{ and }\quad
	\bbE\left[N^w_n(u)^{2q+1}\right]
	= O(n^q).
	\end{align}
In other words, all the moments of $(c(w,u)n)^{-1/2}N^w_n(u)$ converge to the corresponding moments of a standard normal distribution. By the method of moments, the convergence also holds in distribution. Details are left to the reader.
\end{remark}

\subsection{Lower bounds on total variation distance}

The fact that two sequences of random variables have asymptotically different expectations, or variances, does not imply that their total variation distance does not go to zero. However, this is the case if one has control on their tails, or on some higher-order moments. This was our main motivation for computing higher-order moments in the previous section.

We will need the following variant of the well-known ``separating statistics'' lemma, which relies on the Cauchy-Schwarz inequality (see, e.g,~\cite[Proposition 7.12]{LevinPeres2nd}). We could obtain the next lemma by applying the classical one 
to the random variables $(X-\bbE[X])^2, (Y-\bbE[Y])^2$ and using the Cauchy-Schwarz inequality again, but it is not much longer to give a direct proof.
\begin{lemma}\label{lemma:TVvariance}
	For all $\epsilon>0$ and $C\geq 1$ there is $\delta>0$ such that if $X, Y$ are real-valued random variables with 
	$|\Var(X)-\Var(Y)|\geq \epsilon $ and 
	$\bbE[X^4],\bbE[Y^4]\leq C$, then $d_{TV}(X,Y)\geq\delta$.
\end{lemma}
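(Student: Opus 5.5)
We argue by contraposition: if $d_{TV}(X,Y)<\delta$ for a small $\delta$, we show that $|\Var(X)-\Var(Y)|<\epsilon$. Let $\mu,\nu$ be the laws of $X,Y$. Since the variance is shift-invariant but the fourth-moment bound is not, we avoid centering and write $\Var(X)=\bbE[X^2]-\bbE[X]^2$, and similarly for $Y$. It then suffices to show that $|\bbE[X]-\bbE[Y]|$ and $|\bbE[X^2]-\bbE[Y^2]|$ are small when $d_{TV}$ is small.

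The key step is a general bound: for any measurable $\varphi$ with $\bbE[\varphi(X)^2],\bbE[\varphi(Y)^2]\leq M$,
\begin{align}
|\bbE[\varphi(X)]-\bbE[\varphi(Y)]|\leq 2\sqrt{2M\,d_{TV}(X,Y)}.
\end{align}
To prove it, take an optimal coupling $(X',Y')$ of $\mu$ and $\nu$, so that $\bbP(X'\neq Y')=d_{TV}(X,Y)$. Then
\begin{align}
|\bbE[\varphi(X')-\varphi(Y')]|=|\bbE[(\varphi(X')-\varphi(Y'))\mathbf{1}_{X'\neq Y'}]|.
\end{align}
By Cauchy--Schwarz this is at most
\begin{align}
\sqrt{\bbE[(\varphi(X')-\varphi(Y'))^2]}\,\sqrt{d_{TV}(X,Y)}\leq \sqrt{4M}\,\sqrt{d_{TV}(X,Y)}.
\end{align}
The constant is harmless either way. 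Alternatively, one can avoid couplings by writing $\int\varphi\,d(\mu-\nu)$, splitting with the Hahn decomposition, and applying Cauchy--Schwarz to $\varphi\mathbf{1}$ on the set where $\mu>\nu$.

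We now apply the bound twice. With $\varphi(x)=x^2$ we get $M=C$, since the second moment of $\varphi(X)$ is $\bbE[X^4]\leq C$. With $\varphi(x)=x$ we get $\bbE[X^2]\leq \sqrt{C}\leq C$ by Jensen, using $C\geq1$. Writing $d=d_{TV}(X,Y)$, this gives $|\bbE[X^2]-\bbE[Y^2]|\leq 2\sqrt{2Cd}$ and $|\bbE[X]-\bbE[Y]|\leq 2\sqrt{2Cd}$. Since $|\bbE[X]|,|\bbE[Y]|\leq C^{1/4}\leq C$, we have
\begin{align}
|\bbE[X]^2-\bbE[Y]^2|\leq 2C\,|\bbE[X]-\bbE[Y]|\leq 4C\sqrt{2Cd}.
\end{align}
Combining the two estimates,
\begin{align}
|\Var(X)-\Var(Y)|\leq (2+4C)\sqrt{2Cd}.
\end{align}
Choosing $\delta>0$ with $(2+4C)\sqrt{2C\delta}<\epsilon$ completes the proof.

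There is no real obstacle here. The only point needing care is controlling $\bbE[X^2]$ in terms of the fourth moment, so that the Cauchy--Schwarz step applies with $\varphi(x)=x^2$, whose second moment is exactly $\bbE[X^4]$. Working uncentered is what keeps the hypothesis as stated usable.

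In the application to Theorem~\ref{thm:cw_separate}, we will take $X=N_n^w(u)/\sqrt n$ and $Y=N_n^{w'}(u)/\sqrt n$. Here $u$ is chosen so that $c(w,u)\neq c(w',u)$, which is possible by Theorem~\ref{thm:main_variance}. If $|w|\neq|w'|$, Corollary~\ref{cor:length} already settles the claim; otherwise the centering constants coincide, so $X$ and $Y$ are the same function of $\bfw$ and $\bfw'$, and $d_{TV}(X,Y)\leq d_{TV}(\bfw,\bfw')$. Theorem~\ref{thm:mth-moment} with $m=4$ gives the uniform fourth-moment bound, and Theorem~\ref{thm:observables} gives the separation of the variances.
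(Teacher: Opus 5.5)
Your proof is correct and follows essentially the paper's route. Both arguments bound $|\bbE[\varphi(X)]-\bbE[\varphi(Y)]|$ by $\sqrt{d_{TV}(X,Y)}$ times an $L^2$-norm via Cauchy--Schwarz, for $\varphi(x)=x^2$ and $\varphi(x)=x$, control the lower moments by Jensen using $C\geq 1$, and recombine through $\Var=\bbE[X^2]-\bbE[X]^2$. The only difference is that you realize the Cauchy--Schwarz step with a maximal coupling, whereas the paper uses the densities of $\mu,\nu$ relative to $\tfrac12(\mu+\nu)$, which is cosmetic. Your separate treatment of $|w|\neq|w'|$ via Corollary~\ref{cor:length}, which makes a common centering available, spells out a point the paper's short proof of Theorem~\ref{thm:cw_separate} leaves implicit.
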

\begin{proof}
	Let $\mu$ and $\nu$ be the probability distributions of $X$ and $Y$ respectively, and let $\tau$ be another probability measure on $\mathbb{R}$ such that $\mu$ and $\nu$ are absolutely continuous with respect to $\tau$, say $\tau = \frac{1}{2}(\mu+\nu)$. Let $f=\frac{d\mu}{d\tau}$ and $g=\frac{d\nu}{d\tau}$ be the associated Radon-Nikodym derivatives. We use the shortcut notation $\int \cdot \defi  \int_{\mathbb{R}} (\cdot) \,d\tau(x)$. Note that $d_{TV}(X,Y) \defi \frac{1}{2}\int |f(x)-g(x)|$. Now, we have:

	\begin{align}
		|\Var(X)-&\Var(Y)|
	=\left|
	\int f(x)x^2 -\int g(x)x^2 
		- 
		\left( \int f(x) x\right)^2
		+
		\left( \int g(x) x\right)^2
\right|\\
	&=\left|
		\int (f(x)-g(x))x^2
		- 
		 \left(\int (f(x)-g(x)) x\right)
		 \left(\int (f(x)+g(x)) x\right)
\right|\\
&\leq\left|
		\int (f(x)-g(x))x^2\right|
		+\left|\int (f(x)-g(x)) x\right|
		 \int (f(x)+g(x)) |x|
\\
&\leq 
		\sqrt{\int|f(x)-g(x)|}
		\left(
		\sqrt{\int |f(x)-g(x)|x^4}
		+
		\sqrt{\int |f(x)-g(x)|x^2}
		\int (f(x)+g(x)) |x| \right)\\
		&=
		\sqrt{2 d_{TV}(X,Y)}
		\left(\sqrt{\bbE[X^4]+\bbE[Y^4]}
		+
\sqrt{\bbE[X^2]+\bbE[Y^2]}
		(	\bbE[|X|] + \bbE[|Y|])
		\right) \\
		&\leq 6C \sqrt{d_{TV}(X,Y)},
		\end{align}
		where the three inequalities follow by the triangular inequality, Cauchy-Schwarz inequality (used twice) and Jensen inequality (i.e. $\bbE[|X|^k]\leq \bbE[|X|^4]^{k/4}\leq C$ for $k=1,2$, as $C\geq 1$), respectively.
	This is enough to conclude, with
	$\delta=\epsilon^2/6 C$.  
\end{proof}
With the previous results in hand we now obtain Theorem~\ref{thm:cw_separate}.
\begin{proof}[Proof of Theorem~\ref{thm:cw_separate}]
	The first statement is a direct consequence of Theorem~\ref{thm:observables}, Theorem~\ref{thm:mth-moment} with $m=4$,
Lemma~\ref{lemma:TVvariance}, and the fact that the number of quasi-leaves, $Q_n^w(u)$, is a measurable observable of the random function $\mathbf{w}$, hence $d_{TV}(\mathbf{w},\mathbf{w}')\geq d_{TV}(Q_n^w(u), Q_n^{w'}(u))$.
\end{proof}

\section{Counting cycles}
\label{sec:cycles}

The goal of this last section is to study the distribution on short cycles of the random function $\bfw$, and to prove Theorem~\ref{thm:Dn_limit} and Corollary~\ref{cor:distinguish_d}. Both follow from Theorem~\ref{thm:main_cycle} below.  In this section, for $x,y$ integers, we denote by $\gcd{x}{y}$ and $\lcm{x}{y}$ their greatest common divisor and least common multiple respectively.

\subsection{Limit law for short cycles}

A \emph{cycle} of length $\ell\geq 1$ of a function $\bfr:[n]\longrightarrow [n]$ is a sequence $(i_1,\dots,i_\ell)$ of distinct elements of $[n]$ such that $\bfr(i_j)=i_{j+1}$ for $j\in [\ell]$, with $i_{\ell+1}=i_1$. Let $\mathrm{Cyc}_\ell(\bfr)$ be the number of cycles of length $\ell$ of the function $\bfr$.

We start with a general remark about cycles in powers of functions, that holds for any function in the deterministic setting. 
Let $\bfr\!:\![n]\rightarrow[n]$ and $\bfs=\bfr^d$ for some $d\geq 1$.
If $(i_1,\dots,i_\ell)$ is a cycle of $\bfr$, then it splits into $\gcd{\ell}{d}$ cycles of $\bfs$, each of length $\ell/\gcd{\ell}{d}$. We thus have
\begin{align}\label{eq:cycles-split}
	\mathrm{Cyc}_{i}(\bfs) = \sum_{\ell\geq i \atop \ell/\gcd{\ell}{d}=i} \gcd{\ell}{d}\mathrm{Cyc}_\ell(\bfr)
	= \sum_{c\in[d]:c|d\atop \gcd{i}{d/c}=1} c \cdot \mathrm{Cyc}_{ci}(\bfr)= \sum_{c\in[d]:c|d\atop \gcd{i}{c}=1}\frac{d}{c} \cdot \mathrm{Cyc}_{di/c}(\bfr).
\end{align}

Going back to random functions, if two words $w$ and $\pi$ satisfy $w=\prim^d$, we can realize $\bfw$ and $\bfprim$ on a joint probability space such that $\bfw=\bfprim^d$, and~\eqref{eq:cycles-split} now holds with $\mathrm{Cyc}_{\cdot}(\bfs)$ and $\mathrm{Cyc}_{\cdot}(\bfr)$ replaced respectively by $C^w_{\cdot, n}$ and $C^\prim_{\cdot,n}$. For example, if $w=\prim^2$  we have
\begin{align}
C^w_{i,n}
	= 2  C^\prim_{2i,n} + \mathbf{1}_{\{i\text{ odd}\}} C^\prim_{i,n}.
\end{align}

\medskip

Equation~\eqref{eq:cycles-split} reduces, up to elementary number theory, the study of cycles in random functions to the case of primitive words: if $w=\prim^d$ with $\prim$ primitive and $d\geq 1$, $(C^w_{i,n})_{i\geq 1}$ is determined by $(C^\prim_{\ell,n})_{\ell\geq 1}$. We have the following theorem. 
\begin{theorem}\label{thm:main_cycle}
Let $\prim$ be a primitive word and let $w=\prim^d$ for some $d\geq 1$.
For any fixed integer $L\geq 1$,  we have
\begin{align}\label{eq:main_cycle_u}
	(C^\prim_{1,n},C^\prim_{2,n}, \dots, C^\prim_{L,n})
	\stackrel{(d)}{\longrightarrow}
	(Z_1,Z_2,\dots,Z_L),
\end{align}
	where $Z_1,Z_2,\dots$ are independent random variables with $Z_\ell \sim \text{Po}(1/\ell)$ for $\ell \in \mathbb{N}$. 
Moreover, 
\begin{align}\label{eq:main_cycle}
(C^w_{1,n}, C^w_{2,n}, \dots, C^w_{L,n})
	\stackrel{(d)}{\longrightarrow}
	\Psi(Z_1,Z_2,\dots,Z_{dL})
\end{align}
	where $\Psi(x_1,x_2,\dots,x_{dL})=(y_1,y_2,\dots,y_L)$ with
	$y_i\defi \sum_{c\in[d]:c|d\atop \gcd{i}{c}=1} (d/c) x_{di/c}.$

	In both cases, the convergence also holds in the sense of  moments.
\end{theorem}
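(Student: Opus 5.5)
We will use the method of factorial moments. By~\eqref{eq:cycles-split}, and since $\bfw=\bfprim^d$ can be realised on the same probability space, the second statement~\eqref{eq:main_cycle} follows from the first by applying the continuous map $\Psi$. The same reasoning gives convergence of moments, because $\Psi$ is linear and moment convergence of the vector carries over to linear images. It therefore suffices to prove~\eqref{eq:main_cycle_u} for a primitive word $\prim$ of length $k$. For this we show that, for every tuple $(r_1,\dots,r_L)\in\mathbb{N}^L$,
\begin{align}
\bbE\Bigl[\prod_{\ell=1}^L (C^\prim_{\ell,n})_{r_\ell}\Bigr]\xrightarrow{n\to\infty}\prod_{\ell=1}^L \ell^{-r_\ell},
\end{align}
where $(x)_r$ denotes the falling factorial. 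These limits are the joint factorial moments of independent Poisson$(1/\ell)$ variables. Since Poisson laws are determined by their moments, this yields both convergence in distribution and convergence of moments.

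The left-hand side is a sum over ordered families of $R=\sum_\ell r_\ell$ distinct $\bfprim$-cycles, with $r_\ell$ of them having length $\ell$. A $\bfprim$-cycle $(i_1,\dots,i_\ell)$ corresponds to a closed walk in the $\{a,b\}$-digraph of length $k\ell$ that reads $\prim^\ell$, starting at $i_1$. We split the configurations according to the intermediate vertices:
\begin{itemize}
\item[(a)] all $k\ell$ vertices of each walk are distinct, and distinct walks are vertex-disjoint;
\item[(b)] some coincidence occurs.
\end{itemize}

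For (a), the walks use $K=k\sum_\ell \ell r_\ell$ distinct vertices, and each prescribes the $\mathbf{a}$- or $\mathbf{b}$-image of one vertex. Hence each such configuration has probability exactly $n^{-K}$. The number of such configurations is $(n)_K$ divided by the rotational redundancy. A cycle of length $\ell$ has $\ell$ choices of starting point $i_1$, but only rotations by multiples of $k$ along the walk are identified. So each cycle contributes exactly a factor $1/\ell$, and case (a) gives $\prod_\ell \ell^{-r_\ell}+O(1/n)$.

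The main obstacle is case (b). We must show that configurations with coincidences contribute $o(1)$, and it is here that primitivity of $\prim$ matters. Consider the union $H$ of the walks as a subgraph of the digraph, with $|V(H)|$ vertices and $|E(H)|$ distinct labelled arcs. Its probability is $n^{-|E(H)|}$, and there are at most $n^{|V(H)|}$ ways to embed it. Since each vertex has out-degree at most one per letter, the contribution is $O(n^{|V(H)|-|E(H)|})$ summed over finitely many shapes. Every connected component of $H$ contains a cycle, so $|E(H)|\ge|V(H)|$, and the only danger is components with equality, i.e.\ unicyclic components. In such a component every closed walk reading $\prim^\ell$ must wind around the unique cycle. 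The cycle then reads a word $v$ with $\prim^\ell$ equal to a power of a conjugate of $v$, and determinism of $\mathbf{a},\mathbf{b}$ forces the walk to trace exactly this cycle.

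We first try the following argument. Because $\prim$ is primitive, so is each of its conjugates. Therefore $v$ is itself a power of a conjugate of $\prim$, and the $\bfprim$-cycle is simply the primitive cycle traversed along the digraph cycle. Two walks in the same unicyclic component thus define the same $\bfprim$-cycle up to rotation, contradicting distinctness. A single walk with a repeated vertex would force a shorter period, contradicting either primitivity or the fact that $(i_1,\dots,i_\ell)$ consists of distinct elements. Hence in case (b) some component satisfies $|E|\ge|V|+1$, giving contribution $O(1/n)$. We expect the combinatorics-on-words step, showing that a closed walk around a cycle labelled $v$ whose label is $\prim^\ell$ forces $v$ to be a power of a rotation of $\prim$, to require the most care. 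We would handle it with the classical fact that $xy=yx$ implies $x$ and $y$ are powers of a common word (Lyndon--Schützenberger).
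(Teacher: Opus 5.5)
Your proposal is correct and follows essentially the same route as the paper. It uses factorial moments and shows that the only configurations of order one are $R$ vertex-disjoint simple closed walks of lengths $k\ell_q$, each contributing $1/\ell_q$. Primitivity of $\prim$ enters exactly as in the paper: it rules out a walk winding several times around a digraph cycle, and two distinct $\bfprim$-cycles sharing one. The non-primitive case is then deduced from \eqref{eq:cycles-split} in both arguments.

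The only difference is bookkeeping. You bound the remaining configurations statically, via the excess $|E(H)|-|V(H)|$ of the union of walks. The paper instead counts ``hits'' in a sequential exploration started from $Q$ uniform vertices. The number of hits there equals $Q+|E(H)|-|V(H)|$, so your case $|E(H)|=|V(H)|$ is precisely the paper's ``exactly $Q$ hits'' case.
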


The theorem will be a direct consequence of the following lemma.
\begin{lemma}\label{lemma:prob_cycles}
Let $\prim$ be a primitive word.
Let $Q\geq 1$,  $\ell_1,\dots,\ell_Q\geq 1$ and $\ell=\max_{q\in [Q]}\ell_q$.
Let $V_1,\dots,V_Q$ be independent uniform vertices in $[n]$.
	The probability that for all $q\in [Q]$,  $V_q$ belongs to a cycle of length $\ell_q$ of $\bfprim$, and that these cycles are all distinct, is $\displaystyle n^{-Q}+O_{\ell}\bigl(n^{-(Q+1)}\bigr)$.
\end{lemma}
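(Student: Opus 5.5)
We will work directly with the $\{a,b\}$-digraph rather than with $\bfprim$ alone. Write $p=|\prim|$. The vertex $V_q$ lies on a cycle of length $\ell_q$ of $\bfprim$ exactly when the walk in the $\{a,b\}$-digraph started at $V_q$, reading the letters of $\prim^{\ell_q}$ cyclically, returns to $V_q$ after $p\ell_q$ steps and does not hit $V_q$ after any shorter positive multiple of $p$ steps. Such a closed walk visits a sequence of $p\ell_q$ vertices $x^{(q)}_0=V_q,x^{(q)}_1,\dots,x^{(q)}_{p\ell_q-1}$. The \emph{generic} configuration is the one where all these vertices are pairwise distinct, both within a single walk and across different $q$.

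For a fixed generic configuration, the event consists of $\sum_q p\ell_q$ prescribed arc values. Each is a distinct input of $\mathbf{a}$ or $\mathbf{b}$, since the vertices are distinct. The probability of the event is therefore exactly $n^{-\sum_q p\ell_q}$. We now count. Each $V_q$ is uniform, and conditioning on $V_q=x^{(q)}_0$ costs a factor $n^{-1}$ per $q$. The remaining intermediate vertices can be chosen in $(n-Q)_{\sum_q(p\ell_q-1)}$ ways. Summing gives
\[
n^{-Q}\,(n-Q)_{\sum_q(p\ell_q-1)}\,n^{-\sum_q(p\ell_q-1)}=n^{-Q}\bigl(1+O_\ell(n^{-1})\bigr).
\]
In a generic configuration the $Q$ cycles of $\bfprim$ are automatically distinct and of the right lengths. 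The minimal-period condition holds trivially because $V_q$ appears only once among the $x^{(q)}$.

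The main step is to show that non-generic configurations contribute only $O_\ell(n^{-(Q+1)})$, and this is where primitivity of $\prim$ enters. In such a configuration, the union of the $Q$ closed walks forms a labelled $\{a,b\}$-digraph $H$ with, say, $v$ vertices and $e$ distinct arcs. Its probability is at most $n^{-Q}\cdot n^{v-Q}\cdot n^{-e}$ up to constants depending on $\ell$, and there are finitely many shapes, so it suffices to show $e\ge v+1$ whenever a coincidence occurs. Each walk component has out-degree at most $2$ per vertex and contains a closed walk, so $e\ge v$ always holds component-wise. Equality in a component forces every vertex to have out-degree $1$ in $H$, that is, the component is a single directed cycle with every arc's label determined.

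We then rule out this equality case. A walk of length $p\ell_q$ reading $\prim^{\ell_q}$ that winds around a directed cycle of length $m<p\ell_q$ (or that shares that cycle with another walk) forces the label sequence of the cycle to be a cyclic rotation of a power of $\prim$. Two coinciding positions with different phases modulo $p$ would force $\prim$ to equal a nontrivial rotation of itself. This is impossible for a primitive word. Hence all coincidences occur at the same phase. Then the $\bfprim$-cycle through $V_q$ would either be shorter than $\ell_q$, contradicting the required length, or would coincide with another $V_{q'}$'s cycle, contradicting distinctness. So any non-generic configuration has some component with $e\ge v+1$. This gives the extra factor $n^{-1}$, and summing over the finitely many shapes, whose number depends only on $\ell$ and $Q$, completes the proof. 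We expect the most care to be needed in the combinatorial claim that a single-cycle component with inconsistent phases contradicts primitivity, which amounts to the classical fact that a primitive word is not equal to any nontrivial cyclic shift of itself.
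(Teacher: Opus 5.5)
Your proof is correct and is essentially the paper's proof written in static form. The paper explores the $Q$ walks one after another and counts ``hits'', meaning samples that land on an already revealed vertex. In your notation there are exactly $e+Q-v$ hits, so the paper's two steps match yours. Its bound ``at least $Q+1$ hits costs $O_\ell(n^{-(Q+1)})$'' is your $e\ge v+1$ case. Its claim that exactly $Q$ hits forces $Q$ disjoint simple cycles of lengths $p\ell_q$, by primitivity, is your analysis of single-cycle components.

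One bookkeeping slip needs correcting. A shape with $v$ vertices and $e$ arcs has total probability $O(n^{v-Q-e})$, not $n^{-Q}\cdot n^{v-Q}\cdot n^{-e}$; as written, your formula would give $n^{-2Q}$ in the generic case. With the correct count, $e\ge v+1$ is exactly what you need.
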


\begin{proof}
Let $k$ be the length of $\prim$. We let $y_0^{(q)}=V_q$ and for $0\leq r< \ell_q$ and $1\leq j \leq k$ we let $y_{rq+j}^{(q)}=\phi_{\prim_j}\bigl(y_{rq+j-1}^{(q)}\bigr)$. In other words, $(y_0^{(q)}, y_1^{(q)}, \dots, y_{\ell_qk}^{(q)})$ is the sequence of vertices revealed when we try to determine if $(V_q, \bfprim(V_q),\dots,\bfprim^{\ell_q-1}(V_q))$ is a cycle, ``intermediate'' vertices included. 

	We use a randomized procedure to expose the vertices $\bigl(y_j^{(q)}\bigr)_{q,j}$ successively, for $q$ from $1$ to $Q$ (rounds) and for $j$ from $0$ to $\ell_qk$.  The $q$-th round goes as follows: for $j=0$ we sample uniformly the new vertex $V_q$, and for $j>0$ we either sample a new random vertex uniformly and independently if the image of $y^{(q)}_{j-1}$ by the function $\phi_{\prim_{(j \text{ mod } k)}}$ has not been revealed yet, or follow a transition which has already been revealed. If at a given time we sample a new random vertex and if this vertex belongs to the set of already revealed vertices in previous rounds or in the same round but previous steps, we call such a time a ``hit''; this can happen at time $j=0$ if $V_q$ happens to be an already revealed vertex, or at a later time in each round.

Since the total number of revealed vertices is at most $k(\ell_1+\dots+\ell_Q)\leq kQ\ell=O_{\ell}(1)$ and since sampled vertices are uniform and independent, the probability to have at least $r$ hits in the whole exploration process, for $r\geq 1$ fixed, is $O_{\ell}\left(n^{-r}\right)$.

The event that $V_1,\dots, V_Q$ are in distinct cycles implies that there are at least $Q$ hits in the exploration process. Indeed, each vertex $V_q$ needs to have an incoming edge, so either $V_q$ is sampled among the already revealed vertices, or it is reached via a hit at some later step in the $q$-th round. The probability of having at least $Q+1$ hits is $O_{\ell}(n^{-(Q+1)})$.
We claim that the only configurations realizing the desired event and having exactly $Q$ hits are the ones in which $(y^{(q)}_0,\dots,y^{(q)}_{\ell_qk})$, for $q$ from $1$ to $Q$, form $Q$ disjoint directed simple cycles in the underlying $\{a,b\}$-digraph.

Indeed, if there are $Q$ hits in total, there is exactly one at each round, since there must be at least one per round. Moreover, this hit has to be the last sampled vertex of that round: if there was another sampled vertex after it, we would need at least one more hit to come back to the starting point $V_q$. We now examine two cases for the hit happening in the $q$-th round, for $q\in [Q]$. 

	First, if sampling the vertex $V_q$ is a hit, then it belongs to the previously revealed graph, which by induction we can assume is a union of directed simple cycles of lengths $k\ell_1,\dots,k\ell_{q-1}$. Say that $V_q$ belongs to the cycle generated during round $q_0$, for some $q_0 < q$. If $V_q$ appears along this cycle at a distance from $V_{q_0}$ multiple of $k$, then $V_q$ belongs the same cycle of $\bfprim$ as $V_{q_0}$, a contradiction. If the distance is not a multiple of $k$, then the $q_0$-th directed simple cycle already revealed does not induce a cycle of $\bfprim$ starting from $V_{i}$, because $\prim$ is primitive so no power of $\prim$ can coincide with one of its conjugates. This implies that there must be at least one other hit in the exploration of the $q$-th cycle, so there would be at least two hits in this round, a contradiction.

	Secondly, if the vertex $V_q$ was not revealed in the previous rounds (and thus it has no incoming edge in the revealed part), then the unique hit (and last sample) of round $q$ happens at some step $t>1$ during the exploration, i.e. $y^{(q)}_s=y^{(q)}_t$ for some $0\leq s <t$. Because there is no other edge revealed after step $t$ of round $q$, we need $y^{(q)}_s=V_q$ (otherwise $V_q$ has no incoming edge and cannot be on a cycle). Therefore the digraph spanned by $(y^{(q)}_0,\dots,y^{(q)}_{t})$ is a directed simple cycle. In order for this simple cycle to induce a cycle in $\bfprim$ of length $\ell_q$, we need $t=\ell_qk$: if $\pi^{\ell_q}$ is the power of some other word, then this word is itself a power of $\prim$ (since $\prim$ is primitive), and that power has to be $\pi^{\ell_q}$ itself otherwise the length of the cycle to which $V_q$ belongs would not be $\ell_q$ but strictly smaller. Thus the hit happens at step $t=\ell_qk$, creating a directed simple cycle $(y^{(q)}_0,\dots,y^{(q)}_{\ell_qk})$.	This concludes the  proof of the claim. 

From the claim, the desired event is realized with exactly $Q$ hits if and only if at each round $q$, from $1$ to $Q$, during the first $\ell_q k-1$ steps we sample a new vertex and at the last step we sample precisely the vertex~$V_q$. Therefore the probability of the desired event is
\begin{align}
&O_{\ell}\left(\frac{1}{n^{Q+1}}\right)+\prod_{q=1}^Q \left[ \left(1-\frac{kQ\ell}{n}\right)^{\ell_q k-1} \cdot \frac{1}{n}\right] = \frac{1}{n^Q} + O_{\ell}\left(\frac{1}{n^{Q+1}}\right).\qedhere
\end{align}
\end{proof}

The following corollary (and Theorem~\ref{thm:main_cycle}) follows directly from the lemma. For integers $m,r \geq 0$ we let $m_r\defi m (m-1)\dots(m-r+1)$ denote the $r$-th descending factorial of $m$.
\begin{corollary}
Let $\prim$ be a primitive word.
Let $L\geq 1$ and let $r_1,\dots,r_L\geq 0$.
then
\begin{align}
	\bbE \left[ (C^\prim_{1,n})_{r_1} \cdots (C^\prim_{L,n})_{r_L}\right] = \frac{1}{1^{r_1} 2^{r_2}\cdots L^{r_L}} + O\left(\frac{1}{n}\right).
\end{align}
\end{corollary}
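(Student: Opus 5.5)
We will expand the falling factorials as counts of ordered tuples of distinct cycles and then apply Lemma~\ref{lemma:prob_cycles}. The product $(C^\prim_{1,n})_{r_1}\cdots(C^\prim_{L,n})_{r_L}$ counts ordered tuples $(\gamma_{\ell,s})_{\ell\in[L],\,s\in[r_\ell]}$ of pairwise distinct cycles of $\bfprim$ such that $\gamma_{\ell,s}$ has length $\ell$. Set $Q=r_1+\dots+r_L$. We mark each cycle $\gamma_{\ell,s}$ at one of its $\ell$ vertices. Then
\begin{align}
	(C^\prim_{1,n})_{r_1} \cdots (C^\prim_{L,n})_{r_L}
	= \frac{1}{1^{r_1}\cdots L^{r_L}}\,\#\bigl\{(v_q)_{q\in[Q]}\in[n]^Q : \text{the } v_q \text{ lie on pairwise distinct cycles of the prescribed lengths}\bigr\},
\end{align}
where the $Q$ indices are assigned lengths $\ell_q$ so that exactly $r_\ell$ of them equal $\ell$. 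The division is exact, since each tuple of cycles corresponds to exactly $\prod_\ell \ell^{r_\ell}$ choices of marked vertices: distinct cycles are disjoint, so no marked vertex is counted twice.

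Taking expectations, the number of vertex tuples equals $n^Q$ times the probability that independent uniform $V_1,\dots,V_Q$ satisfy the event of Lemma~\ref{lemma:prob_cycles}. That lemma gives this probability as $n^{-Q}+O_\ell(n^{-(Q+1)})$ with $\ell\le L$. The expectation is therefore $n^Q\bigl(n^{-Q}+O(n^{-Q-1})\bigr)/\prod_\ell \ell^{r_\ell}$, which is the claimed $\frac{1}{1^{r_1}\cdots L^{r_L}}+O(1/n)$. The constant depends only on $L$, the $r_\ell$ and $|\prim|$, which are all fixed.

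The only delicate point is the exact bijection between marked tuples of distinct cycles and vertex tuples. Independent uniform $V_q$ may coincide, but the event requires the cycles to be distinct, so coincident vertices are automatically excluded and the correspondence is exact. Everything else is a direct invocation of the lemma.
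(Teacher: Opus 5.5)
Your proposal is correct and matches the paper's proof: both write the product of falling factorials as a count of ordered tuples of pairwise distinct cycles, mark one of the $\ell$ vertices on each cycle of length $\ell$, and apply Lemma~\ref{lemma:prob_cycles} with $(\ell_1,\dots,\ell_Q)$ having exactly $r_\ell$ entries equal to $\ell$. Your remarks on why the division is exact (distinct cycles of a function are vertex-disjoint) and on coinciding $V_q$ spell out details the paper leaves implicit.
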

\begin{proof}
	Let $Q=r_1+\dots+r_L$ and let $(\ell_1,\dots,\ell_Q)=(1,\dots,1,2,\dots, 2,\dots,L\dots,L)$ be the vector obtained by concatenating vectors with entries $i$ of length $r_i$, for $i\in L$. Because there are $\ell$ ways to choose a vertex on a cycle of length $\ell$, the desired expectation multiplied by $\ell_1 \cdots \ell_Q=1^{r_1}2^{r_2}\cdots L^{r_L}$ is equal to $n^Q$ times the probability obtained in Lemma~\ref{lemma:prob_cycles}, and we are done. 
\end{proof}

\begin{proof}[Proof of Theorem~\ref{thm:main_cycle}]
	Since the $r$-th factorial moment of a Po($\lambda$) random variable is $\lambda^r$, 
	the corollary directly implies the convergence of moments in Theorem~\ref{thm:main_cycle} in the case of a primitive word $\prim$ (and the convergence in distribution since these moments decrease fast enough, for example by Carleman's criterion). The non-primitive case follows directly from~\eqref{eq:cycles-split}, applied to the (direct) coupling in which $\bfw=\bfprim^d$. 
\end{proof}

\subsection{Observables}

In order to use Theorem~\ref{thm:main_cycle} to discriminate random functions, we will count cycles of length at most $L$, for some large number $L$, with some ``weights'' related to cycle lengths.
To this end, let $\mathbf{z}=(z_j)_{j\geq 1}$ be a sequence of (real or complex) numbers and introduce the random variable 
\begin{align}
D_n^w(L;\mathbf{z}) \defi  \sum_{j\leq L} z_j C^w_{j,n}.
\end{align}
We will especially focus on the case where the sequence $\mathbf{z}$ is periodic, $z_j=z_{(j\textrm{ mod }g)}$ for some integer $g\geq1$ and all $j\geq 0$.
We write, as in the introduction,
\begin{align}
	D_n^w(L,g;z_0,\dots,z_{g-1}) \defi  D_n^w(L;\mathbf{z}) = \sum_{j\leq L} z_{(j\textrm{ mod } g)} C^w_{j,n}.
\end{align}
In order to state the next theorem we first define the quantity:
\begin{align}
\rho(d,g; z_0,\dots,z_{g-1})\defi 
		\frac{1}{\lcm{d}{g}}\sum_{r=1}^{\lcm{d}{g}}z_{(r\textrm{ mod } g) }|\{c\in [d]:c|d, \gcd{c}{r}=1\}|.
\end{align}

We will be interested in some notion of bivariate convergence in probability:
\begin{definition}[A notion of bivariate convergence in probability]\label{def:bivariate}
	If $(X_{n,L})_{n,L\geq 1}$ is a bi-indexed sequence of real-valued random variables, and $Y$ is a real-valued random variable on the same probability space, 
	we write
$	\lim_{L\rightarrow \infty}	\lim_{n\rightarrow \infty} X_{n,L}=Y,$
	if for any $\epsilon>0$ there exists $L_0=L_0(\epsilon)$ such that for any $L\geq L_0$,
\begin{align}
	\lim_{n\to\infty} \bbP(|X_{n,L}-Y|> \epsilon)=0.
\end{align}	
\end{definition}

Note that this definition is weaker than a full bivariate convergence in probability (that we could denote by $\lim_{L,n\rightarrow \infty}$) since the quantifiers in the large-$n$ limit a priori depend on $L$. However, for the purpose of designing a statistical test separating two random variables it will be as useful. We have the following proposition, already stated in the introduction (Theorem~\ref{thm:Dn_limit}):

\begin{proposition}\label{prop:Dn_limit}
	Let $w$ be a word of exponent $d\geq 1$, i.e. $w=\prim^d$ for some primitive word $\prim$. For any $g\geq 1$ and any tuple $(z_0,\dots,z_{g-1})\in \mathbb{C}^g$ we have the convergence in distribution
\begin{align}
		\lim_{L\rightarrow \infty}
	\lim_{n\rightarrow \infty}
		\frac{1}{\log L} D_n^w(L,g; z_0,\dots,z_{g-1})
=
		\rho(d,g; z_0,\dots,z_{g-1}).
	\end{align}
	Moreover, the convergence also holds in expectation, and in probability in the sense of Definition~\ref{def:bivariate}.
\end{proposition}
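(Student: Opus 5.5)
The plan is to reduce everything to the primitive case through \eqref{eq:cycles-split} and then use Theorem~\ref{thm:main_cycle}. For a fixed $L$, $D_n^w(L,g;\mathbf{z})$ is a fixed linear function of $(C^w_{1,n},\dots,C^w_{L,n})$. By Theorem~\ref{thm:main_cycle}, these converge in distribution and in moments to $\Psi(Z_1,\dots,Z_{dL})$. Writing $y_i=\sum_{c\mid d,\ \gcd{i}{c}=1}(d/c)Z_{di/c}$, we get
\begin{align}
\lim_{n\to\infty} D_n^w(L,g;\mathbf{z}) \stackrel{(d)}{=} X_L\defi \sum_{i=1}^L z_{(i \bmod g)} \sum_{c\in[d]:c|d\atop \gcd{i}{c}=1} \frac{d}{c}\, Z_{di/c},
\end{align}
and the first and second moments converge as well. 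So the $n$-limit is handled entirely by earlier results, and what remains is a statement about independent Poisson variables as $L\to\infty$.

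\textbf{Expectation.} Since $\bbE[Z_m]=1/m$, the expected contribution of the term indexed by $(i,c)$ is $\frac{d}{c}\cdot\frac{c}{di}=\frac1i$. Hence
\begin{align}
\bbE[X_L]=\sum_{i=1}^L \frac{z_{(i\bmod g)}}{i}\,\bigl|\{c\in[d]: c\mid d,\ \gcd{c}{i}=1\}\bigr|.
\end{align}
The coefficient $a_i \defi z_{(i\bmod g)}|\{c\mid d:\gcd{c}{i}=1\}|$ depends only on $i \bmod \lcm{d}{g}$, because coprimality with a divisor $c$ of $d$ depends only on $i\bmod d$. For a periodic sequence with period $N$, we have $\sum_{i\le L} a_i/i = \bigl(\frac1N\sum_{r=1}^N a_r\bigr)\log L + O(1)$. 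To see this, group $i$ by residue class: each class contributes $a_r(\frac1N\log L+O(1))$. Dividing by $\log L$ gives exactly $\rho(d,g;\mathbf{z})$. This settles the expectation statement: the $n$-limit is justified by convergence of moments, and the $L$-limit follows.

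\textbf{Concentration.} To get convergence in probability in the sense of Definition~\ref{def:bivariate}, I would bound the variance. The $Z_m$ are independent and $\Var(Z_m)=1/m$. For each $i$, the indices $di/c$ for different divisors $c$ are distinct, but the same $Z_m$ can appear for several pairs $(i,c)$. So I expand $X_L=\sum_{m\le dL}\beta_m Z_m$, where $\beta_m$ collects the coefficients of all pairs $(i,c)$ with $di/c=m$. Each $|\beta_m|\le C$ for a constant $C=C(d,\mathbf z)$, since there are at most $d$ such pairs and each has $|z|\,d/c\le d\max|z_r|$. This gives $\Var(X_L)\le C^2\sum_{m\le dL}1/m=O(\log L)$, and therefore $\Var(X_L/\log L)=O(1/\log L)\to0$.

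Now fix $\epsilon>0$ and choose $L_0$ so that for $L\ge L_0$ both $|\bbE[X_L]/\log L-\rho|<\epsilon/2$ and $\bbP(|X_L-\bbE X_L|>\epsilon\log L/2)<\epsilon$ hold, the latter by Chebyshev. Since $D_n^w(L,g;\mathbf z)\to X_L$ in distribution as $n\to\infty$, the Portmanteau theorem applied to the closed set $\{|x/\log L-\rho|\ge\epsilon\}$ gives $\limsup_n \bbP(|D_n^w/\log L-\rho|\ge\epsilon)\le \epsilon$. Definition~\ref{def:bivariate} asks for the limit to be $0$ at fixed $\epsilon$. This is obtained by running the same argument with tolerance $\epsilon'\ll\epsilon$ and enlarging $L_0$ to $L_0(\epsilon')$, so the bound becomes $\epsilon'$. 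Letting $\epsilon'$ shrink would force $L_0$ to grow, so the correct reading is that $\lim_n$ is replaced by $\limsup_n\le\epsilon'$ with $\epsilon'$ arbitrary. Convergence in probability to a constant then also implies convergence in distribution.

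\textbf{Main obstacle.} The only delicate step is the interplay of the two limits in this last part. I will handle it through the Poisson limit $X_L$ via the Portmanteau theorem, rather than through uniform-in-$n$ variance bounds. The alternative would be to use convergence of second moments from Theorem~\ref{thm:main_cycle}, which yields $\Var(D_n^w)\to\Var(X_L)$ and allows Chebyshev to be applied directly at finite $n$. That route actually gives the cleaner limit $0$, so I would prefer it.
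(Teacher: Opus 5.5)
Your argument follows the paper's proof step by step. You take $n\to\infty$ at fixed $L$ via Theorem~\ref{thm:main_cycle}, with convergence of first and second moments. You then average the $\lcm{d}{g}$-periodic coefficients over residue classes to get $\bbE[X_L]=\rho(d,g;\mathbf z)\log L+O(1)$, regroup $X_L=\sum_{m\le dL}\beta_m Z_m$ to get $\Var(X_L)=O(\log L)$, and finish with Chebyshev. A small aside: each $Z_m$ in fact occurs for at most one pair $(i,c)$. Indeed, $m=di/c$ with $c\mid d$ and $\mathrm{gcd}(i,c)=1$ forces $d/c=\mathrm{gcd}(m,d)$. Your bound only needs ``at most $d$ pairs'', so nothing breaks.

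What needs correcting is your final claim that the second-moment route ``gives the cleaner limit $0$''. It does not. Chebyshev at finite $n$, combined with $\bbE[D_n^w]\to\bbE[X_L]$ and $\Var(D_n^w)\to\Var(X_L)$, yields for $L$ large
\[
\limsup_{n\to\infty}\bbP\Bigl(\Bigl|\frac{D_n^w(L,g;\mathbf z)}{\log L}-\rho\Bigr|>\epsilon\Bigr)\le\frac{C}{\epsilon^2\log L}.
\]
This is the same kind of statement as your Portmanteau bound.

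No argument can do better at fixed $L$ when $\mathbf z\neq 0$ and $L\ge g$. By Portmanteau applied to the open set, the $\liminf_n$ of that probability is at least $\bbP(|X_L/\log L-\rho|>\epsilon)$. This is strictly positive, because $X_L$ contains the term $d\,z_{(i \bmod g)}Z_{di}$ (from $c=1$) with some nonzero coefficient, and $Z_{di}$ is a Poisson variable of unbounded support, independent of the others. So Definition~\ref{def:bivariate} taken literally (with $\lim_n=0$ for every $L\ge L_0$) cannot be met. The paper's one-line appeal to Chebyshev has exactly the same limitation.

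What you, and the paper, actually prove is $\lim_{L\to\infty}\limsup_{n\to\infty}\bbP(|D_n^w/\log L-\rho|>\epsilon)=0$ for every $\epsilon>0$; this is your ``correct reading''. State that as the conclusion rather than asserting an exact zero limit.
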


\begin{proof}[Proof of Proposition~\ref{prop:Dn_limit}, also stated as Theorem~\ref{thm:Dn_limit}]
We first prove the convergence of the expectation. From Theorem~\ref{thm:main_cycle} we have, for any fixed $L\geq 1$,
\begin{align}
	\lim_{n\rightarrow\infty}	\bbE[\sum_{j\leq L} z_{(j\textrm{ mod } g)} C^w_{j,n}]
	=&\sum_{j\leq L} z_{(j\textrm{ mod } g)}\sum_{c\in[d]:c|d \atop \gcd{c}{j}=1} (d/c) \bbE[Z_{dj/c}]
	= \sum_{j\leq L} z_{(j\textrm{ mod } g)} \sum_{c\in[d]:c|d \atop \gcd{c}{j}=1} \frac{1}{j}\\
	=& \sum_{j\leq L} \frac{z_{(j\textrm{ mod } g)}}{j} |\{c\in[d]:c|d, \gcd{c}{j}=1\}|,
\end{align}
where $Z_1, Z_{2}, \dots$ are independent random variables with $Z_{\ell}\sim \textrm{Po}(1/\ell)$.

Note that for $c|d$, the fact that $\gcd{c}{j}=1$ depends only on the congruence class of $j$ modulo $d$, which depends only on the congruence class of $j$ modulo $\lcm{d}{g}$. 
Therefore we have, for fixed any $L\geq 1$,
\begin{align}
	\lim_{n\rightarrow\infty}\bbE[D_n^w(L,g;z_0,\dots,z_{g-1})]
=& \sum_{j\leq L} \frac{z_{(j\textrm{ mod } g)}}{j} |\{c\in[d]:c|d, \gcd{c}{j}=1\}|\\
	=& \sum_{r=1}^{\lcm{d}{g}} z_{(r\textrm{ mod } g)}\sum_{j\leq L \atop j\equiv r\textrm{ mod } \lcm{d}{g} } \frac{1}{j} |\{c\in[d]:c|d, \gcd{c}{j}=1\}|\\
	=& \sum_{r=1}^{\lcm{d}{g}} z_{(r\textrm{ mod } g)}|\{c\in[d]:c|d, \gcd{c}{r}=1\}| \sum_{j\leq L \atop j\equiv r\textrm{ mod }\lcm{d}{g} } \frac{1}{j} \\
	=& \left(\sum_{r=1}^{\lcm{d}{g}}z_{(r\textrm{ mod } g) }|\{c\in[d]:c|d, \gcd{c}{r}=1\}|\right)\cdot  \frac{\log L +O_L(1) }{\lcm{d}{g}}\\
	=&  \rho(d,g;z_0,\dots,z_{g-1})\log L + O_L(1).
\end{align}
This gives the convergence of the expectation.

We proceed similarly for the variance:\begin{align}
	\lim_{n\rightarrow\infty}\Var(D_n^w(L,g;z_0,\dots, z_{g-1}))
	&= \Var\Bigl(\sum_{j\leq L} z_{(j\textrm{ mod } g)} \!\!\sum_{c\in[d]:c|d\atop \gcd{(d/c)}{j}=1}\!\! c Z_{cj}\Bigr)\\
	&= \sum_{\ell\leq dL} \Var \Bigl(Z_\ell \!\!\!\!\sum_{c,j: \ell=cj\atop c|d, \gcd{(d/c)}{j})=1} \!\!\!\!z_{(j\textrm{ mod } g)} c\Bigr)\\
	&= \sum_{\ell\leq dL} K_{\ell}^2\Var (Z_\ell)\\
	&= O_L(\log L),
\end{align}
where $K_\ell\defi \sum_{c,j: \ell=cj\atop c|d, \gcd{(d/c)}{j})=1} z_{(j\textrm{ mod } g)} c$.
	The convergence in probability thus follows from the Chebyshev inequality.
	The convergence in distribution follows from the fact that, for any fixed $L\geq 1$, the limit distribution when $n\rightarrow \infty$ exists by Theorem~\ref{thm:main_cycle}, and from the convergence in probability just proved.
\end{proof}

Finally, we now explain how the previous result implies that one can fully separate two random functions when the words have different exponents.

\begin{proof}[Proof of Corollary~\ref{cor:distinguish_d}]

	From the convergence in probability in Theorem~\ref{thm:Dn_limit}, it suffices to show that there exists a choice of the parameters $(g;z_0,\dots,z_{g-1})$ such that $\rho(d,g; z_0,\dots,z_{g-1})\neq \rho(d',g; z_0,\dots,z_{g-1})$.
	We will show a bit more, obtaining a direct way to recover the value of $d$.%

	Write the prime number decomposition of $d$ as $d=\prod_{i=1}^m p_i^{\alpha_i}$ and let $\sigma(d)\defi\prod_{i=1}^m (\alpha_i+1)$ be its number of divisors. Take $g$ to be a multiple of $d$ (to be fixed later), so $\lcm{d}{g}=g$.

	Now let $p\leq g$ be either 1 or a prime number, and let $z^{(p)}_r\defi \mathbf{1}_{\{r=p\text{ mod }g\}}$ for $r\in \{0,\dots,g-1\}$. 
	Since $p$ is $1$ or a prime, then
	\begin{align}
		\rho(d,g;z^{(p)}_0,\dots,z^{(p)}_{g-1}) &=\frac{1}{g} \sum_{r=0}^{g-1} z_r^{(p)} |\{c\in[d]: \,c|d, \gcd{c}{r}=1\}|\\
		&= \frac{1}{g}|\{c\in[d]: \,c|d, \gcd{c}{p}=1\}|\\
		&= \begin{cases}	
			\frac{\sigma(d)}{(\alpha_i+1)g}& \mbox{ if } p =p_i \mbox{ for some } i\in[m],\\
			\frac{\sigma(d)}{g} & \mbox{ otherwise}.
	\end{cases}
	\end{align}
	Note that $p=1$ is always in the second case, so we have $\rho(d,g;z^{(1)}_0,\dots, z^{(1)}_{g-1})=\sigma(d)/g$. Moreover, a prime $2\leq p\leq g$ divides $d$ if and only if $\rho(d,g;z^{(p)}_0,\dots,z^{(p)}_{g-1})\neq \rho(d,g;z^{(1)}_0,\dots,z^{(1)}_{g-1})$, and if so, its multiplicity is $\alpha=\rho(d,g;z^{(1)}_0,\dots,z^{(1)}_{g-1})/\rho(d,g;z^{(p)}_0,\dots,z^{(p)}_{g-1}) -1$. 

	Since a number $d$ is determined by the set of primes that divide it together with their multiplicity, if $d\neq d'$ and if we fix $g=\lcm{d}{d'}$, there exists at least one prime $p\leq g$  such that $\rho(d,g;z^{(p)}_0,\dots,z^{(p)}_{g-1})\neq \rho(d,g';z^{(p)}_0,\dots,z^{(p)}_{g-1})$, which is what we wanted to prove.
\end{proof}

\begin{remark}
The proof above shows the following ``direct'' formula to reconstruct $d$. For any $g\geq d$ multiple of $d$ , we have
\begin{align}
\lim_{L\to\infty} \lim_{n\to\infty} \prod_{p \leq g \atop p\text{ prime}}
	p^{\frac{D_n^w(L,g; z^{(1)}_0,\dots, z^{(1)}_{g-1})}{D_n^w(L,g;z^{(p)}_0,\dots, z^{(p)}_{g-1})}-1}
=d,
\end{align}
	where the limit is in probability in the sense of Definition~\ref{def:bivariate}, and where $D_n^w(L,g;z^{(p)}_0,\dots, z^{(p)}_{g-1})$ is the number of cycles of $\bfw$ of length at most $L$ and congruent to $p$ modulo $g$.

\end{remark}

\section*{Acknowledgements}

We thank Valentin Féray, Peleg Michaeli and Andrea Sportiello for interesting discussions and comments.
In particular, when one of us (G.C.) presented the results of the first version at the Flajolet seminar in I.H.P. in Paris in April 2026, Andrea Sportiello raised his hand and said: {\it maybe you could add a real parameter to your main observable, because algebraic independence is always easier to prove for functions than for numbers}. This sentence is at the origin of this second version and the fact that Theorem~\ref{thm:cw_separate} is now unconditional. We thank Andrea very much for this suggestion, and for his kindness, which is only matched by his creativity and quickness of thought.

We acknowledge hospitality and perfect working environment of CIRM in Luminy, France, and the CWI in Amsterdam, Netherlands, where the workshops Combinatorics and Discrete Probability (November 2025) and PhaseCAP (March-April 2026) were held, respectively.

G.C. was supported by the grant ANR-23-CE48-0018 ``CartesEtPlus''. G.P. was supported by the grants PID2023-147202NB-I00 and CEX2020-001084-M, both funded by MICIU/AEI/10.13039/501100011033.
Both authors are supported by the grant MSCA-RISE-2020-101007705 (Horizon 2020), RandNET: Randomness and learning in networks.

\bibliographystyle{alpha}
\bibliography{biblio}

\end{document}